\def\version{15 September 2013}
\theoremstyle{plain}
\newtheorem{theorem}{Theorem}[section]
\newtheorem{lemma}[theorem]{Lemma}
\newtheorem{corollary}[theorem]{Corollary}
\newtheorem{proposition}[theorem]{Proposition}
\theoremstyle{definition}
\newtheorem{definition}[theorem]{Definition}
\newtheorem{example}[theorem]{Example}
\theoremstyle{remark}
\newtheorem*{remark}{Remark}
\def\N{\mathbb N}
\def\Z{\mathbb Z}
\def\R{\mathbb R}
\def\S{\mathbb S}
\def\P{\mathbb P}
\def\Ball{B}
\def\OpenBall{U}
\def\Graph{X}
\def\Unbdd{\mathcal U}
\def\Elements{\mathcal E}
\def\LieG{\mathfrak g}
\def\Orbit{\mathcal C}
\def\Forward{\mathcal C^+}
\def\Project{\mathcal P}
\def\Linear{\mathcal L}
\def\linequiv{\sim}
\def\Quotient#1{#1/\mathord\linequiv}
\def\interior{\qopname\relax o{\mathsf{int}}}
\def\closure{\qopname\relax o{\mathsf{cl}}}
\def\Bndry#1{\closure(\Quotient{#1})}
\def\rank{\qopname\relax o{\mathsf{rk}}}
\def\vdim{\qopname\relax o{\mathsf{dim}}}
\def\vspan{\qopname\relax o{\mathsf{span}}}
\def\CAT{\qopname\relax o{CAT}}
\def\Aut{\qopname\relax o{\mathsf{Aut}}}
\def\UnbddOrb{\qopname\relax o{\mathsf{Unb}}}
\def\phi{\varphi}
\def\eps{\varepsilon}
\def\setsep{\,:\,}
\def\ndash{\nobreakdash-\hskip0pt}
\def\rightfactor{\backslash}
\def\argument{\mathord{\cdot}}
\def\gen#1{\langle#1\rangle}
\def\genp#1{\langle#1\rangle^+}
\def\abs#1{\lvert#1\rvert}
\def\norm#1{\lVert#1\rVert}
\def\lgauge{\mathopen{\kern-3pt\left\bracevert\vphantom{f}\right.\kern-4pt}}
\def\rgauge{\mathclose{\kern-4pt\left.\vphantom{f}\right\bracevert\kern-3pt}}
\def\gauge#1{\lgauge#1\rgauge}
\def\section{\def\@secnumfont{\bfseries}%
  \@startsection{section}{1}%
  \z@{.7\linespacing\@plus\linespacing}{.5\linespacing}%
  {\normalfont\bfseries\centering}}
\begin{document}

\title[Linear and projective boundary]{Linear and projective boundary of nilpotent groups}

\author{Bernhard Kr\"on}
\address{Bernhard Kr\"on\\
	Fakult\"at f\"ur Mathematik\\
	Universit\"at Wien\\
	Nordbergstra\ss e~15\\
	1090 Vienna\\
	Austria}
\email{bernhard.kroen@univie.ac.at}

\author{J\"org Lehnert}
\address{J\"org Lehnert\\
	Max Planck Institute for Mathematics in the Sciences\\
	Inselstra\ss e~22\\
	04103 Leipzig\\
	Germany}
\email{lehnert@mis.mpg.de}

\author{Norbert Seifter}
\address{Norbert Seifter\\
	Department Mathematik und Informationstechnologie\\
	Montanuniversit\"at Leoben\\
	Franz-Josef-Stra\ss e~18\\
	8700 Leoben\\
	Austria}
\email{seifter@unileoben.ac.at}

\author{Elmar Teufl}
\address{Elmar Teufl\\
	Mathematisches Institut\\
	Universit\"at T\"ubingen\\
	Auf der Morgenstelle~10\\
	72076 T\"ubingen\\
	Germany}
\email{elmar.teufl@uni-tuebingen.de}

\date{\version}

\subjclass[2010]{20F65 (54E35,20F18,22E25,05C63)}
\keywords{metric spaces, boundaries, nilpotent groups}

\begin{abstract}
We define a pseudometric on the set of all unbounded subsets of a metric space. The Kolmogorov quotient of this pseudometric space is a complete metric space. The definition of the pseudometric is guided by the principle that two unbounded subsets have distance $0$ whenever they stay sublinearly close. Based on this pseudometric we introduce and study a general concept of boundaries of metric spaces. Such a boundary is the closure of a subset in the Kolmogorov quotient determined by an arbitrarily chosen family of unbounded subsets.

Our interest lies in those boundaries which we get by choosing unbounded cyclic sub\-\hbox{(semi)}\-groups of a finitely generated group (or more general of a compactly generated, locally compact Hausdorff group). We show that these boundaries are quasi-isometric invariants and determine them in the case of nilpotent groups as a disjoint union of certain spheres (or projective spaces).

In addition we apply this concept to vertex-transitive graphs with polynomial growth and to random walks on nilpotent groups.
\end{abstract}

\maketitle

\tableofcontents

\section{Introduction}
\label{section:introduction}

There are numerous boundary notions of graphs, groups, manifolds, metric spaces and other geometric objects. The literature on the subject is extensive and boundary notions proved to be a useful tool in studying the underlying space. An early instance is the theory of ends which was developed in the first half of the twentieth century by Freudenthal (see e.g.~\cite{freudenthal1942neuaufbau}) and others. Various geometric ideas were used to refine the notion of ends:

In 1973 Eberlein and O'Neill~\cite{eberlein1973visibility} constructed the boundary at infinity of a $\CAT(0)$~space by considering equivalence classes of non-compact geodesic rays. The equivalence notion of geodesic rays uses the natural parametrization, i.e. two geodesic rays are equivalent if they stay at bounded distance as the parameter tends to $\infty$. A different description is given by Gromov in \cite{gromov1981hyperbolic} which uses an embedding into the set of continuous functions relying on the metric only.

In graph theory in the 1990s Jung~\cite{jung1993notes} and Jung, Niemayer~\cite{jung1995decomposing} introduced a refinement of ends of graphs called b\ndash fibers and d\ndash fibers, respectively. The basic idea behind fibers is to consider points at infinity as equivalence classes of rays (infinite paths) which stay at bounded distance ``up to linear reparametrization''. In 2005 Bonnington, Richter and Watkins~\cite{bonnington2007between} modified this concept by considering rays as equivalent whenever they stay at sublinear distance ``up to linear reparametrization''. They were able to use this concept to prove some nice results on infinite planar graphs, but the boundary, whose elements have been called ``bundles'', was not topologized and not considered for groups or vertex-transitive graphs.

Another instance, where the concept of staying at sublinear distance is used, is given by Kaimanovich in \cite[Theorem~5.5]{kaimanovich1991poisson}. The so-called ``ray approximation'' is used to determine, whether a given probability space is the Poisson boundary of a random walk on a countable group $G$ defined by a probability measure $\mu$ on $G$. A proposal space $(B,\lambda)$ is the Poisson boundary of $(G,\mu)$, if compatibility conditions between $\mu$ and $\lambda$ hold and if there exist measureable ``projections'' $\pi_n\colon B\to G$, such that almost every trajectory $(g_1,g_2,\dotsc)$ stays sublinear close to $(\pi_1(g_\infty),\pi_2(g_\infty),\dotsc)$, where $g_\infty$ is the limit point of the trajectory $(g_1,g_2,\dotsc)$ in $B$.

In these examples the ``parametrization'' of rays or sequences is used in the definition of staying (sublinearly) close. In the following we relax this and work with general subsets and not only with rays or sequences. Let $(X,d)$ be a metric space, let $o\in X$ be a fixed reference point and denote by $B(x,r)$ the closed ball in $(X,d)$ with center $x$ and radius $r$. If $R,S$ are two unbounded subsets of $X$, their distance $t(R,S)$ is defined to be the square root of the infimum over all $\alpha\ge0$, such that
\[ S\subseteq\bigcup_{x\in R} B(x, \alpha d(o,x)+a) \qquad\text{and}\qquad
	R\subseteq\bigcup_{y\in S} B(y, \alpha d(o,y)+a) \]
for some $a\ge0$. The sets $R,S$ are \emph{sublinearly close}, if $t(R,S)=0$. We show that the set of all unbounded subsets of $(X,d)$ equipped with the distance $t$ is a pseudometric space, whose Kolmogorov quotient is a complete metric space (Proposition~\ref{proposition:premetric} and Theorem~\ref{theorem:metric}). Given some family $\Elements$ of unbounded subsets the associated ``boundary'' of $X$ is the closure of all equivalence classes which contain an element from $\Elements$ in the Kolmogorov quotient. Interesting families of unbounded subsets include: geodesics, horoballs, cyclic sub\-\hbox{(semi)}\-groups (in the case of groups), one-parameter sub\-\hbox{(semi)}\-groups (in the case of topological groups).

We mainly focus on the group case. Let $G$ be a finitely generated (or more general compactly generated, locally compact Hausdorff) group and let $d$ be a word metric on $G$. If the family $\Elements$ is given by all unbounded cyclic subsemigroups or by all unbounded cyclic subgroups, we call the associated boundary \emph{linear boundary} in the former case and \emph{projective boundary} in latter case. We prove that these two boundaries are quasi-isometric invariants (Lemma~\ref{lemma:boundary}). In our main result (Theorem~\ref{theorem:nilpotent}) we identify the linear and projective boundary for nilpotent groups. Let $G$ be either a connected, nilpotent Lie group or a finitely generated, nilpotent group with descending central series
\[ G = \gamma_1(G) \supseteq \gamma_2(G) \supseteq \dotsb \supseteq \gamma_c(G) \supsetneq \gamma_{c+1}(G) = 1. \]
Let $\nu(i)$ denote the compact-free dimension or the torsion-free rank of the commutative group $\gamma_i(G)/\gamma_{i+1}(G)$. Then the linear boundary is homeomorphic to the disjoint union of $c$ spheres
\[ \S^{\nu(1)-1} \uplus \S^{\nu(2)-1}\uplus \dotsb \uplus \S^{\nu(c)-1} \]
and the projective boundary is homeomorphic to the disjoint union of projective spaces
\[ \P^{\nu(1)-1} \uplus \P^{\nu(2)-1}\uplus \dotsb \uplus \P^{\nu(c)-1}. \]
Here $\S^d$ is the $d$\ndash dimensional sphere and $\P^d$ is the $d$\ndash dimensional projective space.

The following facts about the boundary notion introduced above must be stressed: Compact elements of a group $G$ do not contribute to the boundaries $\Project G$ and $\Linear G$. Hence, whenever $G$ only contains compact elements, these boundaries are empty. In particular, this means in the discrete case, that $\Project G$ and $\Linear G$ are empty for torsion groups. We also emphasize that we compare unbounded sets using the distance function $t$ and not sequences or rays using their parametrizations, as it is e.g. done in \cite{bonnington2007between}. For instance, two sequences or rays might be distant in the sense of \cite{bonnington2007between} using parametrizations $a_n=n$ and $b_n=\sqrt n$, respectively, although the underlying unbounded sets are the same hence sublinearly close.

\vspace{2mm}

The paper is organized as follows:
\begin{compactitem}
\item The general framework for metric spaces is studied in Section~\ref{section:construction}.
	The distance $t$ and some auxiliary quantities are introduced and
	several basic results are proved.
	For instance we show that the distance $t$ has all properties stated above.
\item In Section~\ref{section:quasi-iso} we investigate the relationship to quasi-isometries.
	It is shown that the distance $t$ is preserved up to bi-Lipschitz-equivalence under
	quasi-isometries of the underlying space (Theorem~\ref{theorem:qi-bilip}).
\item In Section~\ref{section:cat} we show that
	the boundary at infinity of a complete $\CAT(0)$ space
	equipped with the angular metric can be obtained by
	the boundary construction outlined above using
	the set of unbounded geodesics up to bi-H\"older equivalence.
\item In Section~\ref{section:groups} we apply this concept to groups using
	unbounded cyclic sub\-\hbox{(semi)}\-groups as families of unbounded subsets.
	Some general results are obtained and the case of abelian groups is discussed in detail.
	In the latter case the projective boundary is homeomorphic to a projective space
	and the linear boundary is homeomorphic to a sphere.
\item Section~\ref{section:nilpotent} is devoted to the formulation and proof
	of the main result (Theorem~\ref{theorem:nilpotent}).
	Most technical parts of the proof are deferred to Appendix~\ref{appendix:groups}.
\item Section~\ref{section:graph-boundaries} discusses the situation for graphs.
	The projective boundary of a graph is defined by the above procedure,
	using the family of unbounded orbits generated by cyclic subgroups of the automorphism group
	and the linear boundary is defined analogously.
	For connected, vertex-transitive graphs with polynomial volume growth
	we obtain the same description of the projective (respectively linear) boundary
	as in the case of nilpotent groups (Corollary~\ref{corollary:grdis}).
\item In Section~\ref{section:attach} we construct a topology on the disjoint union of the base space $X$
	and some boundary which is obtained by the construction above.
	The definition is reminiscent of the cone topology of the boundary at infinity of $\CAT(0)$ spaces.
	The subspace topology on $X$ of this topology is always induced by the metric $d$,
	but the subspace topology on the boundary is neither induced by $t$ nor Hausdorff in general.
	We discuss criteria (Lemma~\ref{lemma:hausdorff} and Proposition~\ref{proposition:group-bnd})
	which guarantee both: the subspace topology on the boundary is Hausdorff and induced by $t$.
\item In Section~\ref{section:walks} we show that every boundary point in the linear boundary
	of a nilpotent Lie group is obtained as a limit of a random walk with drift and vice versa.
\item Appendix~\ref{appendix:compact} collects some known results on compactly generated groups and word metrics
	which are used in the previous sections.
\item Appendix~\ref{appendix:groups} mostly contains the technical lemmas
	used in the proof of Theorem~\ref{theorem:nilpotent} and
	the necessary notions from Lie theory.
\end{compactitem}

\section{General construction}
\label{section:construction}

Let $(X,d)$ be a metric space. We write $\Unbdd$ to denote the family of unbounded subsets of $(X,d)$. The closed and open ball with center $x\in X$ and radius $r\ge0$ in $(X,d)$ are denoted by
\[ \Ball(x,r) = \{y \in X \setsep d(y,x)\le r\} \qquad\mbox{and}\qquad
	\OpenBall(x,r) = \{y \in X \setsep d(y,x)< r\}, \]
respectively. Let $o$ be a fixed reference point, let $R\subseteq X$, and let $\alpha$ and $a$ be nonnegative real numbers. We set
\[ \alpha R+a = \bigcup_{x\in R} \Ball(x,\alpha d(o,x)+a) \]
and write $\alpha R$ instead of $\alpha R+0$.

\begin{remark}
The notation $\alpha R+a$ is unusual, but turns out to be convenient for computations involving sets of this form. Mostly this notation will be used if $X$ is a metric space, so no confusion should occur. However, if $X$ is a linear space too, $\alpha R+a$ will always be used in the above meaning and never means a linearly scaled and translated set. Furthermore, it should be the stressed that $0R = R$ and
\[ 0R+a = \bigcup_{x\in R} \Ball(x,a), \]
which is often called $a$\ndash neighborhood of $R$ or generalized ball of radius $a$ around $R$.
\end{remark}

\begin{lemma}\label{lemma:greater-one}
Let $R\in\Unbdd$ and $\alpha>1$. Then $\alpha R=X$.
\end{lemma}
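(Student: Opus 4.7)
The plan is to prove the nontrivial inclusion $X\subseteq\alpha R$ directly; the other inclusion $\alpha R\subseteq X$ is immediate from the definition. Fix an arbitrary $y\in X$. I want to find some $x\in R$ with $y\in\Ball(x,\alpha d(o,x))$, i.e.\ with $d(x,y)\le\alpha d(o,x)$.

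The key observation is that the triangle inequality gives $d(x,y)\le d(o,x)+d(o,y)$, so it suffices to find $x\in R$ such that $d(o,x)+d(o,y)\le\alpha d(o,x)$, which rearranges to $d(o,y)\le(\alpha-1)d(o,x)$. Since $\alpha>1$ we have $\alpha-1>0$, and since $R$ is unbounded, $\{d(o,x)\setsep x\in R\}$ is unbounded in $[0,\infty)$; hence we can select some $x\in R$ with $d(o,x)\ge d(o,y)/(\alpha-1)$. For this choice $y\in\Ball(x,\alpha d(o,x))\subseteq\alpha R$, so $X\subseteq\alpha R$.

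The argument is short and contains no real obstacle; the only point worth being careful about is handling the case $y=o$ (which is trivial since any $x\in R$ works) and making sure that ``unbounded'' is used in the intended sense, namely that $R$ is not contained in any ball about $o$, so that distances $d(o,x)$ for $x\in R$ can be taken arbitrarily large. The lemma explains why only $\alpha\le1$ gives useful information: for $\alpha>1$ the ``fattening'' $\alpha R$ swallows the whole space regardless of $R$.
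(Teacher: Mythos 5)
Your proof is correct and is essentially identical to the paper's argument: both pick an arbitrary point of $X$, use unboundedness of $R$ to find $x\in R$ with $(\alpha-1)d(o,x)$ at least the distance from $o$ to that point, and conclude via the triangle inequality that the point lies in $\Ball(x,\alpha d(o,x))$. The extra remarks about the case $y=o$ and the meaning of unboundedness are fine but not needed.
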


\begin{proof}
Let $x$ be any point in $X$. Since $R$ is unbounded, there is an element $y\in R$ such that $(\alpha-1)d(o,y)\ge d(o,x)$. Hence
\[ d(x,y) \le d(x,o)+d(o,y) \le (\alpha-1)d(o,y)+d(o,y) = \alpha d(o,y),\]
and $x\in\Ball(y,\alpha d(o,y)) \subseteq \alpha R$.
\end{proof}

\begin{lemma}\label{lemma:transitive}
Let $R$, $S$, and $T$ be subsets of $X$. If $T \subseteq \beta S+b$ and $S \subseteq \alpha R+a$ then $T \subseteq (\alpha+\alpha\beta+\beta)R+\beta a+a+b$.
\end{lemma}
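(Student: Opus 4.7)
The plan is to unwind the two set inclusions pointwise and apply the triangle inequality twice. Fix an arbitrary $z \in T$. Since $T \subseteq \beta S + b$, there is some $y \in S$ with $d(z,y) \le \beta d(o,y) + b$. Since $S \subseteq \alpha R + a$, there is some $x \in R$ with $d(y,x) \le \alpha d(o,x) + a$. The goal is then to show $d(z,x) \le (\alpha+\alpha\beta+\beta) d(o,x) + \beta a + a + b$, which would place $z$ in the right generalized ball around $x$, hence in $(\alpha+\alpha\beta+\beta)R+\beta a+a+b$.

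The key auxiliary step is to re-express the bound on $d(z,y)$, which involves $d(o,y)$, in terms of $d(o,x)$, since the radius function in the conclusion is anchored at $x \in R$. This uses a second triangle inequality, $d(o,y) \le d(o,x) + d(x,y) \le (1+\alpha) d(o,x) + a$, which comes directly from the inclusion $S \subseteq \alpha R + a$. Plugging this into $d(z,x) \le d(z,y) + d(y,x)$ and collecting terms produces exactly the constants $\alpha+\alpha\beta+\beta$ in front of $d(o,x)$ and $\beta a + a + b$ as the additive term.

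There is no substantial obstacle here: once one writes down the two witnesses $y$ and $x$ and applies the triangle inequality, the bookkeeping of constants is mechanical. The only point worth being careful about is the order in which the two inclusions are applied (one first peels off the outer inclusion $T \subseteq \beta S + b$, then the inner one $S \subseteq \alpha R + a$) and the substitution of the bound on $d(o,y)$ by a bound in terms of $d(o,x)$; the asymmetric $\alpha\beta$ term arises precisely from that substitution.
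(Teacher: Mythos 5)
Your proof is correct and follows essentially the same route as the paper's: pick witnesses $y\in S$ and $x\in R$ from the two inclusions, bound $d(o,y)$ by $(1+\alpha)d(o,x)+a$ via the triangle inequality, and substitute into $d(z,x)\le d(z,y)+d(y,x)$ to obtain the stated constants.
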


\begin{proof}
Let $z$ be in $T$. Since the sets $\alpha R+a$ and $\beta S+b$ are defined as unions of balls, $z$ is in $\Ball(y,\beta d(o,y)+b)$ for some $y\in S$ and $y$ is in $\Ball(x,\alpha d(o,x)+a)$ for some $x\in R$.  Set $c=d(o,x)$. Then $d(x,y)\le  \alpha c+a$. By the triangle inequality,
\[ d(o,y)\le d(o,x)+d(x,y)\le (\alpha +1)c+a. \]
Hence
\[ d(y,z)\le \beta d(o,y)+b\le (\alpha \beta+\beta)c+\beta a+b. \]
Finally,
\[ d(x,z)\le d(x,y)+d(y,z)\le (\alpha +\alpha \beta+\beta)c+\beta a+a+b. \]
This means that $z\in(\alpha +\alpha \beta+\beta)R+\beta a+a+b$.
\end{proof}

\begin{definition}\label{definition:lequiv}
For two subsets $R,S\subseteq X$ let $s^+(R,S)$ be the infimum of all $\alpha\ge0$ such that $S\subseteq \alpha R+a$ for some $a\ge0$. Set
\[ s(R,S) = \max\{s^+(R,S),s^+(S,R)\} \]
and $t(R,S)=\sqrt{s(R,S)}$. If $R,S\in \Unbdd$ and $s(R,S)=0$ then $R$ and $S$ are called \emph{linearly equivalent} and we write $R\linequiv S$.
\end{definition}

\begin{remark}
The functions $s^+$, $s$, $t$ depend on the metric space $(X,d)$. In order to emphasize the underlying metric space $(X,d)$ we write $s^+_X$ or $s^+_{\smash{(X,d)}}$ and analogously for $s$ and $t$. Similarly, we write $\Unbdd_X$ or $\Unbdd_{\smash{(X,d)}}$ instead of $\Unbdd$, if it is necessary to specify the metric space.
\end{remark}

\begin{lemma}\label{lemma:reference}
Let $R,S$ be two subsets of $X$. Then $s^+(R,S)$ and therefore $s(R,S)$ and $t(R,S)$ do not depend on the reference point $o$ in $X$.
\end{lemma}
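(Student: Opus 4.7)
The plan is to exploit the fact that changing the base point from $o$ to $o'$ shifts each quantity $d(o,x)$ by at most the constant $d(o,o')$, a deviation that can be absorbed into the additive constant $a$ without changing the multiplicative constant $\alpha$. Since $s^+(R,S)$ is defined as an infimum over $\alpha$ for which some $a\ge 0$ works, the lemma should reduce to a direct comparison of the two generalized balls $\alpha R+a$ built from the two reference points.

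Concretely, denote by $\alpha R+_o a$ and $\alpha R +_{o'} a$ the sets obtained from Definition of $\alpha R+a$ using the reference points $o$ and $o'$, respectively. For each $x\in R$ the triangle inequality gives $d(o',x)\le d(o,x)+d(o,o')$, and hence
\[ \Ball(x,\alpha d(o,x)+a) \subseteq \Ball\bigl(x,\alpha d(o',x)+a+\alpha d(o,o')\bigr). \]
Taking the union over $x\in R$ yields $\alpha R+_o a\subseteq \alpha R+_{o'}(a+\alpha d(o,o'))$, and by symmetry also $\alpha R+_{o'} a\subseteq \alpha R+_o(a+\alpha d(o,o'))$.

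Now suppose $S\subseteq \alpha R+_o a$ for some $a\ge 0$. The inclusion above forces $S\subseteq \alpha R+_{o'}(a+\alpha d(o,o'))$, which is of the form $\alpha R+_{o'}a'$ with $a'\ge 0$. Thus every admissible $\alpha$ for the reference point $o$ is also admissible for $o'$, and the symmetric argument gives the reverse. Taking infima, the value of $s^+(R,S)$ is the same whether computed with base point $o$ or $o'$. Since $s$ and $t$ are built from $s^+$ by the symmetric maximum and the square root, they inherit the independence. No serious obstacle is expected here — the whole point is that the additive slack $a$ was introduced precisely to neutralize bounded perturbations such as a change of base point.
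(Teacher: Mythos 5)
Your argument is correct and is essentially the paper's own proof: both absorb the shift $\alpha\,d(o,o')$ caused by changing the reference point into the additive constant $a$, so that the set of admissible $\alpha$ (and hence the infimum) is unchanged. One cosmetic slip: the ball inclusion $\Ball(x,\alpha d(o,x)+a)\subseteq\Ball(x,\alpha d(o',x)+a+\alpha d(o,o'))$ is justified by $d(o,x)\le d(o',x)+d(o,o')$ rather than by the instance $d(o',x)\le d(o,x)+d(o,o')$ you quote, but since both directions of the triangle inequality hold this does not affect the proof.
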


\begin{proof}
Let $o,p\in X$ and set $c=d(o,p)$. We write $s^+_o(R,S)$ in order to emphasize the reference point $o$. Furthermore, write $C_o(R,\alpha,a)$ to denote the set $\alpha R+a$ with respect to the reference point $o$. For $\alpha>s_o^+(R,S)$ there is a number $a>0$ such that $S\subseteq C_o(R,\alpha,a)$. Hence for $y\in S$ we can find a point $x\in R$ such that $d(y,x) \le \alpha d(o,x) + a$. The triangle inequality implies that
\[ d(y,x) \le \alpha ( d(p,x) + d(p,o) ) + a = \alpha d(p,x) + \alpha c + a. \]
Therefore $S\subseteq C_p(R,\alpha,\alpha c+a)$ and thus $s_p^+(R,S)\le s_o^+(R,S)$. The reversed inequality is obtained by changing the r\^ole of $o$ and $p$.
\end{proof}

\begin{lemma}\label{lemma:alternative}
Let $R,S\in\Unbdd$. Then $s^+(R,S)$ is the infimum of all $\alpha\ge0$ such that $S\setminus\OpenBall(o,r) \subseteq \alpha R$ for some $r\ge0$.
\end{lemma}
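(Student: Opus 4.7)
The plan is to prove the equality in two directions; write $\sigma$ for the infimum on the right-hand side. The intuition is that the only difference between the two formulations is whether we tolerate an additive slack $+a$ in the covering, or instead discard the bounded piece $S \cap \OpenBall(o,r)$; both mechanisms compensate for the fact that $d(y,x) \le \alpha d(o,x)$ can fail exactly when $d(o,x)$ is small.

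For $s^+(R,S) \le \sigma$, I would start from arbitrary $\alpha \ge 0$ and $r \ge 0$ with $S \setminus \OpenBall(o,r) \subseteq \alpha R$ and construct $a \ge 0$ with $S \subseteq \alpha R + a$. The far part is automatic since $\alpha R \subseteq \alpha R + a$. For the bounded remainder, since $R$ is nonempty (being unbounded) I fix some $x_0 \in R$; then every $y \in S \cap \OpenBall(o,r)$ satisfies $d(y, x_0) \le r + d(o, x_0)$ by the triangle inequality, so setting $a := r + d(o, x_0)$ places the remainder inside $\Ball(x_0, \alpha d(o, x_0) + a) \subseteq \alpha R + a$. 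Hence $s^+(R,S) \le \alpha$, and taking infimum gives the inequality.

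For $\sigma \le s^+(R,S)$, I would fix $\alpha \ge 0$ and $a \ge 0$ with $S \subseteq \alpha R + a$ and, for arbitrary $\alpha' > \alpha$, exhibit a threshold $r$ such that $S \setminus \OpenBall(o, r) \subseteq \alpha' R$. For each $y \in S$ pick $x_y \in R$ with $d(y, x_y) \le \alpha d(o, x_y) + a$; the triangle inequality then yields $(1 + \alpha) d(o, x_y) \ge d(o, y) - a$, which forces $d(o, x_y)$ to be large whenever $d(o, y)$ is. The target inequality $d(y, x_y) \le \alpha' d(o, x_y)$ is equivalent to $d(o, x_y) \ge a/(\alpha' - \alpha)$, which is guaranteed once $d(o, y) \ge a + a(1+\alpha)/(\alpha' - \alpha)$. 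Choosing $r$ to be this quantity yields $\sigma \le \alpha'$; letting $\alpha' \downarrow \alpha$ and then $\alpha \downarrow s^+(R,S)$ produces the desired inequality.

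The main obstacle is precisely this absorption step in the second direction: the additive term $a$ from $S \subseteq \alpha R + a$ must be folded into the small multiplicative enlargement $\alpha \to \alpha'$, which is why the argument only delivers $\sigma \le \alpha'$ for each $\alpha' > \alpha$ and not for $\alpha$ itself. Comparing infima, rather than a single attaining parameter, is therefore essential; everything else is routine bookkeeping with the triangle inequality.
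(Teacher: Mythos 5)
Your proposal is correct and follows essentially the same route as the paper: absorb the discarded ball $S\cap\OpenBall(o,r)$ into an additive constant $a$ for one inequality, and for the other trade the additive slack $a$ for a small multiplicative enlargement $\alpha\to\alpha'$ valid beyond an explicit radius threshold, then pass to infima. The only (harmless) deviation is in the first direction, where you cover the bounded remainder by a ball around a single fixed $x_0\in R$ with $a=r+d(o,x_0)$, whereas the paper takes $a=r+d(o,R)$ with $d(o,R)$ the infimum of distances; your variant is in fact slightly cleaner since it avoids any issue with that infimum not being attained.
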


\begin{proof}
We write $\sigma^+(R,S)$ to denote the infimum of all $\alpha\ge0$ such that $S\setminus\OpenBall(o,r) \subseteq \alpha R$ for some $r\ge0$. First we show that $s^+(R,S) \le \sigma^+(R,S)$. Assume that $\alpha>\sigma^+(R,S)$ and $r\ge0$ such that $S\setminus\OpenBall(o,r) \subseteq \alpha R$. Set $a = r + d(o,R)$, where $d(o,R) = \inf\{d(o,x) \setsep x\in R\}$. Then, by triangle inequality, $S\subseteq \alpha R+a$. Therefore $s^+(R,S)\le \alpha$ and hence $s^+(R,S)\le \sigma^+(R,S)$.

Now we prove the reversed inequality: Let $\alpha>s^+(R,S)$ and set $\eps = \tfrac12(\alpha-s^+(R,S)) > 0$. Then, by definition of $s^+(R,S)$, there exists a constant $a\ge0$ such that $S\subseteq (\alpha-\eps)R+a$. We claim that $S\setminus\OpenBall(o,r) \subseteq \alpha R$ holds for $r = \tfrac a \eps(1+\alpha)$. Let $y\in S$. Then there is a $x\in R$ with $d(y,x)\le(\alpha-\eps)d(o,x)+a$. Using the triangle inequality yields
\[ d(o,y) \le d(o,x) + d(y,x) \le d(o,x) + (\alpha-\eps) d(o,x)+a \le (1+\alpha-\eps) d(o,x) + a, \]
which implies
\[ d(o,x) \ge \frac{d(o,y)-a}{1+\alpha-\eps}. \]
If $d(o,y)\ge r$ then we obtain
\[ a \le \eps \cdot \frac{d(o,y)-a}{1+\alpha-\eps} \le \eps d(o,x) \]
and
\[ d(y,x) \le (\alpha-\eps) d(o,x) + a \le (\alpha-\eps) d(o,x) + \eps d(o,x) = \alpha d(o,y). \]
Therefore $S\setminus\OpenBall(o,r)\subseteq \alpha R$ and $\sigma^+(R,S)\le s^+(R,S)$.
\end{proof}

\begin{proposition}\label{proposition:premetric}
The function $s^+$ is a premetric on $\Unbdd$ satisfying a weak form of the triangle inequality, i.e.\ if $R,S,T$ are unbounded subsets of $X$, then
\begin{compactitem}
\item $s^+(R,S)\in[0,1]$ and $s^+(R,R)=0$,
\item $s^+(R,T)\le s^+(R,S)+s^+(R,S)s^+(S,T)+s^+(S,T)$.
\end{compactitem}
Similarly, $s$ is a symmetric premetric on $\Unbdd$ satisfying the same weak triangle inequality, i.e.
\begin{compactitem}
\item $s(R,S)\in[0,1]$ and $s(R,R)=0$,
\item $s(R,S) = s(S,R)$,
\item $s(R,T)\le s(R,S)+s(R,S)s(S,T)+s(S,T)$.
\end{compactitem}
Finally, $t$ is a pseudometric on $\Unbdd$, i.e.
\begin{compactitem}
\item $t(R,S)\in[0,1]$ and $t(R,R)=0$,
\item $t(R,S) = t(S,R)$,
\item $t(R,T)\le t(R,S)+t(S,T)$.
\end{compactitem}
\end{proposition}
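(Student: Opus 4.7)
The plan is to prove the three bulleted claims in order, exploiting the fact that once $s^{+}$ is handled, $s$ and $t$ follow with little extra work.

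First I would verify the three properties of $s^{+}$. The bound $s^{+}(R,S)\le 1$ follows immediately from Lemma~\ref{lemma:greater-one}: for every $\alpha>1$ we have $\alpha R=X\supseteq S$, so the infimum defining $s^{+}(R,S)$ is at most $1$; non-negativity is automatic. For $s^{+}(R,R)=0$, observe that for any $a\ge 0$ we have $R\subseteq 0R+a=\bigcup_{x\in R}\Ball(x,a)$, so every $\alpha\ge 0$ is admissible. The weak triangle inequality is exactly where Lemma~\ref{lemma:transitive} is used: if $\alpha>s^{+}(R,S)$ and $\beta>s^{+}(S,T)$ are chosen with $S\subseteq \alpha R+a$ and $T\subseteq \beta S+b$, then the lemma gives $T\subseteq (\alpha+\alpha\beta+\beta)R+\beta a+a+b$, hence $s^{+}(R,T)\le \alpha+\alpha\beta+\beta$. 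Letting $\alpha\downarrow s^{+}(R,S)$ and $\beta\downarrow s^{+}(S,T)$ yields the desired inequality.

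The statement for $s$ is a direct consequence. Symmetry, the diagonal vanishing, and the bound $s(R,S)\in[0,1]$ are inherited from $s^{+}$ by the definition $s(R,S)=\max\{s^{+}(R,S),s^{+}(S,R)\}$. Applying the weak triangle inequality to both $s^{+}(R,T)$ and $s^{+}(T,R)$ and bounding each $s^{+}$ term by the corresponding $s$ term gives
\[ s^{+}(R,T)\le s(R,S)+s(R,S)s(S,T)+s(S,T), \]
and likewise for $s^{+}(T,R)$; taking the maximum yields the weak triangle inequality for $s$.

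Finally I turn to $t=\sqrt{s}$. The range, symmetry, and vanishing on the diagonal are obvious from the corresponding properties of $s$. For the proper triangle inequality I would square both sides: it suffices to prove
\[ s(R,T)\le s(R,S)+s(S,T)+2\sqrt{s(R,S)\,s(S,T)}, \]
and by the weak triangle inequality for $s$ it is enough to check that the mixed term $s(R,S)\,s(S,T)$ on the left is dominated by $2\sqrt{s(R,S)\,s(S,T)}$ on the right. Setting $u=\sqrt{s(R,S)\,s(S,T)}\in[0,1]$ (using $s\le 1$), this reduces to $u^{2}\le 2u$, which holds since $u\le 1\le 2$. The only subtle point in the whole argument is this last trick of passing from the multiplicative defect in the weak triangle inequality to the standard triangle inequality by taking square roots, and it works precisely because $s$ is bounded by $1$; this is the reason the normalization via Lemma~\ref{lemma:greater-one} was built in from the start.
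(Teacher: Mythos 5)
Your proof is correct and follows essentially the same route as the paper: the $s^+$ properties come from Lemmas~\ref{lemma:greater-one} and \ref{lemma:transitive}, the $s$ properties follow by taking maxima, and the triangle inequality for $t$ uses exactly the paper's estimate $s(R,S)s(S,T)\le 2\sqrt{s(R,S)s(S,T)}$, valid because $s\le 1$. Your proposal is in fact slightly more detailed than the paper's proof, which dispatches the $s^+$ and $s$ claims in one sentence.
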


\begin{proof}
The statements for $s^+$ and $s$ follow from the definition and from the Lemmas~\ref{lemma:greater-one} and \ref{lemma:transitive}.

It remains to show that $t$ satisfies the triangle inequality. Let $R,S$ be unbounded subsets of $X$. Then
\begin{align*}
s(R,T) &\le s(R,S)+s(R,S)s(S,T)+s(S,T) \\
	&\le s(R,S)+2\sqrt{s(R,S)s(S,T)}+s(S,T)
\end{align*}
which implies
\[ t(R,T)=\sqrt{s(R,T)}\le\sqrt{s(R,S)}+\sqrt{s(S,T)}=t(R,S)+t(S,T). \qedhere \]
\end{proof}

\begin{corollary}\label{corollary:well-def}
Assume that $R,S,T$ are unbounded subsets of $X$. If $s^+(S,T)=0$ then $s^+(R,T)\le s^+(R,S)$ and $s^+(S,R)\le s^+(T,R)$. Therefore, if $S\sim T$, then $s^+(R,S) = s^+(R,T)$, $s^+(S,R) = s^+(T,R)$, and $s(R,S) = s(R,T)$, $t(R,S)=t(R,T)$.
\end{corollary}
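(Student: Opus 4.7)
The plan is to read everything off from the weak triangle inequality
\[ s^+(A,C)\le s^+(A,B)+s^+(A,B)\,s^+(B,C)+s^+(B,C) \]
established in Proposition~\ref{proposition:premetric}, by plugging in the appropriate triples and using $s^+(S,T)=0$ (or, in the second half, both $s^+(S,T)=0$ and $s^+(T,S)=0$).

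For the first inequality I would apply the weak triangle inequality to the triple $(R,S,T)$, obtaining
\[ s^+(R,T)\le s^+(R,S)+s^+(R,S)\,s^+(S,T)+s^+(S,T)=s^+(R,S), \]
since the middle and last summand vanish by assumption. For the second inequality I would apply the same inequality to the triple $(S,T,R)$, giving
\[ s^+(S,R)\le s^+(S,T)+s^+(S,T)\,s^+(T,R)+s^+(T,R)=s^+(T,R). \]

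To deduce the ``therefore'' part, I would use that $S\linequiv T$ means $s^+(S,T)=s^+(T,S)=0$. Applying the two inequalities just proved in both directions (swapping the roles of $S$ and $T$) yields $s^+(R,S)=s^+(R,T)$ and $s^+(S,R)=s^+(T,R)$. The statement for $s$ is then immediate from its definition as the maximum of these two quantities, and the statement for $t=\sqrt{s}$ follows at once. There is no real obstacle here; the whole corollary is a formal consequence of the weak triangle inequality, and the only thing to be careful about is choosing the correct order of arguments in each application, since $s^+$ is not symmetric.
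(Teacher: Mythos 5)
Your proposal is correct and follows exactly the paper's own argument: both inequalities are obtained by the same two instances of the weak triangle inequality from Proposition~\ref{proposition:premetric}, and the ``therefore'' part is deduced in the same way from $s^+(S,T)=s^+(T,S)=0$. Nothing to add.
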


\begin{proof}
Using Proposition~\ref{proposition:premetric} and $s^+(S,T)=0$ we get
\[ s^+(R,T)\le s^+(R,S)+s^+(R,S)s^+(S,T)+s^+(S,T)=s^+(R,S) \]
and
\[ s^+(S,R)\le s^+(S,T)+s^+(S,T)s^+(T,R)+s^+(T,R)=s^+(T,R). \]
The remaining claims follow, since $S\sim T$ implies $s^+(S,T)=s^+(T,S)=0$.
\end{proof}

\begin{corollary}\label{corollary:equiv-rel}
Linear equivalence is an equivalence relation on unbounded subsets and the functions $s^+$, $s$, $t$ are well-defined on the quotient space $\Quotient\Unbdd$.
\end{corollary}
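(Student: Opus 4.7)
The plan is to read off both claims directly from Proposition~\ref{proposition:premetric} and Corollary~\ref{corollary:well-def}. There is essentially no obstacle here; it is a bookkeeping corollary, so the proof will be a short, clean verification.

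First I would verify that $\linequiv$ is an equivalence relation on $\Unbdd$. Reflexivity is immediate: Proposition~\ref{proposition:premetric} gives $s(R,R)=0$, so $R\linequiv R$. Symmetry follows from the symmetry of $s$ stated in the same proposition: if $s(R,S)=0$, then $s(S,R)=0$. For transitivity, suppose $R\linequiv S$ and $S\linequiv T$, i.e.\ $s(R,S)=s(S,T)=0$. The weak triangle inequality for $s$ from Proposition~\ref{proposition:premetric} yields
\[ s(R,T)\le s(R,S)+s(R,S)s(S,T)+s(S,T)=0, \]
so $R\linequiv T$. (Alternatively, one can note that $t$ is already a bona fide pseudometric, so its zero set is automatically an equivalence relation, which is the same relation since $t(R,S)=0$ iff $s(R,S)=0$.)

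Next I would verify well-definedness of $s^+$, $s$, and $t$ on $\Quotient\Unbdd$. Suppose $R\linequiv R'$ and $S\linequiv S'$. I would apply Corollary~\ref{corollary:well-def} twice: first with the triple $(R,S,S')$, which gives $s^+(R,S)=s^+(R,S')$ and $s^+(S,R)=s^+(S',R)$, and then with the triple $(S',R,R')$, which gives $s^+(S',R)=s^+(S',R')$ and $s^+(R,S')=s^+(R',S')$. Chaining these equalities yields $s^+(R,S)=s^+(R',S')$, and the corresponding statements for $s$ and $t=\sqrt{s}$ follow immediately from their definitions in terms of $s^+$. Hence all three functions descend to well-defined functions on $\Quotient\Unbdd\times\Quotient\Unbdd$, completing the proof.
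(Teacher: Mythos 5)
Your proof is correct and follows essentially the same route as the paper: reflexivity and symmetry from the definition, transitivity via the weak triangle inequality (the paper routes this through Corollary~\ref{corollary:well-def}, which is itself just that inequality), and well-definedness by chaining two applications of Corollary~\ref{corollary:well-def}, exactly as in the paper's chain $s^+(R_1,S_1)=s^+(R_2,S_1)=s^+(R_2,S_2)$.
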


\begin{proof}
Reflexivity and symmetry follow immediately from the definition. Transitivity follows from Corollary~\ref{corollary:well-def}. Let $R_1,R_2,S_1,S_2$ be unbounded subsets and suppose $s(R_1,R_2)=s(S_1,S_2)=0$. Corollary~\ref{corollary:well-def} implies that $s^+(R_1,S_1)=s^+(R_2,S_1)=s^+(R_2,S_2)$, whence $s^+$ and therefore $s$, $t$ are well-defined on equivalence classes.
\end{proof}

\begin{theorem}\label{theorem:metric}
$(\Quotient\Unbdd,t)$ is a complete metric space.
\end{theorem}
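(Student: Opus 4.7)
That $t$ descends to a genuine metric on $\Quotient\Unbdd$ is immediate from Proposition~\ref{proposition:premetric} and Corollary~\ref{corollary:equiv-rel}, so the real content of the theorem is completeness: given a Cauchy sequence $([R_n])$, I must construct an unbounded $R \subseteq X$ with $t(R_n, R) \to 0$.

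First I would pick representatives $R_n$ and pass to a subsequence with $s(R_n, R_{n+1}) < 4^{-n}$. Iterating the weak triangle inequality of Proposition~\ref{proposition:premetric} then yields a universal constant $C$ such that
\[ s^+(R_n, R_m) \le C \cdot 4^{-\min(n,m)} \quad \text{for all } n, m. \]
By Lemma~\ref{lemma:alternative}, each of these bounds can be upgraded to an inclusion of the form $R_m \setminus \OpenBall(o, \rho) \subseteq 2C \cdot 4^{-\min(n,m)} R_n$ for a finite radius $\rho = \rho(n,m)$ (and symmetrically after swapping $n$ and $m$), with no additive term.

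Next I would construct the candidate limit as $R := \bigcup_{m \ge 1} T_m$, where
\[ T_m := R_m \cap \bigl\{x \in X \setsep r_m/2 \le d(o,x) < 2 r_{m+1}\bigr\}, \]
and the increasing sequence $r_m \to \infty$ is chosen inductively so that $r_m/2$ dominates all the radii $\rho(n,m)$ and $\rho(m,n)$ with $n \le m$, and so that $T_m$ is non-empty (which is possible because each $R_m$ is unbounded). The set $R$ is then unbounded, while each $T_m$ is bounded by $2 r_{m+1}$.

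It remains to verify $t(R_n, R) \to 0$. For $s^+(R_n, R)$, the tail $\bigcup_{m > n} T_m$ sits inside $2C \cdot 4^{-n} R_n$ by the choice of $r_m$, while $\bigcup_{m \le n} T_m$ is bounded and therefore absorbed into an additive constant. Conversely, for $s^+(R, R_n)$, a point $y \in R_n$ with $d(o, y) \ge r_n$ lies in some annulus $[r_m, r_{m+1})$ with $m \ge n$; the inclusion $R_n \setminus \OpenBall(o, r_m) \subseteq 2C \cdot 4^{-n} R_m$ produces $x \in R_m$ with $d(y, x) \le 2C \cdot 4^{-n} d(o,x)$, and the elementary two-sided estimate $d(o,x) \in \bigl[d(o,y)/(1+2C \cdot 4^{-n}),\, d(o,y)/(1-2C \cdot 4^{-n})\bigr]$ places $x$ in $[r_m/2, 2 r_{m+1})$ once $n$ is large, so $x \in T_m \subseteq R$; points $y$ with $d(o,y) < r_n$ lie in a bounded set and are again absorbed. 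The main delicacy I expect is exactly this last placement step: a tight partition $[r_m, r_{m+1})$ would fail for $y$ near a boundary of an annulus, whereas the factor-of-two slack in the enlarged annuli is precisely what absorbs the multiplicative error $2C \cdot 4^{-n}$ coming from Lemma~\ref{lemma:alternative}.
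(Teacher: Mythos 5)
Your argument is correct, and it follows the same broad strategy as the paper's proof --- convert the Cauchy estimates into conical inclusions via Lemma~\ref{lemma:alternative} and then build the limit set by a diagonal construction over an inductively chosen, increasing sequence of radii --- but the candidate limit is assembled differently. The paper does not pass to a subsequence; it keeps a rate function $\eps^*$ with $s(R_m,R_n)<\eps^*(m)$ for $m\le n$, applies Lemma~\ref{lemma:alternative} \emph{twice} (the second time to the truncated sets $R_m\setminus\OpenBall(o,r^*(n))$) in order to control how far out the witness points lie, and then defines the limit $S$ as the collection of individual witness points $x^*\in R_n$ attached to the points $x\in R_m$ with $d(o,x)\ge q^*(m)$, where $n$ is determined by the annulus $[q^*(n),q^*(n+1))$ containing $d(o,x)$. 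With that construction the inclusion $R_m\setminus\OpenBall(o,q^*(m))\subseteq\eps^*(m)S$ holds by fiat, and the work goes into the reverse inclusion. Your construction instead takes the full annular slices $T_m=R_m\cap\{x\setsep r_m/2\le d(o,x)<2r_{m+1}\}$; this makes the inclusion $\bigcup_{m>n}T_m\subseteq 2C\cdot4^{-n}R_n$ immediate from a single application of Lemma~\ref{lemma:alternative}, and shifts the work to the placement step you single out, namely checking that the witness $x\in R_m$ produced for $y\in R_n$ actually lands in the enlarged annulus $[r_m/2,2r_{m+1})$ --- which your two-sided estimate on $d(o,x)$ in terms of $d(o,y)$ handles correctly once $2C\cdot4^{-n}\le\tfrac12$. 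The net effect is a trade: you avoid the paper's second round of truncation, at the cost of the overlap bookkeeping in the annuli; both routes are of comparable length. Two small points to make explicit in a final write-up: the radii $r_m$ should also be forced to tend to infinity (the domination and non-emptiness constraints alone do not guarantee this, but adding $r_m\ge m$ costs nothing), and after proving $t(R_{n},R)\to0$ along the chosen subsequence you should invoke Cauchyness of the original sequence once more to conclude that it, too, converges to $[R]$.
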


\begin{proof}
By Proposition~\ref{proposition:premetric} and Corollary~\ref{corollary:equiv-rel} $(\Quotient\Unbdd,t)$ is a metric space. It remains to prove that it is also complete.

Let $(\xi_n)_{n\ge0}$ be a Cauchy sequence in $\Quotient\Unbdd$. Without loss of generality we may assume that $s(\xi_n,\xi_m)\le1/2$ for all $n,m$. Choose representatives $R_n\in\xi_n$. Then for any $\eps>0$ there is an index $N$ such that $s(R_n,R_m)<\eps$ for $n,m\ge N$. Therefore there exists a function $\eps^*\colon\N\to(0,1/2]$ such that $\eps^*$ is decreasing, $\eps^*(n)\to0$ as $n\to\infty$, and $s(R_m,R_n) < \eps^*(m)$ for $m\le n$.

According to Lemma~\ref{lemma:alternative} there are $r(m,n)\ge0$, for $m\le n$, such that
\[ R_n\setminus\OpenBall(o,r(m,n)) \subseteq \eps^*(m) R_m
	\qquad\text{and}\qquad
	R_m\setminus\OpenBall(o,r(m,n)) \subseteq \eps^*(m) R_n. \]
Hence there is an increasing function $r^*\colon\N\to[0,\infty)$ such that $r^*(n)\ge r(m,n)$ for $m\le n$. Applying Lemma~\ref{lemma:alternative} to $R_m\setminus\OpenBall(o,r^*(n))$ and $R_n\setminus\OpenBall(o,r^*(n))$ for $m\le n$ implies that there are $q(m,n)\ge0$ such that
\begin{align*}
R_n\setminus\OpenBall(o,q(m,n)) &\subseteq \eps^*(m) \bigl(R_m\setminus\OpenBall(o,r^*(n))\bigr), \\
R_m\setminus\OpenBall(o,q(m,n)) &\subseteq \eps^*(m) \bigl(R_n\setminus\OpenBall(o,r^*(n))\bigr).
\end{align*}
Thus there is an increasing function $q^*\colon\N\to[0,\infty)$ such that $q^*(n)\to\infty$ as $n\to\infty$
and $q^*(n)\ge q(m,n)$ for $m\le n$.

Let $x\in R_m$ with $q^*(n) \le d(o,x) < q^*(n+1)$ for some $n\ge m$. Then there is a $y\in R_n$ such that
\[ d(o,y)\ge r^*(n) \qquad\text{and}\qquad d(x,y) \le \eps^*(m) d(o,y). \]
Using the triangle inequality and $\eps^*(m) \le 1/2$ we get
\[ d(o,y)\le d(o,x) + d(x,y) \le d(o,x) + \eps^*(m) d(o,y) \le d(o,x) + d(o,y) / 2 \]
and
\begin{equation}\label{equation:estimate}
d(o,y)\le 2d(o,x) < 2q^*(n+1).
\end{equation}
We write $x^*$ to denote this element $y$ and define the set $S$ by
\[ S = \bigcup_{m\ge1} \{ x^* \setsep x\in R_m, \, d(o,x)\ge q^*(m) \}. \]
Then $S$ is an unbounded subset of $X$. Note that if $x\in S$ and $d(o,x)\ge 2q^*(m)$ for some $m$ then $x\in R_n$ for some $n\ge m$ due to the estimate in \eqref{equation:estimate}. We claim that $s(S,R_m)\le\eps^*(m)$ for $m\ge1$.
\begin{compactitem}
\item Let $x$ be an element of $R_m$ with $d(o,x)\ge q^*(m)$.
	Then, by construction of $S$, there is a $y\in S$ with $d(x,y)\le\eps^*(m) d(o,y)$.
	This implies
	\[ R_m\setminus\OpenBall(o,q^*(m)) \subseteq \eps^*(m) S. \]
\item Let $x$ be an element of $S$ with $d(o,x)\ge 2q^*(m)$.
	Then $x\in R_n$ for some $n\ge m$.
	This implies the lower bound $d(o,x)\ge r^*(n)$.
	By definition of $r^*$ there is a $y\in R_m$
	such that $d(x,y) \le \eps^*(m) d(o,y)$. Hence
	\[ S\setminus\OpenBall(o,2q^*(m)) \subseteq \eps^*(m) R_m. \]
\end{compactitem}
This implies the claim. Let $\zeta$ be the equivalence class of $S$. Then
\[ s(\xi_m,\zeta) = s(R_m,S) \le \eps^*(m) \]
for $m\ge1$. Therefore $\xi_m$ converges to $\zeta$ proving Cauchy completeness.
\end{proof}

\begin{definition}\label{definition:metric}
We call $t$ \emph{angle metric of unbounded sets} (see Example~\ref{example:euclid}).
\end{definition}

If $\Xi$ is a subset of $\Quotient\Unbdd$, we write $\closure(\Xi)$ to denote the closure of $\Xi$ in the metric space $(\Quotient\Unbdd,t)$. Let $\Elements\subseteq\Unbdd$ be a family of unbounded subsets of $(X,d)$. Define $\Quotient\Elements$ to be the set of equivalence classes in $\Quotient\Unbdd$ which contain at least one element from $\Elements$, this is
\[ \Quotient\Elements = \{ [R] \setsep R\in\Elements \} \subseteq \Quotient\Unbdd, \]
where $[R]$ is the equivalence class of $R$ with respect to linear equivalence $\linequiv$. Note that $\Bndry\Elements$ is a well-defined subset of $\Quotient\Unbdd$ which is closed and hence Cauchy complete. Thus up to isometry $(\Bndry\Elements,t)$ is the Cauchy completion of $(\Quotient\Elements,t)$.

\begin{remark}
The definition of the set $\Quotient\Elements$ depends on the underlying metric space $(X,d)$. However, no confusion should occur, since the underlying metric space will be clear from the context. Moreover, the above definition of $\Quotient\Elements$ is somewhat unusual, since $\Quotient\Elements\subseteq\Quotient\Unbdd$. The reason for this definition is that we will use topological notions of $(\Quotient\Unbdd,t)$ for the subset $\Quotient\Elements$. Furthermore, note that, if $\mathord\linequiv_\Elements$ denotes the restriction of $\linequiv$ to the set $\Elements$ then
\[ \Quotient\Elements \to \Elements/\mathord\linequiv_\Elements, \quad
	\zeta \mapsto \zeta\cap\Elements \]
is a canonical bijection.
\end{remark}

\begin{example}\label{example:euclid}
Consider $\R^n$ equipped with the usual $\ell^2$\ndash metric. For a nonzero vector $x\in\R^n$ let $L_x$ denote the line $\{\lambda x \setsep \lambda\in\R\}$ and $H_x$ the half-line $\{\lambda x \setsep \lambda\geq 0\}$. Set $\mathcal L=\{L_x \setsep x\in\R^n, x\ne0\}$ and $\mathcal H=\{H_x \setsep x\in\R^n, x\ne0\}$. Then $\Bndry{\mathcal L}$ is the projective space $\P^{n-1}$ and $\Bndry{\mathcal H}$ is the sphere $\S^{n-1}$. If $x,y\in\R^n\setminus\{0\}$ then
\[ s(L_x,L_y) = \sin(\angle(L_x,L_y)) \qquad\text{and}\qquad
	s(H_x,H_y) = \sin(\min\{\tfrac12\pi,\angle(H_x,H_y)\}) \]
where $\angle(L_x,L_y)$ is the smaller angle between the lines $L_x$ and $L_z$ and $\angle(H_x,H_y)$ is the angle between the half-lines $H_x,H_y$.
\end{example}

The following examples show that the function $s$ is not always a metric and that geodesics do not always yield a nice structure.

\begin{example}\label{example:nometric}
Consider the $2$\ndash dimensional space $\R^2$ with $\ell^1$\ndash metric $d_1$. Let $x_1=(1,0)$, $x_2=(2,1)$, and $x_3=(1,1)$. Set $L_i=\{ \lambda x_i \setsep \lambda\in\R \}$ for $i\in\{1,2,3\}$. Then
\[ s(L_1,L_2)=\tfrac12, \qquad s(L_2,L_3)=\tfrac13, \qquad s(L_1,L_3)=1, \]
and the triangle inequality is not satisfied.
\end{example}

\begin{example}\label{example:z2boundary}
Consider the metric space $(\Z^2,d_1)$, where $d_1$ is the $\ell^1$\ndash metric. In this discrete setting a geodesic ray is an infinite sequence $(x_0,x_1,\dotsc)$ in $\Z^2$ such that $d(x_i,x_j) = \abs{i-j}$. Let $\mathcal G$ be the family of all geodesic rays emanating from the origin. Furthermore, let $\Elements$ be the family of all sets $\{n x \setsep n\in\N_0\}$ for $x\in\Z^2$, $x\ne0$. Then the space $\Bndry{\mathcal G}$ contains much more elements than $\Bndry\Elements$. To see this set $x_{2n}=(2^n-1,2^n-1)$ and $x_{2n+1}=(2^{n+1}-1,2^n-1)$ for $n\in\N_0$. Join $x_m$ and $x_{m+1}$, $m\in\N_0$,  by a geodesic path and let $R$ denote the ray consisting of the union of these finite geodesic paths. Obviously $R$ is a geodesic ray and there is some $\eps>0$ such that $s(R,S)\ge\eps$ for all $S\in\Elements$.
\end{example}

\section{Quasi-isometries}
\label{section:quasi-iso}

\begin{definition}\label{definition:quasi-iso}
Let $(X,d_X)$ and $(Y,d_Y)$ be metric spaces and let $q>0$. A function $f\colon X \to Y$ is called a \emph{$q$\ndash quasi-isometry} if
\[ q^{-1} d_X(x,x')-q \le d_Y(f(x),f(x')) \le q d_X(x,x)+q \]
for all $x,x'\in X$ and such that every closed ball in $Y$ with radius $q$ contains an element of $f(X)$. We say that two metrics $d_1$ and $d_2$ on $X$ are \emph{quasi-isometrically equivalent}, if the identity is a quasi-isometry from $(X,d_1)$ to $(X,d_2)$.
\end{definition}

\begin{lemma}\label{lemma:qi-cones}
Let $f\colon X \to Y$ be a $q$\ndash quasi-isometry of the metric spaces $(X,d_X)$ and $(Y,d_Y)$. Let $R,S$ be unbounded subsets of $X$. Then $f(R),f(S)$ are unbounded subsets of $Y$ and
\[ q^{-2} s_X^+(R,S) \le s_Y^+(f(R),f(S)) \le q^2 s_X^+(R,S). \]
\end{lemma}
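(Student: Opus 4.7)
First, I would dispose of the unboundedness claim. If $R$ is unbounded, then there is a sequence $x_n\in R$ with $d_X(o,x_n)\to\infty$, and the lower quasi-isometry bound gives $d_Y(f(o),f(x_n))\ge q^{-1}d_X(o,x_n)-q\to\infty$, so $f(R)$ is unbounded (and similarly for $f(S)$). By Lemma~\ref{lemma:reference} the value of $s_Y^+(f(R),f(S))$ does not depend on the choice of reference point in $Y$, so I may fix $f(o)$ as the reference point in $Y$, which is the key technical move: it allows the quasi-isometry inequalities to convert the distance to the reference point directly.

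For the upper bound, let $\alpha>s_X^+(R,S)$, so there is $a\ge0$ with $S\subseteq\alpha R+a$. Given $y\in S$, pick $x\in R$ with $d_X(x,y)\le\alpha d_X(o,x)+a$. Apply the upper quasi-isometry inequality to $(x,y)$ and the resulting inequality $d_X(o,x)\le q\,d_Y(f(o),f(x))+q^2$ from the lower quasi-isometry bound to estimate
\[ d_Y(f(x),f(y))\le q\alpha\,d_X(o,x)+qa+q\le q^2\alpha\,d_Y(f(o),f(x))+(q^3\alpha+qa+q). \]
Thus $f(S)\subseteq q^2\alpha\,f(R)+a'$ with $a'=q^3\alpha+qa+q$, which gives $s_Y^+(f(R),f(S))\le q^2\alpha$. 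Taking the infimum over admissible $\alpha$ yields $s_Y^+(f(R),f(S))\le q^2 s_X^+(R,S)$.

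For the lower bound, let $\beta>s_Y^+(f(R),f(S))$, so there is $b\ge0$ with $f(S)\subseteq\beta f(R)+b$. Given $y\in S$, the point $f(y)\in f(S)$ lies in some ball $\Ball(f(x),\beta d_Y(f(o),f(x))+b)$ with $x\in R$. Now apply the lower quasi-isometry inequality to $(x,y)$ together with $d_Y(f(o),f(x))\le q\,d_X(o,x)+q$ to estimate
\[ d_X(x,y)\le q\,d_Y(f(x),f(y))+q^2 \le q^2\beta\,d_X(o,x)+(q^2\beta+qb+q^2), \]
which shows $S\subseteq q^2\beta R+(q^2\beta+qb+q^2)$, so $s_X^+(R,S)\le q^2\beta$. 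Taking the infimum over admissible $\beta$ gives $s_X^+(R,S)\le q^2 s_Y^+(f(R),f(S))$, equivalently $q^{-2}s_X^+(R,S)\le s_Y^+(f(R),f(S))$.

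The argument is essentially bookkeeping, and the density clause of Definition~\ref{definition:quasi-iso} is not used here: both directions only need the two-sided distance distortion inequalities applied to pairs $(o,x)$ and $(x,y)$. The only point requiring a moment's thought is realizing that the additive constants $a,b$ and the extra summands like $q^3\alpha+q$ do not affect the infimum, since they are absorbed into the free additive constant available in the definition of $s^+$; picking $f(o)$ as the reference point is what keeps the multiplicative part clean and gives precisely the factor $q^2$.
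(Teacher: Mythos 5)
Your proof is correct, and the upper bound is computed exactly as in the paper: fix $f(o)$ as reference point, take $\alpha>s_X^+(R,S)$, and push the containment $S\subseteq\alpha R+a$ forward through the two quasi-isometry inequalities to get $f(S)\subseteq q^2\alpha f(R)+(q^3\alpha+qa+q)$. The only divergence is in the lower bound: the paper argues in the forward direction by negation, showing that for $\alpha<s_X^+(R,S)$ and arbitrary $a$ there is a point of $S$ escaping $\alpha R+a$ whose image then escapes $q^{-2}\alpha f(R)+b$ for arbitrarily large $b$, whence $s_Y^+(f(R),f(S))\ge q^{-2}\alpha$; you instead pull a containment $f(S)\subseteq\beta f(R)+b$ back to $S\subseteq q^2\beta R+(q^2\beta+qb+q^2)$, which needs no case distinction for $s_X^+(R,S)=0$ and no "for every $a$" quantifier bookkeeping, and is symmetric with the upper-bound computation. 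Both yield the same constant $q^2$; your observations that the density clause of the quasi-isometry definition is unused here and that the additive constants are absorbed by the free parameter in the definition of $s^+$ are both accurate.
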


\begin{proof}
We fix reference points $o$ and $f(o)$ in $X$ and $Y$, respectively. First of all note that
\[ q^{-1} d_X(x,x')-q \le d_Y(f(x),f(x')) \le q d_X(x,x')+q \]
implies
\[ q^{-1} d_Y(f(x),f(x')) - 1 \le d_X(x,x') \leq q d_Y(f(x),f(x')) + q^2 \]
for all $x,x'\in X$.

Let $\alpha>s_X^+(R,S)$. Then there is a number $a$ such that $S\subseteq \alpha R+a$. Hence for $x'\in S$ there is a point $x\in R$ with $d_X(x',x) \le \alpha d_X(o,x)+a$. Since $f$ is a $q$\ndash quasi-isometry, we get $d_X(o,x) \le q d_Y(f(o),f(x)) + q^2$. This implies
\begin{align*}
d_Y(f(x'),f(x)) &\le q d_X(x',x) + q \le q\alpha d_X(o,x) + qa+q \\
	&\le q^2\alpha d_Y(f(o),f(x)) + q^3\alpha+qa+q,
\end{align*}
proving that
\[ f(S) \subseteq q^2\alpha f(R) + q^3\alpha+qa+q \]
holds. Thus $s_Y^+(f(R),f(S)) \le q^2 s_X^+(R,S)$.

If $s_X^+(R,S)=0$ then $s_Y^+(f(R),f(S))\ge q^{-2} s_X^+(R,S)$ trivially holds. Hence we assume that $s_X^+(R,S)>0$. Then, for $\alpha<s_X(R,S)$, $S\subseteq \alpha R+a$ fails to be true for all $a\ge0$. Hence for every $a\ge0$ there exists a point $x'\in S$ which is not contained in $\alpha R+a$. Thus $d_X(x',x)>\alpha d(o,x)+a$ for all $x\in R$. This implies
\begin{align*}
d_Y(f(x'),f(x)) &\ge q^{-1} d_X(x',x) - q > q^{-1}\alpha d_X(o,x) + q^{-1}a-q \\
	&\ge q^{-2}\alpha d_Y(f(o),f(x)) + q^{-1}(a-\alpha)-q.
\end{align*}
Thus $f(x')$ is not contained in $q^{-2}\alpha f(R) + q^{-1}(a-\alpha)-q$. Since $a\ge0$ was arbitrary, this means that $s_Y^+(f(R),f(S))\ge q^{-2} s_X^+(R,S)$.
\end{proof}

\begin{theorem}\label{theorem:qi-bilip}
Let $f\colon X \to Y$ be a $q$\ndash quasi-isometry of the metric spaces $(X,d_X)$ and $(Y,d_Y)$. Then $f$ induces a bijection $f\colon\Quotient{\Unbdd_X}\to\Quotient{\Unbdd_Y}$ which is bi-Lipschitz continuous:
\[ q^{-1} t_X(\zeta,\xi) \le t_Y(f(\zeta),f(\xi)) \le q t_X(\zeta,\xi) \]
for all $\zeta,\xi\in \Quotient{\Unbdd_X}$. In particular, if $\Elements$ is a family of unbounded subsets in $X$, then $f(\Quotient\Elements) = \Quotient{f(\Elements)}$ and $f(\Bndry\Elements) = \Bndry{f(\Elements)}$.
\end{theorem}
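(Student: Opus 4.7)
My plan is to reduce essentially everything to Lemma~\ref{lemma:qi-cones}, which already provides the key bi-Lipschitz estimate $q^{-2} s_X^+ \le s_Y^+\circ f \le q^2 s_X^+$ at the level of representatives. Taking square roots immediately gives the stated bi-Lipschitz bound for $t$, once I know that $f$ descends to a bijection of the quotients. So the real work is in three steps: (i) check that the assignment $R\mapsto f(R)$ maps $\Unbdd_X$ to $\Unbdd_Y$ and descends to equivalence classes, (ii) show bijectivity of the induced map, and (iii) deduce the two final identities.

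For step (i), if $R$ is unbounded in $X$, then $d_Y(f(o),f(x)) \ge q^{-1}d_X(o,x) - q$ forces $f(R)$ to be unbounded in $Y$. If $R\linequiv R'$, then $s_X(R,R')=0$, and Lemma~\ref{lemma:qi-cones} yields $s_Y^+(f(R),f(R')) \le q^2 s_X^+(R,R')=0$ (and symmetrically), so $f(R)\linequiv f(R')$. Injectivity of the induced map follows from the lower bound in Lemma~\ref{lemma:qi-cones}: $s_Y(f(R),f(S))=0$ forces $s_X(R,S)=0$.

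For surjectivity I would construct a coarse inverse $g\colon Y\to X$ by using the density hypothesis on $f$: for each $y\in Y$ pick $g(y)\in X$ with $d_Y(f(g(y)),y)\le q$. Given any unbounded $T\in\Unbdd_Y$, unboundedness of $g(T)$ in $X$ follows from $d_X(g(y),o)\ge q^{-1}d_Y(f(g(y)),f(o))-1$ together with $d_Y(f(g(y)),f(o))\ge d_Y(y,f(o))-q$. Furthermore, every point of $T$ lies within $d_Y$\ndash distance $q$ of $f(g(T))$ and vice versa, so choosing $\alpha=0$ and $a=q$ in Definition~\ref{definition:lequiv} shows $s_Y(f(g(T)),T)=0$, hence $[f(g(T))]=[T]$ in $\Quotient{\Unbdd_Y}$. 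This exhibits every equivalence class in the target as the image of one from the source.

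The identity $f(\Quotient\Elements)=\Quotient{f(\Elements)}$ is tautological from the definition of $\Quotient{(\argument)}$. For $f(\Bndry\Elements)=\Bndry{f(\Elements)}$, the bi-Lipschitz bound just established makes the induced map $f\colon(\Quotient{\Unbdd_X},t_X)\to(\Quotient{\Unbdd_Y},t_Y)$ a homeomorphism, so it commutes with closure, giving the claim. The only mildly delicate step is the surjectivity argument via the coarse inverse; everything else is bookkeeping on top of Lemma~\ref{lemma:qi-cones}.
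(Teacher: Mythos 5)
Your proposal is correct and follows essentially the same route as the paper: well-definedness, injectivity and the bi-Lipschitz bound all come from Lemma~\ref{lemma:qi-cones}, and surjectivity uses the density condition in the definition of a quasi-isometry (the paper takes $R=f^{-1}((0S+q)\cap f(X))$ where you instead select one preimage point per $y\in T$, but both sets are linearly equivalent to $T$ via $\alpha=0$, $a=q$). No gaps.
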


\begin{proof}
Of course $f(\Unbdd_X)$ is a subset of $\Unbdd_Y$. By Lemma~\ref{lemma:qi-cones} the function $f\colon\Quotient{\Unbdd_X}\to\Quotient{\Unbdd_Y}$ which maps the equivalence class of an unbounded $R\subseteq X$ to the equivalence class of $f(R)$ is well-defined, one-to-one, and satisfies
\[ q^{-1} t_X(\zeta,\xi) \le t_Y(f(\zeta),f(\xi)) \le q t_X(\zeta,\xi) \]
for all $\zeta,\xi\in \Quotient{\Unbdd_X}$. Thus it remains to show that $f$ is also onto. Let $S$ be an unbounded subset of $Y$. Since $f$ is a $q$\ndash quasi-isometry, the set $R=f^{-1}((0S+q)\cap f(X))$ is an unbounded subset of $X$ and $f(R) = (0S+q)\cap f(X) \linequiv S$.
\end{proof}

\section{Boundary at infinity and angular metric in a \texorpdfstring{$\CAT(0)$}{CAT(0)}~space}
\label{section:cat}

Let us recall the definitions of the boundary at infinity and the angular metric in a $\CAT(0)$~space. For more details we refer to the book of Bridson and Haefliger~\cite{bridson1999metric}, see especially Chapter~II.8 and Chapter~II.9 therein. A \emph{geodesic ray} in a metric space $(X,d)$ is a curve $c\colon[0,\infty)\to X$ such that $d(c(x),c(y))=\abs{x-y}$ for all $x,y\ge 0$. The \emph{boundary at infinity} $\partial X$ of $X$ is defined to be the set of equivalence classes of geodesic rays, where geodesic rays $c,c'$ are equivalent whenever they stay at bounded distance, that is, if there is a constant $K$, such that $d(c(x),c'(x))\le K$ for all $x\in[0,\infty)$. In the sequel we assume that $X$ is a complete $\CAT(0)$~space.

For each point $p$ in $X$ and $\xi$ in $\partial X$ there is precisely one geodesic ray belonging to $\xi$ which emanates from $p$. Then $\angle_p(\xi,\zeta)$ for $\xi,\zeta\in\partial X$ is defined to be the angle at $p$ between the uniquely determined rays in $\xi$ and $\zeta$ which emanate from $p$. The angle between $\xi$ and $\zeta$ is defined by
\[ \angle(\xi,\zeta) = \sup\{\angle_p(\xi,\zeta) \setsep p\in X\}. \]
This yields a metric on $\partial X$ called \emph{angular metric} and $(\partial X, \angle)$ is a complete metric space. For our purposes the following description of the angular metric is useful. Fix a reference point $o$ in $X$. If $\xi\in\partial X$, we write $c_\xi$ for the uniquely determined geodesic ray in $\xi$ which emanates from $o$ and $R_\xi$ for the image of $c_\xi$ in $X$, i.e.\ $R_\xi = c_\xi([0,\infty))$. Then, see \cite[Proposition~9.8~(4)]{bridson1999metric},
\[ 2\sin\bigl(\tfrac12\angle(\xi,\zeta)\bigr) = \lim_{x\to\infty} \tfrac1x d(c_\xi(x),c_\zeta(x)). \]

\begin{lemma}\label{lemma:comp}
Let $\xi,\zeta$ be elements in $\partial X$. Then
\[ s(R_\xi,R_\zeta) \le 2\sin\bigl(\tfrac12\angle(\xi,\zeta)\bigr) \le 4s(R_\xi,R_\zeta). \]
\end{lemma}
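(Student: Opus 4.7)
Denote $\alpha = 2\sin(\tfrac12\angle(\xi,\zeta))$ and $\gamma = s(R_\xi,R_\zeta)$; by the asymptotic formula displayed just before the lemma, $\alpha = \lim_{t\to\infty} \tfrac1t d(c_\xi(t),c_\zeta(t))$. The plan is to prove the two inequalities separately, converting between the linear-scaling statements and this asymptotic ratio via Lemma~\ref{lemma:alternative}, which recasts ``$s^+(R,S)<\alpha$'' as the inclusion $S\setminus\OpenBall(o,r)\subseteq\alpha R$ for some $r\ge0$.

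For the upper bound $\gamma\le\alpha$, fix $\beta>\alpha$. The limit definition of $\alpha$ provides an $N\ge 0$ such that $d(c_\xi(t),c_\zeta(t))\le\beta t$ for all $t\ge N$. Since $d(o,c_\xi(t))=t$, this says that for every $y=c_\zeta(t)$ with $t\ge N$ the point $x=c_\xi(t)\in R_\xi$ satisfies $y\in\Ball(x,\beta d(o,x))$; hence $R_\zeta\setminus\OpenBall(o,N)\subseteq\beta R_\xi$. Lemma~\ref{lemma:alternative} then gives $s^+(R_\xi,R_\zeta)\le\beta$, and the symmetric pairing gives $s^+(R_\zeta,R_\xi)\le\beta$. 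Letting $\beta\searrow\alpha$ yields $\gamma\le\alpha$.

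For the lower bound $\alpha\le 4\gamma$, I first dispose of the case $\gamma>\tfrac12$: since $\alpha\le 2$ trivially, one has $4\gamma>2\ge\alpha$. So assume $\gamma\le\tfrac12$ and pick $\beta$ with $\gamma<\beta<1$. By Lemma~\ref{lemma:alternative} applied to $s^+(R_\xi,R_\zeta)<\beta$, there is $r\ge0$ with $R_\zeta\setminus\OpenBall(o,r)\subseteq\beta R_\xi$. For $t>r$ choose $u\ge 0$ with $d(c_\zeta(t),c_\xi(u))\le\beta u$. The reverse triangle inequality at $o,c_\xi(u),c_\zeta(t)$ (whose distances from $o$ are $u$ and $t$) gives $|t-u|\le\beta u$, and since $c_\xi$ is an arc-length parametrized geodesic ray, $d(c_\xi(t),c_\xi(u))=|t-u|$. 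Combining these yields
\[ d(c_\xi(t),c_\zeta(t))\le d(c_\xi(t),c_\xi(u))+d(c_\xi(u),c_\zeta(t))\le |t-u|+\beta u\le 2\beta u\le \frac{2\beta t}{1-\beta}, \]
where the last step uses $u\le t/(1-\beta)$ (if $u\ge t$) or $u<t$ (in which case $2\beta u<2\beta t\le 2\beta t/(1-\beta)$). Dividing by $t$, sending $t\to\infty$, and letting $\beta\searrow\gamma$ gives $\alpha\le\tfrac{2\gamma}{1-\gamma}\le 4\gamma$.

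The main place requiring a moment of care is the lower bound: the argument naturally produces the sharper estimate $\alpha\le\tfrac{2\gamma}{1-\gamma}$, which directly implies the stated factor $4$ only after the case split $\gamma\le\tfrac12$ versus $\gamma>\tfrac12$. Beyond that, both directions are bookkeeping with the triangle inequality combined with Lemma~\ref{lemma:alternative}, and the only CAT(0)-specific input used is the asymptotic-slope formula for $\alpha$.
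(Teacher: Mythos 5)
Your proposal is correct and follows essentially the same route as the paper: the direction $s(R_\xi,R_\zeta)\le 2\sin(\tfrac12\angle(\xi,\zeta))$ by comparing $c_\xi(t)$ and $c_\zeta(t)$ at equal parameters, and the reverse direction via the case split at $\tfrac12$, Lemma~\ref{lemma:alternative}, and the triangle-inequality bookkeeping relating the parameters $t$ and $u$. The only cosmetic difference is that you keep $\beta<1$ and obtain the sharper intermediate bound $\tfrac{2\gamma}{1-\gamma}$, whereas the paper fixes $\alpha\le\tfrac12$ and bounds $y\le 2x$ directly to reach the factor $4$.
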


\begin{proof}
Note that $d(o,c_\xi(x)) = d(o,c_\zeta(x)) = x$ for all $x\in[0,\infty)$, since $c_\xi(0)=c_\zeta(0)=o$. Suppose that $\alpha>2\sin\bigl(\tfrac12\angle(\xi,\zeta)\bigr)$. Then there exists a constant $a\ge0$, such that $d(c_\xi(x),c_\zeta(x)) \le \alpha x$ for all $x\ge a$. This implies that
\[ R_\xi \subseteq \alpha R_\zeta + a \qquad\text{and}\qquad R_\zeta \subseteq \alpha R_\xi + a. \]
Therefore $s(R_\xi,R_\zeta)\le\alpha$ which yields the lower bound.

If $s(R_\xi,R_\zeta)\ge\tfrac12$ then the upper bound is trivially true. Hence assume that $s(R_\xi,R_\zeta)<\tfrac12$ and fix some $\alpha$, such that $s(R_\xi,R_\zeta)<\alpha\le\tfrac12$. By Lemma~\ref{lemma:alternative} there is a constant $r\ge0$, such that $R_\zeta\setminus U(o,r)\subseteq \alpha R_\xi$. Hence, for any $x\ge r$, there is a $y=y(x)\ge0$, such that
\[ d(c_\zeta(x),c_\xi(y)) \le \alpha d(o,c_\xi(y)) = \alpha y. \]
Using the triangle inequality the estimate above yields $y\le x+\alpha y$ and $x\le y+\alpha y$. It follows that $\abs{y-x}\le\alpha y$ and $y\le2x$, since $\alpha\le\tfrac12$. Collecting the pieces we get
\begin{align*}
d(c_\zeta(x), c_\xi(x))
&\le d(c_\zeta(x), c_\xi(y)) + d(c_\xi(y), c_\xi(x)) \\
&\le \alpha d(o, c_\xi(y)) + \abs{y-x} \\
&\le 2\alpha y \le 4\alpha x.
\end{align*}
and thus
\[ 2\sin\bigl(\tfrac12\angle(\xi,\zeta)\bigr)
	= \lim_{x\to\infty} \tfrac1x d(c_\zeta(x), c_\xi(x))
	\le 4\alpha. \qedhere \]
\end{proof}

As a consequence of the previous lemma we get that two geodesic rays $c$ and $c'$ stay at bounded distance if and only if the subsets $c([0,\infty))$ and $c'([0,\infty))$ are linearly equivalent. Write $\mathcal G$ to denote the family of all subsets of the form $c([0,\infty))$, where $c$ is some geodesic ray in $X$.

\begin{proposition}\label{proposition:realize}
Let $X$ be a complete $\CAT(0)$~space and equip $\partial X$ with the angular metric $\angle$. Then
\[ \partial X \to \Quotient{\mathcal G}, \quad \xi \mapsto [R_\xi], \]
where $[R_\xi]$ is the equivalence class of $R_\xi$ with respect to linear equivalence, is one-to-one, onto, and bi-H\"older continuous:
\[ \tfrac1\pi\angle(\xi,\zeta) \le \bigl( t([R_\xi],[R_\zeta]) \bigr)^2 \le \angle(\xi,\zeta) \]
for all $\xi,\zeta\in\partial X$. Furthermore, $\Quotient{\mathcal G}$ is a closed subset of $(\Quotient\Unbdd,t)$, since $(\partial X,\angle)$ is a complete metric space.
\end{proposition}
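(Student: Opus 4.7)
The plan is to extract bijectivity from the $\CAT(0)$ facts about asymptotic rays recalled earlier in this section, to derive the bi-H\"older inequality directly from Lemma~\ref{lemma:comp} together with elementary sine estimates, and then to read off closedness.

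First I would check well-definedness, injectivity and surjectivity of $\xi\mapsto[R_\xi]$. Well-definedness is immediate since for each $\xi\in\partial X$ the ray $c_\xi$ emanating from $o$ and representing $\xi$ is uniquely determined, as recalled at the start of the section. For injectivity, if $[R_\xi]=[R_\zeta]$ then $s(R_\xi,R_\zeta)=0$, so Lemma~\ref{lemma:comp} forces $\sin(\tfrac12\angle(\xi,\zeta))=0$ and hence $\xi=\zeta$. For surjectivity, given a representative $R=c([0,\infty))\in\mathcal G$ of an element of $\Quotient{\mathcal G}$, the ray $c$ determines a boundary point $\xi\in\partial X$, and the unique ray $c_\xi$ from $o$ representing $\xi$ stays at bounded distance from $c$; bounded distance implies $s(R,R_\xi)=0$ (via the choice $\alpha=0$ in Definition~\ref{definition:lequiv}), so $[R]=[R_\xi]$.

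The bi-H\"older bounds follow directly from Lemma~\ref{lemma:comp}. Using $\sin\theta\le\theta$ I obtain $s(R_\xi,R_\zeta)\le 2\sin(\tfrac12\angle(\xi,\zeta))\le\angle(\xi,\zeta)$, whence $(t([R_\xi],[R_\zeta]))^2=s(R_\xi,R_\zeta)\le\angle(\xi,\zeta)$. For the lower bound, Lemma~\ref{lemma:comp} gives $s(R_\xi,R_\zeta)\ge\tfrac12\sin(\tfrac12\angle(\xi,\zeta))$, and Jordan's inequality $\sin\theta\ge\tfrac2\pi\theta$ on $[0,\tfrac\pi2]$, applied at $\theta=\tfrac12\angle(\xi,\zeta)$, supplies the stated lower bound on $(t([R_\xi],[R_\zeta]))^2=s$ up to a universal constant. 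Once the bi-H\"older bound is established, the bijection is a homeomorphism onto its image, so $\Quotient{\mathcal G}$ inherits completeness from the complete metric space $(\partial X,\angle)$, and a complete subspace of a metric space is closed.

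The only genuinely nontrivial ingredient is surjectivity, which is where the $\CAT(0)$ and completeness hypotheses really enter, through the existence of a representative of each asymptote class that emanates from the chosen reference point $o$. All remaining steps are direct translations of the inequalities in Lemma~\ref{lemma:comp}.
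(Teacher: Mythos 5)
Your proof is correct and follows essentially the same route as the paper: the bi-H\"older estimate is obtained exactly as in the paper's one-line proof (Lemma~\ref{lemma:comp} combined with $\tfrac2\pi u\le\sin u\le u$ on $[0,\tfrac\pi2]$), and your verification of injectivity, surjectivity via the unique representative ray from $o$, and closedness via completeness of $(\partial X,\angle)$ supplies details that the paper leaves implicit. One remark: as you correctly flag with ``up to a universal constant'', this derivation only gives $s(R_\xi,R_\zeta)\ge\tfrac1{2\pi}\angle(\xi,\zeta)$ (from $4s\ge2\sin(\tfrac12\angle)\ge\tfrac2\pi\angle$), not the constant $\tfrac1\pi$ stated in the proposition, so the paper's constant appears to be off by a factor of $2$ --- harmless for the bi-H\"older and closedness conclusions, but worth recording.
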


\begin{proof}
Since $\angle(\xi,\zeta)\in[0,\pi]$ and $\tfrac2\pi x \le \sin(x) \le x$ for all $x\in[0,\tfrac\pi2]$, Lemma~\ref{lemma:comp} yields $\tfrac1\pi \angle(\xi,\zeta) \le s(R_\xi,R_\zeta) \le \angle(\xi,\zeta)$
\end{proof}

\section{Boundaries of groups}
\label{section:groups}

Let $G$ be a group and $d$ be a metric on $G$. Fix the identity element $1\in G$ as reference point. From an algebraic point of view it is natural to consider the families of unbounded cyclic subgroups and unbounded cyclic subsemigroups of the group $G$. Hence define
\[ \Orbit G = \{ \gen g \setsep g\in G,\, \gen g\in\Unbdd \} \]
and
\[ \Forward G = \{ \genp g \setsep g\in G,\, \genp g\in\Unbdd \}, \]
where $\genp g = \{ g^n \setsep n\in\N_0 \}$ is the semigroup generated by $g\in G$. Note that, if $\genp g \in \Forward G$, then $\gen g \in \Orbit G$.

\begin{definition}\label{definition:groupbndry}
We define the \emph{projective boundary} of $G$ by $\Project G=\Bndry{\Orbit G}$ and the \emph{linear boundary} by $\Linear G=\Bndry{\Forward G}$
\end{definition}

\begin{remark}
Both, $\Project G$ and $\Linear G$, depend on the metric $d$. If it is necessary to emphasize this dependence, we write $\Project(G,d)$ and $\Linear(G,d)$, respectively.
\end{remark}

\begin{lemma}\label{lemma:orbits}
If $g\in\Orbit G$ and $h\in\Forward G$ then $\gen{g^n}\linequiv\gen g$ and $\genp{h^n}\linequiv\genp h$ for all $n\in\N$. Furthermore, if $d$ is left-invariant or right-invariant, then $\genp g \in \Forward G$ if and only if $\gen g \in \Orbit G$.
\end{lemma}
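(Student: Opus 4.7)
The plan is to prove the three claims in sequence, each reducing to a (semi-)coset decomposition combined with a bound coming from translation invariance of the metric. Since the paper's principal examples are word metrics, which are left-invariant, I use left-invariance below; the argument for right-invariance is symmetric.

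For $\gen{g^n}\linequiv\gen g$, the containment $\gen{g^n}\subseteq\gen g$ already gives $\gen{g^n}\subseteq 0\gen g+0$, hence $s^+(\gen g,\gen{g^n})=0$ directly from the definition. For the reverse direction I decompose $\gen g$ into its $n$ cosets $g^r\gen{g^n}$ for $r=0,\dotsc,n-1$. Writing a general element as $g^{nm+r}=g^{nm}g^r$, left-invariance yields $d(g^{nm+r},g^{nm})=d(g^r,1)\le M$ with $M:=\max_{0\le r<n}d(g^r,1)$. Hence every element of $\gen g$ lies within distance $M$ of some element of $\gen{g^n}$, giving $\gen g\subseteq 0\gen{g^n}+M$ and $s^+(\gen{g^n},\gen g)=0$. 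The analogous decomposition $\genp h=\bigcup_{r=0}^{n-1}h^r\genp{h^n}$, together with the constant $\max_{0\le r<n}d(h^r,1)$, handles $\genp{h^n}\linequiv\genp h$.

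For the ``Furthermore'' part, the implication $\genp g\in\Forward G\Rightarrow\gen g\in\Orbit G$ is immediate from $\genp g\subseteq\gen g$. For the converse I argue by contradiction: assume $\gen g$ is unbounded while $\genp g$ is bounded. Since $\gen g=\genp g\cup\{g^{-n}:n\ge 1\}$, the ``negative'' orbit $\{g^{-n}:n\ge 1\}$ must then be unbounded. But left-invariance gives $d(1,g^{-n})=d(g^n\cdot 1,g^n\cdot g^{-n})=d(g^n,1)$, so $d(1,g^{-n})=d(1,g^n)$ for every $n\ge 0$; right-invariance yields the same identity from the other side. Since the sequence $(d(1,g^n))_{n\ge 0}$ is bounded by assumption on $\genp g$, the sequence $(d(1,g^{-n}))_{n\ge 0}$ is bounded as well, contradicting the unboundedness of $\gen g$.

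I do not anticipate a genuinely hard step: translation invariance converts $d(g^{nm+r},g^{nm})$ into the constant $d(g^r,1)$ in the first part, and identifies $d(1,g^n)$ with $d(1,g^{-n})$ in the third. The only bookkeeping care is to keep the arguments of $s^+$ in the correct order, since the definition ``$S\subseteq\alpha R+a$'' sets a bound on $s^+(R,S)$ rather than on $s^+(S,R)$.
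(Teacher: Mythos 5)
The paper states this lemma without proof (it is treated as routine; the only later reference, in Lemma~\ref{lemma:unbddorb}, just says the graph analogue ``can be proved in the same way''), so there is no printed argument to compare against. Your proof is correct and is the natural one: the coset decomposition $\gen g=\bigcup_{r=0}^{n-1}g^r\gen{g^n}$ together with translation invariance gives $\gen g\subseteq 0\gen{g^n}+M$, hence $s^+(\gen{g^n},\gen g)=0$, and the trivial inclusion gives the other inequality; the same works for $\genp h$; and $d(1,g^{-n})=d(1,g^n)$ under either invariance settles the ``furthermore'' clause. You also keep the asymmetric arguments of $s^+$ in the correct order throughout. One remark worth making explicit: the lemma's first sentence does not formally assume invariance of $d$ (only the ``furthermore'' clause does), yet your argument uses left-invariance there --- and rightly so, since for a completely arbitrary metric on $G$ the claim $s^+(\gen{g^n},\gen g)=0$ can fail (one can place the nontrivial cosets of $\gen{g^n}$ superlinearly far from $\gen{g^n}$). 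So you are supplying a hypothesis the paper leaves implicit in its standing setting of word metrics; it would be worth one sentence in your write-up flagging that the first part, like the second, relies on left- or right-invariance (or at least on the metric being quasi-isometrically equivalent to an invariant one).
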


There are two interesting sources for metrics on a group $G$. If $G$ is finitely generated (or more generally compactly generated), it is natural to consider word metrics on $G$. If $G$ is a connected Lie group, it is natural to consider left-invariant Riemannian metrics on $G$. In this case $G$ is also compactly generated and Corollary~\ref{corollary:milnor2} implies that any left-invariant Riemannian metric is quasi-isometrically equivalent to any word metric on $G$. Hence for our purposes it is sufficient to study the setting of compactly generated groups in more detail.

A topological group is called \emph{compactly generated}, if there is a compact generating set $K\subseteq G$. In this case $S=K\cup K^{-1}$ is a compact, symmetric (i.e.\ $S=S^{-1}$), generating set. Set $S^0=\{1\}$ and $S^n = \{s_1\dotsm s_n \setsep s_1,\dotsc, s_n\in S\}$ for $n\ge1$. Note that $S^n$ is compact and symmetric for all $n\ge0$ and
\[ G = \bigcup_{n\ge0} S^n. \]
The \emph{word metric} $d$ of $G$ with respect to $S$ is defined by $d(g,h) = \inf\{n \setsep g^{-1}h \in S^n\}$. The metric $d$ is left-invariant and induces the discrete topology on $G$ which is in general different from the group topology. In the sequel we consider the class of compactly generated, locally compact Hausdorff groups. Some facts about such groups and their word metrics are provided by Appendix~\ref{appendix:compact}. Finitely generated groups fit in this setting (in this case a finitely generated group is equipped with the discrete topology). If not stated otherwise, all topological notions refer to the group topology (except for boundedness which refers to the word metric $d$).

We fix some compactly generated, locally compact Hausdorff group $G$ and a word metric $d$ on $G$. Notice that a subset of $G$ is bounded with respect to $d$ if and only if it is relatively compact (see Lemma~\ref{lemma:basics}). Suppose that $d'$ is another word metric on $G$ or (more general) a metric which is quasi-isometrically equivalent to $d$. Then, by Theorem~\ref{theorem:qi-bilip}, $t_{(G,d)}$ and $t_{(G,d')}$ are bi-Lipschitz-equivalent. Hence linear equivalence and all notions which only depend on the topological or uniform structure of $\Quotient\Unbdd$ (like closure or Cauchy completeness for instance), do not depend on the generating set. In particular, we obtain the following statement.

\begin{lemma}\label{lemma:boundary}
If a compactly generated, locally compact Hausdorff group $G$ is equipped with a word metric $d$ then the (topological) spaces $\Linear G$ and $\Project G$ do not depend on the choice of the word metric (or of the generating set).
\end{lemma}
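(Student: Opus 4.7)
The plan is essentially bookkeeping built on Theorem~\ref{theorem:qi-bilip}. As noted in the paragraph just before the lemma, any two word metrics $d, d'$ on $G$ are quasi-isometrically equivalent (a standard fact about compactly generated, locally compact Hausdorff groups, assembled in Appendix~\ref{appendix:compact}). Hence the identity map, viewed as a quasi-isometry $(G,d) \to (G,d')$, induces by Theorem~\ref{theorem:qi-bilip} a bi-Lipschitz bijection
\[ \Phi\colon \Quotient{\Unbdd_{(G,d)}} \to \Quotient{\Unbdd_{(G,d')}}, \quad [R]_d \mapsto [R]_{d'}. \]
It then remains only to verify that under $\Phi$ the constructions of $\Project G$ and $\Linear G$ coincide in the two metrics.

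The key observation is that the families $\Orbit G$ and $\Forward G$ are defined in terms of the algebraic structure of $G$ together with a single metric-dependent condition, namely unboundedness of the cyclic (semi)subgroup in question. But a subset of $G$ is bounded with respect to a word metric if and only if it is relatively compact in the group topology (recalled immediately before the lemma), so unboundedness of a subset of $G$ is an intrinsic property of the group and does not depend on the choice of word metric. Consequently, as literal sets of subsets of $G$, $\Orbit G$ built using $d$ and $\Orbit G$ built using $d'$ coincide, and likewise for $\Forward G$.

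Combining these two facts, $\Phi$ restricts to a bijection between $\Quotient{\Orbit G}$ computed in $(G,d)$ and the corresponding set in $(G,d')$. The final clause of Theorem~\ref{theorem:qi-bilip} then upgrades this to an equality $\Phi(\Project(G,d)) = \Project(G,d')$, and since $\Phi$ is bi-Lipschitz the restriction is a homeomorphism; the identical argument with $\Forward G$ in place of $\Orbit G$ handles $\Linear G$. I do not anticipate any real obstacle: the substantive inputs (quasi-isometric equivalence of word metrics on a compactly generated, locally compact Hausdorff group and the bi-Lipschitz invariance of $t$ under quasi-isometries) are already available, so the proof reduces to the intrinsic characterization of boundedness as relative compactness, which guarantees the invariance of the algebraic families $\Orbit G$ and $\Forward G$ under a change of word metric.
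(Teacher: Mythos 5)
Your argument is correct and follows essentially the same route as the paper: the paper proves this lemma in the paragraph immediately preceding it, by combining the bi-Lipschitz equivalence of the quotient metrics $t_{(G,d)}$ and $t_{(G,d')}$ coming from Theorem~\ref{theorem:qi-bilip} with the observation that boundedness coincides with relative compactness (Lemma~\ref{lemma:basics}), so that the families $\Orbit G$ and $\Forward G$ are metric-independent. Your write-up merely makes the bookkeeping explicit, so there is nothing to add.
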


A group element $g$ is called \emph{compact}, if $\gen g$ is relatively compact, and \emph{non-compact} otherwise. Thus $g$ is non-compact if and only if $\gen g\in\Orbit G$. Notice that, if $G$ is finitely generated, a group element $g$ is non-compact if and only if $g$ is non-torsion. Furthermore, by Weil's lemma (see \cite[Theorem~9.1]{hewitt1979abstract}) $g$ is non-compact, if and only if $\gen g$ is the image of a monomorphism $\Z\to G$ which is a topological isomorphism onto $\gen g$ (a topological isomorphism is a group isomorphism which is also a homeomorphism). Hence, Weil's lemma implies the following.

\begin{lemma}\label{lemma:unbdd}
If $g\in G$ is non-compact then $d(1,g^n)\to\infty$ for $n\to\infty$.
\end{lemma}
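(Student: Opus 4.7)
The plan is to combine Weil's lemma (invoked in the paragraph preceding the statement) with the fact that bounded sets in the word metric are exactly the relatively compact ones. The desired conclusion says that the map $n\mapsto d(1,g^n)$ is proper as a function $\N\to[0,\infty)$; equivalently, for every $R\ge0$ the set $N_R=\{n\in\N\setsep d(1,g^n)\le R\}$ is finite.

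First I would observe that the closed ball $\Ball(1,R)$ with respect to the word metric is relatively compact in $G$: boundedness in $(G,d)$ is equivalent to relative compactness, so $\overline{\Ball(1,R)}$ is compact in the group topology. Next I would invoke Weil's lemma: because $g$ is non-compact, the homomorphism $\phi\colon\Z\to G$, $\phi(n)=g^n$, is injective and a topological isomorphism from $\Z$ (with its discrete topology) onto $\gen g$ equipped with the subspace topology from $G$. Consequently $\phi^{-1}\bigl(\overline{\Ball(1,R)}\cap\gen g\bigr)$ is a relatively compact subset of the discrete space $\Z$, hence finite; in particular $N_R=\{n\in\N\setsep g^n\in\Ball(1,R)\}$ is finite.

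This immediately gives $d(1,g^n)\to\infty$ as $n\to\infty$, since only finitely many $n$ can satisfy $d(1,g^n)\le R$ for any fixed $R$.

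There is no genuine obstacle here: the whole content sits in two inputs from earlier in the paper — the identification ``bounded $\Leftrightarrow$ relatively compact'' (Lemma~\ref{lemma:basics} in Appendix~\ref{appendix:compact}) and Weil's lemma, used to transport the discreteness of $\Z$ to $\gen g$. The only point that deserves a moment of care is verifying that the intersection of a compact subset of $G$ with $\gen g$ really is relatively compact in the subspace topology; this is automatic, and avoids having to argue that $\gen g$ is closed in $G$ (which need not be needed for the statement at hand).
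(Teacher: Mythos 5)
Your overall route is exactly the one the paper intends: the paper offers no written proof of this lemma, only the remark that Weil's lemma implies it, and your argument --- combining the equivalence ``bounded $\Leftrightarrow$ relatively compact'' from Lemma~\ref{lemma:basics} with the topological isomorphism $\Z\to\gen g$ to conclude that each set $\{n\in\N \setsep d(1,g^n)\le R\}$ is finite --- is precisely that implication spelled out.

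However, the one step you declare ``automatic'' is the one place where care is actually needed. It is \emph{not} true in general that the intersection of a compact subset of $G$ with a subspace that is discrete in its subspace topology is relatively compact in that subspace: take $G=\R$, the subspace $H=\{1/n \setsep n\ge1\}$ (discrete in its subspace topology) and the compact set $K=[0,1]$; then $K\cap H=H$ is infinite, hence not relatively compact in the discrete space $H$. What rescues your argument is exactly the fact you claim to avoid: $\gen g$ \emph{is} closed in $G$. This is true for a simple reason --- a discrete subgroup of a Hausdorff topological group is always closed (choose a symmetric neighbourhood $V$ of $1$ with $V^{-1}V\cap\gen g=\{1\}$; then every point of the closure has a neighbourhood meeting $\gen g$ in at most one point, so it already lies in $\gen g$) --- and it is also part of the usual formulation of Weil's lemma. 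Once $\gen g$ is closed, $\overline{\Ball(1,R)}\cap\gen g$ is a closed subset of a compact set, hence compact, hence finite in the discrete group $\gen g\simeq\Z$, and your proof goes through verbatim.
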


\begin{remark}
Notice, that compact group elements of a group $G$ do not contribute to the boundaries $\Project G$ and $\Linear G$. Especially, if $G$ only contains compact group elements, then these boundaries are empty. In the discrete case this means that torsion groups have empty boundaries.
\end{remark}

\begin{remark}
Let $g$ and $h$ be non-compact group elements. We have seen that $s(\genp g, \genp h)\le 1$ and $s(\genp g, \genp{g^n})=0$ for all $n\in\N$. Now it is natural to ask, what can be said about $s(\genp g, \genp{g^{-1}})$. Often $s(\genp g, \genp{g^{-1}})=1$, but in \cite{kroen2012linear} Kr\"on, Lehnert and Stein give an example of a finitely generated group constructed by iterated HNN-extensions with a non-torsion element $g$ such that $s(\genp g, \genp{g^{-1}})\le\frac{12}{17}$. They also show that in general this value cannot be arbitrarily close to zero. Indeed, $s(\genp g, \genp{g^{-1}})$ is always greater or equal $\frac12$. The infimum of these values (for all groups) is unknown. In \cite{kroen2012linear} there is also an example of a finitely generated group with non-torsion elements $g,h$ for which $\genp g \linequiv \genp h$ but $\genp{g^{-1}} \not\linequiv \genp{h^{-1}}$.
\end{remark}

The following lemma yields a useful alternative to compute $s^+(\genp g, \genp h)$ and $s^+(\gen g, \gen h)$.

\begin{lemma}\label{lemma:limsup}
Let $g$ and $h$ be non-compact group elements. Then
\[ s^+(\genp g, \genp h)
	= \limsup_{n\to\infty} \, \inf\biggl\{ \frac{d(h^n,g^m)}{d(1,g^m)} \setsep m\in\N_0 \biggr\} \]
and
\[ s^+(\gen g, \gen h)
	= \limsup_{\abs{n}\to\infty} \, \inf\biggl\{ \frac{d(h^n,g^m)}{d(1,g^m)} \setsep m\in\Z \biggr\}. \]
\end{lemma}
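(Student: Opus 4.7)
The plan is to reformulate $s^+(\genp g,\genp h)$ via Lemma~\ref{lemma:alternative} as the infimum of all $\alpha\ge 0$ for which $\genp h\setminus\OpenBall(1,r)\subseteq\alpha\genp g$ holds for some $r\ge 0$, and then to unpack the right-hand side using
\[ \alpha\genp g \;=\; \bigcup_{m\ge 0}\Ball\bigl(g^m,\alpha d(1,g^m)\bigr). \]
Because $d(1,g^0)=0$, the $m=0$ ball degenerates to the singleton $\{1\}$, so for $h^n\ne 1$ the membership $h^n\in\alpha\genp g$ is equivalent to
\[ \inf\biggl\{\frac{d(h^n,g^m)}{d(1,g^m)}\setsep m\ge 1\biggr\}\le\alpha. \]
Denote the right-hand limsup in the statement by $L$.

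For $L\le s^+(\genp g,\genp h)$, I would fix any $\alpha>s^+(\genp g,\genp h)$, use Lemma~\ref{lemma:alternative} to obtain $r\ge 0$ with $\genp h\setminus\OpenBall(1,r)\subseteq\alpha\genp g$, and apply Lemma~\ref{lemma:unbdd} to the non-compact element $h$ to find an index $N$ with $d(1,h^n)\ge r$ and $h^n\ne 1$ for all $n\ge N$. The equivalence above then yields $\inf_{m\ge 1}d(h^n,g^m)/d(1,g^m)\le\alpha$ for every $n\ge N$, and therefore $L\le\alpha$; letting $\alpha\downarrow s^+(\genp g,\genp h)$ gives the inequality.

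For the reverse direction, given any $\alpha>L$ there is $N$ such that for every $n\ge N$ some $m_n\ge 1$ satisfies $d(h^n,g^{m_n})<\alpha d(1,g^{m_n})$, placing $h^n$ in $\alpha\genp g$. To absorb the finitely many remaining elements $h^0,\dotsc,h^{N-1}$, I would choose $r$ strictly larger than $\max\{d(1,h^n)\setsep 0\le n<N\}$; this gives $\genp h\setminus\OpenBall(1,r)\subseteq\alpha\genp g$, and Lemma~\ref{lemma:alternative} yields $s^+(\genp g,\genp h)\le\alpha$, whence $s^+(\genp g,\genp h)\le L$.

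The identity for $\gen g,\gen h$ follows from the same argument with $\N_0$ replaced by $\Z$ and with Lemma~\ref{lemma:unbdd} applied to both $h$ and $h^{-1}$ (the latter is non-compact since $\gen h=\gen{h^{-1}}$ is unbounded), which by left-invariance of the word metric gives $d(1,h^n)\to\infty$ as $\abs{n}\to\infty$. The only delicate point throughout is the degenerate $m=0$ term: it must be separated so that the set-theoretic inclusion translates faithfully to the ratio infimum over $m\ge 1$ (respectively $m\in\Z\setminus\{0\}$). This is the main obstacle, but it is resolved cleanly by using Lemma~\ref{lemma:unbdd} to exclude $h^n=1$ for all sufficiently extreme $n$.
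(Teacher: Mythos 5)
Your proof is correct, and both directions go through; the difference from the paper's argument is which characterization of $s^+$ you work with. The paper proves the lemma directly from Definition~\ref{definition:lequiv}: for $\alpha>s^+(\genp g,\genp h)$ it takes the inclusion $\genp h\subseteq\alpha\genp g+a$, picks $k(n)$ with $d(h^n,g^{k(n)})\le\alpha d(1,g^{k(n)})+a$, and then uses the triangle inequality together with Lemma~\ref{lemma:unbdd} to show $d(1,g^{k(n)})\to\infty$, so that the error term $a/d(1,g^{k(n)})$ vanishes in the limsup. You instead route through Lemma~\ref{lemma:alternative}, which eliminates the additive constant at the cost of truncating $\genp h$ outside a ball; this buys you a cleaner forward direction (no error-term analysis) but forces you to confront the degenerate $m=0$ ball $\Ball(g^0,0)=\{1\}$ and to invoke Lemma~\ref{lemma:unbdd} to discard the finitely many $n$ with $h^n\in\OpenBall(1,r)$ --- a point the paper's route sidesteps entirely, since there the quantity $d(h^n,g^0)/d(1,g^0)$ simply never enters the estimates. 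Your reverse directions are essentially identical: the paper absorbs $h^0,\dotsc,h^{N-1}$ by setting $a=\max\{d(1,h^n)\setsep 0\le n<N\}$, you exclude them by choosing $r$ above that maximum. One cosmetic remark: your claim that $h^n\in\alpha\genp g$ is \emph{equivalent} to $\inf_{m\ge1}d(h^n,g^m)/d(1,g^m)\le\alpha$ is not literally true in the direction from the infimum to membership (the infimum need not be attained), but your argument only ever uses the two correct implications (membership gives $\inf\le\alpha$; a strict bound $\inf<\alpha$ gives membership), so this does not affect the proof.
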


\begin{proof}
We only prove the first claim, since the proof of the second is analogous. Suppose that $\alpha>s^+(\genp g, \genp h)$. Hence $\genp h \subseteq \alpha \genp h + a$ for some $a\ge0$. Thus, for each $n\in\N_0$, there is an integer $k=k(n)\ge0$, such that $d(h^n,g^k) \le \alpha d(1,g^k) + a$. Then
\[ \inf\biggl\{ \frac{d(h^n,g^m)}{d(1,g^m)} \setsep m\in\N_0 \biggr\}
	\le \frac{d(h^n,g^k)}{d(1,g^k)}
	\le \alpha + \frac{a}{d(1,g^k)}. \]
Using the triangle inequality we get
\[ (1-\alpha) d(1,g^k) - a \le d(1,h^n) \le (1+\alpha) d(1,g^k) + a. \]
If $n\to\infty$ then $d(1,h^n)\to\infty$ by Lemma~\ref{lemma:unbdd} and therefore $d(1,g^k)\to\infty$. This implies
\[ \limsup_{n\to\infty} \inf\biggl\{ \frac{d(h^n,g^m)}{d(1,g^m)} \setsep m\in\N_0 \biggr\}
	\le \limsup_{n\to\infty} \alpha + \frac{a}{d(1,g^k)} = \alpha. \]
In order to prove the reversed inequality assume that
\[ \alpha > \limsup_{n\to\infty} \inf\biggl\{ \frac{d(h^n,g^m)}{d(1,g^m)} \setsep m\in\N_0 \biggr\}. \]
Then there is an integer $N\ge0$, such that
\[ \inf\biggl\{ \frac{d(h^n,g^m)}{d(1,g^m)} \setsep m\in\N_0 \biggr\} \le \alpha \]
for all $n\ge N$. Let $\eps>0$. Then, for each $n\ge N$, we can find an integer $k=k(n)\ge0$, such that
\[ \frac{d(h^n,g^k)}{d(1,g^k)} \le \alpha+\eps. \]
Set $a=\max\{d(1,h^n) \setsep 0\le n<N\}$. Then we obtain $\genp h \subseteq (\alpha+\eps) \genp g + a$.
\end{proof}

Using Corollary~\ref{corollary:milnor1} and its notation, we obtain the following:

\begin{lemma}\label{lemma:finite}
Let $G$ be a compactly generated, locally compact group. The following statements are true up to bi-Lipschitz-equivalence of the metric $t$:
\begin{itemize}
\item Suppose that $N$ is a compact group and $H$ is a topological Hausdorff group.
	If $\{1\} \longrightarrow N \longrightarrow H \stackrel{\pi}{\longrightarrow} G \longrightarrow \{1\}$
	is a topological short exact sequence, such that $\pi\colon H\to G$ is also open,
	then $H$ and $G$ have the same linear and projective boundaries, respectively.
\item If $H$ is a closed subgroup of $G$ and $(H\rightfactor G, d_{H\rightfactor G})$ is bounded then
	\[ \Linear H \subseteq \Linear G \qquad\text{and}\qquad
		\Project H \subseteq \Linear G. \]
	If $H$ is of finite index in $G$ then equality holds.
\end{itemize}
\end{lemma}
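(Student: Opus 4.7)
The plan is to reduce both items to Theorem~\ref{theorem:qi-bilip} by producing, in each case, a quasi-isometry between $(H,d_H)$ and $(G,d_G)$ for appropriate word metrics, and then verifying by hand that the resulting bi-Lipschitz bijection on $\Quotient\Unbdd$ carries the distinguished families $\Orbit\cdot$ and $\Forward\cdot$ in the expected way.

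For item~(i), I would fix a compact symmetric generating set $S$ of $G$, choose a compact symmetric lift $\tilde S\subseteq H$ with $\pi(\tilde S)=S$, and observe that $\tilde S\cup N$ is a compact symmetric generating set of $H$. With respect to the induced word metrics the map $\pi$ is trivially $1$-Lipschitz and surjective, and Corollary~\ref{corollary:milnor1} applied to the transitive left-translation action of $H$ on $G$ (which is proper because the kernel $N$ is compact) shows that $\pi$ is in fact a quasi-isometry. Theorem~\ref{theorem:qi-bilip} then gives a bi-Lipschitz bijection $\pi_\ast\colon\Quotient{\Unbdd_H}\to\Quotient{\Unbdd_G}$. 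The obvious identities $\pi(\gen h)=\gen{\pi(h)}$ and $\pi(\genp h)=\genp{\pi(h)}$ send $\Quotient{\Orbit H}$ and $\Quotient{\Forward H}$ into their $G$-counterparts. Conversely, any non-compact $g\in G$ lifts to some $h\in\pi^{-1}(g)$, and using compactness of $N$ together with Lemma~\ref{lemma:basics} one checks that $h$ is itself non-compact; hence the induced map between the families of classes is also onto. Passing to closures in the complete space of Theorem~\ref{theorem:metric} yields $\Linear H=\Linear G$ and $\Project H=\Project G$.

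For item~(ii), the boundedness of $(H\rightfactor G,d_{H\rightfactor G})$ together with Lemma~\ref{lemma:basics} says that $H\rightfactor G$ is relatively compact, so the left-translation action of $H$ on $G$ is cocompact; it is moreover proper with trivial stabilizers because $H$ is closed in the locally compact group $G$. Corollary~\ref{corollary:milnor1} then yields that $H$ is compactly generated and that the inclusion $\iota\colon H\hookrightarrow G$ is a quasi-isometry for any choice of word metrics on $H$ and $G$. Theorem~\ref{theorem:qi-bilip} supplies the bi-Lipschitz bijection $\iota_\ast$; because every cyclic (semi)subgroup of $H$ is automatically a cyclic (semi)subgroup of $G$, we get $\iota_\ast(\Quotient{\Forward H})\subseteq\Quotient{\Forward G}$ and $\iota_\ast(\Quotient{\Orbit H})\subseteq\Quotient{\Orbit G}$, whence by taking closures $\Linear H\subseteq\Linear G$ and $\Project H\subseteq\Project G$. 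If in addition $[G:H]=n<\infty$, then for every $g\in G$ the pigeonhole principle applied to the cosets $H,Hg,\dots,Hg^n$ produces an integer $k$ with $1\le k\le n$ and $g^k\in H$; Lemma~\ref{lemma:orbits} now gives $\genp{g^k}\linequiv\genp g$ and $\gen{g^k}\linequiv\gen g$, so that every class on the $G$-side has a representative in $H$ and the two inclusions become equalities.

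The main obstacle I expect is not conceptual but entirely one of bookkeeping: one has to be confident that the Milnor--Schwarz-type statements collected in Appendix~\ref{appendix:compact} really deliver the two required quasi-isometries in the generality of compactly generated, locally compact Hausdorff groups, and in particular that the word metric on $H$ inherited from Corollary~\ref{corollary:milnor1} in~(ii) is quasi-isometrically equivalent to any word metric that $H$ carries in its own right (this is guaranteed by Lemma~\ref{lemma:boundary}). Once these quasi-isometries are in place, the remaining matching of cyclic (semi)subgroups, supported by Lemma~\ref{lemma:orbits}, is essentially formal.
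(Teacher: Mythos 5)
Your proposal is correct and follows essentially the same route as the paper: both items are reduced to Corollary~\ref{corollary:milnor1} and Theorem~\ref{theorem:qi-bilip}, with the correspondence of unbounded cyclic sub(semi)groups checked via properness of $\pi$ in the first item and via the pigeonhole argument together with Lemma~\ref{lemma:orbits} in the finite-index case. The only cosmetic difference is that you build a compact generating set of $H$ by hand, which Corollary~\ref{corollary:milnor1} already supplies.
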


\begin{proof}
In order to prove the first statement note that, by Corollary~\ref{corollary:milnor1} the homomorphism $\pi\colon H\to G$ is a quasi-isometry. Assume that $h\in H$ and $\gen{\pi(h)}$ is bounded in $G$ then $\pi^{-1}(\gen{\pi(h)})$ is bounded by Lemma~\ref{lemma:maps}. Hence $\gen h\subseteq \pi^{-1}(\gen{\pi(h)})$ is bounded. Thus unbounded cyclic sub\-\hbox{(semi)}\-groups of $H$ are mapped onto unbounded cyclic sub\-\hbox{(semi)}\-groups of $G$. This implies the first statement using Theorem~\ref{theorem:qi-bilip}.

Now suppose that $H$ is a closed subgroup of $G$ and $H\rightfactor G$ is bounded. By Corollary~\ref{corollary:milnor1} the inclusion is a quasi-isometry. In order to emphasize the dependence on $H$ and $G$, we use subscripts $H$ and $G$. By Theorem~\ref{theorem:qi-bilip} we have
\[ \Linear H = \closure_H(\Forward H/\mathord{\linequiv_H})
	= \closure_G(\Forward H/\mathord{\linequiv_G})
	\subseteq \closure_G(\Forward G/\mathord{\linequiv_G}) = \Linear G \]
and analogously for $\Project H\subseteq \Project G$. Assume that $H$ has finite index in $G$. If $g\in G$ then $H\rightfactor H\gen g$ is finite. Thus there are $k\in \Z$ and $n>0$, such that $Hg^{k+n} = Hg^k$. This implies $g^n\in H$. Hence in this case Lemma~\ref{lemma:orbits} implies
\[ \Forward H/\mathord{\linequiv_H} = \Forward G/\mathord{\linequiv_G}
	\qquad\text{and}\qquad \Orbit H/\mathord{\linequiv_H} = \Orbit G/\mathord{\linequiv_G} \]
which yields the assertion.
\end{proof}

In the setting of finitely generated groups the previous lemma implies that two weakly commensurable finitely generated groups $G$ and $H$ (i.e.\ there is a group $Q$ and homomorphisms $Q\to G$ and $Q\to H$ both having finite kernels and images of finite index) have the same linear and projective boundaries. In the continuous setting the situation is more complicated: In general it is possible that
\[ \Forward H/\mathord{\linequiv_H} \subsetneq \Forward G/\mathord{\linequiv_G} \]
(consider for instance $\Z^2\le\R^2$). However, equality may hold after taking closures on both sides, i.e.\ $\Linear H = \Linear G$. The problem here is to find for each non-compact $g\in G$ a sequence $(h_n)_{n\ge0}$ in $H$, such that $t(\genp g, \genp{h_n}) \to 0$ for $n\to\infty$. Notice that there is always an unbounded subset $R\subseteq H$ with $\genp g \linequiv R$, if $H\rightfactor G$ is bounded.

With this preparations we can settle the commutative case completely. Recall that, if $G$ is a commutative, compactly generated, locally compact Hausdorff group, then by \cite[Theorem~9.8]{hewitt1979abstract} there are integers $a,b\ge0$ and a commutative, compact Hausdorff group $C$, such that $G$ is topologically isomorphic to $\R^a\times\Z^b\times C$.

\begin{corollary}\label{corollary:commutative}
Assume that $G$ is a commutative, compactly generated, locally compact Hausdorff group.
\begin{compactitem}
\item If $G$ is topologically isomorphic to $\R^a\times\Z^b\times C$
	for some integers $a,b\ge0$ and some compact, commutative group $C$
	then $\Linear G = \S^{a+b-1}$ and $\Project G = \P^{a+b-1}$.
\item If $H$ is a closed subgroup, such that $G/H$ is compact
	 then $\Linear H = \Linear G$ and $\Project H = \Project G$.
\end{compactitem}
\end{corollary}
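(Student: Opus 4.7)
The approach is to reduce both parts to the Euclidean model $\R^n$ and then invoke Example~\ref{example:euclid}.

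For the first bullet, I would first apply the first item of Lemma~\ref{lemma:finite} to the projection $G \to \R^a \times \Z^b$ (whose kernel $C$ is compact), yielding $\Linear G = \Linear(\R^a \times \Z^b)$ and $\Project G = \Project(\R^a \times \Z^b)$. Next I apply the second bullet (proved below) to the cocompact embedding $\R^a \times \Z^b \hookrightarrow \R^{a+b}$, whose quotient is the compact torus $(\R/\Z)^b$, reducing to computing $\Linear \R^{a+b}$ and $\Project \R^{a+b}$. By Lemma~\ref{lemma:boundary} the boundaries may be computed using the Euclidean metric, and for any $g \in \R^n \setminus \{0\}$ consecutive multiples of $g$ sit at constant distance $\norm g$, so $\genp g \linequiv H_g$ and $\gen g \linequiv L_g$. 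Hence $\Quotient{\Forward{\R^n}} = \Quotient{\mathcal H}$ and $\Quotient{\Orbit{\R^n}} = \Quotient{\mathcal L}$ in the notation of Example~\ref{example:euclid}, giving $\Linear \R^n = \S^{n-1}$ and $\Project \R^n = \P^{n-1}$.

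For the second bullet, I would first apply the first item of Lemma~\ref{lemma:finite} both to $G \to \R^a \times \Z^b$ and to the induced map $H \to HC/C$ (whose kernel $H \cap C$ is compact; note that $HC$ is closed in $G$ as the sum of a closed and a compact set in a commutative locally compact group), reducing to the case $G = \R^a \times \Z^b$ with $H \subseteq G$ a closed cocompact subgroup. Viewing $G \subseteq \R^{a+b}$, the quotient $\R^{a+b}/H$ remains compact, so it suffices to treat the case $G = \R^n$ and $H$ a closed cocompact subgroup of $\R^n$. The key observation is that such an $H$ must satisfy $\vspan H = \R^n$: otherwise $\R^n/H$ would surject continuously onto the nontrivial Euclidean quotient $\R^n/\vspan H$, contradicting compactness. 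Consequently, for every $v \in \S^{n-1}$ I can choose $h_k \in H$ with $\norm{h_k} \to \infty$ and $h_k/\norm{h_k} \to v$. Using $\genp{h_k} \linequiv H_{h_k}$ together with the angle formula from Example~\ref{example:euclid}, this yields $t(\genp{h_k}, H_v) \to 0$. Thus $\Quotient{\Forward H}$ is dense in $\S^{n-1} = \Linear \R^n$, and taking the closure gives $\Linear H = \Linear G$. The projective case is analogous, with lines replacing half-lines.

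The main obstacle will be the spanning observation for closed cocompact subgroups of $\R^n$ and the bookkeeping needed to verify the openness hypotheses when invoking Lemma~\ref{lemma:finite}. Once those are in hand, the identification of the boundaries with spheres and projective spaces is a straightforward density argument in the angular metric, resting on the fact that integer-sampled half-lines in $\R^n$ stay sublinearly close to their continuous counterparts.
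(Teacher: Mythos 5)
Your argument is correct, and for the second bullet it takes a genuinely different route from the paper. The paper's proof of that part is purely structural: it invokes the classification of closed cocompact subgroups of $\R^a\times\Z^b\times C$ to write $H\cong\R^{a-c}\times\Z^{b+c}\times D$ with $D$ compact, notes that the total rank $a+b$ is unchanged, and then simply applies the first bullet to $H$. You instead prove equality of boundaries directly by a density argument: reduce to a closed cocompact $H\le\R^n$, observe that cocompactness forces $\vspan H=\R^n$ (for closed subgroups of $\R^n$ these are in fact equivalent), approximate every direction $v$ by normalized elements $h_k/\norm{h_k}$ of $H$, and conclude $t(\genp{h_k},H_v)\to0$ from the angle formula of Example~\ref{example:euclid}, so that $\Quotient{\Forward H}$ is dense in $\S^{n-1}$. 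Your approach has the advantage of making explicit exactly the point the paper itself flags in the remark after Lemma~\ref{lemma:finite}, namely that $\Forward H/\mathord\linequiv$ may be a proper subset of $\Forward G/\mathord\linequiv$ (as for $\Z^2\le\R^2$) and equality only holds after taking closures; the paper's first-bullet step ``it is easy to see that $\R^a\times\Z^b$ and $\R^{a+b}$ have the same boundaries'' is precisely an instance of your density lemma, so you have in effect supplied the detail the paper leaves implicit. The paper's route buys brevity and avoids any metric estimate, at the cost of quoting the structure theorem for $H$ a second time. One small remark: your derivation of the approximating sequence $h_k$ is most cleanly obtained not from spanning per se but from cocompactness directly (write $tv=h_t+k_t$ with $k_t$ in a fixed compact set $K$ satisfying $\R^n=H+K$); for closed subgroups the two hypotheses coincide, so this is only a matter of presentation.
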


\begin{proof}
As $\R^a\times\Z^b$ is a quotient of $G$ with compact kernel, the linear and projective boundaries of $G$ and $\R^a\times\Z^b$ are the same, respectively. Since any word metric on $\R^a\times\Z^b$ is quasi-isometrically equivalent to the $\ell^2$\ndash metric on $\R^a\times\Z^b$, we may use the $\ell^2$\ndash metric. It is then easy to see that $\R^a\times\Z^b$ and $\R^{a+b}$ have the same boundaries. Hence the assertion follows from Example~\ref{example:euclid}.

Suppose that $H$ is a closed subgroup, such that $G/H$ is compact. As before, let $G$ be topologically isomorphic to $\R^a\times\Z^b\times C$. It follows that $H$ is topologically isomorphic to $\R^{a-c}\times\Z^{b+c}\times D$ for some integer $c$ and some commutative, compact Hausdorff group $D$. Thus the first assertion implies the second.
\end{proof}

In the setting of topological groups it is natural to consider also unbounded one-parameter subgroups and unbounded one-parameter subsemigroups, as well. A \emph{one-parameter subgroup} in $G$ is the image of a continuous homomorphism $\R\to G$ and a \emph{one-parameter subsemigroup} is the image of a continuous semigroup homomorphism $[0,\infty)\to G$. Obviously, if $\phi\colon[0,\infty)\to G$ is a continuous semigroup homomorphism, then there is a canonical extension to a continuous group homomorphism $\bar\phi\colon\R\to G$ and $\phi$ has unbounded image, if and only if $\bar\phi$ has. Define $\Orbit_\R G$ and $\Forward_\R G$ to be the family of unbounded one-parameter subgroups and unbounded one-parameter subsemigroups, respectively. Again, by Weil's lemma a continuous homomorphism $\phi\colon\R\to G$ has unbounded image if and only if $\phi$ is a topological isomorphism onto its image.

\begin{lemma}\label{lemma:orbits2}
Suppose that $\phi\colon\R\to G$ is a continuous homomorphism with unbounded image. Then
\[ \phi([0,\infty)) \linequiv \genp{\phi(t)} \qquad\text{and}\qquad \phi(\R) \linequiv \gen{\phi(t)} \]
for all $t>0$. Hence
\[ \Quotient{\Orbit_\R G} \subseteq \Quotient{\Orbit G}
	\qquad\text{and}\qquad
	\Quotient{\Forward_\R G} \subseteq \Quotient{\Forward G}. \]
\end{lemma}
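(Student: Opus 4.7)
The plan is to show that the continuous image $\phi([0,\infty))$ and the discrete subsemigroup $\genp{\phi(t)}$ differ only by a bounded ``translation'' in the word metric, and therefore must be sublinearly close. Concretely, fix $t>0$ and set $K = \sup\{d(1,\phi(r)) \setsep r\in[0,t]\}$. Since $[0,t]$ is compact and $\phi$ is continuous into the group topology, $\phi([0,t])$ is relatively compact, hence (by Lemma~\ref{lemma:basics} in Appendix~\ref{appendix:compact}) bounded in $d$, so $K<\infty$.

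Next I would exploit left-invariance of the word metric. For any $s\ge0$, write $s = nt + r$ with $n\in\N_0$ and $r\in[0,t)$; then $\phi(s) = \phi(nt)\phi(r)$ and $d(\phi(s),\phi(nt)) = d(1,\phi(r)) \le K$. Thus every point of $\phi([0,\infty))$ lies within distance $K$ of a point of $\genp{\phi(t)}$, i.e.\ $\phi([0,\infty)) \subseteq 0\genp{\phi(t)}+K$. The reverse containment $\genp{\phi(t)}\subseteq \phi([0,\infty))$ is trivial. Before invoking this I must also check both sets are unbounded: the element $\phi(t)$ is non-compact, since Weil's lemma forces $\phi$ to be a topological isomorphism onto its image (as its image is unbounded), so $\gen{\phi(t)}$ is discrete and infinite in $\phi(\R)$; hence by Lemma~\ref{lemma:unbdd}, $d(1,\phi(nt))\to\infty$, making $\genp{\phi(t)}$ unbounded, and thus $\phi([0,\infty))\supseteq\genp{\phi(t)}$ is unbounded too. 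Because the two unbounded sets have finite Hausdorff distance in $d$, we get $s^+(\phi([0,\infty)),\genp{\phi(t)}) = s^+(\genp{\phi(t)},\phi([0,\infty))) = 0$, that is, $\phi([0,\infty))\linequiv\genp{\phi(t)}$.

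For the second equivalence $\phi(\R)\linequiv\gen{\phi(t)}$, I would repeat the same argument, but now writing $s = nt+r$ with $n\in\Z$ and $r\in[0,t)$; the estimate $d(\phi(s),\phi(nt))\le K$ is identical and yields Hausdorff distance at most $K$, hence linear equivalence. For the ``Hence'' statement, it is then immediate: if $[R]\in\Quotient{\Orbit_\R G}$ with $R = \phi(\R)$, choose any $t>0$; then $\gen{\phi(t)}\in\Orbit G$ (by non-compactness of $\phi(t)$) and $R\linequiv\gen{\phi(t)}$, so $[R]\in\Quotient{\Orbit G}$. The analogous argument with $\genp{\phi(t)}$ in place of $\gen{\phi(t)}$ handles $\Quotient{\Forward_\R G}\subseteq\Quotient{\Forward G}$.

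There is really no major obstacle here; the only subtle point is verifying that $K$ is finite, which requires that ``bounded in the word metric'' coincides with ``relatively compact in the group topology''—this is precisely the content of the appendix lemma on compactly generated groups that was already highlighted after Lemma~\ref{lemma:boundary}. Everything else is a direct consequence of left-invariance of $d$ and the Euclidean-division decomposition of $s\in\R_{\ge 0}$ (respectively $s\in\R$) modulo $t$.
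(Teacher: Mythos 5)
Your proof is correct. The paper states Lemma~\ref{lemma:orbits2} without any proof, and your argument --- decomposing $s=nt+r$ with $r\in[0,t)$, bounding $d(\phi(s),\phi(nt))=d(1,\phi(r))\le K$ via left-invariance of the word metric and compactness (hence boundedness, by Lemma~\ref{lemma:basics}) of $\phi([0,t])$, and then reading off $s^+=0$ in both directions from the finite Hausdorff distance --- is precisely the routine verification the authors evidently intended; the only cosmetic remark is that unboundedness of $\genp{\phi(t)}$ also follows directly from the containment $\phi([0,\infty))\subseteq 0\genp{\phi(t)}+K$ together with unboundedness of $\phi([0,\infty))$, so the detour through Weil's lemma (which does require the closedness of the image asserted in \cite[Theorem~9.1]{hewitt1979abstract}) is not strictly needed.
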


\begin{proposition}\label{proposition:oneparam}
Let $G$ be a connected, nilpotent Lie group. Then
\[ \Quotient{\Orbit_\R G} = \Quotient{\Orbit G}
	\qquad\text{and}\qquad
	\Quotient{\Forward_\R G} = \Quotient{\Forward G}. \]
\end{proposition}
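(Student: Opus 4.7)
The inclusions $\Quotient{\Orbit_\R G}\subseteq\Quotient{\Orbit G}$ and $\Quotient{\Forward_\R G}\subseteq\Quotient{\Forward G}$ are already recorded in Lemma~\ref{lemma:orbits2}, so the content is to prove the reverse containments. My plan is to show that for every non-compact $g\in G$ there is a continuous homomorphism $\phi\colon\R\to G$ with unbounded image such that $\phi(\R)\linequiv\gen g$ and $\phi([0,\infty))\linequiv\genp g$. Granting this, the equivalence class of $\gen g$ is represented by the one-parameter subgroup $\phi(\R)\in\Orbit_\R G$, and likewise in the semigroup case, yielding the desired equalities.

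The construction of $\phi$ proceeds via the exponential map $\exp\colon\LieG\to G$. Writing $g=\exp(X)$, set $\phi(t)=\exp(tX)$. Then $\phi$ is a continuous homomorphism with $\phi(n)=g^n$ for all $n\in\Z$, so $\gen g\subseteq\phi(\R)$ and $\genp g\subseteq\phi([0,\infty))$; by Lemma~\ref{lemma:unbdd}, $\gen g$ is unbounded, hence so is $\phi(\R)$ (in particular $X\ne 0$). To upgrade the trivial inclusions to linear equivalence, I would observe that $K=\phi([0,1])$ is a continuous image of a compact set, hence compact and therefore bounded in the word metric with some diameter bound $C$ (Lemma~\ref{lemma:basics}). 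For any $t\in\R$, write $t=n+s$ with $n\in\Z$ and $s\in[0,1]$; by left invariance of the word metric,
\[ d(\phi(t),g^n)=d(g^n\phi(s),g^n)=d(1,\phi(s))\le C. \]
Hence $\phi(\R)\subseteq 0\gen g+C$, giving $s^+(\gen g,\phi(\R))=0$, and combined with the trivial reverse bound this yields $\phi(\R)\linequiv\gen g$. The semigroup assertion follows by restricting to $t\ge 0$ and $n\in\N_0$.

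The main obstacle is the very first step: the surjectivity of $\exp_G$, which is exactly where both hypotheses (connected and nilpotent) are used. For simply connected nilpotent Lie groups this is the classical theorem of Malcev, according to which $\exp$ is a global diffeomorphism. For a general connected nilpotent Lie group $G$ one passes to the simply connected universal cover $p\colon\tilde G\to G$: the cover $\tilde G$ is simply connected nilpotent, so $\exp_{\tilde G}$ is surjective; since $p$ is a surjective smooth homomorphism inducing an isomorphism of Lie algebras and $\exp_G\circ dp_1=p\circ\exp_{\tilde G}$, the surjectivity of $\exp_G$ follows. Once exp-surjectivity is in hand, the remaining steps are straightforward bookkeeping with the definitions of $s^+$ and $\linequiv$ developed in Section~\ref{section:construction}, and no further obstacles arise. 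This material would naturally be packaged as a short preparatory lemma (perhaps deferred to Appendix~\ref{appendix:groups}) recording the surjectivity of $\exp$, with the proposition itself reduced to the approximation argument of the second paragraph.
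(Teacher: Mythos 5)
Your proposal is correct and follows essentially the same route as the paper: both use the surjectivity of $\exp$ for connected nilpotent Lie groups to place every non-compact $g$ on a one-parameter subgroup $t\mapsto\exp(tX)$, and then invoke the linear equivalence of that one-parameter sub(semi)group with $\genp g$ respectively $\gen g$. The only difference is that you spell out the details the paper delegates to Lemma~\ref{lemma:orbits2} (the compactness/left-invariance argument showing $\phi(\R)\subseteq 0\gen g+C$) and to the unexplained assertion that $\exp$ is surjective; both of your expansions are sound.
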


\begin{proof}
Let $\LieG$ be the Lie algebra of $G$ and $\exp\colon\LieG\to G$ be the exponential map. Then $\exp$ is surjective. Thus, if $g$ is a non-compact group element, then there is an element $x\in\LieG$ with $\exp(x)=g$. Then $\R\to G$, $t\mapsto\exp(tx)$ is a continuous homomorphism with unbounded image which proves the statement.
\end{proof}

\section{Boundaries of nilpotent groups}
\label{section:nilpotent}

In the following we determine the linear and projective boundary of connected, nilpotent Lie groups and their discrete counterparts, finitely generated nilpotent groups. A commutative, connected Lie group $G$ is isomorphic to $\R^a\times (\R/\Z)^b$ for some integers $a$ and $b$. In analogy to the discrete case we call the integer $a$ the \emph{compact-free dimension} of $G$. For convenience we define $\S^{-1}$ and $\P^{-1}$ to be the empty set.

\begin{theorem}\label{theorem:nilpotent}
Let $G$ be a nilpotent group which is either a connected Lie group or a finitely generated group. Suppose that $G$ has descending central series
\[ G = G_1 \supseteq G_2 \supseteq \dotsb \supseteq G_c \supsetneq G_{c+1} = \{1\}, \]
where $c\ge1$ is the nilpotency class of $G$. Let $\nu(i)$ denote the compact-free dimension or torsion-free rank of $G_i/G_{i+1}$, respectively. Then the linear boundary $\Linear G$ is homeomorphic to the disjoint union of $c$ spheres:
\[ \Linear G = \S^{\nu(1)-1} \uplus \S^{\nu(2)-1}\uplus \dotsb \uplus \S^{\nu(c)-1}. \]
Analogously, the projective boundary $\Project G$ is homeomorphic to the disjoint union of projective spaces:
\[ \Project G = \P^{\nu(1)-1} \uplus \P^{\nu(2)-1}\uplus \dotsb \uplus \P^{\nu(c)-1}. \]
\end{theorem}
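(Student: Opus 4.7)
The plan is to decompose $\Linear G$ (and $\Project G$) along the layers $G_i\setminus G_{i+1}$ of the descending central series, to match each layer with the linear (respectively projective) boundary of its abelianization $G_i/G_{i+1}$, and to invoke the abelian case (Corollary~\ref{corollary:commutative}) which gives $\S^{\nu(i)-1}$ (respectively $\P^{\nu(i)-1}$). The starting point is the classical growth estimate that $d(1,g^n)$ is bi-Lipschitz comparable to $n^{1/i}$ for every non-compact $g\in G_i\setminus G_{i+1}$: this is the Bass--Guivarc'h bound in the discrete case and its Lie-group analogue via the exponential map, both supplied by the technical lemmas of Appendix~\ref{appendix:groups}. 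In particular, every non-compact element belongs to a unique layer.

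Next I would separate the layers. For $g\in G_i\setminus G_{i+1}$ and $h\in G_j\setminus G_{j+1}$ with $i<j$, the projection $\pi\colon G\to G/G_{i+1}$ is $1$-Lipschitz, satisfies $\pi(h^n)=1$, and $d_{G/G_{i+1}}(1,\pi(g)^m)$ has the same $m^{1/i}$ growth as $d_G(1,g^m)$. Hence $d_G(g^m,h^n)\ge d_{G/G_{i+1}}(\pi(g)^m,1)\gtrsim d_G(1,g^m)$ uniformly in $n$, which via Lemma~\ref{lemma:limsup} forces $s^+(\genp g,\genp h)$ to be bounded below by a strictly positive constant; the symmetric bound for $s^+(\genp h,\genp g)$ follows the same template using the mismatched orders $n^{1/j}\ll m^{1/i}$. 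Thus the images of distinct layers are separated by a uniform gap and are in particular pairwise disjoint in $\Linear G$ and $\Project G$.

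Within a single layer I would identify the layer-$i$ piece of the boundary with $\Linear(G_i/G_{i+1})$ (respectively $\Project(G_i/G_{i+1})$). Write $\pi_i\colon G_i\to Q_i=G_i/G_{i+1}$ for the quotient. Using a Hall--Petresco expansion in the discrete case or Baker--Campbell--Hausdorff in the Lie case, one shows that whenever $g=hz$ with $z\in G_{i+1}$ one has $g^n=h^n w_n$ with $w_n\in G_{i+1}$ and $d(1,w_n)=O(n^{1/(i+1)})$, which is $o(n^{1/i})=o(d(1,g^n))$. This forces $\genp g\linequiv\genp h$, so $g\mapsto[\genp g]$ descends to $Q_i$, and the same estimate with two varying elements shows that the descended map is a bi-Lipschitz embedding of $\Linear Q_i$ into $\Linear G$ (and of $\Project Q_i$ into $\Project G$). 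By Corollary~\ref{corollary:commutative} the boundary of the abelian group $Q_i$ is $\S^{\nu(i)-1}$ (respectively $\P^{\nu(i)-1}$), and this is transported to the layer-$i$ component of the boundary of $G$. Assembling across $i=1,\dotsc,c$ and using the gap from the previous paragraph together with the compactness of each sphere or projective space to see that closures introduce no extra points gives the claimed disjoint-union decomposition.

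The main obstacle is the Hall--Petresco / BCH estimate underpinning the last step: namely the quantitative control on the $G_{i+1}$-error $w_n$, uniformly in $g$ and $h$, at the level of the word metric. In the discrete nilpotent case this amounts to bounding iterated commutators with the required precision, and it is exactly this computation that underpins the bi-Lipschitz identification of layers; this is the technical content deferred to Appendix~\ref{appendix:groups}.
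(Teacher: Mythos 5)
Your proposal is correct in outline and is, at the conceptual level, the same proof as the paper's: the three steps you describe (elements of $G_i$ in the same coset of $G_{i+1}$ generate linearly equivalent semigroups; distinct layers sit at a uniform positive $s$\ndash distance, in fact distance $1$; within a layer the distance is an explicit function of the abelianized angle) are precisely the content of Lemma~\ref{lemma:additive} and Lemma~\ref{lemma:three}, and the paper itself remarks that a commutator-calculus version of the argument exists and ``follows similar lines''. The difference is one of bookkeeping: the paper transports everything to the Lie algebra via $\exp$ and the homogeneous gauge $\gauge{\argument}$ (Lemmas~\ref{lemma:gaugequasi} and~\ref{lemma:compare}), computes the boundary of the auxiliary additive metric $d_a$ exactly, and treats the discrete case by Mal'tsev completion, whereas you stay at the group level with the quotients $G/G_{i+1}$ and $G_i/G_{i+1}$ and Corollary~\ref{corollary:commutative}. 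What each buys: the Lie-algebra route localizes all the commutator estimates in two clean lemmas (Lemmas~\ref{lemma:equal} and~\ref{lemma:upper}) at the price of the gauge machinery and Mal'tsev; your route avoids Lie theory in the discrete case but must redo those estimates as Hall--Petresco computations with uniform word-length control.

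Three points in your write-up need repair. First, before assigning a non-compact element to the layer $G_i\setminus G_{i+1}$ you must pass to the torsion-free (resp.\ compact-free) situation, or work with the isolators $I(G_i)$: if $g\in G_i\setminus G_{i+1}$ but some power $g^k$ lies in $G_{i+1}$, then $\genp{g}\linequiv\genp{g^k}$ and the growth exponent of $g$ is not $1/i$, so your layer decomposition as stated is wrong for such elements; this is exactly what the reduction via Lemma~\ref{lemma:ranks} and Lemma~\ref{lemma:finite} is for. Second, the identification of the layer-$i$ piece with $\Linear(G_i/G_{i+1})$ is not bi-Lipschitz but only bi-H\"older: by Lemma~\ref{lemma:additive}\ref{claim:thirda} an angle $\theta$ in the abelianization becomes $(\sin\theta)^{1/i}$ at the level of $s^+$ in $G$ (and the comparison of $s^+_a$ with $s^+_m$ in Lemma~\ref{lemma:three} costs a further exponent $i/c$). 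This is harmless for the homeomorphism claim, and bi-H\"older maps still extend to completions so your closure argument survives, but ``bi-Lipschitz embedding'' is false as stated. Third, the BCH/Hall--Petresco error is $O\bigl(n^{(1-1/c)/i}\bigr)$ (Lemma~\ref{lemma:equal}), not $O\bigl(n^{1/(i+1)}\bigr)$: the $k$\ndash fold commutators containing one factor from $G_{i+1}$ contribute on the order of $n^{k/(ki+1)}$, and the dominant term is the largest admissible $k$, not $k=1$. Both bounds are $o(n^{1/i})$, which is all your argument uses, so this is a quantitative slip rather than a gap.
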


If two finitely generated, nilpotent groups $G$ and $H$ are weakly commensurable then previous result yields a new proof of the fact that the multisets
\[ \{ \nu_1(G),\nu_2(G),\dotsc \} \qquad\text{and}\qquad \{ \nu_1(H),\nu_2(H),\dotsc \} \]
of torsion-free ranks are equal, since the boundaries of $G$ and $H$ are bi-Lipschitz-equivalent. Notice that there is no information on the ordering and it is unclear, whether it is possible to deduce the ordering from the angle metrics of $G$ and $H$, respectively . It is a corollary of Pansu's theorem (see \cite[Th\'eor\`eme~3]{pansu1989metriques}), that even the tuples
\[ (\nu_1(G),\nu_2(G),\dotsc) \qquad\text{and}\qquad (\nu_1(H),\nu_2(H),\dotsc) \]
are equal.

First we prove the theorem for connected Lie groups and then use the Mal'tsev completion to deduce the statement for finitely generated groups. For both cases we use the notation and results of Appendix~\ref{appendix:groups}.

\begin{proof}[Proof of Theorem~\ref{theorem:nilpotent} in the Lie case]
We prove that statement for $\Linear G$, as the other case is completely analogous. Let $G$ be a connected, nilpotent Lie group with word metric $d_G$. Set $t_G=t_{(G,d_G)}$ and write $\linequiv_G$ to denote linear equivalence in $(G,d_G)$. By Lemma~\ref{lemma:ranks} and Lemma~\ref{lemma:finite} we may assume that $G$ is also simply connected. Set $t_a=t_{(\LieG,d_a)}$ and write $\linequiv_a$ to denote linear equivalence in $(\LieG,d_a)$. By the Lemmas \ref{lemma:compare}, \ref{lemma:additive}, \ref{lemma:three} the map
\[ \phi \colon \Forward(\LieG,+)/\mathord{\linequiv_a} \to
	\Forward(G,\cdot)/\mathord{\linequiv_G} \]
which maps the equivalence class of $\genp x\in\Forward(\LieG,+)$ to the equivalence class of $\genp{\exp(x)}\in\Forward(G,\cdot)$, is well-defined and bi-H\"older continuous with respect to the metrics $t_a$ and $t_G$, respectively. Hence $\phi$ extends to a bi-H\"older continuous map from $\Linear(\LieG,d_a)$ to $\Linear(G,d_G)$. Then the assertion follows from the first part of Lemma~\ref{lemma:additive}.
\end{proof}

\begin{proof}[Proof of Theorem~\ref{theorem:nilpotent} in the discrete case]
Let $\Gamma$ be a finitely generated, nilpotent group. We only show the assertion for $\Linear\Gamma$ for the same reason as above. By Lemma~\ref{lemma:ranks} and Lemma~\ref{lemma:finite} we may assume that $\Gamma$ is also torsion-free. Then the (real) Mal'tsev completion of $\Gamma$ yields a connected, simply connected, nilpotent Lie group $G$, such that $\Gamma$ is a uniform subgroup of $G$, see \cite{maltsev1951class}. Using Lemma~\ref{lemma:finite} it follows that $\Linear\Gamma\subseteq\Linear G$. Let $d_G$ be a word metric on $G$ and set $t_G=t_{(G,d_G)}$. In order to prove equality, it is sufficient to construct for each $g\in G$ a sequence $h_1,h_2,\dotsc\in\Gamma$, such that $t_G(\genp g, \genp{h_m})\to0$ if $m\to\infty$. Suppose that $g$ is an element of $G_n$. Set $\Lambda=\log(\Gamma)$ and $x=\log(g)$. Then $\Lambda\cap\LieG_k$ is a uniform subgroup in $(\LieG_k,\cdot)$ for all $k$. Hence $\pi_n(\Lambda\cap\LieG_n)$ is a uniform subgroup of $(V_n,+)$, since $\pi_n$ is a continuous epimorphism from $(\LieG_n,\cdot)$ to $(V_n,+)$. As $V_n$ is isomorphic to $\R^{\nu(n)}$, $\pi_n(\Lambda\cap\LieG_n)$ is isomorphic to $\Z^{\nu(n)}$. Thus there is a sequence $y_1,y_2,\dotsc\in\Lambda\cap\LieG_n$, such that $t_a(\genp{\pi(x)}, \genp{\pi(y_m)})\to0$ if $m\to\infty$. Since $\genp x\linequiv_a \genp{\pi(x)}$ and $\genp{y_m}\linequiv_a\genp{\pi(y_m)}$, we infer that $t_a(\genp x, \genp{y_m})\to0$ if $m\to\infty$. Set $h_m=\exp(z_m)\in\Gamma$. Then $t_G(\genp g, \genp{h_m})\to0$ for $m\to\infty$ using Lemma~\ref{lemma:three} as required.
\end{proof}

\begin{remark}
We have carried out an alternative proof for the discrete case which avoids the use of Mal'tsev completion and tools from Lie theory and employs techniques from combinatorial group theory---mainly commutator calculus and careful analysis of word lengths'. This proof follows similar lines compared to the proof for the Lie case given here.
\end{remark}

\section{Boundaries of vertex-transitive graphs with polynomial growth}
\label{section:graph-boundaries}

Let $G$ be a group and let $S$ be a finite generating set of $G$. Then the Cayley graph $\Graph$ of $G$ with respect to $S$ is given by $\mathit{V\Graph}=G$ and $\mathit{E\Graph}=\{\{g,gs\} \setsep g\in G, s\in S\}$. If we define a Cayley graph in this way, namely by right multiplication, then $G$ acts as a vertex-transitive group of automorphisms on $\Graph$ by left multiplication. Hence Cayley graphs of finitely generated groups give rise to a particular class of locally finite, vertex-transitive graphs.

Since we have defined our notion of boundary for metric spaces in general, it is natural to consider $\Linear G$ and $\Project G$ not only for groups $G$ (and thus for their Cayley graphs), but also for vertex-transitive graphs in general. But as Example~\ref{example:z2boundary} shows, even for simple structures, as Cayley graphs of $\Z^d$, for Cayley graphs of groups $G$ the space $\Quotient\Unbdd$ is much richer than $\Linear G$ or $\Project G$. Hence it seems rather difficult to characterize our boundaries for graphs without involving group actions. Therefore, we define---roughly speaking---the projective (linear) boundary of a graph as the projective (linear) boundary induced by the action of its automorphism group. Then, at least for graphs with polynomial growth, it is possible to obtain results similar to the above.

Furthermore, we emphasize that the concepts defined in the sequel are not restricted to locally finite graphs. In addition the results up to Corollary~\ref{corollary:graphs} also hold without the assumption of local finiteness. From Theorem~\ref{theorem:graphs} to the end of this section we consider graphs with polynomial growth which of course implies that they are locally finite. Hence, although the main results of this section only hold for locally finite graphs, this assumption is never explicitly stated.

In the following we always endow a graph $\Graph$ with the graph metric $d$, i.e., for any two vertices $u,v\in\mathit{V\Graph}$, the distance $d(u,v)$ is the infimum of all numbers $k$ such that there is a path of length $k$ connecting $u$ and $v$.

\begin{definition}
Let $\Graph=(\mathit{V\Graph},\mathit{E\Graph})$ be an infinite, connected graph and let $\Aut\Graph$ be the automorphism group of $\Graph$. For $v\in\mathit{V\Graph}$, we write $\UnbddOrb_v\Graph\subseteq\Aut\Graph$ to denote the set of group elements $g\in\Aut\Graph$ for which the set $\gen g v = \{g^n v \setsep n\in\Z\}$ is unbounded.
\end{definition}

\begin{lemma}\label{lemma:unbddorb}
Let $\Graph$ be an infinite, connected graph, $v\in\mathit{V\Graph}$, and let $g\in\UnbddOrb_v\Graph$.
\begin{compactitem}
\item The set $\UnbddOrb_v\Graph$ is symmetric and both,
	$g^\infty v = \{g^n v \setsep n\in\N_0\}$ and
	$(g^{-1})^\infty v = \{g^{-n} v \setsep n\in\N_0\}$, are unbounded.
\item If $n$ is a nonzero integer then $g^n\in\UnbddOrb_v\Graph$.
	Furthermore, $\gen g v$, $\gen{g^n} v$ are linearly equivalent and
	$g^\infty v$, $(g^n)^\infty$ are linearly equivalent, too.
\end{compactitem}
\end{lemma}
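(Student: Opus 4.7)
The plan is to exploit only the isometry property of $g\in\Aut\Graph$ together with the identity $d(v,g^n v) = d(v,g^{-n}v)$ for every $n\in\Z$, which follows by applying the isometry $g^{-n}$ to both points. This identity shows that the distance sequences of $g^\infty v$ and $(g^{-1})^\infty v$ to $v$ coincide termwise, so either both sets are bounded or both are unbounded. If both were bounded, then $\gen g v = g^\infty v \cup (g^{-1})^\infty v$ would also be bounded, contradicting $g\in\UnbddOrb_v\Graph$. Hence both one-sided orbits are unbounded, and the symmetry of $\UnbddOrb_v\Graph$ follows at once from $\gen{g^{-1}}v = \gen g v$.

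For the second bullet fix a nonzero integer $n$. The key principle is that for every $r\in\Z$ the isometry $g^r$ sends balls to balls of the same radius, so a set $A\subseteq \mathit{V\Graph}$ is bounded iff $g^r A$ is bounded. The decomposition
\[ \gen g v = \bigcup_{r=0}^{\abs n-1} g^r\cdot\gen{g^n}v \]
then exhibits $\gen g v$ as a finite union of isometric copies of $\gen{g^n}v$, so the unboundedness of $\gen g v$ forces that of $\gen{g^n}v$, whence $g^n\in\UnbddOrb_v\Graph$. The analogous decomposition $g^\infty v = \bigcup_{r=0}^{n-1}g^r\cdot(g^n)^\infty v$, valid for $n>0$, likewise yields the unboundedness of $(g^n)^\infty v$.

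The linear equivalences are then routine. One direction follows trivially from the inclusions $\gen{g^n}v\subseteq\gen g v$ and $(g^n)^\infty v\subseteq g^\infty v$ (the latter for $n>0$), which make the corresponding $s^+$ vanish. For the reverse direction set $C = \max\{d(v,g^r v)\setsep 0\le r<\abs n\}$; writing any $m\in\Z$ as $m=nk+r$ with $0\le r<\abs n$ one has $d(g^m v,g^{nk}v) = d(g^r v,v)\le C$ by the isometry property of $g^{-nk}$, which places every point of $\gen g v$ inside the bounded neighbourhood $0\cdot\gen{g^n}v + C$, and the same computation works for the forward orbits. The only real obstacle is conceptual bookkeeping: one must distinguish the two-sided orbit $\gen g v$, for which any nonzero $n$ is admissible, from the one-sided $g^\infty v$, whose natural index range is $n\in\N$, and translate the isometry property of $g$ systematically into bounded-neighbourhood statements of the form required by the definition of $s^+$ in Section~\ref{section:construction}. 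Nothing deeper is needed.
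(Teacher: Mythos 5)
Your proof is correct and supplies exactly the argument the paper leaves implicit: the paper's own proof merely declares the first part immediate and refers the second to Lemma~\ref{lemma:orbits}, which is itself stated without proof. The isometry identity $d(v,g^nv)=d(v,g^{-n}v)$, the coset decomposition $\gen g v=\bigcup_{r=0}^{\abs n-1}g^r\gen{g^n}v$, and the estimate $\gen g v\subseteq 0\,\gen{g^n}v+C$ with $C=\max\{d(v,g^rv)\setsep 0\le r<\abs n\}$ are precisely the intended steps, and you correctly restrict the forward-orbit statements to $n>0$.
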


\begin{proof}
The first part is immediate. The second part can be proved in the same way as Lemma~\ref{lemma:orbits}.
\end{proof}

\begin{definition}
Let $\Graph$ be an infinite, connected graph and let $G\le\Aut\Graph$. For $v\in \mathit{V\Graph}$ we define
\[ \Orbit_{G,v}\Graph = \{ \gen g v \setsep g \in \UnbddOrb_v\Graph \cap G \}, \qquad
	\Forward_{G,v}\Graph = \{ g^\infty v \setsep g \in \UnbddOrb_v\Graph \cap G \}. \]
\end{definition}

\begin{lemma}\label{lemma:ref-of-graph}
Let $\Graph$ be an infinite, connected graph. Then, for $u,v\in\mathit{V\Graph}$, we have
\[ \UnbddOrb_u\Graph = \UnbddOrb_v\Graph. \]
If $G\le\Aut\Graph$ then
\[ \Quotient{\Orbit_{G,u}\Graph} = \Quotient{\Orbit_{G,v}\Graph} \qquad\text{and}\qquad
	\Quotient{\Forward_{G,u}\Graph} = \Quotient{\Forward_{G,v}\Graph} \]
up to isometric isomorphy.
\end{lemma}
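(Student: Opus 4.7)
The plan is to exploit the fact that every element of $\Aut\Graph$ is an isometry of $(\mathit{V\Graph},d)$. If $g\in\Aut\Graph$ and $u,v$ are any two vertices, then for every integer $n$ we have $d(g^n u,g^n v)=d(u,v)$. This single identity will drive everything.

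First, for the identity $\UnbddOrb_u\Graph=\UnbddOrb_v\Graph$, I would simply note that the orbit $\gen g v$ is contained in the set $\{x\in \mathit{V\Graph}\setsep d(x,\gen g u)\le d(u,v)\}$, because $g^n v$ lies within distance $d(u,v)$ of $g^n u$ for every $n\in\Z$. Hence $\gen g v$ is bounded whenever $\gen g u$ is, and by symmetry the two orbits are simultaneously bounded or simultaneously unbounded. This gives $g\in\UnbddOrb_u\Graph\iff g\in\UnbddOrb_v\Graph$.

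Second, for the equalities of quotient sets I would strengthen the previous observation: the orbits $\gen g u$ and $\gen g v$ lie at Hausdorff distance at most $d(u,v)$, and likewise for the forward orbits $g^\infty u$ and $g^\infty v$. Any two unbounded sets $R,S\subseteq\mathit{V\Graph}$ whose Hausdorff distance is bounded by some $a\ge 0$ satisfy $S\subseteq 0R+a$ and $R\subseteq 0S+a$, so $s^+(R,S)=s^+(S,R)=0$ and hence $R\linequiv S$. Applying this with $a=d(u,v)$ to $R=\gen g u$, $S=\gen g v$ (respectively to the forward orbits) shows $[\gen g u]=[\gen g v]$ and $[g^\infty u]=[g^\infty v]$ in $\Quotient{\Unbdd_{\Graph}}$ for every $g\in\UnbddOrb_u\Graph\cap G=\UnbddOrb_v\Graph\cap G$. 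Consequently
\[ \Quotient{\Orbit_{G,u}\Graph}=\{[\gen g u]\setsep g\in\UnbddOrb_u\Graph\cap G\}
	=\{[\gen g v]\setsep g\in\UnbddOrb_v\Graph\cap G\}=\Quotient{\Orbit_{G,v}\Graph} \]
as subsets of $\Quotient{\Unbdd_{\Graph}}$, and the same identification works verbatim for $\Forward_{G,\argument}\Graph$. Since both quotient sets are literally the same subset of the metric space $(\Quotient{\Unbdd_{\Graph}},t)$, they are trivially isometrically isomorphic.

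There is no real obstacle here: the content is that automorphisms being isometries makes the base vertex irrelevant. The only point that needs a line of justification is that Hausdorff-close unbounded sets are linearly equivalent, and this is immediate from the definition of $s^+$ using $\alpha=0$.
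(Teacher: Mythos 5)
Your proof is correct and follows essentially the same route as the paper's: both arguments rest on the fact that $g^n$ is an isometry, so $d(g^n u,g^n v)=d(u,v)$, which gives unboundedness of one orbit from the other and the containments $\gen g u\subseteq 0\gen g v+d(u,v)$ and $\gen g v\subseteq 0\gen g u+d(u,v)$, hence linear equivalence via $\alpha=0$. The observation that the two quotient sets are then literally the same subset of $\Quotient{\Unbdd}$ matches the paper's conclusion.
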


\begin{proof}
Assume that $g\in\UnbddOrb_u\Graph$. Then $d(u, g^n u)\to\infty$ as $n\to\infty$. As $\Graph$ is connected, $d(u,v)<\infty$ for all $v\in VX$. The triangle inequality implies
\[ d(u, g^n u) \le d(u,v) + d(v, g^n v) + d(g^n v, g^n u) = 2d(u,v) + d(v, g^n v), \]
hence $d(v, g^n v) \ge d(u, g^n u) - 2d(u,v) \to \infty$ as $n\to\infty$. Thus $\UnbddOrb_u\Graph\subseteq\UnbddOrb_v\Graph$ and the reversed inclusion follows by means of symmetry. In order to prove the second part of our assertion, note that, for $g\in\UnbddOrb_u\Graph\cap G=\UnbddOrb_v\Graph\cap G$,
\[ \gen g u \subseteq 0\gen g v + d(u,v) \qquad\text{and}\qquad
	\gen g v \subseteq 0\gen g u + d(u,v) \]
which means that $\gen g u$ and $\gen g v$ are linearly equivalent, thus implying $\Quotient{\Orbit_{G,u}\Graph} = \Quotient{\Orbit_{G,v}\Graph}$ up to isometric isomorphy. An analogous reasoning yields $\Quotient{\Forward_{G,u}\Graph} = \Quotient{\Forward_{G,v}\Graph}$.
\end{proof}

In the light of Lemma~\ref{lemma:ref-of-graph} we may drop dependence on the vertex $v$. This motivates the following definition:

\begin{definition}\label{definition:graph-bndry}
Let $\Graph$ be an infinite, connected graph and fix a reference vertex $v$. Then we define $\UnbddOrb\Graph=\UnbddOrb_v\Graph$. If $G$ is a subgroup of $\Aut\Graph$ then we set
\[ \Project_G\Graph=\Bndry{\Orbit_{G,v}\Graph} \qquad\text{and}\qquad
	\Linear_G\Graph=\Bndry{\Forward_{G,v}\Graph}. \]
The spaces $\Project\Graph=\Project_{\Aut\Graph}\Graph$ and $\Linear\Graph=\Linear_{\Aut\Graph}\Graph$ are called \emph{projective boundary} and \emph{linear boundary} of $\Graph$, respectively.
\end{definition}

Let $\Graph$ be a graph and let $\sigma$ be a partition of the vertex set $\mathit{V\Graph}$. The quotient graph $\Graph_\sigma$ is defined as follows: the vertex set $\mathit{V\Graph}_\sigma$ is $\sigma$, and two vertices $x,y\in\mathit{V\Graph}_\sigma$ are adjacent, if there are adjacent vertices $v,w\in\Graph$ with $v\in x$ and $w\in y$. Let $G\le\Aut\Graph$ be a group of automorphisms such that $\sigma$ is $G$\ndash invariant, i.e.\ $g(b)\in\sigma$ for all $b\in\sigma$ and all $g\in G$. Then $G$ naturally induces a group action on $\Graph_\sigma$. The subgroup of the automorphism group $\Aut\Graph_\sigma$ corresponding to this action is denoted by $G_\sigma$. Also, there is a homomorphism $\phi\colon G\to G_\sigma$ such that the kernel of $\phi$ consists of all those $g\in G$ with $g(b)=b$ for all $b\in\sigma$. If $G\le\Aut\Graph$ acts vertex-transitively on $\Graph$ and $\sigma$ is a $G$\ndash invariant partition of $\mathit{V\Graph}$ then $\sigma$ is called \emph{imprimitivity system of $G$ on $\Graph$}. The elements of an imprimitivity system are called \emph{blocks}.

Let $\sigma$ be an $\Aut\Graph$\ndash invariant partition of $\mathit{V\Graph}$. In order to avoid ambiguity we write $(\Aut\Graph)_\sigma$ to denote the subgroup of $\Aut\Graph_\sigma$ corresponding to the natural action of $\Aut\Graph$ on $\Graph_\sigma$. Notice that $(\Aut\Graph)_\sigma\subseteq\Aut\Graph_\sigma$, but these two groups are not necessarily equal as the next example shows.

\begin{example}\label{example:crossedlatter}
Consider the graph $\Graph$ depicted in Figure~\ref{fig:crossedlatter}. It consists of two disjoint infinite double-rays $\{v_i \setsep i\in\Z\}$ and $\{w_i \setsep i\in\Z\}$ and additional ``crossed rungs'': For even $i$, $v_i$ is connected to $w_{i+1}$ and for odd $i$, $v_i$ is connected to $w_{i-1}$.
\begin{figure}[htb]
\centering
\tikzstyle{vertex}=[circle, fill=black, inner sep=0pt, minimum width=2pt]
\begin{tikzpicture}
	\draw (-2,0)--(3,0) (-2,1)--(3,1);
	\draw[dashed] (-2.75,0)--(-2,0) (-2.75,1)--(-2,1) (3,0)--(3.75,0) (3,1)--(3.75,1);
	\foreach \i in {-2,-1,0,1,2,3} {
		\ifodd\i\relax \draw (\i,0)--(\i-1,1); \else \draw (\i,0)--(\i+1,1); \fi
		\draw (\i,0) node[vertex] {} node[below,font=\small] {$v_{\i}$};
		\draw (\i,1) node[vertex] {} node[above,font=\small] {$w_{\i}$};
	}
\end{tikzpicture}
\caption{An example graph $\Graph$ for $(\Aut\Graph)_\sigma\subsetneq\Aut\Graph_\sigma$.}
\label{fig:crossedlatter}
\end{figure}
This graph is vertex-transitive, and the sets $\{v_i,w_i\}$, $i\in\Z$, give rise to an imprimitivity system $\sigma$ of $\Aut\Graph$ on $\Graph$. The quotient graph $\Graph_\sigma$ is an infinite double-ray $\{x_i \setsep i\in\Z\}$, where the vertices $x_i$ correspond to the sets $\{v_i,w_i\}$ for $i\in\Z$. The mapping $g_\sigma$ which fixes $x_0$ and maps $x_i$ onto $x_{-i}$ for $i\in\Z$ is obviously an automorphism of $\Graph_\sigma$. But there exists no automorphism $g\in\Aut\Graph$ with
\[ g(\{v_i,w_i\}) = \{v_{-i},w_{-i}\} \]
for $i\in\Z$. Hence, for this graph $\Graph$, $(\Aut\Graph)_\sigma\subsetneq\Aut\Graph_\sigma$ holds.
\end{example}

Let $\Graph$ be an infinite, connected graph and $H\le G\le\Aut\Graph$. As the underlying metric space $(\mathit{V\Graph},d)$ is fixed, the inclusion $H\le G$ implies, that $\Linear_H\Graph$ and $\Project_H\Graph$ are up to isometric isomorphy subspaces of $\Linear_G\Graph$ and $\Project_G\Graph$, respectively: Fix some reference vertex $v\in\mathit{V\Graph}$ and notice that $\Forward_{H,v}\subseteq\Forward_{G,v}$. Hence the map
\[ \Forward_{H,v}\to\Forward_{G,v}, \quad h^\infty v\mapsto h^\infty v \]
induces an isometric embedding $\Quotient{\Forward_{H,v}}\to\Quotient{\Forward_{G,v}}$ which extends naturally to the topological closures $\Linear_H\Graph$ and $\Linear_G\Graph$. Similarly, there is an isometric embedding $\Project_H\Graph\to\Project_G\Graph$.

\begin{lemma}\label{lemma:graphs}
Let $\Graph$ be an infinite, connected graph.
\begin{compactitem}
\item If $H\le G\le\Aut\Graph$ and $H$ has finite index in $G$
	then $\Linear_H\Graph$ and $\Linear_G\Graph$
	($\Project_H\Graph$ and $\Project_G\Graph$)
	are isometrically isomorphic.
\item Let $G\le\Aut\Graph$ and let $\sigma$ be a $G$\ndash invariant partition of $\mathit{V\Graph}$
	such that
	\[ \sup\{d(x,y) \setsep x,y\in b, \, b\in\sigma\}<\infty. \]
	Then $\Linear_G\Graph$ and $\Linear_{G_\sigma}\Graph_\sigma$
	($\Project_G\Graph$ and $\Project_{G_\sigma}\Graph_\sigma$)
	are bi-Lipschitz-equivalent.
\end{compactitem}
\end{lemma}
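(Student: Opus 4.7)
The plan is to handle the two parts of Lemma~\ref{lemma:graphs} separately. For part~(i), I would prove the stronger set-equality
\[ \Forward_{H,v}\Graph/\mathord\linequiv = \Forward_{G,v}\Graph/\mathord\linequiv \]
inside $\Quotient{\Unbdd}$ (and the analogous equality for $\Orbit$); taking closures then gives literal equality of the linear (respectively projective) boundaries, which is trivially an isometric isomorphism. The inclusion $\subseteq$ is immediate from $H\le G$. For the reverse inclusion, given $g\in\UnbddOrb\Graph\cap G$, the finiteness of $[G:H]$ forces $g^n\in H$ for some $n\ge1$ via pigeonholing the cosets $g^kH$, $k\ge0$. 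Lemma~\ref{lemma:unbddorb} then delivers $g^n\in\UnbddOrb\Graph\cap H$ together with $g^\infty v\linequiv(g^n)^\infty v$ and $\gen g v\linequiv\gen{g^n}v$, placing the class of $g^\infty v$ (respectively $\gen g v$) inside the $H$\ndash indexed quotient.

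For part~(ii), my plan is to verify that the block-projection $\pi\colon V\Graph\to V\Graph_\sigma$ is a quasi-isometry and then to apply Theorem~\ref{theorem:qi-bilip}. Setting $D=\sup\{d(x,y)\setsep x,y\in b,\,b\in\sigma\}<\infty$, the inequality $d_\sigma(\pi(u),\pi(v))\le d(u,v)$ follows directly from the definition of the quotient graph. For the reverse bound I would lift a geodesic of length $k$ in $\Graph_\sigma$ from $\pi(u)$ to $\pi(v)$ block by block: between consecutive blocks one uses a witness edge of $\Graph$ supplied by the definition of $\Graph_\sigma$, and inside each block one needs at most one intra-block detour of length $\le D$ in order to reach the next witness endpoint. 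This produces a walk in $\Graph$ of length at most $(D+1)k+D$, hence $d(u,v)\le(D+1)d_\sigma(\pi(u),\pi(v))+(D+1)$. Combined with the (trivial) surjectivity of $\pi$, this shows that $\pi$ is a $(D+1)$\ndash quasi-isometry.

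It then remains to check that the bi-Lipschitz bijection of Theorem~\ref{theorem:qi-bilip} induced by $\pi$ carries $\Forward_{G,v}\Graph$ onto $\Forward_{G_\sigma,\pi(v)}\Graph_\sigma$, and likewise for $\Orbit$. The key identity is $\pi(gu)=\phi(g)\pi(u)$, where $\phi\colon G\to G_\sigma$ is the (surjective) induced homomorphism, so in particular $\pi(g^\infty v)=\phi(g)^\infty\pi(v)$. The quasi-isometry bounds then show that $g\in\UnbddOrb\Graph\cap G$ if and only if $\phi(g)\in\UnbddOrb\Graph_\sigma\cap G_\sigma$: the forward implication uses $d_\sigma\ge q^{-1}d-q$ applied to the orbit, and the converse lifts any $h\in\UnbddOrb\Graph_\sigma\cap G_\sigma$ to some $g\in\phi^{-1}(h)$ and applies $d\ge d_\sigma$. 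Hence $\pi(\Forward_{G,v}\Graph)=\Forward_{G_\sigma,\pi(v)}\Graph_\sigma$, and the ``in particular'' clause of Theorem~\ref{theorem:qi-bilip} delivers the bi-Lipschitz equivalence of the two linear boundaries; the projective case is completely analogous. The only mildly technical point is the path-lifting estimate for $\pi$, while every remaining step is essentially bookkeeping.
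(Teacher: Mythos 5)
Your proposal is correct and follows essentially the same route as the paper: for part (i) both arguments reduce to finding $n$ with $g^n\in H$ (you via pigeonholing cosets, the paper via passing to the normal core and taking $n$ to be the index) and then invoke the linear equivalence of $g^\infty v$ and $(g^n)^\infty v$ from Lemma~\ref{lemma:unbddorb}; for part (ii) both show the block projection $\pi$ is a quasi-isometry with the same constants and apply Theorem~\ref{theorem:qi-bilip}, your write-up merely spelling out the path-lifting bound and the two-way preservation of unboundedness that the paper leaves implicit.
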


\begin{proof}
In order to prove the first statement, we may assume that $H$ is a normal subgroup of $G$ with finite index, as the intersection of all conjugates of $H$ forms a normal subgroup with finite index. Let $n$ be the finite index of $H$ in $G$. Then, for any $g\in\UnbddOrb_v\Graph\cap G$, $g^n\in\UnbddOrb_v\Graph\cap H$ and the unbounded subsets $g^\infty v\in\Forward_{G,v}$, $(g^n)^\infty v\in\Forward_{H,v}$ are linearly equivalent. Therefore, the isometric embedding $\Quotient{\Forward_{H,v}}\to\Quotient{\Forward_{G,v}}$ is an isometric isomorphism which extends naturally to $\Linear_H\Graph$ and $\Linear_G\Graph$. Analogous reasoning yields the statement for $\Project_H\Graph$ and $\Project_G\Graph$.

We now prove the second assertion. For $x\in\mathit{V\Graph}$ we write $\bar x$ to denote the element of $\sigma=\mathit{V\Graph}_\sigma$ containing $x$. Similarly, we write $\bar g\in G_\sigma$ for the automorphism of $\Graph_\sigma$ induced by the group element $g\in G$. Fix some reference vertex $v$ and set
\[ a=\sup\{d(x,y) \setsep x,y\in b, \, b\in\sigma\} < \infty. \]
The map $\pi\colon\mathit{V\Graph}\to\mathit{V\Graph}_\sigma$, $x\mapsto\bar x$, is a quasi-isometry, since
\[ d_{\Graph_\sigma}(\bar x, \bar y) \le d_\Graph(x,y) \le (a+1) d_{\Graph_\sigma}(\bar x, \bar y) + a \]
for $x,y\in\mathit{V\Graph}$. Furthermore, $\pi$ induces a map from $\Forward_{G,v}\Graph$ onto $\Forward_{G_\sigma,\bar v}\Graph_\sigma$: if $g^\infty v = \{v_0,v_1,\dotsc\} \in \Forward_{G,v}\Graph$ then $\pi(g^\infty v) = \{\bar v_0, \bar v_1, \dotsc\} = \bar g^\infty \bar v \in \Forward_{G_\sigma,\bar v}\Graph_\sigma$. Theorem~\ref{theorem:qi-bilip} implies that
\[ \Linear_G\Graph = \Bndry{\Forward_{G,v}\Graph} \qquad\text{and}\qquad
	\Linear_{G_\sigma}\Graph_\sigma = \Bndry{\Forward_{G_\sigma,\bar v}\Graph_\sigma} \]
are bi-Lipschitz-equivalent. Again the statement for $\Project_G\Graph$ and $\Project_{G_\sigma}\Graph_\sigma$ follows along the same lines.
\end{proof}

\begin{corollary}\label{corollary:graphs}
Let $\Graph$ be an infinite, connected graph.
\begin{compactitem}
\item If $G\le\Aut\Graph$ acts vertex-transitively on $\Graph$ and
	$\sigma$ is an imprimitivity system of $G$ on $\Graph$ with finite blocks
	then $\Linear_G\Graph$ and $\Linear_{G_\sigma}\Graph_\sigma$
	($\Project_G\Graph$ and $\Project_{G_\sigma}\Graph_\sigma$)
	are bi-Lipschitz-equivalent.
\item If $G\le\Aut\Graph$ acts freely and with finitely many orbits on $\mathit{V\Graph}$
	then $\Linear G$ and $\Linear_G\Graph$
	($\Project G$ and $\Project_G\Graph$)
	are bi-Lipschitz-equivalent.
\end{compactitem}
\end{corollary}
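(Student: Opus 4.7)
The plan for the first assertion is to reduce to the second part of Lemma~\ref{lemma:graphs} by checking that the blocks of $\sigma$ have uniformly bounded diameter. Since $G$ acts vertex-transitively on $\Graph$ and $\sigma$ is an imprimitivity system, $G$ acts transitively on the blocks (given $b,b'\in\sigma$ with $x\in b$, $y\in b'$, any $g\in G$ with $g(x)=y$ must send $b$ to $b'$ because $\sigma$ is a partition) and does so by isometries of $(\mathit{V\Graph},d)$. Hence any two blocks are isometric as metric subspaces; being finite by hypothesis, they share a common finite diameter. Therefore $\sup\{d(x,y)\setsep x,y\in b,\,b\in\sigma\}<\infty$, and the second part of Lemma~\ref{lemma:graphs} applies directly, yielding the desired bi-Lipschitz equivalence for both the linear and the projective boundary.

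For the second assertion I would argue via a Milnor--Schwarz type lemma. Fix a reference vertex $v\in\mathit{V\Graph}$ and choose representatives $v=v_0,v_1,\dots,v_{k-1}$ of the finitely many $G$-orbits on $\mathit{V\Graph}$. Setting $D=\max_j d(v,v_j)$ guarantees that every vertex of $\Graph$ lies within distance $D$ of the orbit $Gv$, because any $w\in\mathit{V\Graph}$ has the form $gv_j$ for some $g\in G$ and $j$, so $d(w,gv)=d(v_j,v)\le D$. A standard lifting argument---take a shortest path in $\Graph$ from $v$ to $gv$, shadow each intermediate vertex by an element of $Gv$ at distance at most $D$, and read off the consecutive ratios as group elements---shows that $S=\{h\in G\setsep 0<d(v,hv)\le 2D+1\}$ generates $G$ and that the orbit map $\pi\colon(G,d_S)\to(\mathit{V\Graph},d)$, $g\mapsto gv$, is a quasi-isometry: injectivity of $\pi$ is the freeness of the action, coarse surjectivity is the $D$-density of $Gv$, and the two-sided Lipschitz bound comes directly from the construction of $S$.

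Once $\pi$ is known to be a quasi-isometry, Theorem~\ref{theorem:qi-bilip} supplies a bi-Lipschitz bijection $\Quotient{\Unbdd_G}\to\Quotient{\Unbdd_{\mathit{V\Graph}}}$. It then remains to check that $\pi$ sends $\Forward G$ onto $\Forward_{G,v}\Graph$ and $\Orbit G$ onto $\Orbit_{G,v}\Graph$ up to linear equivalence. This is immediate: $\pi(\genp g)=g^\infty v$ and $\pi(\gen g)=\gen g v$, and since quasi-isometries preserve (un)boundedness, $\genp g$ is unbounded in $(G,d_S)$ exactly when $g^\infty v$ is unbounded in $(\mathit{V\Graph},d)$, i.e.\ exactly when $g\in\UnbddOrb_v\Graph$. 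The second half of Theorem~\ref{theorem:qi-bilip} then gives $\pi(\Linear G)=\Linear_G\Graph$ and $\pi(\Project G)=\Project_G\Graph$ as bi-Lipschitz-equivalences.

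The only genuine delicacy is the Milnor--Schwarz step, which in the locally finite case is routine but here must be performed in a graph that need not be locally finite; the point is that $S$ may be infinite, but the argument still produces a well-defined word metric $d_S$ on $G$ and a quasi-isometry $\pi$, which is all that the remainder of the proof requires.
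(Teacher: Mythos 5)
Your proof of the first assertion is essentially the paper's: the paper also disposes of it by observing that vertex-transitivity together with finiteness of the blocks forces $\sup\{d(x,y)\setsep x,y\in b,\,b\in\sigma\}<\infty$ and then invoking the second part of Lemma~\ref{lemma:graphs}; your justification via transitivity of $G$ on the blocks is just the spelled-out version of ``immediate''. For the second assertion you take a genuinely different route. The paper follows the Contraction Lemma of Babai: it chooses a finite tree $T$ containing exactly one vertex of each $G$\ndash orbit, notes that $\{g\mathit{VT}\setsep g\in G\}$ is a $G$\ndash invariant partition with uniformly bounded blocks whose quotient graph is a Cayley graph of $G$ (with $G\simeq G_\sigma$), and then applies the second part of Lemma~\ref{lemma:graphs} once more, so that $\Linear G=\Linear_{G_\sigma}\Graph_\sigma$ holds by definition. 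You instead run a \v Svarc--Milnor argument directly on the orbit map $g\mapsto gv$ and transport the boundaries with Theorem~\ref{theorem:qi-bilip}; this is correct, and your identification $\pi(\genp g)=g^\infty v$ together with the preservation of unboundedness does give $\pi(\Forward G)=\Forward_{G,v}\Graph$ and hence $\pi(\Linear G)=\Linear_G\Graph$, likewise for the projective boundary. As for what each approach buys: your route cannot simply cite Lemma~\ref{lemma:milnor}, since without local finiteness the action need not be proper (the set $\{g\setsep d(gv,v)\le r\}$ may be infinite), so you are right to redo the lifting argument by hand, whereas the paper sidesteps properness entirely by reusing the already-proved quotient statement. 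Conversely, the paper's construction hands you the Cayley-graph structure of $\Graph_\sigma$ explicitly, making $\Linear G=\Linear_{G_\sigma}\Graph_\sigma$ true by definition; in your argument the metric $d_S$ with possibly infinite $S$ must be understood as the word metric defining $\Linear G$ --- the same implicit convention the paper adopts when $\Graph$ is not locally finite, so this is a point worth stating but not a gap.
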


\begin{proof}
The first statement is immediate:
\[ \sup\{ d(x,y) \setsep x,y\in b, \, b\in\sigma\} < \infty \]
follows from the fact that $G$ acts vertex-transitively on $\Graph$ and the blocks of $\sigma$ are finite.

To prove the second statement we apply the ideas of the proof of the so-called Contraction Lemma (see \cite{babai1977some}): Since $G$ acts freely and with finitely many orbits on $\Graph$, there is a finite tree $T$ in $\Graph$ which contains exactly one vertex of each orbit of $G$ on $\Graph$. Furthermore, the sets $g\mathit{VT}$ for $g\in G$ form a partition of $\mathit{V\Graph}$. Set $\sigma=\{g\mathit{VT} \setsep g\in G\}$. Then $\Graph_\sigma$ is isomorphic to a Cayley graph of $G$, and the groups $G$ and $G_\sigma$ are isomorphic, as $\mathit{VT}$ contains exactly one vertex of each orbit. Hence $\Linear G$ is (by definition) equal to $\Linear_{G_\sigma}\Graph_\sigma$ and the spaces $\Linear_{G_\sigma}\Graph_\sigma$, $\Linear_G\Graph$ are bi-Lipschitz-equivalent by the previous lemma.
\end{proof}

\begin{theorem}\label{theorem:graphs}
Let $\Graph$ be an infinite, connected, vertex-transitive graph with polynomial growth. Then there is a finitely generated, torsion-free, nilpotent group $N$ which has the same growth rate as $\Graph$, and $\Linear N$ and $\Project N$ are bi-Lipschitz-equivalent to $\Linear\Graph$ and $\Project\Graph$, respectively.
\end{theorem}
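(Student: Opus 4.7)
The plan is to reduce, by a chain of bi-Lipschitz-equivalences built from Lemma~\ref{lemma:graphs} and Corollary~\ref{corollary:graphs}, the study of $\Linear\Graph$ and $\Project\Graph$ to the boundaries of a finitely generated, torsion-free, nilpotent group.

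First I would invoke Trofimov's structure theorem for vertex-transitive graphs of polynomial growth: for $G=\Aut\Graph$ acting vertex-transitively on $\Graph$, there is a $G$\ndash invariant imprimitivity system $\sigma$ of $\mathit{V\Graph}$ with finite blocks such that the induced group $G_\sigma$ acts on the quotient graph $\Graph_\sigma$ with trivial vertex stabilizers. In particular $\Graph_\sigma$ is isomorphic to a Cayley graph of $G_\sigma$. Since the blocks of $\sigma$ are finite and $G$ acts vertex-transitively, the hypothesis of the second part of Lemma~\ref{lemma:graphs} is satisfied, so $\Linear\Graph=\Linear_G\Graph$ is bi-Lipschitz-equivalent to $\Linear_{G_\sigma}\Graph_\sigma$, and similarly for the projective boundary.

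Next, $G_\sigma$ acts freely and vertex-transitively on $\Graph_\sigma$, so Corollary~\ref{corollary:graphs} (second part) gives that $\Linear_{G_\sigma}\Graph_\sigma$ is bi-Lipschitz-equivalent to $\Linear G_\sigma$ (where $G_\sigma$ carries its word metric), and the analogous statement for $\Project$. The group $G_\sigma$ is finitely generated, being the vertex group of a locally finite Cayley graph, and the quotient map $\Graph\to\Graph_\sigma$ is a quasi-isometry, so $G_\sigma$ has the same polynomial growth rate as $\Graph$.

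Finally, Gromov's polynomial growth theorem implies that $G_\sigma$ is virtually nilpotent, and inside any nilpotent finite-index subgroup of $G_\sigma$ one finds a torsion-free subgroup $N$ of finite index (a standard fact for finitely generated nilpotent groups, provable by induction on the nilpotency class using the structure theorem for finitely generated abelian groups). Then $N$ is finitely generated, torsion-free, nilpotent, of finite index in $G_\sigma$, and shares its growth rate. The finite-index clause of Lemma~\ref{lemma:finite} yields $\Linear N=\Linear G_\sigma$ and $\Project N=\Project G_\sigma$. Concatenating the bi-Lipschitz-equivalences above proves the theorem. The main obstacle in the argument is the first step: without Trofimov's theorem one has no way to pass from the possibly wild automorphism group $\Aut\Graph$ to a group acting freely on a closely related space. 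Once the imprimitivity system with finite blocks and free quotient action is in hand, the remainder of the proof is purely an assembly of the lemmas of this section together with Gromov's theorem and elementary nilpotent group theory.
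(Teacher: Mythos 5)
Your overall strategy---Trofimov's theorem, passage to the quotient graph $\Graph_\sigma$ over an imprimitivity system with finite blocks, and then to a finite-index, torsion-free, nilpotent subgroup---is the same as the paper's, and your reordering (going from $\Linear_{G_\sigma}\Graph_\sigma$ directly to a group boundary, without the paper's detour through $\Aut\Graph_\sigma$) is legitimate for the theorem as stated. However, there is a genuine gap in the middle step. Theorem~\ref{theorem:trofimov2} only guarantees that the stabilizer in $G_\sigma$ of a vertex of $\Graph_\sigma$ is \emph{finite}, not trivial. Consequently $G_\sigma$ need not act freely on $\Graph_\sigma$, and $\Graph_\sigma$ need not be isomorphic to a Cayley graph of $G_\sigma$ (by Theorem~\ref{theorem:sabidussi} one only gets that $m\Graph_\sigma$ is a Cayley graph, where $m$ is the stabilizer order). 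Your application of the second part of Corollary~\ref{corollary:graphs} to conclude that $\Linear_{G_\sigma}\Graph_\sigma$ is bi-Lipschitz-equivalent to $\Linear G_\sigma$ is therefore unjustified as written: that corollary requires a free action, which is exactly what its proof via the Contraction Lemma uses to partition $\mathit{V\Graph}_\sigma$ into translates of a finite tree.

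The repair is already contained in your last paragraph, but the steps must be reordered. First pass to the finitely generated, torsion-free, nilpotent subgroup $N$ of finite index in $G_\sigma$ (your elementary argument for extracting a torsion-free finite-index subgroup works; the paper instead quotes a result of Seifter). Since $N$ is torsion-free and the vertex stabilizers in $G_\sigma$ are finite, any element of $N$ fixing a vertex has finite order and is hence trivial, so $N$ \emph{does} act freely on $\Graph_\sigma$; it also acts with finitely many orbits because it has finite index in the vertex-transitive group $G_\sigma$ (this hypothesis of Corollary~\ref{corollary:graphs} should be stated explicitly). Now the first part of Lemma~\ref{lemma:graphs} gives that $\Linear_{G_\sigma}\Graph_\sigma$ and $\Linear_N\Graph_\sigma$ are isometrically isomorphic, and only then does the second part of Corollary~\ref{corollary:graphs} give that $\Linear_N\Graph_\sigma$ and $\Linear N$ are bi-Lipschitz-equivalent; the same applies to $\Project$. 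With this correction your chain of equivalences closes, and the growth-rate claim follows as you indicate (the finiteness of the stabilizers being what makes the orbit map a quasi-isometry).
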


To prove this result about graphs with polynomial growth, the following two results of Trofimov~\cite{trofimov1984graphs} are essential.

\begin{theorem}[Theorem~1 in \cite{trofimov1984graphs}]\label{theorem:trofimov1}
Let $\Graph$ be an infinite, connected, vertex-transitive graph with polynomial growth. Then there exists an imprimitivity system $\sigma$ of $\Aut\Graph$ on $\mathit{V\Graph}$ with finite blocks such that $\Aut\Graph_\sigma$ is a finitely generated virtually nilpotent group and the stabilizer in $\Aut\Graph_\sigma$ of a vertex of $\Graph_\sigma$ is finite.
\end{theorem}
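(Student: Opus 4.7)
The plan is to follow the strategy Trofimov uses, which combines the theory of locally compact permutation groups on graphs with Gromov's theorem on groups of polynomial growth. Equip $G=\Aut\Graph$ with the permutation topology (pointwise convergence on the discrete set $\mathit{V\Graph}$). Because $\Graph$ has polynomial growth it is locally finite, so each vertex stabilizer $G_v$ is a compact open subgroup, and $G$ is a second countable, locally compact Hausdorff topological group acting continuously and transitively on $\mathit{V\Graph}$.

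The first step is to identify a closed normal subgroup $K\trianglelefteq G$ whose orbits on $\mathit{V\Graph}$ are finite; these orbits will form the desired imprimitivity system $\sigma$. A natural candidate is the subgroup generated by the compact normal subgroups of $G$, or equivalently the ``contraction kernel'' of $G_v$—the set of automorphisms whose iterates eventually fix larger and larger balls around $v$. Polynomial growth is what forces such a $K$ to have finite orbits: if an element $g\in K$ moved some vertex to arbitrarily large distance, the combination of orbit-counting and $K$ being normal would contradict the polynomial bound on $\abs{\Ball(v,n)}$. Once $K$ is fixed, the block containing $v$ is $Kv$, which is finite, and $\sigma=\{Kgv\setsep g\in G\}$ is an $\Aut\Graph$-invariant partition.

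Next, one analyses the quotient. The group $G_\sigma$ acts transitively on $\Graph_\sigma$, and the stabilizer of a block $\bar v$ is (a closed subgroup of) $G_v/K$. The design of $K$ should be such that this quotient is finite: one argues that after factoring out $K$, no non-trivial compact subgroup of $G_\sigma$ fixes a vertex, which together with local finiteness of $\Graph_\sigma$ forces vertex stabilizers to be finite. Here one may need to iterate the construction, successively enlarging $K$ by additional compact normal subgroups of $G/K$ and pulling them back—Trofimov's argument organises this as a transfinite but stabilising procedure using the descending chain condition supplied by polynomial growth.

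The hard step is showing that $\Aut\Graph_\sigma$ (and not just $G_\sigma$) has these properties, i.e.\ that the imprimitivity system is rigid enough that every automorphism of $\Graph_\sigma$ comes from $G_\sigma$ up to finite index; Example~\ref{example:crossedlatter} shows this is a genuine issue. Granting this, $\Aut\Graph_\sigma$ acts with finite vertex stabilizers on the locally finite vertex-transitive graph $\Graph_\sigma$ of polynomial growth. Orbit-stabilizer then identifies $\Aut\Graph_\sigma$ with a finitely generated group whose Cayley graph is quasi-isometric to $\Graph_\sigma$, hence of polynomial growth, and Gromov's theorem concludes that $\Aut\Graph_\sigma$ is virtually nilpotent. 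The main obstacle throughout is the extraction of $K$ with \emph{finite} (not merely compact) quotient stabilizers, which is what makes Trofimov's argument delicate; Gromov's theorem then enters only at the very end to produce the virtually nilpotent conclusion.
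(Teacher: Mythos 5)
First, a point of order: the paper contains no proof of this statement at all --- it is imported verbatim as Theorem~1 of Trofimov's paper \cite{trofimov1984graphs} and used as a black box (e.g.\ in the proof of Theorem~\ref{theorem:graphs}). So there is no ``paper's own proof'' to match your attempt against; the only fair comparison is with Trofimov's argument, and against that standard your text is an outline with the two hardest steps left unproved. Concretely: (i)~you never establish that your candidate subgroup $K$ (the group generated by compact normal subgroups, or a ``contraction kernel'' of a vertex stabilizer) actually has finite orbits, nor that the quotient stabilizers become finite --- at both places you write ``the design of $K$ should be such that'' and defer to an unspecified iteration. But the existence of a closed normal subgroup with finite orbits whose quotient has \emph{finite} vertex stabilizers is precisely the content of Trofimov's theorem; assuming it is assuming the conclusion. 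The heuristic ``if $g\in K$ moved a vertex arbitrarily far, orbit-counting would contradict polynomial growth'' does not survive scrutiny as stated: normality of $K$ alone does not convert a single unbounded orbit into enough distinct vertices in a ball to violate a polynomial volume bound, and making this precise is where Trofimov's delicate analysis (and the descending-chain bookkeeping you allude to) actually lives.

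(ii)~The second genuine gap is one you flag yourself and then wave through with ``Granting this'': the theorem asserts properties of the \emph{full} automorphism group $\Aut\Graph_\sigma$, not of $(\Aut\Graph)_\sigma$, and Example~\ref{example:crossedlatter} in this very paper shows the inclusion $(\Aut\Graph)_\sigma\subseteq\Aut\Graph_\sigma$ can be proper. Trofimov's resolution is not to prove a rigidity statement that every automorphism of $\Graph_\sigma$ comes from $G_\sigma$ up to finite index (which is false in general); rather, the blocks are taken to be orbits of a subgroup defined canonically from the graph --- compare the role of the characteristic subgroup $B_0(\Graph)$ of bounded automorphisms of finite order in Proposition~\ref{proposition:bounded-finite} --- so that $\Graph_\sigma$ is again an infinite, connected, vertex-transitive graph of polynomial growth to which the analysis of stabilizers applies directly, with the construction iterated until the stabilizers in the full automorphism group are finite. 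Your closing step is the one part that is sound and matches the known route: once finite stabilizers on a locally finite vertex-transitive graph of polynomial growth are in hand, the orbit-stabilizer and Milnor--\v Svarc arguments make $\Aut\Graph_\sigma$ finitely generated of polynomial growth, and Gromov's theorem yields virtual nilpotency. As it stands, though, your proposal is a plausible road map with the two load-bearing lemmas missing, not a proof.
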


\begin{theorem}[Theorem~2 in \cite{trofimov1984graphs}]\label{theorem:trofimov2}
Let $\Graph$ be an infinite, connected graph with polynomial growth and let a group $G\leq\Aut\Graph$ act vertex-transitively on $\mathit{V\Graph}$. Then there exists an imprimitivity system $\sigma$ of $G$ on $\mathit{V\Graph}$ with finite blocks such that $G_\sigma$ is a finitely generated virtually nilpotent group and the stabilizer in $G_\sigma$ of a vertex of $\Graph_\sigma$ is finite.
\end{theorem}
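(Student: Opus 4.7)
The plan is to deduce Theorem~\ref{theorem:trofimov2} from Theorem~\ref{theorem:trofimov1} by observing that an imprimitivity system obtained for the full automorphism group is automatically an imprimitivity system for every vertex-transitive subgroup, and by transferring the algebraic conclusions from $\Aut\Graph_\sigma$ down to the subgroup $G_\sigma$. Thus the bulk of the content is packed into Theorem~\ref{theorem:trofimov1}, and what remains is essentially a permanence argument.

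First I would apply Theorem~\ref{theorem:trofimov1} to $\Graph$ to obtain an imprimitivity system $\sigma$ of $\Aut\Graph$ on $\mathit{V\Graph}$ with finite blocks such that $\Aut\Graph_\sigma$ is a finitely generated virtually nilpotent group with finite vertex stabilizers. Since $G\le\Aut\Graph$, the partition $\sigma$ is automatically $G$-invariant, and since $G$ acts vertex-transitively on $\mathit{V\Graph}$, it is an imprimitivity system of $G$ on $\mathit{V\Graph}$ with finite blocks. The natural homomorphism $\phi\colon G\to G_\sigma\le\Aut\Graph_\sigma$ then realizes $G_\sigma$ as a subgroup of $\Aut\Graph_\sigma$, and the stabilizer in $G_\sigma$ of any vertex of $\Graph_\sigma$ is contained in the corresponding stabilizer in $\Aut\Graph_\sigma$, which is finite by Theorem~\ref{theorem:trofimov1}. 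This already gives the block condition and the stabilizer condition.

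It then remains to verify that $G_\sigma$ is itself finitely generated and virtually nilpotent. For this I would invoke the standard fact that finitely generated virtually nilpotent groups are virtually polycyclic, and that every subgroup of a virtually polycyclic group is again virtually polycyclic, hence finitely generated. A short coset argument also shows that if $N$ is a nilpotent subgroup of finite index in $\Aut\Graph_\sigma$, then $G_\sigma\cap N$ is nilpotent of finite index in $G_\sigma$, so $G_\sigma$ is virtually nilpotent. Combining these conclusions with the previous paragraph yields all the assertions of Theorem~\ref{theorem:trofimov2}.

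The main obstacle does not lie in this reduction — which is essentially bookkeeping — but rather in Theorem~\ref{theorem:trofimov1} itself, whose proof relies on Gromov's polynomial growth theorem together with a delicate analysis of how the automorphism group of an infinite vertex-transitive graph of polynomial growth is organized relative to its finite balls. At the level of the present argument, the only subtle point worth isolating is the permanence of ``finitely generated virtually nilpotent'' under passing to subgroups, which ultimately rests on polycyclicity.
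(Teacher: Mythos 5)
This statement is not proved in the paper at all: it is imported verbatim as Theorem~2 of Trofimov~\cite{trofimov1984graphs}, just as Theorem~\ref{theorem:trofimov1} is imported as Trofimov's Theorem~1, so there is no internal proof to compare yours against. That said, your deduction of Theorem~\ref{theorem:trofimov2} from Theorem~\ref{theorem:trofimov1} is mathematically sound with the definitions this paper uses: an $\Aut\Graph$\ndash invariant partition is a fortiori $G$\ndash invariant, and since $G$ is assumed to act vertex-transitively, $\sigma$ is indeed an imprimitivity system of $G$ on $\mathit{V\Graph}$ with finite blocks; $G_\sigma$ is by definition a subgroup of $\Aut\Graph_\sigma$ (you only need this inclusion, so the caveat of Example~\ref{example:crossedlatter}, that $(\Aut\Graph)_\sigma$ may be a \emph{proper} subgroup of $\Aut\Graph_\sigma$, is harmless for your argument); vertex stabilizers in $G_\sigma$ lie inside the finite vertex stabilizers of $\Aut\Graph_\sigma$; and your permanence step is correctly grounded, since a finitely generated virtually nilpotent group is virtually polycyclic and hence satisfies the maximal condition on subgroups, so $G_\sigma$ is finitely generated, while $G_\sigma\cap N$ is nilpotent of index at most $[\Aut\Graph_\sigma:N]$ in $G_\sigma$ by the usual coset argument. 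What your route buys is the observation that, with this paper's purely combinatorial notion of imprimitivity system (a $G$\ndash invariant partition for a vertex-transitive $G$), the subgroup version is a formal corollary of the full-automorphism-group version; what it does not do is reprove anything of substance, since, as you yourself acknowledge, all the depth (Gromov's polynomial growth theorem and Trofimov's structural analysis of $\Aut\Graph$ relative to finite balls) remains packed inside the black box of Theorem~\ref{theorem:trofimov1}, which the paper likewise only cites.
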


\begin{proof}[Proof of Theorem~\ref{theorem:graphs}]
Let $G=\Aut\Graph$ and let $\sigma$ and $G_\sigma$ as in Theorem~\ref{theorem:trofimov2}. Then $G_\sigma$ contains a finitely generated, nilpotent, normal subgroup $N$ of finite index. By \cite[Corollary~2.7]{seifter1991properties} we can furthermore assume that $N$ is torsion-free. Since the finite index of $N$ in $G_\sigma$ implies that $N$ acts with finitely many orbits on $\Graph$, we can assume by \cite[Theorem~2.3]{seifter1991groups} that all $n\in N$, $n\ne 1$, act with infinite orbits on $\Graph_\sigma$.

Since the vertex stabilizers of $\Aut\Graph_\sigma$ and $G_\sigma$ are both finite (by Theorems~\ref{theorem:trofimov1} and \ref{theorem:trofimov2}), both groups have the same growth rate as the graph $\Graph_\sigma$ which is of course equal to the growth rate of $\Graph$. Hence $G_\sigma$ has finite index in $\Aut\Graph_\sigma$. As $N$ has finite index in $G_\sigma$, it has also finite index in $\Aut\Graph_\sigma$. Therefore Lemma~\ref{lemma:graphs} implies that the projective (linear) boundary induced by $N$ on $\Graph_\sigma$ is bi-Lipschitz-equivalent to the projective (linear) boundary induced by $\Aut\Graph_\sigma$ which we defined to be the projective (linear) boundary of $\Graph_\sigma$.

Since $\Graph_\sigma$ is a quotient graph of $\Graph$ with respect to the finite blocks of $\sigma$, Corollary~\ref{corollary:graphs} implies that the projective (linear) boundaries of $\Graph$ and $\Graph_\sigma$ which are induced by $\Aut\Graph$ and $G_\sigma=(\Aut\Graph)_\sigma$, respectively, are bi-Lipschitz-equivalent.

To conclude the proof we show that $\Linear N$ and $\Project N$ are bi-Lipschitz-equivalent to $\Linear_N\Graph_\sigma$ and $\Project_N\Graph_\sigma$, respectively. As $N$ is torsion-free and the stabilizer of a vertex is finite, $N$ acts freely on $\Graph_\sigma$. Since $N$ also acts with finitely many orbits on $\Graph_\sigma$, the claim follows directly from Corollary~~\ref{corollary:graphs}.
\end{proof}

As a consequence of Theorem~\ref{theorem:nilpotent} we obtain the following result.

\begin{corollary}\label{corollary:grdis}
Let $\Graph$ be an infinite, connected, vertex-transitive graph with polynomial growth and let $N$ be a finitely generated, torsion-free, nilpotent group supplied by Theorem~\ref{theorem:graphs}. Then the linear boundary $\Linear\Graph$ is homeomorphic to a disjoint union of spheres:
\[ \Linear\Graph = \S^{\nu(1)-1} \uplus \S^{\nu(2)-1}\uplus \dotsb \uplus \S^{\nu(c)-1}, \]
where $c$ is the nilpotency class of $N$ and $\nu(i)$ is the torsion-free rank of the $i$\ndash th quotient in the descending central series of $N$. Analogously, the projective boundary $\Project\Graph$ is homeomorphic to a disjoint union of projective spaces:
\[ \Project\Graph = \P^{\nu(1)-1} \uplus \P^{\nu(2)-1}\uplus \dotsb \uplus \P^{\nu(c)-1}. \]
\end{corollary}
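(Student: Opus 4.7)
The plan is to chain together Theorem~\ref{theorem:graphs} and Theorem~\ref{theorem:nilpotent}. Theorem~\ref{theorem:graphs} already does almost all of the geometric work: it produces, from the vertex-transitive graph $\Graph$ of polynomial growth, a finitely generated, torsion-free, nilpotent group $N$ together with bi-Lipschitz equivalences $\Linear N \to \Linear\Graph$ and $\Project N \to \Project\Graph$ (with respect to the metric $t$ on the respective Kolmogorov quotients). In particular both maps are homeomorphisms of the underlying topological spaces.

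Next I would invoke Theorem~\ref{theorem:nilpotent} applied to the finitely generated nilpotent group $N$. Let
\[ N = N_1 \supseteq N_2 \supseteq \dotsb \supseteq N_c \supsetneq N_{c+1} = \{1\} \]
be the descending central series of $N$, and let $\nu(i)$ denote the torsion-free rank of the abelian quotient $N_i/N_{i+1}$. Since $N$ is finitely generated and nilpotent, Theorem~\ref{theorem:nilpotent} directly identifies
\[ \Linear N \cong \S^{\nu(1)-1} \uplus \S^{\nu(2)-1} \uplus \dotsb \uplus \S^{\nu(c)-1} \]
and
\[ \Project N \cong \P^{\nu(1)-1} \uplus \P^{\nu(2)-1} \uplus \dotsb \uplus \P^{\nu(c)-1}. \]

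Composing this with the homeomorphisms supplied by Theorem~\ref{theorem:graphs} gives the two claimed homeomorphisms for $\Linear\Graph$ and $\Project\Graph$. There is effectively no obstacle here, since both ingredients are stated in exactly the form needed; the only mild point to verify is that the invariants $c$ and $\nu(i)$ in the conclusion are read off from $N$ (and are well-defined by Lemma~\ref{lemma:ranks} or the corresponding appendix result), so the statement of the corollary is consistent with how $N$ is produced in Theorem~\ref{theorem:graphs}. The proof is therefore a short assembly: invoke Theorem~\ref{theorem:graphs} to reduce to $N$, apply Theorem~\ref{theorem:nilpotent} to $N$, and observe that bi-Lipschitz equivalence of metric spaces is in particular a homeomorphism.
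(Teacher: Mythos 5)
Your proposal is correct and matches the paper's (implicit) argument exactly: the paper states the corollary as an immediate consequence of Theorem~\ref{theorem:graphs} combined with Theorem~\ref{theorem:nilpotent}, which is precisely the chaining you describe. The only thing you add is the explicit observation that a bi-Lipschitz equivalence is a homeomorphism, which is the correct and necessary glue.
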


Having these characterizations of the linear and projective boundaries of vertex-transitive graphs with polynomial growth, immediately the following question arises: When are the linear (projective) boundary of an infinite, connected, vertex-transitive graph $\Graph$ with polynomial growth and the linear (projective) boundary of its automorphism group $\Aut\Graph$ bi-Lipschitz-equivalent? Using the concept of bounded automorphisms we are able to present a partial answer to this question.

An automorphism $b\in\Aut\Graph$ is called \emph{bounded} if there is an integer $k$, depending on $b$, such that $d(x,b(x))\leq k$ holds for all $x\in\mathit{V\Graph}$. Of course the bounded automorphisms of $\Graph$ give rise to a normal subgroup $B(\Graph)$ of $\Aut\Graph$. As was shown in \cite{godsil1989note}, the same holds for the bounded automorphisms of finite order of $\Graph$. We denote the normal subgroup of $\Aut\Graph$ generated by all bounded automorphisms of finite order by $B_0(\Graph)$. As was also shown in \cite{godsil1989note}, $B_0(\Graph)$ is locally finite, periodic and has finite orbits on $\Graph$. Furthermore, in \cite{seifter1991properties} the following result concerning $B_0(\Graph)$ was proved:

\begin{proposition}[Corollary~2.7 in \cite{seifter1991properties}]
\label{proposition:bounded-finite}
Let $\Graph$ be an infinite, connected graph with polynomial growth and let $G\le \Aut\Graph$ act vertex-transitively on $\Graph$. Then the orbits of $B_0(\Graph)\cap G$ on $\Graph$ give rise to an imprimitivity system $\sigma$ of $G$ on $\mathit{V\Graph}$ such that $G_{\sigma}$ satisfies the assertions of Theorem~\ref{theorem:trofimov2}.
\end{proposition}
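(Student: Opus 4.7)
The plan is to verify the defining properties of $\sigma$ in turn and then compare $G_\sigma$ with the quotient group produced by an application of Theorem~\ref{theorem:trofimov2} to $(G,\Graph)$ itself. Set $H := B_0(\Graph) \cap G$. Since $B_0(\Graph)$ is normal in $\Aut\Graph$, the intersection $H$ is a normal subgroup of $G$, so its orbits form a $G$\ndash invariant partition $\sigma$, which is an imprimitivity system by vertex-transitivity of $G$. Each $H$-orbit sits inside a $B_0(\Graph)$-orbit, and the latter is finite by \cite{godsil1989note}; hence the blocks of $\sigma$ are finite, and by vertex-transitivity they share a common finite size (say at most $k$) and a common finite diameter $D$ in $(\mathit{V\Graph},d)$.

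Next I would identify the kernel $K_\sigma$ of $G\to G_\sigma$ with $H$. The inclusion $H\le K_\sigma$ is immediate. Conversely, any $g\in K_\sigma$ preserves every block of $\sigma$ setwise, so $d(x,g(x))\le D$ for all $x$, making $g$ bounded, while its restriction to each block of size $\le k$ yields $g^{k!}=1$. Thus $g$ is a bounded automorphism of finite order and therefore lies in $H$. Now apply Theorem~\ref{theorem:trofimov2} to $(G,\Graph)$ to obtain some imprimitivity system $\tau$ with finite blocks such that $G_\tau$ is finitely generated virtually nilpotent with finite vertex stabilizers. The very same displacement argument applied to $K_\tau:=\ker(G\to G_\tau)$ gives $K_\tau\le H=K_\sigma$, so $G_\sigma=G/H$ is a quotient of $G_\tau=G/K_\tau$ and is therefore itself finitely generated virtually nilpotent.

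It remains to show that the vertex stabilizer in $G_\sigma$ of a block $b\in\sigma$ is finite. Denote the setwise and pointwise stabilizers in $G$ by $G^{\{b\}}$ and $G^b$. Since $H\le G^{\{b\}}$ (every element of $H$ permutes the orbit $b$) and $G^b$ is normal in $G^{\{b\}}$, the stabilizer of $b$ in $G_\sigma$ fits into the short exact sequence
\[ 1 \longrightarrow G^bH/H \longrightarrow G^{\{b\}}/H \longrightarrow G^{\{b\}}/G^bH \longrightarrow 1. \]
The right-hand group is a quotient of $G^{\{b\}}/G^b$, which embeds in the symmetric group on the finite set $b$ and is therefore finite. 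For the left-hand group $G^bH/H\cong G^b/(G^b\cap H)$, pick $y\in b$; then $G^b\le G_y$, and the image of $G_y$ in $G_\tau$ lies in the vertex stabilizer of the $\tau$-block containing $y$, which is finite. Combined with $K_\tau\le H$ this forces $G_y/(G_y\cap H)$ to be finite, whence $G^b/(G^b\cap H)$ is finite as well.

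The hard part will be this last step: it requires juggling stabilizers relative to the two a priori incomparable imprimitivity systems $\sigma$ and $\tau$, and the whole argument rests on the fact that $K_\tau$ is contained in $H$, which is precisely what lets one transfer the finiteness of vertex stabilizers in $G_\tau$ over to the setwise stabilizer quotient $G^{\{b\}}/H$ appearing in $G_\sigma$.
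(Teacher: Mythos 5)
Your argument is correct, and it is worth noting that the paper itself offers no proof of this proposition: it is imported verbatim as Corollary~2.7 of \cite{seifter1991properties}, so there is nothing in the text to compare against. What your derivation buys is a self-contained proof from ingredients already quoted in the paper, namely Theorem~\ref{theorem:trofimov2} and the two facts about $B_0(\Graph)$ from \cite{godsil1989note} (normality in $\Aut\Graph$ and finiteness of its orbits). The two load-bearing steps are both sound. First, the displacement argument: since $H=B_0(\Graph)\cap G$ is normal in $G$ and $G$ is vertex-transitive, $G$ permutes the $H$\ndash orbits transitively by isometries, so the blocks have a common finite size and diameter; any automorphism fixing every block setwise is then bounded and of finite order, hence lies in $B_0(\Graph)$. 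Applied to $\sigma$ this identifies $\ker(G\to G_\sigma)$ with $H$, and applied to the Trofimov system $\tau$ it gives $K_\tau\le H$, which is exactly what exhibits $G_\sigma\cong G/H$ as a quotient of the finitely generated virtually nilpotent group $G_\tau\cong G/K_\tau$ (and both properties pass to quotients). Second, the stabilizer transfer: the block stabilizer in $G_\sigma$ is $G^{\{b\}}/H$, its quotient $G^{\{b\}}/G^bH$ is finite because it factors through the symmetric group on the finite block $b$, and the remaining piece $G^bH/H$ embeds in $G_y/(G_y\cap H)$, which is finite because $G_y/(G_y\cap K_\tau)$ injects into a finite vertex stabilizer of $G_\tau$ and $K_\tau\le H$. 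I see no gap; the only cosmetic remark is that the uniformity of block size and diameter rests on normality of $H$ (respectively $G$\ndash invariance of $\tau$) combined with transitivity, which you do in fact invoke.
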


Together with the following result of Sabidussi~\cite{sabidussi1964vertex}, Proposition~\ref{proposition:bounded-finite} now immediately implies a partial answer to the above formulated question. To formulate Sabidussi's result we need another definition.

If $\Graph$ is a graph and $m$ is a cardinal then the graph $m\Graph$ is defined on the Cartesian product of $\mathit{V\Graph}$ by a set $M$ of cardinality $m$, and
\[ E(m\Graph) = \Bigl\{ \{(x,i),(y,j)\} \setsep \{x,y\}\in \mathit{E\Graph}, \, i,j\in M \Bigr\}. \]

\begin{theorem}[Theorem~4 in \cite{sabidussi1964vertex}]
\label{theorem:sabidussi}
Let $\Graph$ be a connected graph and let $G\le \Aut\Graph$ act vertex-transitively on $\Graph$. Furthermore, let $m$ denote the cardinality of the stabilizer in $G$ of a vertex of $\Graph$. Then $m\Graph$ is a Cayley graph of $G$.
\end{theorem}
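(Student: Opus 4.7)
The plan is to build an explicit graph isomorphism between $m\Graph$ and some Cayley graph of $G$. Fix a reference vertex $v\in\mathit{V\Graph}$ and set $H=G_v$; by hypothesis $\card H=m$, so I may identify the indexing set $M$ with $H$ itself. Using vertex-transitivity of $G$, I choose a transversal $(r_w)_{w\in\mathit{V\Graph}}$ of the left cosets of $H$ such that $r_w v=w$ for every $w$ and $r_v=1$. Each $g\in G$ then admits a unique factorisation $g=r_{gv}h$ with $h\in H$, yielding a bijection
\[ \phi\colon G\to \mathit{V\Graph}\times H,\qquad g\mapsto \bigl(gv,\,r_{gv}^{-1}g\bigr). \]

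Next I would define the candidate generating set $S=\{s\in G\setminus\{1\}\setsep sv\sim v \text{ in } \Graph\}$. Symmetry ($sv\sim v$ iff $v\sim s^{-1}v$) is immediate, and the key step is to check that $S$ generates $G$. Given $g\in G$, connectedness of $\Graph$ supplies a path $v=v_0,v_1,\dotsc,v_n=gv$; picking $t_i\in G$ with $t_i v=v_i$ and $t_0=1$, the element $s_i=t_{i-1}^{-1}t_i$ lies in $S$ because $s_i v=t_{i-1}^{-1}v_i\sim t_{i-1}^{-1}v_{i-1}=v$. Hence $t_n=s_1\dotsm s_n\in\gen S$, and we have $g=t_n h_0^{-1}$ for some $h_0\in H$. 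To absorb $H$ into $\gen S$, pick any $s\in S$ (which exists because $v$ has at least one neighbour); for $h\in H$ the element $hs$ satisfies $(hs)v=h(sv)\sim hv=v$, whence $hs\in S$ and $h=(hs)s^{-1}\in\gen S$. Consequently $g\in\gen S$, so $S$ generates $G$.

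Finally I would verify that $\phi$ transports the Cayley graph $\mathrm{Cay}(G,S)$ onto $m\Graph$. By definition $g_1$ is adjacent to $g_2$ in $\mathrm{Cay}(G,S)$ iff $g_1^{-1}g_2\in S$, iff $(g_1^{-1}g_2)v\sim v$, iff $g_2v\sim g_1v$ in $\Graph$. On the other hand $\phi(g_1)=(g_1v,h_1)$ and $\phi(g_2)=(g_2v,h_2)$ are adjacent in $m\Graph$ precisely when $\{g_1v,g_2v\}\in\mathit{E\Graph}$. The two conditions coincide (independently of the second coordinates, as they must in $m\Graph$), so $\phi$ is a graph isomorphism, proving that $m\Graph$ is a Cayley graph of $G$.

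The main obstacle I anticipate is showing that $S$ alone (rather than $S\cup H$) generates $G$: the path argument only places $g$ in $\gen S\cdot H$, and one must exploit that $H$ preserves the neighbourhood of $v$ to conclude $H\subseteq\gen S$. The degenerate case where $\Graph$ is a single vertex is harmless: then $G=H$, $S=\emptyset$, and $m\Graph$ is the Cayley graph of $G$ with empty generating set, namely $m$ isolated vertices.
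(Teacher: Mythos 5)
The paper states this result as a citation (Theorem~4 of Sabidussi, 1964) and gives no proof of its own, so there is nothing internal to compare against. Your argument is correct and is essentially the classical one: the bijection $g\mapsto(gv,r_{gv}^{-1}g)$, the connection set $S=\{s\ne1\setsep sv\sim v\}$, and the observation that adjacency in both $m\Graph$ and $\mathrm{Cay}(G,S)$ depends only on the pair $(g_1v,g_2v)$ all check out, including the step $hs\in S$ for $h\in G_v$ that absorbs the stabilizer into $\gen S$ (note $hs\ne1$ since otherwise $sv=v\sim v$, impossible in a simple graph). The degenerate one-vertex case is handled sensibly, so I see no gap.
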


\begin{corollary}
Let $\Graph$ be an infinite, connected, vertex-transitive graph with polynomial growth. Then $\Linear\Aut\Graph$ and $\Project\Aut\Graph$ are bi-Lipschitz-equivalent to $\Linear\Graph$ and $\Project\Graph$, respectively, if $B_0(\Graph)$ is finite.
\end{corollary}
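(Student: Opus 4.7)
The plan is to factor both $\Linear\Aut\Graph$ and $\Linear\Graph$ through the common intermediate object $\Linear G_\sigma$, where $\sigma$ is the imprimitivity system of $\Aut\Graph$ on $\mathit{V\Graph}$ supplied by Proposition~\ref{proposition:bounded-finite} applied with $G=\Aut\Graph$. Since the blocks of $\sigma$ are the $B_0(\Graph)$\ndash orbits on $\mathit{V\Graph}$, the hypothesis that $B_0(\Graph)$ is finite forces all blocks to be finite and of equal size (using vertex-transitivity of $\Aut\Graph$).

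On the graph side, I would apply Corollary~\ref{corollary:graphs} (first part) directly to conclude that $\Linear\Graph=\Linear_{\Aut\Graph}\Graph$ is bi-Lipschitz-equivalent to $\Linear_{(\Aut\Graph)_\sigma}\Graph_\sigma=\Linear_{G_\sigma}\Graph_\sigma$, and likewise for $\Project$. To move from $\Linear_{G_\sigma}\Graph_\sigma$ to $\Linear G_\sigma$, I would invoke Trofimov's Theorem~\ref{theorem:trofimov1} so that the stabilizer in $G_\sigma$ of a vertex of $\Graph_\sigma$ has finite cardinality $m$, and then Sabidussi's Theorem~\ref{theorem:sabidussi} to realize $m\Graph_\sigma$ as a Cayley graph of $G_\sigma$. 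Corollary~\ref{corollary:graphs} (second part), applied to the free, vertex-transitive action of $G_\sigma$ on this Cayley graph, gives $\Linear G_\sigma$ bi-Lipschitz-equivalent to $\Linear_{G_\sigma}(m\Graph_\sigma)$, and Lemma~\ref{lemma:graphs} (second part), applied to the $G_\sigma$\ndash invariant partition of $m\Graph_\sigma$ into the $m$\ndash point fibers of the projection $m\Graph_\sigma\to\Graph_\sigma$, gives $\Linear_{G_\sigma}(m\Graph_\sigma)$ bi-Lipschitz-equivalent to $\Linear_{G_\sigma}\Graph_\sigma$. Chaining these steps yields $\Linear\Graph$ bi-Lipschitz-equivalent to $\Linear G_\sigma$.

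On the group side, I would identify the kernel of the canonical surjection $\pi\colon\Aut\Graph\to G_\sigma$: every $g\in\ker\pi$ fixes each finite block of $\sigma$ setwise, hence $d(x,g(x))$ is bounded uniformly in $x$ (by the common block diameter), and $g$ restricts to a permutation of each block of bounded size, so $g$ has finite order. Thus $\ker\pi$ consists of bounded automorphisms of finite order, so $\ker\pi\subseteq B_0(\Graph)$; the reverse inclusion is built into the construction of $\sigma$. Since $\ker\pi=B_0(\Graph)$ is finite and $G_\sigma$ is finitely generated by Trofimov, lifting finite generators and adjoining $\ker\pi$ shows that $\Aut\Graph$ is itself finitely generated, so it carries a word metric and $\Linear\Aut\Graph$ is defined. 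Lemma~\ref{lemma:finite} (first part), applied to the short exact sequence $1\to B_0(\Graph)\to\Aut\Graph\to G_\sigma\to 1$ with compact (finite) kernel, then yields $\Linear\Aut\Graph$ bi-Lipschitz-equivalent to $\Linear G_\sigma$. Combining with the previous paragraph produces the desired equivalence, and the same chain works verbatim for $\Project$.

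The step I expect to demand the most care is the identification $\ker\pi=B_0(\Graph)$: the boundedness of $g\in\ker\pi$ follows immediately, but one must additionally argue that such a $g$ has finite order---using that all $\sigma$\ndash blocks share a single finite cardinality, so $g$ is a product of permutations on disjoint finite sets of uniformly bounded size---so that $g$ genuinely lies in the normal subgroup $B_0(\Graph)$ generated by bounded automorphisms of finite order. Once this and the resulting finite generation of $\Aut\Graph$ are in hand, the remainder of the argument is a mechanical chaining of Corollary~\ref{corollary:graphs}, Lemma~\ref{lemma:graphs}, and Lemma~\ref{lemma:finite}.
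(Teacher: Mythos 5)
Your proof is correct and uses the same toolkit as the paper---Proposition~\ref{proposition:bounded-finite}, Trofimov's theorems, Sabidussi's Theorem~\ref{theorem:sabidussi}, and the chaining of Corollary~\ref{corollary:graphs} and Lemma~\ref{lemma:graphs}---but the factorization is organized differently. The paper's (very terse) proof stays at the level of $\Graph$ itself: from the finiteness of $B_0(\Graph)$ it deduces that the stabilizer in $\Aut\Graph$ of a vertex of $\Graph$ is finite of some cardinality $m$, applies Sabidussi to realize $m\Graph$ as a Cayley graph of $\Aut\Graph$, and then finishes by ``arguments quite similar to those in the proof of Theorem~\ref{theorem:graphs}''; it never needs the exact sequence $1\to B_0(\Graph)\to\Aut\Graph\to(\Aut\Graph)_\sigma\to 1$ or Lemma~\ref{lemma:finite}. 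You instead route both sides through $G_\sigma$: Sabidussi is applied to $\Graph_\sigma$ and $G_\sigma$, and the group side is handled by identifying $\ker\pi$ with $B_0(\Graph)$ and invoking Lemma~\ref{lemma:finite}. Your version is longer but supplies precisely the details the paper leaves implicit---in particular that every automorphism fixing all blocks setwise is bounded of finite order (the blocks being finite of a common size) and hence lies in $B_0(\Graph)$, and that $\Aut\Graph$ is consequently finitely generated so that $\Linear\Aut\Graph$ is defined at all. Two minor remarks: the finiteness of the stabilizer in $G_\sigma$ of a vertex of $\Graph_\sigma$ is what Proposition~\ref{proposition:bounded-finite} (via Theorem~\ref{theorem:trofimov2}) asserts for \emph{this} $\sigma$, so you should cite that rather than Theorem~\ref{theorem:trofimov1}, whose imprimitivity system need not a priori be the one built from the $B_0(\Graph)$\ndash orbits; and in the step contracting the $m$\ndash point fibers of $m\Graph_\sigma\to\Graph_\sigma$ one should note that the fibers have diameter at most $2$ and are permuted by the Cayley action, so Lemma~\ref{lemma:graphs} indeed applies. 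Both routes are valid and yield the same bi-Lipschitz equivalences.
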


\begin{proof}
$B_0(\Graph)$ is a normal subgroup of $\Aut\Graph$. If it is in addition finite then it follows from \ref{proposition:bounded-finite} and \ref{theorem:trofimov1} that the stabilizer of a vertex of $\Graph$ in $\Aut\Graph$ has some finite cardinality $m$. Then, by Theorem~\ref{theorem:sabidussi}, $m\Graph$ is a Cayley graph of $\Aut\Graph$ and arguments quite similar to those in the proof of Theorem~\ref{theorem:graphs} immediately complete the proof.
\end{proof}

In \cite{trofimov1983bounded} Trofimov defined a \emph{lattice} as a connected locally finite graph $\Graph$, such that for one of the groups $G$, acting vertex-transitively on $\Graph$, there exists an imprimitivity system $\sigma$ with finite blocks, such that $G_\sigma$ is a finitely generated, commutative group. As was shown in \cite{trofimov1983bounded}, in this case $G\leq B(\Graph)$ holds. Furthermore, it is obvious that lattices have polynomial growth with the same growth rate as $G_\sigma$. In addition lattices can be characterized as follows:

\begin{theorem}[Theorem~1 in \cite{trofimov1983bounded}]
\label{theorem:lattices}
Let $\Graph$ be a connected locally finite graph. Then $\Graph$ is a lattice if and only if a group $G\leq B(\Graph)$ acts vertex-transitively on $\Graph$.
\end{theorem}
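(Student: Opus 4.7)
The plan is to handle the two implications separately, using Trofimov's structure theorem (Theorem~\ref{theorem:trofimov2}) in the backward direction.

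For the forward implication, suppose $\Graph$ is a lattice, so some vertex-transitive $G\le\Aut\Graph$ admits a $G$-invariant partition $\sigma$ with finite blocks such that the induced quotient $G_\sigma$ is finitely generated commutative. I would first note that a vertex-transitive abelian action on $\Graph_\sigma$ has constant displacement: writing an arbitrary vertex as $h_\sigma v_0$ and using commutativity gives $d_{\Graph_\sigma}(h_\sigma v_0, g_\sigma h_\sigma v_0) = d_{\Graph_\sigma}(v_0, g_\sigma v_0)$, so $G_\sigma \le B(\Graph_\sigma)$. To lift this to $G \le B(\Graph)$, vertex-transitivity of $G$ together with finiteness of blocks forces all blocks to have the same uniformly bounded $\Graph$-diameter $D$; the quasi-isometry estimate from the proof of Lemma~\ref{lemma:graphs} then yields $d_\Graph(x,gx) \le (D+1)\, d_{\Graph_\sigma}(b_x, g_\sigma b_x) + D$, which is bounded.

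For the backward implication, assume $G \le B(\Graph)$ acts vertex-transitively. A standard argument (going back to Trofimov) shows that any connected locally finite graph admitting a vertex-transitive action by bounded automorphisms has polynomial growth. Thus Theorem~\ref{theorem:trofimov2} furnishes an imprimitivity system $\tau$ of $G$ on $\mathit{V\Graph}$ with finite blocks such that $G_\tau$ is finitely generated virtually nilpotent with finite vertex stabilizers; since blocks of $\tau$ have bounded $\Graph$-diameter, the inclusion $G \le B(\Graph)$ passes through to $G_\tau \le B(\Graph_\tau)$. Now the elementary estimate $d(x,[g,h]x) \le 2(M_g + M_h)$ shows that commutators in $G_\tau$ are uniformly bounded, so orbits of $N = [G_\tau,G_\tau]$ on $\mathit{V\Graph}_\tau$ lie inside fixed-radius balls and are therefore finite by local finiteness. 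Let $\bar N\le G$ denote the preimage of $N$; its orbits on $\mathit{V\Graph}$ form a $G$-invariant partition $\sigma$ (which coarsens $\tau$) whose blocks are finite unions of finite $\tau$-blocks, hence finite. The induced quotient $G_\sigma$ is then a quotient of the finitely generated abelian group $G_\tau/N$, hence finitely generated commutative, exhibiting $\Graph$ as a lattice.

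I expect the hard part to be the backward direction: while the commutator displacement estimate is elementary, one must verify simultaneously that the partition $\sigma$ is $G$-invariant (via normality of $\bar N$ in $G$), has finite blocks (via finiteness of $N$-orbits in $\Graph_\tau$), and that $G_\sigma$ inherits vertex-transitivity and the commutative quotient structure from $G_\tau/N$. Aligning Trofimov's virtually nilpotent structure with these displacement bounds on commutators---in particular ensuring that the finite kernel of the action on $\Graph_\tau$ does not interfere with the abelianization---will be the technical heart of the argument.
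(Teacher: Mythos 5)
The paper contains no proof of this statement to compare against: it is quoted verbatim as Theorem~1 of Trofimov's paper \cite{trofimov1983bounded} and used as a black box. Judged on its own merits, your forward direction is fine --- a vertex-transitive commutative action has constant displacement ($d(h_\sigma v_0, g_\sigma h_\sigma v_0)=d(v_0,g_\sigma v_0)$), and the quasi-isometry onto the quotient with uniformly bounded blocks lifts boundedness back to $\Graph$. The backward direction, however, has two genuine gaps.

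First, you invoke polynomial growth of $\Graph$ in order to apply Theorem~\ref{theorem:trofimov2}. This is asserted as ``a standard argument going back to Trofimov,'' but it is essentially equivalent to the statement being proved (lattices trivially have polynomial growth), so without an independent derivation this step is circular. Second, and more seriously, finiteness of the orbits of $N=[G_\tau,G_\tau]$ does not follow from the estimate $d(x,[g,h]x)\le 2(M_g+M_h)$: that bound depends on $g$ and $h$ through $M_g$ and $M_h$, which are unbounded over $G_\tau$, so single commutators are \emph{not} uniformly bounded; and even if they were, a subgroup generated by automorphisms of displacement at most $C$ need not have orbits inside a ball of radius $C$ --- a product of $n$ such elements can move a vertex by $nC$ (the group $\Z$ acting on its own Cayley graph is generated by an element of displacement $1$ and has infinite orbits). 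The step that actually carries the theorem is different: since $G_\tau$ acts with finite vertex stabilizers and consists of bounded automorphisms, every conjugate $h^{-1}gh$ sends the base vertex into the finite ball $\Ball(v_0,M_g)$, so conjugacy classes are finite, $G_\tau$ is a finitely generated $FC$\ndash group, and by B.~H.~Neumann's theorem its derived subgroup is \emph{finite} --- only then are the $N$\ndash orbits finite and the abelian quotient available. Your plan skips exactly this argument, which is the technical heart of Trofimov's proof.
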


This immediately leads to the following:

\begin{theorem}\label{theorem:latticebndry}
Let $\Graph$ be a connected locally finite graph of polynomial growth with growth rate $r$ and let a group $G\leq B(\Graph)$ act vertex-transitively on $\Graph$. Then
\[ \Linear\Graph = \S^{r-1} \qquad\text{and}\qquad \Project\Graph = \P^{r-1}. \]
\end{theorem}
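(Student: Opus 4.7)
The plan is to combine the lattice structure available from the hypothesis with Theorem~\ref{theorem:graphs} (and Corollary~\ref{corollary:grdis}) and finish with a Hausdorff-dimension pigeonhole. The key is that the lattice data gives a commutative group acting faithfully and freely on a quotient of $\Graph$ with finite blocks, which transports the computation of $\Linear\Graph$ and $\Project\Graph$ to the abelian case handled by Corollary~\ref{corollary:commutative}.

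First, by Theorem~\ref{theorem:lattices} the graph $\Graph$ is a lattice, so Trofimov's definition supplies a vertex-transitively acting subgroup $G\le B(\Graph)$ (possibly replacing the given one) and an imprimitivity system $\sigma$ of $G$ with finite blocks such that $G_\sigma$ is finitely generated and commutative. As noted in the text, a lattice has the same polynomial growth rate as $G_\sigma$, so $G_\sigma\cong\Z^r\times F$ for some finite group $F$. Since $G_\sigma\le\Aut\Graph_\sigma$ acts faithfully, commutatively and vertex-transitively, every vertex stabilizer is normal and must fix every vertex, hence is trivial, so the action is free with a single orbit.

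Second, I would chase the boundary through this quotient. Lemma~\ref{lemma:graphs} gives a bi-Lipschitz equivalence of $\Linear_G\Graph$ with $\Linear_{G_\sigma}\Graph_\sigma$ (and analogously for $\Project$); Corollary~\ref{corollary:graphs} then identifies $\Linear_{G_\sigma}\Graph_\sigma$ with $\Linear G_\sigma$ up to bi-Lipschitz equivalence using the free transitive action; and Corollary~\ref{corollary:commutative} evaluates $\Linear G_\sigma=\S^{r-1}$ and $\Project G_\sigma=\P^{r-1}$. Combining these three steps, $\Linear_G\Graph$ is bi-Lipschitz-equivalent to $\S^{r-1}$ and $\Project_G\Graph$ to $\P^{r-1}$. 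Since $G\le\Aut\Graph$, the spaces $\Linear_G\Graph$ and $\Project_G\Graph$ embed isometrically into $\Linear\Graph$ and $\Project\Graph$, respectively, so a bi-Lipschitz copy of $\S^{r-1}$ sits inside $\Linear\Graph$ and a bi-Lipschitz copy of $\P^{r-1}$ inside $\Project\Graph$.

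Third, I would upgrade from $G$ to $\Aut\Graph$ using Theorem~\ref{theorem:graphs}. It yields a torsion-free nilpotent group $N$ of growth rate $r$ and nilpotency class $c$, with
\[ \Linear\Graph\cong\S^{\nu(1)-1}\uplus\dotsb\uplus\S^{\nu(c)-1},\qquad \Project\Graph\cong\P^{\nu(1)-1}\uplus\dotsb\uplus\P^{\nu(c)-1}, \]
and Bass's growth formula $\sum_{i=1}^c i\nu(i)=r$. The Hausdorff dimension of the disjoint union is $\max_i(\nu(i)-1)$, while a bi-Lipschitz embedding of $\S^{r-1}$ forces this to be at least $r-1$, so $\max_i\nu(i)\ge r$. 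Together with $i\nu(i)\le r$ this forces $\nu(1)=r$ and $\nu(i)=0$ for $i\ge2$, hence $c=1$ and the two disjoint unions collapse to $\S^{r-1}$ and $\P^{r-1}$, respectively.

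The main obstacle is the dimension pigeonhole at the end. For $r\ge2$ the preservation of Hausdorff dimension under bi-Lipschitz maps settles it cleanly; the degenerate case $r=1$ has to be handled separately, using that $\sum i\nu(i)=1$ with nonnegative integer coefficients already forces $c=1$ and $\nu(1)=1$. An alternative route which avoids invoking dimension theory altogether is to apply Pansu's theorem on the quasi-isometry invariance of the full Bass tuple of a torsion-free nilpotent group: since $N$ and $G_\sigma$ are both quasi-isometric to $\Graph$ and hence to each other, their Bass tuples agree, and that of $G_\sigma$ is $(r,0,\dotsc)$.
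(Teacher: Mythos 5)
Your first two steps are correct and are essentially what the paper's one-line proof (apply Theorem~\ref{theorem:lattices} and argue as for Theorem~\ref{theorem:graphs}) intends: the lattice structure replaces the virtually nilpotent quotient of Theorem~\ref{theorem:trofimov2} by a finitely generated commutative one, commutativity plus faithfulness forces the action of $G_\sigma$ on $\Graph_\sigma$ to be free and transitive, and Lemma~\ref{lemma:graphs}, Corollary~\ref{corollary:graphs} and Corollary~\ref{corollary:commutative} identify $\Linear_G\Graph$ with $\S^{r-1}$ up to bi-Lipschitz equivalence, sitting isometrically inside $\Linear\Graph$.

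The genuine gap is the Hausdorff-dimension pigeonhole in your third step. Corollary~\ref{corollary:grdis} identifies $\Linear\Graph$ with $\S^{\nu(1)-1}\uplus\dotsb\uplus\S^{\nu(c)-1}$ only up to homeomorphism---the map constructed in the proof of Theorem~\ref{theorem:nilpotent} is bi-H\"older, not bi-Lipschitz---and Hausdorff dimension is not a homeomorphism invariant, so neither of the two numbers you compare is the Hausdorff dimension of $(\Linear\Graph,t)$. Already for $N=\Z^{r}$ the metric $t=\sqrt{s}$ is comparable to the square root of the spherical angle, so the boundary has Hausdorff dimension $2(r-1)$ rather than $r-1$; and for the discrete Heisenberg group ($\nu(1)=2$, $\nu(2)=1$, $r=4$) the circle component of the linear boundary carries, by Lemmas~\ref{lemma:additive}, \ref{lemma:three} and \ref{lemma:compare}, a metric whose Hausdorff dimension lies somewhere between $2$ and $4$, while $\max_i(\nu(i)-1)=1$ and $r-1=3$; the inequality you need does not follow without carefully tracking the H\"older exponents. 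The step is repaired by using topological (covering) dimension instead, which is a homeomorphism invariant and monotone on subspaces of separable metric spaces: an embedded topological copy of $\S^{r-1}$ forces $\max_i(\nu(i)-1)\ge r-1$, and then Bass's formula $\sum_i i\,\nu(i)=r$ yields $\nu(1)=r$ and $\nu(i)=0$ for $i\ge2$, hence $c=1$, exactly as you conclude. Your alternative route via Pansu's theorem is correct and is closer to the paper's intent: since $N$ is quasi-isometric to $\Graph$ and hence to $G_\sigma$, its tuple of torsion-free ranks is $(r,0,\dotsc)$, so the disjoint unions collapse to $\S^{r-1}$ and $\P^{r-1}$ with no dimension count at all.
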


\begin{proof}
Applying Theorem~\ref{theorem:lattices} this result can be shown analogously to the proof of Theorem~\ref{theorem:graphs}.
\end{proof}

Let $\Graph$ now be a Cayley graph of a group $G$. Then any group element $g\in G$ gives rise to a bounded automorphism of $\Graph$ if and only if the conjugacy class of $g$ in $G$ is finite (see e.g.\ \cite[page~335]{godsil1989note}). So the boundedness of an element $g\in G$ is independent of whatever Cayley graph represents $G$.

A group $G$ is called \emph{$FC$\ndash group} if for every $g\in G$ the conjugacy class of $g$ in $G$ is finite. Hence for $FC$\ndash groups $G$ each $g\in G$ acts as a bounded automorphism on any Cayley graph of $G$. Therefore Cayley graphs of finitely generated $FC$\ndash groups are lattices and Theorem~\ref{theorem:latticebndry} immediately implies:

\begin{corollary}
Let $G$ be a finitely generated $FC$\ndash group with polynomial growth of growth rate $r$. Then
\[ \Linear G = \S^{r-1} \qquad\text{and}\qquad \Project G = \P^{r-1}. \]
\end{corollary}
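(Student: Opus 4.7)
The plan is to follow the narrative of the paragraph preceding the corollary: show that any Cayley graph of $G$ is a lattice, read off its linear and projective boundaries from Theorem~\ref{theorem:latticebndry}, and transfer the resulting equalities from the graph to the group.

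First I would fix a finite generating set for $G$ and let $\Graph$ be the associated Cayley graph. The group $G$ acts on $\Graph$ on the left by graph automorphisms, freely and with a single orbit. By the fact recalled immediately before the statement---that a group element induces a bounded automorphism of a Cayley graph if and only if its conjugacy class is finite---every $g\in G$ acts as a bounded automorphism of $\Graph$, so $G\le B(\Graph)$ acts vertex-transitively on $\Graph$. Theorem~\ref{theorem:lattices} then implies that $\Graph$ is a lattice, and since $\Graph$ inherits the polynomial growth rate $r$ from $G$, Theorem~\ref{theorem:latticebndry} yields $\Linear\Graph=\S^{r-1}$ and $\Project\Graph=\P^{r-1}$.

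It remains to identify $\Linear G$ and $\Project G$ with the graph boundaries $\Linear\Graph$ and $\Project\Graph$. Since $G$ acts freely and vertex-transitively on $\Graph$, Corollary~\ref{corollary:graphs}(2) already shows that $\Linear G$ is bi-Lipschitz-equivalent to $\Linear_G\Graph$ and $\Project G$ to $\Project_G\Graph$. The main obstacle is therefore to identify $\Linear_G\Graph$ with $\Linear_{\Aut\Graph}\Graph=\Linear\Graph$: in general $G$ need not have finite index in $\Aut\Graph$, so Lemma~\ref{lemma:graphs}(1) is not directly applicable. My proposed fix is to rerun the (sketched) proof of Theorem~\ref{theorem:latticebndry} with $G$ in the r\^ole of $\Aut\Graph$: applying Theorem~\ref{theorem:trofimov2} to $G$ produces an imprimitivity system $\sigma$ with finite blocks such that $G_\sigma$ is finitely generated and commutative, after which Corollary~\ref{corollary:commutative} combined with Lemma~\ref{lemma:graphs} and Lemma~\ref{lemma:finite} computes $\Linear_G\Graph$ and $\Project_G\Graph$ as $\S^{r-1}$ and $\P^{r-1}$ directly, with no reference to $\Aut\Graph$.

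A conceptually simpler alternative that sidesteps the graph machinery altogether is to observe that, because each conjugacy class is finite, the centralizer of every element of $G$ has finite index in $G$; therefore $Z(G)$, as the intersection of the centralizers of a finite generating set, has finite index in $G$. Thus $Z(G)$ is a finitely generated abelian subgroup whose torsion-free rank equals the common growth rate $r$, and the conclusion follows at once from Corollary~\ref{corollary:commutative} applied to $Z(G)$ together with Lemma~\ref{lemma:finite}(2).
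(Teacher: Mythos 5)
Your proposal is correct, and in fact it contains two workable arguments where the paper gives essentially none: the paper derives the corollary ``immediately'' from Theorem~\ref{theorem:latticebndry} via the observation that Cayley graphs of finitely generated $FC$\ndash groups are lattices, which is exactly your first route. You are right to flag the one non-trivial point that the paper glosses over, namely that Theorem~\ref{theorem:latticebndry} computes $\Linear\Graph=\Linear_{\Aut\Graph}\Graph$ whereas the corollary is about $\Linear G$, and that $G$ need not have finite index in $\Aut\Graph$; your patch (apply Theorem~\ref{theorem:trofimov2}, or better Proposition~\ref{proposition:bounded-finite}, directly to $G$ acting on its Cayley graph) is the right repair, with the small caveat that Theorem~\ref{theorem:trofimov2} by itself only delivers a virtually \emph{nilpotent} $G_\sigma$ --- commutativity of the relevant finite-index subgroup has to come from the $FC$\ndash hypothesis or from Trofimov's lattice results, not from Theorem~\ref{theorem:trofimov2} alone. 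Your second argument is a genuinely different and more elementary route that bypasses the entire graph-theoretic apparatus: since $Z(G)=\bigcap_{s\in S}C_G(s)$ for a finite generating set $S$ and each centralizer has index equal to the (finite) size of the corresponding conjugacy class, $Z(G)$ has finite index, is finitely generated abelian of torsion-free rank $r$, and Corollary~\ref{corollary:commutative} together with the finite-index case of Lemma~\ref{lemma:finite} gives $\Linear G=\S^{r-1}$ and $\Project G=\P^{r-1}$ at once. This classical Neumann-type argument is shorter, avoids Trofimov's theorems entirely, and as a bonus shows that the polynomial-growth hypothesis in the statement is automatic for finitely generated $FC$\ndash groups; what it does not give, and what the paper's route is designed to showcase, is the passage through the boundary theory of vertex-transitive graphs and lattices.
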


\section{Attaching the boundary}
\label{section:attach}

Let $\Xi$ be any subset of $\Quotient\Unbdd$. In the following we describe a topology $\tau$ on the disjoint union $\bar X$ of $X$ and $\Xi$, such that two requirements hold:
\begin{compactitem}
\item The subspace topology of $\tau$ on $X$ is induced by the metric $d$.
\item If $x_1,x_2,\dotsc$ is a sequence in $X$,
	which eventually leaves any ball in $X$,
	and $\xi$ is an equivalence class in $\Xi$,
	such that $x_1,x_2,\dotsc\in R$ for some $R\in\xi$
	then $x_1,x_2,\dotsc$ converges to $\xi$ in $\tau$.
\end{compactitem}
Due to the second requirement the subspace topology of $\tau$ on $\Xi$ is in general neither induced by the metric $t$ nor Hausdorff, see Lemma~\ref{lemma:hausdorff}.

Fix some reference point $o$ in $X$ and let $\xi\in\Xi$ be an equivalence class. If $R\in\xi$ and $\alpha>0$ and $r\ge0$ then we set
\[ N(R,\alpha,r) = \interior\bigl(\alpha R \setminus \OpenBall(o,r)\bigr)
	\uplus \{ \zeta\in\Xi \setsep s^+(\xi,\zeta) < \alpha \} \]
where $\interior(A)$ is the interior of the set $A\subseteq X$. Note that $N(R,\alpha,p)\subseteq N(S,\beta,q)$ if $R\subseteq S$, $\alpha\le\beta$, $p\ge q$. We define the topology $\tau$ on $\bar X = X\uplus\Xi$ by assigning to each $x\in\bar X$ a family $\mathcal V_x$ of sets which serves as an open neighborhood base for $x$:
\begin{compactitem}
\item If $x\in X$ then $\mathcal V_x$ is the family of open balls centered at $x$.
\item If $\xi\in\Xi$ then $\mathcal V_\xi$ is the family of sets $N(R,\alpha,r)$
	with $R\in\xi$, $\alpha>0$, and $r\ge0$.
\end{compactitem}

\begin{lemma}\label{lemma:gluing}
The families $\mathcal V_x$, $x\in\bar X$, are open neighborhood bases of a topology $\tau$ on $\bar X$. Its subspace topology on $X$ is induced by the metric $d$, $X$ is dense and open in $\bar X$, and the subspace topology on $\Xi$ is $T_0$.
\end{lemma}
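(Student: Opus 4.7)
The plan is to check the four assertions in turn, devoting most effort to the neighborhood-base axioms at boundary points.

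At $x \in X$ the family $\mathcal V_x$ is the usual collection of open metric balls, so (B1)--(B3) and openness are standard. At $\xi \in \Xi$, axiom (B1) is immediate from $s^+(\xi, \xi) = 0 < \alpha$. For (B2), given basic neighborhoods $N(R, \alpha, r)$ and $N(R', \alpha', r')$ with $R, R' \in \xi$, fix $R$ and use $R \linequiv R'$ to obtain $a \ge 0$ with $R \subseteq 0 R' + a$; then the triangle-inequality estimate of Lemma~\ref{lemma:transitive} shows that for $\beta \le \min(\alpha, \alpha')$ and $q$ sufficiently large (so that the constant $a$ is absorbed into a term $\alpha' d(o, \argument)$), we have $N(R, \beta, q) \subseteq N(R, \alpha, r) \cap N(R', \alpha', r')$.

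The heart of the argument is (B3), which simultaneously establishes openness of each basic set. Suppose $\zeta \in N(R, \alpha, r)$ and set $\xi = [R]$. If $\zeta \in X$, then $\zeta$ is a metric interior point of $\alpha R \setminus \OpenBall(o, r)$, so some $\OpenBall(\zeta, \rho)$ is already inside $N(R, \alpha, r)$. If $\zeta = \xi' \in \Xi$, then $s^+(\xi, \xi') < \alpha$; fix $\beta_0 \in (s^+(\xi, \xi'), \alpha)$ and pick $\beta > 0$ small enough that $\beta_0 + \beta_0 \beta + \beta < \alpha$. The weak triangle inequality of Proposition~\ref{proposition:premetric} then shows that every $\omega$ with $s^+(\xi', \omega) < \beta$ satisfies $s^+(\xi, \omega) < \alpha$, handling the $\Xi$-side. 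For the $X$-side, pick $S \in \xi'$ and $a \ge 0$ with $S \subseteq \beta_0 R + a$; for any $y \in \Ball(x, \beta d(o, x))$ with $x \in S$, the same triangle argument provides $x' \in R$ with
\[ d(y, x') \le (\beta + \beta_0 + \beta \beta_0) d(o, x') + (\beta + 1) a. \]
Choosing $q \ge r$ large enough that $d(o, y) \ge q$ forces $d(o, x')$ so large that $(\beta + 1) a \le (\alpha - \beta - \beta_0 - \beta \beta_0) d(o, x')$ gives $\interior(\beta S \setminus \OpenBall(o, q)) \subseteq \interior(\alpha R \setminus \OpenBall(o, r))$, completing $N(S, \beta, q) \subseteq N(R, \alpha, r)$.

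The remaining claims are quick. The subspace topology on $X$ matches the metric topology by construction, and $X$ is open in $\bar X$ because $\mathcal V_x \subseteq 2^X$ for every $x \in X$. For density, given $N(R, \alpha, r)$, pick any $x \in R$ with $d(o, x) > 2r$; then a short triangle-inequality check (using Lemma~\ref{lemma:greater-one} in the case $\alpha > 1$) shows that $\OpenBall(x, \min(\alpha, \tfrac12) d(o, x))$ is contained in $\alpha R \setminus \OpenBall(o, r)$, placing $x$ in $X \cap N(R, \alpha, r)$. For the $T_0$ property on $\Xi$, $\xi \ne \zeta$ forces $s^+(\xi, \zeta) > 0$ or $s^+(\zeta, \xi) > 0$; choosing a representative on the positive side and any $\alpha$ in $(0, s^+)$, the set $N(\argument, \alpha, 0)$ separates $\xi$ and $\zeta$. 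I expect the main obstacle to be the coordinated choice of $\beta_0, \beta, q$ in step (B3): the weak triangle inequality for $s^+$ forces precisely the relation $\beta_0(1 + \beta) + \beta < \alpha$, and the metric-side estimate must reuse the same $\beta_0$ so that the $\Xi$-side and $X$-side inclusions in $N(S, \beta, q) \subseteq N(R, \alpha, r)$ are compatible---a further complication being that $N(R, \alpha, r)$ genuinely depends on the representative $R$, not on the class $\xi = [R]$.
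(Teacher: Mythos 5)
Your overall strategy coincides with the paper's: verify the three neighborhood-base axioms (Willard, Theorem~4.5) and then read off the topological claims. Your treatment of (B1), (B3), openness, the subspace topology on $X$, density and the $T_0$ separation is sound and matches the paper's argument in substance. However, step (B2) contains a genuine error. You claim that $R\linequiv R'$ yields a constant $a\ge0$ with $R\subseteq 0R'+a$, i.e.\ that linearly equivalent sets lie within a bounded neighborhood of one another. This is false: linear equivalence only asserts $s(R,R')=0$, which by Definition~\ref{definition:lequiv} means that for every $\eps>0$ there is some $a_\eps$ with $R\subseteq\eps R'+a_\eps$; the infimum $0$ need not be attained at $\eps=0$. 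For example, in $\R^2$ the sets $R=\{(n,0)\setsep n\in\N\}$ and $R'=\{(n,\log(n+1))\setsep n\in\N\}$ are linearly equivalent but $R$ is contained in no bounded neighborhood of $R'$. As a consequence your parameter choice $\beta\le\min(\alpha,\alpha')$ is also too generous: composing the correct sublinear inclusion with Lemma~\ref{lemma:transitive} inflates the coefficient from $\beta$ to $\beta+\eps\beta+\eps$, so $\beta$ must be taken strictly below $\alpha'$, e.g.\ $\beta=\min\bigl\{\alpha,\tfrac{\alpha'-\eps}{1+\eps}\bigr\}$ for suitably small $\eps\in(0,\alpha')$.

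The gap is repairable by exactly the technique you use correctly in (B3): by Lemma~\ref{lemma:alternative} choose $r\ge\max\{p,q\}$ with $R\setminus\OpenBall(o,r)\subseteq\eps R'$, apply Lemma~\ref{lemma:transitive} to obtain $\beta\bigl(R\setminus\OpenBall(o,r)\bigr)\subseteq(\beta+\eps\beta+\eps)R'\subseteq\alpha'R'$ for the shrunken $\beta$ above, and absorb the remaining piece $\beta\OpenBall(o,r)$ into $\OpenBall(o,(1+\beta)r)$; this is precisely the paper's argument for the second axiom. Note that the $\Xi$-part of (B2) is unproblematic, since $\{\zeta\in\Xi\setsep s^+(\xi,\zeta)<\alpha\}$ depends only on the class $\xi$ and not on the representative; the representative-dependence you rightly flag as the main complication is confined to the $X$-part, and that is exactly where your bounded-distance claim breaks down.
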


\begin{proof}
By Theorem~4.5 in \cite{willard2004general} we have to check the following three conditions for all $x\in\bar X$:
\begin{compactitem}
\item If $V\in\mathcal V_x$ then $x\in V$.
\item If $V_1,V_2\in\mathcal V_x$ then $V_3\subseteq V_1\cap V_2$ for some $V_3\in\mathcal V_x$.
\item If $V\in\mathcal V_x$ and $z\in V$ then $W\subseteq V$ for some $W\in\mathcal V_z$.
\end{compactitem}
The first condition is immediate for all $x\in\bar X$ and the second and third condition hold for all $x\in X$. Hence let $\xi\in\Xi$. In order to prove the second condition for $\xi$ consider $N(R,\alpha,p), N(S,\beta,q) \in \mathcal V_\xi$ with $R,S\in\xi$, $\alpha,\beta>0$, and $p,q\ge0$. Choose $\eps$ in $(0,\beta)$ and set
\[ \gamma = \min\bigl\{\alpha,\tfrac{\beta-\eps}{1+\eps}\bigr\}. \]
Since $R,S\in\xi$, it follows that $s(R,S)=0$ and by Lemma~\ref{lemma:alternative} there is a number $r\ge\max\{p,q\}$ such that $R\setminus\OpenBall(o,r)\subseteq\eps S$. Using Lemma~\ref{lemma:transitive} this yields
\[ \gamma R \subseteq \gamma(R\setminus\OpenBall(o,r)) \cup \gamma\OpenBall(o,r)
	\subseteq (\gamma+\eps\gamma+\eps) S \cup \OpenBall(o,(1+\gamma)r)
	\subseteq \beta S \cup \OpenBall(o,(1+\gamma)r) \]
by the choice of $\gamma$. Therefore
\[ N(R,\gamma,(1+\gamma)r) \subseteq N(R,\alpha,p) \cap N(S,\beta,q), \]
whence the second condition holds for $\xi$. The third condition holds for $\xi$, if $z\in V\cap X$ or $z=\xi$. Hence consider $V=N(R,\alpha,p)$ with $R\in\xi$, $\alpha>0$, $p\ge0$, and let $\zeta\ne\xi$ be an element in $V\cap\Xi$. Choose an element $S$ in $\zeta$ and choose $\beta$ in $(s^+(R,S),\alpha)$, which is possible, since $s^+(R,S) = s^+(\xi,\zeta) < \alpha$. There is a number $r\ge p$, such that $S\setminus\OpenBall(o,r) \subseteq \beta R$. Set $\gamma = \frac{\alpha-\beta}{1+\beta} > 0$. Then
\[ \gamma S \subseteq \gamma(S\setminus\OpenBall(o,r)) \cup \gamma\OpenBall(o,r)
	\subseteq (\gamma+\beta\gamma+\beta) R \cup \OpenBall(o,(1+\gamma)r)
	= \alpha R \cup \OpenBall(o,(1+\gamma)r) \]
by the choice of $\beta$ and $\gamma$. Hence we obtain
\[ N(S,\gamma,(1+\gamma)r) \subseteq N(R,\alpha,p). \]
The last three assertions follow from the construction of $\tau$.
\end{proof}

\begin{remark}
Let $X$ be an unbounded, locally compact, metric space. Then $(\bar X,\tau)$ is compact if the equivalence class of the unbounded set $X$ is an element of $\Xi$. If, apart from the equivalence class of $X$, $\Xi$ contains further elements then $(\bar X,\tau)$ is not Hausdorff.
\end{remark}

\begin{lemma}\label{lemma:hausdorff}
Let $\Xi$ be any subset of $\Quotient\Unbdd$ and let $(\bar X,\tau)$ be defined as above.
\begin{compactitem}
\item The space $(\bar X,\tau)$ is Hausdorff if and only if
	\[ s^+(\xi,\zeta) = 0 \Longleftrightarrow s^+(\zeta,\xi) = 0 \]
	for all $\xi,\zeta\in\Xi$.
	In this case, the subspace topology of $\tau$ on $\Xi$ is induced by the metric $t$.
\item Suppose that $\Xi=\Bndry\Elements$ for some family $\Elements\subseteq\Unbdd$.
	If there exists a function $f\colon[0,1]\to[0,\infty)$,
	such that $f(0)=0$, $f$ is continuous at $0$, and
	$s^+(S,R)\le f(s^+(R,S))$ for all $R,S\in\Elements$
	then $(\bar X,\tau)$ is Hausdorff and
	the subspace topology of $\tau$ on $\Bndry\Elements$ is induced by the metric $t$.
\end{compactitem}
\end{lemma}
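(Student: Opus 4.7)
The forward direction of the first assertion is direct. If $(\bar X,\tau)$ is Hausdorff and $\xi,\zeta\in\Xi$ are distinct with $s^+(\xi,\zeta)=0$, then the $\Xi$\ndash part of every basic neighborhood $N(R,\alpha,r)$ of $\xi$ includes $\{\eta\in\Xi\setsep s^+(\xi,\eta)<\alpha\}$ and hence contains $\zeta$, contradicting Hausdorffness. So $s^+(\xi,\zeta)=0$ forces $\xi=\zeta$, whence $s^+(\zeta,\xi)=0$; the reverse implication is symmetric. For the backward direction I would separate two distinct points of $\bar X$ by basic opens in three cases: two points of $X$ by $d$\ndash balls; a point $x\in X$ and a class $\xi\in\Xi$ by a small ball around $x$ together with an $N(R,\alpha,r)$ whose parameter $r>d(o,x)$ is large enough to push the $X$\ndash part outside the ball; and two distinct $\xi,\zeta\in\Xi$ by choosing representatives $R\in\xi$, $S\in\zeta$ and small parameters $\alpha<s^+(\xi,\zeta)$, $\beta<s^+(\zeta,\xi)$, both positive by the hypothesis.

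For the topology claim of the first assertion I would first show that each set $U_\alpha(\xi):=\{\eta\in\Xi\setsep s^+(\xi,\eta)<\alpha\}$ is $t$\ndash open: applying the weak triangle inequality of Proposition~\ref{proposition:premetric} at $\zeta_0\in U_\alpha(\xi)$ gives $s^+(\xi,\eta)\le s^+(\xi,\zeta_0)(1+s^+(\zeta_0,\eta))+s^+(\zeta_0,\eta)<\alpha$ for $t(\zeta_0,\eta)$ small enough. Hence $\tau|_\Xi$ is coarser than the $t$\ndash topology; equality then follows by combining first-countability of both topologies, the symmetric-at-$0$ hypothesis, and completeness of $(\Quotient\Unbdd,t)$ to match their convergent sequences.

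For the second assertion the key step is transporting the inequality $s^+(S,R)\le f(s^+(R,S))$ from $\Elements$ to $\Bndry\Elements$. I would replace $f$ by the smallest non-decreasing, right-continuous majorant $\tilde f$, which still satisfies $\tilde f(0)=0$ and is continuous at $0$. Since the weak triangle inequality makes $s^+$ continuous in each argument, approximating $\xi,\zeta\in\Bndry\Elements$ by sequences $[R_n],[S_n]$ with $R_n,S_n\in\Elements$ and taking $\limsup$ in $s^+(S_n,R_n)\le\tilde f(s^+(R_n,S_n))$ yields $s^+(\zeta,\xi)\le\tilde f(s^+(\xi,\zeta))$ on the closure, using upper semicontinuity of $\tilde f$. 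This is the symmetric-at-$0$ condition, so $(\bar X,\tau)$ is Hausdorff by the first assertion; moreover, for any $\eps>0$ continuity of $\tilde f$ at $0$ produces $\alpha>0$ with $\max(\alpha,\tilde f(\alpha))<\eps^2$, whence $U_\alpha(\xi)$ is contained in the $t$\ndash ball of radius $\eps$ around $\xi$, proving that $\tau|_{\Bndry\Elements}$ coincides with the $t$\ndash topology.

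The main obstacle is the asymmetry of $s^+$: because the weak triangle inequality controls $s^+(\xi,\zeta)$ only in terms of $s^+(\xi,\eta)$ and $s^+(\eta,\zeta)$ (not $s^+(\zeta,\eta)$), one cannot separate distinct boundary points by basic neighborhoods using Proposition~\ref{proposition:premetric} alone; the symmetric-at-$0$ hypothesis has to be upgraded along sequences via completeness in the first assertion, and in the second assertion it is replaced by the explicit modulus $\tilde f$, whose construction (monotone right-continuous regularization) and extension to the closure form the technical heart of the argument.
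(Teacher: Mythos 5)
Your treatment of the second assertion is correct and follows the paper's own route: approximate $\xi,\zeta\in\Bndry\Elements$ by classes from $\Quotient\Elements$, push the hypothesis through the weak triangle inequality of Proposition~\ref{proposition:premetric}, and deduce the symmetric-at-zero condition on the closure. Your monotone right-continuous regularization $\tilde f$ is in fact a small improvement: the paper's chain of inequalities uses $f(s^+(\xi',\zeta'))\le f(2\eps+\eps^2)$, which tacitly assumes $f$ non-decreasing, and $\tilde f$ repairs this at no cost. The quantitative form $s^+(\zeta,\xi)\le\tilde f(s^+(\xi,\zeta))$ on all of $\Bndry\Elements$ also gives a clean proof that $\tau$ restricted to $\Bndry\Elements$ is induced by $t$ (every $U_\alpha(\xi)$ with $\max\{\alpha,\tilde f(\alpha)\}<\eps^2$ lies in the $t$\ndash ball of radius $\eps$), which is more than the paper writes down.

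The gap is in the backward direction of the first assertion. Choosing $\alpha<s^+(\xi,\zeta)$ and $\beta<s^+(\zeta,\xi)$ only guarantees $\zeta\notin N(R,\alpha,r)$ and $\xi\notin N(S,\beta,r')$; it does not make the two neighborhoods disjoint. A third class $\eta\in\Xi$ lies in both $\Xi$\ndash parts exactly when $s^+(\xi,\eta)<\alpha$ and $s^+(\zeta,\eta)<\beta$, and the weak triangle inequality bounds $s^+(\xi,\zeta)$ in terms of $s^+(\xi,\eta)$ and $s^+(\eta,\zeta)$ --- not $s^+(\zeta,\eta)$ --- so no contradiction is reached; the pointwise hypothesis ``$s^+$ vanishes in one order iff in the other'' provides no uniform control of $s^+(\eta,\zeta)$ by $s^+(\zeta,\eta)$. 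Moreover the $X$\ndash parts $\interior(\alpha R\setminus\OpenBall(o,r))$ and $\interior(\beta S\setminus\OpenBall(o,r'))$ must also be shown disjoint (possibly after a careful choice of the representatives $R$ and $S$), which you do not address. You acknowledge the difficulty, but the proposed repair --- ``upgrading along sequences via completeness'' --- is not an argument: completeness of $(\Quotient\Unbdd,t)$ concerns the symmetric metric $t$ and says nothing about the one-sided quantity $s^+$, and first-countability does not let you identify two topologies from a pointwise condition. The same uniformity issue affects the claim that $\tau|_\Xi$ is induced by $t$ under the first assertion's hypothesis: your computation correctly shows that $\tau|_\Xi$ is coarser than the $t$\ndash topology, but the reverse inclusion needs, for each $\eps>0$, some $\alpha>0$ with $U_\alpha(\xi)$ contained in the $t$\ndash ball of radius $\eps$, i.e.\ a modulus exactly as in the second assertion. (The paper itself disposes of the first assertion with the single sentence that it is ``a direct consequence of the definition'', so it offers nothing to compare against here; but your proposal as written does not close this direction either.)
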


\begin{proof}
The first assertion is a direct consequence of the definition of the open neighborhood bases $\mathcal V_\xi$ for $\xi\in\Xi$. The second statement is a consequence of the first, since the hypotheses imply that
\[ s^+(\xi,\zeta) = 0 \Longleftrightarrow s^+(\zeta,\xi) = 0 \]
for all $\xi,\zeta\in\Bndry\Elements$: If $s^+(\xi,\zeta)=0$ and $\eps>0$ is given then there are $\xi',\zeta'\in\Quotient\Elements$, such that $s(\xi,\xi')\le\eps$ and $s(\zeta,\zeta')\le\eps$. Thus
\begin{align*}
s^+(\zeta,\xi)
&\le 2\eps + \eps^2 + s^+(\zeta',\xi') (1+\eps)^2 \\
&\le 2\eps + \eps^2 + f(s^+(\xi',\zeta')) (1+\eps)^2 \\
&\le 2\eps + \eps^2 + f(2\eps + \eps^2) (1+\eps)^2.
\end{align*}
This shows that $s^+(\zeta,\xi)=0$.
\end{proof}

With these preparations we are able to provide a criterion which ensures that the topology defined above on the disjoint union of a compactly generated, locally compact Hausdorff group $G$ and its linear boundary $\Linear G$ (projective boundary $\Project G$) is Hausdorff and the subspace topology on $\Linear G$ ($\Project G$) is induced by the angle metric $t$.

\begin{proposition}\label{proposition:group-bnd}
Let $G$ be a compactly generated, locally compact Hausdorff group. Assume that there exists a constant $C\ge1$, such that for every group element $g\in G$ with $\genp g\in\Forward G$ there is an element $\tilde g\in G$ with the following two properties:
\begin{compactitem}
\item $\genp{\tilde g} \linequiv \genp g$ and
\item $d(1,\tilde g^m) \le C d(1,\tilde g^n) + C$ for all $m,n$ with $0\le m\le n$.
\end{compactitem}
Then the topology $\tau$ on $G\uplus\Linear G$ defined by Lemma~\ref{lemma:gluing} is Hausdorff and the subspace topology of $\tau$ on $\Linear G$ is induced by the metric $t$. An analogous statement holds for the projective boundary.
\end{proposition}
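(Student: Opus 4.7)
The plan is to apply Lemma~\ref{lemma:hausdorff} to $\Elements=\Forward G$ (respectively $\Orbit G$ for the projective boundary) and verify criterion~(ii): exhibit a function $f\colon[0,1]\to[0,\infty)$ with $f(0)=0$, continuous at $0$, satisfying $s^+(\genp h,\genp g)\le f(s^+(\genp g,\genp h))$ for all non-compact $g,h\in G$. Once such an $f$ is produced, both the Hausdorff property of $\tau$ and the identification of the subspace topology on the boundary with the metric $t$ follow immediately from the lemma.

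A first reduction uses Corollary~\ref{corollary:well-def} together with the hypothesis of the proposition: since $s^+$ descends to linear equivalence classes, I may replace any non-compact $g,h$ with the preferred representatives $\tilde g,\tilde h$ supplied by the hypothesis without altering either $s^+$ value. Hence I may assume that $g$ and $h$ themselves satisfy the monotonicity bound $d(1,g^m)\le C\,d(1,g^n)+C$ for $0\le m\le n$, and analogously for $h$.

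Next, fix $\alpha>s^+(\genp g,\genp h)$ and $a\ge 0$ with $\genp h\subseteq\alpha\genp g+a$. For each $k\ge 0$ pick $m_k$ with $d(h^k,g^{m_k})\le\alpha\,d(1,g^{m_k})+a$; the triangle inequality forces $d(1,g^{m_k})$ to lie in a band centered at $d(1,h^k)$ of multiplicative width $(1\pm\alpha)^{-1}$ plus additive error of order $a$. Given $M\ge 0$ with $\rho=d(1,g^M)$ large, I exploit that $k\mapsto d(1,h^k)$ tends to infinity (Lemma~\ref{lemma:unbdd}) and advances by at most $d(1,h)$ per step to choose the smallest $k^*$ with $d(1,h^{k^*})\ge\rho$. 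This yields $d(1,h^{k^*})\in[\rho,\rho+d(1,h))$ and consequently $d(1,g^{m_{k^*}})$ comparable to $\rho$.

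The main obstacle is to bound
\[ d(g^M,h^{k^*})\le d(g^M,g^{m_{k^*}})+d(g^{m_{k^*}},h^{k^*}) \]
by a small multiple of $d(1,h^{k^*})$. The second summand is immediately at most $\alpha\,d(1,g^{m_{k^*}})+a=O(\alpha\rho+a)$, but the first summand $d(g^M,g^{m_{k^*}})=d(1,g^{M-m_{k^*}})$ is delicate: a naive triangle-inequality bound through the identity gives only $O(\rho)$, insufficient to force $f(\alpha)\to 0$ as $\alpha\to 0$. The monotonicity of $g$ must be used finely here, to argue that indices $M$ and $m_{k^*}$ with $d(1,g^M)$ and $d(1,g^{m_{k^*}})$ comparable up to $\alpha\rho$ cannot produce a detour within the sequence $g^0,g^1,g^2,\dots$ of size commensurate with $\rho$; the hypothesis essentially prevents the orbit from making such backtracks. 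Once a quantitative bound on $d(g^M,g^{m_{k^*}})$ of the form $\alpha\rho$ plus additive constants is established, dividing by $d(1,h^{k^*})\ge\rho$ and passing to the limsup via Lemma~\ref{lemma:limsup} yields an admissible $f$. The projective case is handled identically, with $\Forward G$ and $\genp{\,\cdot\,}$ replaced by $\Orbit G$ and $\gen{\,\cdot\,}$ throughout.
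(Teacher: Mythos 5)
Your framework is the right one and coincides with the paper's: reduce to the second criterion of Lemma~\ref{lemma:hausdorff}, use Corollary~\ref{corollary:well-def} to pass to the representatives $\tilde g$ supplied by the hypothesis, and then exhibit a function $f$ that is linear in $s^+(\genp g,\genp h)$. But the heart of the proof --- the quantitative bound on $d(g^M,h^{k})$ for a well-chosen $k$ --- is exactly the step you leave open (``Once a quantitative bound \ldots\ is established''), and the matching you propose does not allow the monotonicity hypothesis to close it. You choose $k^*$ by comparing \emph{norms}: the smallest $k$ with $d(1,h^{k})\ge\rho=d(1,g^M)$. The hypothesis $d(1,g^m)\le C\,d(1,g^n)+C$ for $0\le m\le n$, however, compares powers by their \emph{exponents}; applied to $\abs{M-m_{k^*}}\le\max\{M,m_{k^*}\}$ it only yields
$d(g^M,g^{m_{k^*}})=d(1,g^{\abs{M-m_{k^*}}})\le C\max\{d(1,g^M),d(1,g^{m_{k^*}})\}+C$,
which is $O(\rho)$ --- precisely the insufficient bound you acknowledge. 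Knowing that $d(1,g^M)$ and $d(1,g^{m_{k^*}})$ are both comparable to $\rho$ gives no control whatsoever on the exponent gap $\abs{M-m_{k^*}}$, so your ``no backtracks'' heuristic does not follow from the stated hypothesis along this route.

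The missing idea is to sandwich the \emph{exponent} $M$ rather than the norm $\rho$. Writing $\nu(k)$ for an exponent with $d(h^k,g^{\nu(k)})\le\alpha d(1,g^{\nu(k)})+a$, one picks $k=\kappa(M)$ with $\nu(k)\le M\le\nu(k+1)$. Then $0\le M-\nu(k)\le\nu(k+1)-\nu(k)$, so the monotonicity hypothesis applies on the nose:
$d(g^M,g^{\nu(k)})=d(1,g^{M-\nu(k)})\le C\,d(1,g^{\nu(k+1)-\nu(k)})+C=C\,d(g^{\nu(k)},g^{\nu(k+1)})+C$.
The last quantity is at most $O(\alpha)\,d(1,h^k)+O(1)$ because $g^{\nu(k)}$ and $g^{\nu(k+1)}$ lie within $\alpha d(1,g^{\nu(k)})+a$ and $\alpha d(1,g^{\nu(k+1)})+a$ of the \emph{consecutive} points $h^k$ and $h^{k+1}$, which are only $d(1,h)$ apart. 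Combined with the easy bound on $d(g^{\nu(k)},h^k)$ this gives $d(g^M,h^{\kappa(M)})\le 2(1+4C)\alpha\,d(1,h^{\kappa(M)})+\text{const}$, whence $f(x)=2(1+4C)x$ works in Lemma~\ref{lemma:hausdorff}. Without this exponent-sandwiching step your argument has a genuine gap at its central point.
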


\begin{proof}
We check that the function $f\colon[0,1]\to[0,\infty)$, $x\mapsto 2(1+4C)x$ satisfies the conditions of the second part of Lemma~\ref{lemma:hausdorff} which implies the statement.

Of course, $f$ is continuous and $f(0)=0$. Furthermore, if $x\ge\tfrac12$, then $f(x)\ge 1+4C\ge 1$. Hence $s^+(\genp h,\genp g)\le f(s^+(\genp g,\genp h))$ is trivially true if $\genp g,\genp h\in\Forward G$ and $s^+(\genp g,\genp h)\ge\tfrac12$, since $s^+(\genp h,\genp g)\le1$. Hence we may assume that $s^+(\genp g,\genp h)<\tfrac12$. Additionally, after replacing $g$ by $\tilde g$ if necessary, we may assume that $d(1,g^m) \le C d(1,g^n)+ C$ for all $m,n$ with $m\le n$. Choose a number $\alpha$ which satisfies $s^+(\genp g,\genp h)<\alpha<\tfrac12$. Then there is a constant $a\ge0$, such that $\genp h\subseteq\alpha \genp g+a$. Hence, for each $n\in\N_0$ there is an integer $\nu(n)\ge0$ such that
\[ d(h^n,g^{\nu(n)}) \le \alpha d(1,g^{\nu(n)}) + a. \]
Now we define the function $\kappa\colon\N_0\to\N_0$ by
\[ \kappa(n) = \min\{m\in\N_0 \setsep \nu(m) \le n \le \nu(m+1) \}. \]
We claim that
\[ d(g^n,h^{\kappa(n)}) \le 2(1+4C) \alpha d(1,h^{\kappa(n)}) + 2Cd(1,h) + 2a + 8Ca + C \]
for all $n\in\N_0$. Once this claim is established then, by the second assertion of Lemma~\ref{lemma:hausdorff}, the proof is finished. Let $n\ge0$ be an integer and set $k=\kappa(n)$. Since
\[ d(g^n,h^k) \le d(g^n,g^{\nu(k)}) + d(g^{\nu(k)},h^k), \]
we need to find upper bounds for $d(g^n,g^{\nu(k)})$ and $d(g^{\nu(k)},h^k)$, see Figure~\ref{figure:triangles}.
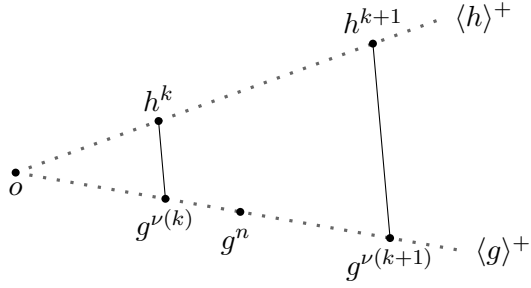
\begin{figure}[htb]
\tikzstyle{vertex}=[circle, fill=black, inner sep=0pt, minimum width=3pt]
\begin{tikzpicture}
	\draw[very thick,loosely dotted,black!60] (0,0) -- (20:6) (0,0) -- (-10:6);
	\draw (20:2) -- (-10:2) (20:5) -- (-10:5);
	\draw (0,0) node[vertex] {} node[below] {$o$};
	\draw (20:6) node[right] {$\genp h$}
	      (20:2) node[vertex] {} node[above] {$h^k$}
	      (20:5) node[vertex] {} node[above] {$h^{k+1}$}
	      (-10:6) node[right] {$\genp g$}
	      (-10:2) node[vertex] {} node[below] {$g^{\nu(k)}$}
	      (-10:3) node[vertex] {} node[below] {$g^{n\phantom{)}}$}
	      (-10:5) node[vertex] {} node[below] {$g^{\nu(k+1)}$};
\end{tikzpicture}
\caption{The positive powers of two elements $g,h$ and constellation used in the proof of Proposition~\ref{proposition:group-bnd}.}
\label{figure:triangles}
\end{figure}
Then
\[ d(1,g^{\nu(k)}) \le d(1,h^k) + d(h^k,g^{\nu(k)}) \le d(1,h^k) + \alpha d(1,g^{\nu(k)}) + a \]
yields
\[ d(1,g^{\nu(k)}) \le \tfrac1{1-\alpha} (d(1,h^k)+a) \le 2d(1,h^k) + 2a \]
using the bound $\alpha\le\tfrac12$. Thus
\[ d(g^{\nu(k)},h^k) \le \alpha d(1,g^{\nu(k)}) + a \le 2\alpha d(1,h^k) + 2a. \]
We obtain
\begin{align*}
d(g^{\nu(k)},g^{\nu(k+1)})
&\le d(g^{\nu(k)},h^k) + d(h^k,h^{k+1}) + d(h^{k+1},g^{\nu(k+1)}) \\
&\le \alpha d(1,g^{\nu(k)}) + a + d(1,h) + \alpha d(1,g^{\nu(k+1)}) + a.
\end{align*}
Then $d(1,g^{\nu(k+1)})\le d(1,g^{\nu(k)}) + d(g^{\nu(k)},g^{\nu(k+1)})$ implies
\[ d(g^{\nu(k)},g^{\nu(k+1)}) \le \alpha d(g^{\nu(k)},g^{\nu(k+1)}) + 2\alpha d(1,g^{\nu(k)}) + d(1,h) + 2a \]
and by rearranging the last inequality we get
\begin{align*}
d(g^{\nu(k)},g^{\nu(k+1)})
&\le \tfrac1{1-\alpha} (2\alpha d(1,g^{\nu(k)}) + d(1,h) + 2a) \\
&\le 4\alpha d(1,g^{\nu(k)}) + 2d(1,h) + 4a \\
&\le 8\alpha d(1,h^k) + 2d(1,h) + 8a
\end{align*}
using the bound $\alpha\le\tfrac12$ twice. The assumption on $g$ implies
\[ d(g^n,g^{\nu(k)}) \le Cd(g^{\nu(k)},g^{\nu(k+1)}) + C \le 8C\alpha d(1,h^k) + 2Cd(1,h) + 8Ca + C. \]
Collecting the pieces yields
\[ d(g^n,h^k) \le 2(1+4C) \alpha d(1,h^k) + 2Cd(1,h) + 2a + 8Ca + C. \qedhere \]
\end{proof}

\begin{lemma}\label{lemma:group-bnd}
Let $G$ be a connected, nilpotent Lie group or a finitely generated, nilpotent group. Then the assumption of the previous proposition on $G$ holds.
\end{lemma}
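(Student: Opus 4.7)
The strategy is to apply the asymptotic growth estimate for positive powers in nilpotent groups developed in Appendix~\ref{appendix:groups}: for every non-compact $g \in G$ there exist a constant $c(g) > 0$ and an integer $k = k(g) \in \{1, \dots, c\}$ (where $c$ is the nilpotency class) such that
\[ \lim_{n \to \infty} \frac{d(1, g^n)}{n^{1/k}} = c(g). \]
Here $k$ is the largest index for which $g$ has nontrivial image in $G_k/G_{k+1}$. In the Lie case this follows from Lemma~\ref{lemma:compare} together with the homogeneity of the metric $d_a$ on the Lie algebra $\LieG$ under the canonical dilations adapted to the stratification $\LieG = \LieG_1 \supseteq \LieG_2 \supseteq \dotsb$; in the finitely generated case, one reduces to the Lie case via the Mal'cev completion after passing to a torsion-free subgroup of finite index, exactly as in the proof of Theorem~\ref{theorem:nilpotent}.

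Once this asymptotic is in hand, I would fix any $\eps \in (0, 1)$ and, for each non-compact $g$, choose an integer $N = N(g, \eps)$ large enough that
\[ (1 - \eps) c(g) n^{1/k} \le d(1, g^n) \le (1 + \eps) c(g) n^{1/k} \qquad \text{for all } n \ge N, \]
and set $\tilde g = g^N$. Then $\genp{\tilde g} \linequiv \genp g$ by Lemma~\ref{lemma:orbits}. For integers $0 \le m \le n$ the case $m = 0$ is trivial; for $1 \le m \le n$ both $Nm, Nn \ge N$, so
\[ d(1, \tilde g^m) \le (1+\eps) c(g) (Nm)^{1/k} \le (1+\eps) c(g) (Nn)^{1/k} \le \tfrac{1+\eps}{1-\eps}\, d(1, \tilde g^n). \]
Taking $\eps = 1/3$ yields the desired inequality with the universal constant $C = 2$.

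The main obstacle is securing uniformity of $C$ in $g$: the asymptotic contains a $g$-dependent multiplicative constant $c(g)$ and a $g$-dependent threshold $N(g,\eps)$. The former cancels when one forms the ratio $d(1,\tilde g^m)/d(1,\tilde g^n)$, leaving only the universal factor $(1+\eps)/(1-\eps)$; the latter is absorbed by replacing $g$ with $\tilde g = g^N$, which shifts the indexing past the pre-asymptotic regime where a $g$-dependent additive error would otherwise be unavoidable. This is exactly why Proposition~\ref{proposition:group-bnd} is formulated with the freedom to pass from $g$ to a linearly equivalent $\tilde g$, rather than demanding the estimate directly on $g$.
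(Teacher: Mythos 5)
Your overall strategy --- pass to a linearly equivalent element whose positive powers have essentially monotone distance to the identity, and extract a constant uniform in $g$ from the $n^{1/k}$ growth --- is the right one, and your reductions (to the simply connected, respectively torsion-free, case, and from the discrete to the Lie case via the Mal'tsev completion) match the paper. The gap is the asymptotic you build everything on: the existence of the limit $\lim_{n\to\infty} d(1,g^n)/n^{1/k}=c(g)$ does \emph{not} follow from Lemma~\ref{lemma:compare} together with the homogeneity of $d_a$. What those tools give, via Lemma~\ref{lemma:gaugequasi}, is only the two-sided estimate $q^{-1}\gauge{n\log g}\le d(1,g^n)\le q\gauge{n\log g}+q$, where $\gauge{n\log g}=\max_j n^{1/j}\norm{\pi_j(\log g)}^{1/j}\sim n^{1/k}\gauge{\pi_k(\log g)}$; this pins the ratio $d(1,g^n)/n^{1/k}$ down only within a multiplicative window of size $q^2$ and says nothing about its convergence. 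That the limit actually exists is a substantially deeper fact (essentially Pansu's theorem on convergence to the asymptotic cone), which the paper neither proves nor invokes; consequently your universal constant $C=2$ is unsupported as written.

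The argument is easily repaired without the limit, and the repair is essentially the paper's proof. Since each component $m\mapsto m^{1/j}\norm{\pi_j(\log g)}^{1/j}$ is nondecreasing, the map $m\mapsto\gauge{m\log g}=\gauge{(\log g)^m}$ is exactly nondecreasing, so for $0\le m\le n$ Lemma~\ref{lemma:gaugequasi} yields $d(1,g^m)\le q\gauge{m\log g}+q\le q\gauge{n\log g}+q\le q^2 d(1,g^n)+q$. Hence the hypothesis of Proposition~\ref{proposition:group-bnd} holds with $C=q^2$ --- a constant depending on the generating set but not on $g$, which is all that is required --- and neither the threshold $N(g,\eps)$ nor the passage to a power $g^N$ is needed. (The paper takes $\tilde g=\exp(\pi_i(\log g))$, for which $\gauge{y^m}=m^{1/i}\gauge{y}$ is exactly homogeneous and $\genp{\tilde g}\linequiv\genp g$ by Lemmas~\ref{lemma:three} and \ref{lemma:compare}; your device of raising $g$ to a high power to escape the pre-asymptotic regime is a legitimate alternative, but it is solving a problem that the gauge comparison already avoids.)
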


\begin{proof}
Without loss of generality we may assume that $G$ is simply connected in the Lie case or torsion-free in the discrete case, see Lemma~\ref{lemma:ranks} and Corollary~\ref{corollary:milnor1}. Furthermore, it is sufficient to prove the statement in the Lie case, as the discrete case follows by embedding $G$ in its real Mal'tsev completion.

Hence suppose that $G$ is a connected, simply connected, nilpotent Lie group and let $d_G$ be a word metric on $G$. We use the notation of Appendix~\ref{appendix:groups}. By Lemma~\ref{lemma:gaugequasi} there exists a constant $q$, such that
\[ q^{-1} \gauge{x} \le d_G(1,\exp(x)) \le q \gauge{x}+q \]
for all $x\in\LieG$. We claim that the assumption of the previous proposition holds for $C=q^2$. Let $g$ be a group element of $G$. Then $g\in G_i$ but $g\notin G_{i+1}$ for some $i\ge1$. Set $y=\pi_i(\log(g))\in V_i$ and $h=\exp(y)$. Then, for $0\le m\le n$, we have $\gauge{y^m} = m^{1/i} \gauge{y} \le n^{1/i} \gauge{y} = \gauge{y^n}$ and therefore
\[ d_G(1,h^m) \le q \gauge{y^m} + q \le q \gauge{y^n} + q \le q^2 d_G(1,h^n) + q. \qedhere \]
\end{proof}

\section{Random walks on nilpotent groups}
\label{section:walks}

Many aspects of random walks on nilpotent groups were studied, see for instance \cite{alexopoulos2002random,guivarch1973croissance,guivarch1980sur,kaimanovich1991poisson,tanaka2011large}. In the sequel we give a  simple corollary of some results of Kaimanovich in \cite{kaimanovich1991poisson}. Let $G$ be a connected, simply connected, nilpotent Lie group with descending central series
\[ G=G_1\supseteq G_2\supseteq \dotsc G_c\supsetneq G_{c+1}=\{1\} \]
and let $d_G$ be a word metric on $G$. A random walk $(S_k)_{k\ge0}$ on $G$ has \emph{finite first moment}, whenever
\[ E(d_G(1,S_1)) = \int_G d_G(1,g) d\mu(g) < \infty, \]
where $\mu$ is the law of $S_1$. Note that this notion does not depend on the choice of the word metric. We say that $(S_k)_{k\ge0}$ has \emph{drift} if there is an integer $n\ge1$, such that $S_1\in G_n$ almost surely and $(S_k G_{n+1})_{k\ge0}$ is a random walk in the commutative group $G_n/G_{n+1}$ with drift, i.e. if we identify $G_n/G_{n+1}$ with $\R^{\nu(n)}$, where $\nu(n)$ is the dimension of $G_n/G_{n+1}$, then the expected direction $E(S_1 G_{n+1})\in G_n/G_{n+1}=\R^{\nu(n)}$ is non-zero.

\begin{theorem}\label{theorem:walks}
Let $(S_k)_{k\ge0}$ be a random walk with finite first moment and drift on a connected, simply connected, nilpotent Lie group $G$. Then there is a deterministic group element $g$, such that $\{S_k \setsep k\ge0\}\linequiv \genp g$ holds almost surely. In terms of the topology on $G\uplus\Linear G$, of Lemma~\ref{lemma:gluing}, this means that almost surely $(S_k)_{k\ge0}$ converges to the equivalence class of $\genp g$ in $\Linear G$. On the other hand, every point in $\Linear G$ is limit point of a random walk with drift (in the sense above).
\end{theorem}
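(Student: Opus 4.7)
The overall plan is to treat the two assertions separately. For the existence of $g$, I would identify the limiting direction via a strong law of large numbers on an abelian graded piece of $G$, pick a deterministic element pointing in that direction, and establish sublinear closeness using the gauge estimate (Lemma~\ref{lemma:gaugequasi}) together with the Campbell--Hausdorff formula on $\LieG$. For the converse, any boundary point is realized by a deterministic walk along it.

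For the forward direction, let $n\ge1$ denote the level of the drift and write $\pi\colon G_n\to G_n/G_{n+1}$ for the abelian quotient. Since $\pi$ is continuous and (up to bi-Lipschitz equivalence of word metrics, cf.\ Corollary~\ref{corollary:milnor1}) does not inflate distances, the finite first moment of $(S_k)$ transfers to the $\R^{\nu(n)}$\ndash valued random walk $(\pi(S_k))$, and the classical strong law of large numbers gives $\pi(S_k)/k\to v$ almost surely, where $v=E(\pi(S_1))\ne 0$. Identify $V_n\subset\LieG$ with $G_n/G_{n+1}$ via $\exp$, pick $\tilde v\in V_n$ corresponding to $v$, and set $g=\exp(\tilde v)$. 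Then $g^k=\exp(k\tilde v)$ and, by Lemma~\ref{lemma:gaugequasi} together with the gauge scaling $\gauge{k\tilde v}=k^{1/n}\gauge{\tilde v}$, one has $d_G(1,g^k)\asymp k^{1/n}$. The goal is to show that $d_G(S_k,g^k)=o(d_G(1,g^k))$ almost surely. Using the Campbell--Hausdorff expansion, the $V_n$\ndash component of $\log(g^{-k}S_k)$ equals $\pi_n(\log S_k)-k\tilde v$, which by the law of large numbers is $o(k)$ and therefore contributes gauge $o(k^{1/n})$; the higher-grade components of $\log(g^{-k}S_k)$ are controlled by the overall word-length bound $d_G(1,S_k)=O(k^{1/n})$, and a careful gauge-level bookkeeping shows that their joint contribution is likewise $o(k^{1/n})$. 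Choosing $m(k)=k$ in the definition of $s^+$ then yields $s(\{S_k\setsep k\ge 0\},\genp g)=0$, i.e.\ $\{S_k\setsep k\ge 0\}\linequiv \genp g$ almost surely. The convergence of $(S_k)$ to $[\genp g]$ in the topology of Lemma~\ref{lemma:gluing} is immediate, since any cone neighborhood $N(R,\alpha,r)$ of $[\genp g]$ absorbs the trajectory eventually.

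For the converse, given $\xi\in\Linear G$ pick a non-compact $g$ with $\genp g\in\xi$, let $n$ be the unique level with $g\in G_n\setminus G_{n+1}$, and consider the deterministic walk $X_i=g$: it has finite first moment $d_G(1,g)$ and drift at level $n$ (its projection in $G_n/G_{n+1}$ is simply $\pi(g)\ne 0$), and its trajectory is literally $\genp g\in\xi$. The main technical obstacle is the Campbell--Hausdorff bookkeeping in the forward direction: although higher-grade components of $\log S_k$ may themselves grow linearly in norm, one must verify that their contribution to the gauge is genuinely $o(k^{1/n})$ rather than merely $O(k^{1/n})$. Rather than carrying out this calculation by hand, it is cleanest to invoke Kaimanovich's ray-approximation theorem \cite[Theorem~5.5]{kaimanovich1991poisson} to produce a deterministic one-parameter subgroup to which $(S_k)$ stays sublinearly close, and then to pass from the one-parameter subgroup to the cyclic subsemigroup $\genp g$ via Lemma~\ref{lemma:orbits2}.
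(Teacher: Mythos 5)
Your final fallback is in fact the paper's own proof: the authors do not carry out the strong-law/Campbell--Hausdorff computation either, but simply quote Kaimanovich for the existence of a deterministic $g\in G_n\setminus G_{n+1}$ with $d_{G_n}(S_k,g^k)=o(k)$ almost surely, then convert this via the distortion of $G_n$ in $G$ into $d_G(S_k,g^k)=o(k^{1/n})$ and compare with $d_G(1,g^k)\ge C(g)k^{1/n}$. Two points, however. First, you cite the wrong result: Theorem~5.5 of \cite{kaimanovich1991poisson} is the abstract ray-approximation \emph{criterion} for identifying Poisson boundaries (this is exactly how the present paper describes it in its introduction); it does not by itself produce the approximating ray. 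The statement that actually delivers the deterministic element for nilpotent groups is Theorem~4.2 and the remark following it, which is what the paper invokes. Second, the gap you flag in your self-contained variant is genuine and is not closed by ``careful gauge-level bookkeeping'' in the naive sense: for grades $j\ge 2n$ the commutator sums in $\log S_k$ contain on the order of $k^2$ bounded terms, so the crude estimate gives $\norm{\pi_j(\log(g^{-k}S_k))}=O(k^2)$ and hence gauge $O(k^{2/j})=O(k^{1/n})$ when $j=2n$ --- only $O$, not $o$. Upgrading this requires exploiting cancellation (the leading term is a degenerate $U$\ndash statistic because the bracket of the mean with itself vanishes), which is precisely the content one is outsourcing to Kaimanovich. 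The paper's route through $d_{G_n}$ sidesteps this entirely, because the $o(k)$ estimate is taken in the intrinsic word metric of $G_n$ and only then rescaled by the $n$\ndash th root.

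Minor remarks: your transfer of the first moment to the abelian quotient is only unproblematic because the paper's definition of ``drift'' already presupposes that $E(S_1G_{n+1})$ exists; and in the converse you pick a representative $\genp g\in\xi$ for an arbitrary $\xi\in\Linear G$, which is legitimate here only because Theorem~\ref{theorem:nilpotent} shows that for these groups $\Quotient{\Forward G}$ is already closed, so every boundary point is of this form. With the citation corrected to Theorem~4.2, your argument coincides with the paper's.
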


\begin{proof}
Let $n\ge1$ be the integer, such that $S_1\in G_n$ almost surely and $(S_k G_{n+1})_{k\ge0}$ is a random walk with drift. By triviality of Poisson boundary and a result of Kaimanovich (see Theorem~4.2 and the following Remark in \cite{kaimanovich1991poisson}) there is a deterministic group element $g\in G_n$ ($g\notin G_{n+1}$ by the assumptions), such that $d_{G_n}(S_k,g^k) = o(k)$ almost surely. This implies $d_G(S_k,g^k) = o(k^{1/n})$. Since $d_G(1,g^k)\ge C(g) k^{1/n}$ for some constant $C(g)>0$, we obtain $\{S_k \setsep k\ge0\}\linequiv \genp g$ almost surely. And the other statements follow.

On the other hand, every point in $\Linear G$ is limit point of a corresponding deterministic random walk.
\end{proof}

\begin{remark}
As pointed out by Tanaka \cite{tanaka2012remark} the deterministic group element $g$ in the previous theorem is given by $g G_{n+1} = E(S_1 G_{n+1})$, where $n$ is the integer, such that $S_1\in G_n$ almost surely and $(S_k G_{n+1})_{k\ge0}$ is a random walk with drift.
\end{remark}

\begin{remark}
A similar statement holds for finitely generated, torsion-free, nilpotent groups. Suppose that $G$ is such a group and consider a random walk $(S_k)_{k\ge0}$ on $G$ with drift. Then $(S_k)_{k\ge0}$ converges to an element in $\Linear G$ with respect to the topological space $(G\uplus\Linear G,\tau)$, where $\tau$ is the topology of Lemma~\ref{lemma:gluing}. On the other hand, every point in $\Linear G$ is limit point of a random walk with drift.
\end{remark}

\begin{appendix}

\section{Compactly generated groups}
\label{appendix:compact}

We provide some results on word metrics of compactly generated, locally compact groups and related issues which are completely analogous to the case of finitely generated groups. The books of Hewitt and Ross~\cite{hewitt1979abstract}, Stroppel~\cite{stroppel2006locally}, and de~la~Harpe~\cite{delaharpe2000topics} provide a good background on topological and finitely generated groups. We recall some basics from \cite{guivarch1980sur}.

\begin{lemma}[Proposition~1 in \cite{guivarch1980sur}]\label{lemma:basics}
Let $G$ be a compactly generated, locally compact Hausdorff group.
\begin{compactitem}
\item If $S$ is a compact, symmetric, generating set
	then, for some $n\ge 0$, the set $S^n$ contains a neighborhood of $1$.
\item A subset of $G$ is compact, if and only if
	it is closed and bounded with respect to some word metric.
	Consequently, a subset is bounded if and only if it is relatively compact.
\item If $S$ and $S'$ are two compact symmetric generating sets
	then the associated word metrics $d$ and $d'$ are bi-Lipschitz-equivalent,
	i.e.\ there is a constant $q>0$, such that
	\[ q^{-1} d(x,y) \le d'(x,y) \le q d(x,y) \]
	for all $x,y\in G$.
\end{compactitem}
\end{lemma}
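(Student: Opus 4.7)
The plan is to establish the three bullets in order, since the second and third rest on the first. For the first, the idea is to invoke the Baire category theorem: $G$ is locally compact Hausdorff, hence a Baire space, and one may write $G=\bigcup_{n\ge 0} S^n$ as an increasing countable union of sets which are compact (as continuous images of $S^n$ under the multiplication map) and therefore closed. Some $S^n$ must then have nonempty interior; picking a point $u\in\interior(S^n)$, the left translate $u^{-1}\interior(S^n)$ is an open neighborhood of $1$, and since $S$ is symmetric we have $u^{-1}\in S^n$, so this neighborhood lies in $S^{2n}$.

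For the second bullet, the easier direction is that a closed and bounded set $A$ satisfies $A\subseteq S^m$ for some $m$, and a closed subset of the compact set $S^m$ is compact. For the converse, suppose $A$ is compact. Using the first bullet, fix $k$ with $1\in\interior(S^k)$. Then $\{g\cdot\interior(S^k):g\in A\}$ is an open cover of $A$; extract a finite subcover indexed by $g_1,\dots,g_N$, each lying in some $S^{n_i}$. Then $A\subseteq\bigcup_i g_iS^k\subseteq S^{k+\max_i n_i}$, so $A$ is bounded.

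For the third bullet, $S'$ is compact and therefore, by the second bullet, contained in $S^n$ for some $n$. By induction $(S')^m\subseteq S^{nm}$ for every $m$, so $d(1,g)\le n\, d'(1,g)$ for every $g\in G$. Left-invariance of both word metrics upgrades this to $d(g,h)\le n\, d'(g,h)$ for all $g,h\in G$, and the reverse bound is obtained by swapping the roles of $S$ and $S'$.

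The only genuine obstacle is the first bullet, where the combinatorial object $S^n$ must be related to the group topology; once Baire category supplies some $S^n$ with nonempty interior, the remainder is straightforward bookkeeping with containments of powers of the generating set.
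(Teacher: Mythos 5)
Your proposal is correct and follows essentially the same route as the paper: the Baire category argument (with $S^n$ closed because compact in a Hausdorff group) for the first bullet, covering a compact set by finitely many translates of the interior of some $S^k$ for the second, and $S'\subseteq S^n$ plus left-invariance for the third. The only differences are cosmetic — you spell out the translation $u^{-1}\interior(S^n)$ where the paper just invokes symmetry, and you phrase the compact-implies-bounded step via containments of powers rather than via the distance estimate $d(1,a)\le n_i+k$ — so there is nothing substantive to add.
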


\begin{proof}
For sake of completeness we provide a short proof: Since $G$ is Hausdorff, the sets $S^n$, $n=0,1,\dotsc,$ are closed and their union is equal to $G$. Hence, as locally compact Hausdorff spaces are Baire spaces (see \cite[Corollary~25.4]{willard2004general}), there is an integer $n\ge0$, such that $S^n$ contains a non-empty open subset. Since $S^n$ is symmetric, $S^{2n}$ contains a neighborhood of $1$.

Let $S$ be any compact, symmetric, generating subset of $G$ and $d$ be the associated word metric. Choose $n\ge0$ such that $S^n$ contains some open neighborhood $U$ of $1$. Suppose that $A$ is a compact subset of $G$. Then there are finitely many elements $a_1,\dotsc,a_r$ of $A$ such that $A\subseteq a_1 U \cup \dotsb \cup a_r U$. Thus
\[ d(1,a) \le n \max\{d(1,a_1),\dotsc,d(1,a_r)\} \]
for all $a\in A$. Hence $A$ is bounded with respect to $d$ and, since $G$ is assumed to be Hausdorff, the set $A$ is also closed. Now suppose that $A$ is a closed subset of $G$ and bounded with respect to $d$. Then $A\subseteq S^m$ for some $m\ge0$ which implies that $A$ is compact.

By the second statement, there is a constant $q>0$, such that $d'(1,x) \le q$ for all $x\in S$ and $d(1,y) \le q$ for all $y\in S'$. This implies the third assertion.
\end{proof}

A metric space $(X,d)$ is called \emph{$q$\ndash quasi-geodesic}, if for all $x,y\in X$ there is an integer $n\ge0$ and points $x=x_0,x_1,\dotsc,x_{n-1},x_n=y$ in $X$, such that
\[ n \le q d(x,y) + q \qquad\text{and}\qquad d(x_{i-1},x_i) \le q \]
for all $1\le i\le n$. We remark that similar notions are used in the literature (see for instance \cite[Definition~8.22]{bridson1999metric} and \cite[Section~0.2.D]{gromov1993asymptotic}). Of course, any geodesic metric space is $1$\ndash quasi-geodesic and any word metric on a compactly generated, locally compact group is $1$\ndash quasi-geodesic.

In the following we give a straightforward generalization of the classical Milnor-\v Svarc lemma (see for instance \cite[Proposition~8.19]{bridson1999metric} or \cite[Theorem~IV.B.23]{delaharpe2000topics}) to the continuous case. Before stating the lemma we give a precise description of the setting: Let $G$ be a locally compact group and $X$ be a Hausdorff space. Furthermore, let $d_X$ be a quasi-geodesic metric on $X$ (we do not assume that $d_X$ induces the topology on $X$). If not stated otherwise, all topological notions concerning $X$ refer to the topology on $X$ with the exception of boundedness, which refers to the metric $d_X$. An action $G\times X\to X$, $(g,x)\mapsto gx$ is called
\begin{compactitem}
\item \emph{continuous}, if it is a continuous mapping from $G\times X$ to $X$,
\item \emph{$q$\ndash cobounded}, if for all $x,y\in X$ there is a $g\in G$ with $d_X(gx,y)\le q$,
\item \emph{proper}, if $\{ g\in G \setsep d_X(gx,x)\le r\}$ is compact for all $x\in X$ and all $r\ge0$.
\end{compactitem}
We say that $G$ \emph{acts by isometries}, if $x\mapsto gx$ is an isometry with respect to $d_X$ for all $g\in G$. Note that if the action is continuous and $K\subseteq G$ is compact then, for any $x\in X$, the set $Kx = \{gx \setsep g\in K\}$ is compact and hence bounded. With these preparations we are ready to state the lemma:

\begin{lemma}\label{lemma:milnor}
Let $G$ be a locally compact Hausdorff group and $X$ be a Hausdorff space which is additionally endowed with a quasi-geodesic metric $d_X$, such that all compact subsets are bounded. Suppose that there is a continuous, cobounded, proper action of $G$ by isometries on $X$. Then $G$ is compactly generated and for any $x\in X$ the map $G\to X$, $g\mapsto gx$ is a quasi-isometry from $(G,d_G)$ to $(X,d_X)$, where $d_G$ is some word metric of $G$.
\end{lemma}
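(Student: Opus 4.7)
The plan is to follow the classical Milnor--\v Svarc scheme, adapted to the compactly generated setting and the weaker hypothesis that $d_X$ is only quasi-geodesic rather than genuinely geodesic. Fix a basepoint $x \in X$ and choose a single constant $q$ large enough that $d_X$ is $q$-quasi-geodesic and that the action of $G$ on $X$ is $q$-cobounded. The first step is to introduce the candidate compact generating set
\[ S = \{ g \in G \setsep d_X(gx, x) \le 3q \}, \]
which is compact by the properness hypothesis.

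Next I would verify that $S$ generates $G$ and that the orbit map $g \mapsto gx$ distorts distances at most linearly in one direction. Given $g \in G$, apply the quasi-geodesic property to $x$ and $gx$ to obtain a chain $x = x_0, x_1, \dotsc, x_n = gx$ with $n \le q\, d_X(x,gx) + q$ and $d_X(x_{i-1}, x_i) \le q$ for all $i$. Cobounded\-ness supplies elements $g_i \in G$ with $d_X(g_i x, x_i) \le q$; take $g_0 = 1$ and $g_n = g$. Since $G$ acts by isometries,
\[ d_X(x,\, g_{i-1}^{-1} g_i x) = d_X(g_{i-1} x, g_i x) \le 2q + q, \]
so $g_{i-1}^{-1} g_i \in S$. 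The telescoping identity $g = (g_0^{-1} g_1)(g_1^{-1} g_2) \dotsm (g_{n-1}^{-1} g_n)$ then shows $g \in S^n$, whence $G = \bigcup_n S^n$ is compactly generated and $d_G(1,g) \le n \le q\, d_X(x,gx) + q$.

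For the reverse inequality, I would use continuity of the action and compactness of $S$: the map $S \to X$, $s \mapsto sx$, has compact image, which is bounded in $(X,d_X)$ by hypothesis. Setting $M = \sup_{s\in S} d_X(sx, x)$, the triangle inequality combined with $G$ acting by isometries gives $d_X(x, gx) \le M\, d_G(1,g)$ for every $g \in G$. Finally, coboundedness ensures that the image $Gx$ is $q$-dense in $X$, which is exactly the surjectivity-up-to-$q$ requirement in Definition~\ref{definition:quasi-iso}. Combining these estimates proves that $g \mapsto gx$ is a quasi-isometry $(G, d_G) \to (X, d_X)$.

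The main obstacle I anticipate is bookkeeping the constants to produce a single $q$ that witnesses the quasi-isometry relation, together with carefully invoking the continuity hypothesis in the one place it really matters: the finiteness of $M$. In the classical geodesic version one takes $S = \{g \setsep d_X(gx, x) \le 3q\}$ with a continuous action on a proper geodesic space, and $M \le 3q$ is free; here, properness of the action (not of the space) is what makes $S$ compact, and the hypothesis that compact subsets of $X$ are bounded is what forces $M < \infty$.
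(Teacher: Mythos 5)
Your proposal is correct and follows essentially the same route as the paper's proof: the same generating set $S=\{g \setsep d_X(gx,x)\le 3q\}$ (which, as you implicitly use, is automatically symmetric because the action is by isometries), the same chain-plus-coboundedness argument for $d_G(1,g)\le q\,d_X(x,gx)+q$, and the same use of continuity and the ``compact implies bounded'' hypothesis to get $M=\sup_{s\in S}d_X(sx,x)<\infty$ for the reverse telescoping bound. Your closing remarks correctly isolate exactly where properness of the action and boundedness of compact sets enter, matching the paper's argument.
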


\begin{proof}
Except for minor modifications the proof is the same as in \cite{bridson1999metric,delaharpe2000topics}.

For simplicity we assume that the constant $q$ involved in the quasi-geodesic metric is the same as the constant $q$ of the cobounded action. Fix $x\in X$. Since the action is proper, the set $\{g\in G \setsep d(gx,x) \le 3q\}$ is compact. Let $S$ be the union of this set and its inverse. Then $S$ is compact and symmetric and $1\in S$.

We show that $S$ generates $G$. Let $g\in G$. Since $(X,d_X)$ is $q$\ndash quasi-geodesic, there are $x=x_0,x_1,\dotsc,x_n=gx$, such that $n\le q d(x,gx) + q$ and $d(x_{i-1},x_i)\le q$ for $1\le i\le n$. Since the action is $q$\ndash cobounded, there are group elements $g_0=1, g_1, \dotsc, g_n=g$, such that $d_X(g_ix, x_i) \le q$ for all $0\le i\le n$. Then
\[ d_X(g_{i-1}^{-1}g_i^{} x, x) = d_X(g_i x, g_{i-1} x)
	\le d_X(g_i x, x_i) + d_X(x_i, x_{i-1}) + d_X(x_{i-1}, g_{i-1} x) \le 3q. \]
It follows that $s_i^{} = g_{i-1}^{-1} g_i^{} \in S$ and thus $g=g_n=s_1\dotsm s_n\in S^n$. Hence $S$ is a generating set. Let $d_G$ be the word metric on $G$ with respect to $S$. Then the estimate above for $g\in G$ yields
\[ d_G(1,g) \le n \le q d_X(x, gx) + q. \]

Now we prove that $G\to X$, $g\mapsto gx$ is a quasi-isometry from $(G,d_G)$ to $(X,d_X)$. Let $g,h\in G$. Then we obtain
\[ d_G(g,h) = d_G(1,g^{-1}h) \le q d_X(x, g^{-1}hx) + q = q d_X(gx, hx) + q. \]
For the reversed bound, note that $Sx$ is bounded, since $S$ is compact. Hence
\[ M = \sup\{ d_X(x,y) \setsep y\in Sx\} \]
is finite. Suppose that $d_G(g,h)=n\ge1$ and $g^{-1}h=s_1\dotsm s_n$ for some $s_1,\dotsc,s_n\in S$. Then
\begin{align*}
d_X(gx, hx)
&= d_X(x, g^{-1}hx) = d_X(x, s_1\dotsm s_nx) \\
&\le d_X(x, s_1x) + d_X(s_1x, s_1s_2x) + \dotsb + d_X(s_1\dotsm s_{n-1}x, s_1\dotsm s_nx) \\
&= d_X(x, s_1x) + d_X(x, s_2x) + \dotsb + d_X(x, s_nx) \\
&\le Mn = M d_G(g,h). \qedhere
\end{align*}
\end{proof}

In order to have a handy reference we formulate the following well-known results, see \cite[Section~5]{hewitt1979abstract} and \cite[Section~I.10.2]{bourbaki1966general1}.

\begin{lemma}\label{lemma:maps}
Let $G$ be a Hausdorff group.
\begin{compactitem}
\item Suppose that $H$ is a subgroup.
	We write $H\rightfactor G$ to denote the set of right cosets $Hg$, $g\in G$,
	and equip $H\rightfactor G$ with the quotient topology.
	Then the projection $\pi\colon G\to H\rightfactor G$ is open
	(i.e.\ images of open sets are open).
	If $H$ is compact then $\pi$ is also proper
	(i.e.\ preimages of compact sets are compact).
\item Suppose that $H$ is a Hausdorff group and
	$\pi\colon H\to G$ is a continuous and open homomorphism which is onto.
	If the kernel of $\pi$ is compact then $\pi$ is proper.
\end{compactitem}
\end{lemma}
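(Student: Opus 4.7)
My plan is to treat the three claims in sequence, with the second (properness of $\pi$ when $H$ is compact) being the main piece and the third (properness of $\pi$ in part~2) reducing to it.

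For openness of the coset projection $\pi\colon G \to H\rightfactor G$, the observation is that for any open $U\subseteq G$, the preimage $\pi^{-1}(\pi(U)) = HU = \bigcup_{h\in H} hU$ is a union of left translates of $U$, each open since $g\mapsto hg$ is a homeomorphism. By the defining property of the quotient topology, $\pi(U)$ is open.

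For properness when $H$ is compact, the key ingredient I would first establish is a tube-type statement: if $V\subseteq G$ is open and $Hg\subseteq V$, then there exists an open neighborhood $U$ of $g$ with $HU\subseteq V$. I would prove this by choosing, for each $h\in H$, open neighborhoods $A_h\ni h$ and $B_h\ni g$ with $A_h B_h\subseteq V$ (using continuity of multiplication at $(h,g)$), extracting a finite subcover $A_{h_1},\dotsc,A_{h_n}$ of $H$ by compactness, and setting $U=B_{h_1}\cap\dotsb\cap B_{h_n}$. With this lemma, I would show $\pi^{-1}(K)$ is compact for any compact $K\subseteq H\rightfactor G$ by a standard tube-plus-fiber argument: given an open cover $\{U_\alpha\}$ of $\pi^{-1}(K)$, each compact fiber $\pi^{-1}(x)=Hg_x$ admits a finite subcover whose union $V_x$ is open and contains $Hg_x$; the tube lemma then yields an open $W_x\ni g_x$ with $HW_x\subseteq V_x$, and $\pi(W_x)$ is an open neighborhood of $x$ by the first part. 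A finite subcover of $K$ by such $\pi(W_{x_1}),\dotsc,\pi(W_{x_k})$ produces the desired finite subcover of $\pi^{-1}(K)$.

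For part~2, I would factor $\pi$ through the kernel $N=\ker\pi$, which is normal so that $N\rightfactor H = H/N$, writing $\pi = \bar\pi\circ p$ with $p\colon H\to H/N$ the coset projection and $\bar\pi\colon H/N\to G$ the induced continuous bijection. Part~1 supplies that $p$ is continuous, open and proper. The map $\bar\pi$ is open since, for open $W\subseteq H/N$, one has $\bar\pi(W) = \pi(p^{-1}(W))$, which is open as $p$ is continuous and $\pi$ is open; hence $\bar\pi$ is a homeomorphism and in particular proper. Composition of proper maps is proper (preimages of compact sets pull back through each factor), so $\pi$ is proper.

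The main obstacle is the tube lemma in the second step; everything else amounts to formal bookkeeping. A convenient feature is that no local compactness of $G$ is required—the finite subcover in the tube lemma is powered entirely by compactness of $H$—so the proof works under the bare Hausdorff hypothesis stated in the lemma.
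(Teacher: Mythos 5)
Your proof is correct. Note that the paper itself does not prove this lemma: it is recorded as a well-known fact with pointers to Hewitt--Ross (Section~5) and Bourbaki (Section~I.10.2), so there is no in-paper argument to compare against. Your write-up supplies exactly the standard argument one finds in those sources: openness of $\pi$ from $\pi^{-1}(\pi(U))=HU=\bigcup_{h\in H}hU$ and the definition of the quotient topology; properness of the quotient by a compact subgroup via a Wallace-type tube lemma, which correctly uses only continuity of multiplication and compactness of $H$ (no local compactness of $G$ is needed, as you observe); and the second item by factoring $\pi$ through $H/\ker\pi$, where the induced continuous open bijection is a homeomorphism and the canonical projection is proper by the first item --- the kernel being normal, left and right cosets coincide, so the first item applies verbatim to the pair $(H,\ker\pi)$. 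The tube-plus-fiber step is also sound: each fiber $Hg_x$ is compact as a continuous image of $H$, the tube lemma gives $HW_x\subseteq V_x$, hence $\pi^{-1}(\pi(W_x))\subseteq V_x$, and a finite subcover of the compact set $K$ by the open sets $\pi(W_x)$ yields a finite subcover of $\pi^{-1}(K)$ by the original cover. No gaps.
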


\begin{example}\label{example:exact}
Let $G$ be a compactly generated, locally compact Hausdorff group with word metric $d_G$, $N$ a compact Hausdorff group, and $H$ a Hausdorff group. Suppose that
\[ \{1\} \longrightarrow N \longrightarrow H \stackrel{\pi}{\longrightarrow} G \longrightarrow \{1\} \]
is a topological exact sequence (i.e.\ all involved homomorphisms are continuous). The action $H\times G\to G$, $(h,g)\mapsto\pi(h)g$ is continuous and it acts by isometries. As $\pi$ is onto, this action is obviously cobounded. Furthermore, the action is proper, if and only if
\[ \{ h\in H \setsep d_G(hg,g) \le r \} = \pi^{-1}(g B(1,r) g^{-1}) \]
is compact for all $g\in G$ and all $r\ge0$. Here $B(1,r)$ is the closed ball in $G$ with respect to $d_G$. If $\pi$ is an open map, it follows that the action is proper (Lemma~\ref{lemma:maps}) and $H$ is locally compact, since this is an extension property.
\end{example}

\begin{example}\label{example:subgroup}
Consider a compactly generated, locally compact Hausdorff group $G$ with word metric $d_G$ and let $H$ be a subgroup of $G$. Then $H\times G\to G$, $(h,g)\to hg$ is a continuous action which acts by isometries. The set $H\rightfactor G$ inherits a metric $d_{H\rightfactor G}$ from $G$:
\[ d_{H\rightfactor G}(Hg_1, Hg_2) = \min\{d_G(h_1g_1, h_2g_2) \setsep h_1, h_2\in H\} \]
for $g_1,g_2\in G$, which is well-defined, since $d_G$ is discrete. By left-invariance the action is cobounded, if and only if $(H\rightfactor G, d_{H\rightfactor G})$ is bounded. Notice that $(H\rightfactor G, d_{H\rightfactor G})$ is bounded, if $H\rightfactor G$ is compact with respect to the quotient topology of $G$. To see this, choose $n\ge1$, such that $S^n$ contains an open neighborhood $U$ of $1$. Since the projection $\pi\colon G\to H\rightfactor G$ is open (Lemma~\ref{lemma:maps}), $\{\pi(gU) \setsep g\in G\}$ is an open cover of $H\rightfactor G$. Hence there is a finite subcover $\{\pi(g_1U),\dotsc,\pi(g_mU)\}$. Thus any coset of $H\rightfactor G$ is of the form $Hg_iu$ for some $1\le i\le m$ and some $u\in U$. This yields the bound
\begin{align*}
d_{H\rightfactor G}(H,Hg_iu)
&\le d_G(1,g_iu) \le d_G(1,g_i) + d_G(1,u) \\
&\le \max\{d_G(1,g_i) \setsep 1\le i\le m\} + n.
\end{align*}
If $H$ is a closed subgroup then $H$ is locally compact and this action is proper. To see this let $g\in G$ and $r\ge0$ be given. Then
\[ \{ h\in H \setsep d_G(hg,g) \le r \} = g B(1,r) g^{-1} \cap H \]
is compact, since $g B(1,r) g^{-1}$ is compact and $H$ is closed.
\end{example}

By an application of the generalized Milnor-\v Svarc lemma to the situations described in the two previous examples we obtain the following:

\begin{corollary}\label{corollary:milnor1}
Consider a compactly generated, locally compact Hausdorff group $G$ with word metric $d_G$.
\begin{itemize}
\item Suppose that $N$ is a compact Hausdorff group and $H$ is a Hausdorff group and that
	\[ \{1\} \longrightarrow N \longrightarrow H \stackrel{\pi}{\longrightarrow} G \longrightarrow \{1\} \]
	is a topological exact sequence, such that $\pi\colon H\to G$ is open.
	Then $H$ is compactly generated and locally compact and
	$\pi$ is a quasi-isometry from $(H,d_H)$ to $(G,d_G)$
	for any word metric $d_H$ on $H$.
\item If $H$ is a closed subgroup of $G$ and $(H\rightfactor G, d_{H\rightfactor G})$ is bounded
	then $H$ is compactly generated and locally compact and
	the inclusion is a quasi-isometry from $(H,d_H)$ to $(G,d_G)$
	for any word metric $d_H$ on $H$.
	Furthermore, if $H\rightfactor G$ is compact,
	then $(H\rightfactor G, d_{H\rightfactor G})$ is bounded.
\end{itemize}
\end{corollary}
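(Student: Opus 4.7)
The plan is to reduce both assertions to the generalized Milnor--\v Svarc lemma (Lemma~\ref{lemma:milnor}) applied to the two actions already set up in Examples~\ref{example:exact} and~\ref{example:subgroup}. The word metric $d_G$ on $G$ is $1$\ndash quasi-geodesic, and by Lemma~\ref{lemma:basics} its compact subsets are precisely its closed, bounded subsets. Thus the target side of Lemma~\ref{lemma:milnor} is set up with $X=G$, $d_X=d_G$. It then remains, in each case, to verify that the relevant action of the would-be group is continuous, by isometries, cobounded and proper, and that the acting group is itself locally compact Hausdorff; the conclusion then follows by applying the Milnor--\v Svarc lemma at the reference point $x=1\in G$, and the fact that any two word metrics on a compactly generated locally compact Hausdorff group are bi-Lipschitz-equivalent (Lemma~\ref{lemma:basics}) promotes the conclusion to hold for every word metric $d_H$.

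For the first assertion, consider the action $H\times G\to G$, $(h,g)\mapsto \pi(h)g$. Continuity and the isometric property are immediate from the fact that $\pi$ is continuous and $d_G$ is left-invariant; coboundedness follows because $\pi$ is onto. As observed in Example~\ref{example:exact}, the openness of $\pi$ combined with Lemma~\ref{lemma:maps} yields that $\pi$ is proper, which gives properness of the action. Finally, $H$ is locally compact Hausdorff since local compactness is an extension property of Hausdorff topological groups (given that both $N$ and $G$ are locally compact Hausdorff and $\pi\colon H\to G$ is open and continuous). Lemma~\ref{lemma:milnor} therefore produces a word metric $d_H$ on $H$ such that the orbit map $h\mapsto \pi(h)\cdot 1=\pi(h)$ is a quasi-isometry from $(H,d_H)$ to $(G,d_G)$, and Lemma~\ref{lemma:basics} lets us replace $d_H$ by an arbitrary word metric.

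For the second assertion, consider the left multiplication action $H\times G\to G$, $(h,g)\mapsto hg$. Continuity and the isometric property are again immediate, and coboundedness is equivalent to boundedness of $(H\rightfactor G, d_{H\rightfactor G})$, which is the standing hypothesis. Properness follows from the closedness of $H$: for every $g\in G$ and $r\ge 0$, the set $\{h\in H : d_G(hg,g)\le r\}=g\Ball(1,r)g^{-1}\cap H$ is the intersection of a compact and a closed set, hence compact (this is the verification done at the end of Example~\ref{example:subgroup}). Since $H$ is a closed subgroup of a locally compact Hausdorff group, it is itself locally compact Hausdorff. Applying Lemma~\ref{lemma:milnor} at $x=1$ shows that $H$ is compactly generated and the orbit map $h\mapsto h$, i.e., the inclusion, is a quasi-isometry onto its image for some (and hence any) word metric $d_H$ on $H$. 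The concluding ``furthermore'' statement, that compactness of $H\rightfactor G$ implies boundedness of $(H\rightfactor G, d_{H\rightfactor G})$, is precisely the open-cover argument carried out in Example~\ref{example:subgroup} and needs no further proof.

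No genuine obstacle should arise: the only content of the corollary is the careful combination of two previously set-up actions with Lemma~\ref{lemma:milnor}, so the main point is simply checking that all hypotheses are in place (local compactness of $H$, properness via Lemma~\ref{lemma:maps} and closedness, coboundedness from surjectivity of $\pi$ or from boundedness of the coset space), and then invoking the Milnor--\v Svarc lemma verbatim.
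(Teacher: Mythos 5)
Your proposal is correct and follows exactly the route the paper intends: the paper derives this corollary by applying the generalized Milnor--\v Svarc lemma (Lemma~\ref{lemma:milnor}) to the two actions set up in Examples~\ref{example:exact} and~\ref{example:subgroup}, which is precisely what you do, including the verification of properness via Lemma~\ref{lemma:maps} (respectively closedness of $H$) and the upgrade from one word metric to all of them via Lemma~\ref{lemma:basics}. The paper leaves these checks implicit in the examples, so your write-up is simply a more explicit version of the same argument.
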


Finally, we note the following consequence of the Milnor-\v Svarc lemma, which says, that any reasonable metric on a compactly generated, locally compact Hausdorff group is quasi-isometrically equivalent to any word metric on the group.

\begin{corollary}\label{corollary:milnor2}
Let $G$ be a locally compact Hausdorff group. Suppose that $d_Q$ is a left-invariant, $q$\ndash quasi-geodesic metric on $G$ with the property, that compact subsets are bounded with respect to $d_Q$ and closed balls with respect to $d_Q$ are compact. Then $G$ is compactly generated and $d_Q$ is quasi-isometrically equivalent to any word metric on $G$.
\end{corollary}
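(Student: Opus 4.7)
The plan is to apply the generalized Milnor-\v Svarc lemma (Lemma~\ref{lemma:milnor}) to the action of $G$ on itself by left multiplication, with $X=G$ equipped with its own group topology and the metric $d_X=d_Q$. First I would collect the hypotheses needed by Lemma~\ref{lemma:milnor}: $G$ is a Hausdorff space; $d_Q$ is $q$\ndash quasi-geodesic by assumption; compact subsets of $G$ are $d_Q$\ndash bounded by assumption. The action $G\times G\to G$, $(g,h)\mapsto gh$ is continuous in the group topology, and it acts by isometries on $(G,d_Q)$ directly from the left-invariance of $d_Q$.

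Next I would verify coboundedness and properness. Coboundedness is immediate: for any $x,y\in G$ the element $g=yx^{-1}$ satisfies $gx=y$, so $d_Q(gx,y)=0$. For properness, fix $x\in G$ and $r\ge0$; left-invariance (applied with $h=x^{-1}$) gives $d_Q(gx,x)=d_Q(x^{-1}gx,1)$, hence
\[ \{g\in G \setsep d_Q(gx,x)\le r\} = x\,B_Q(1,r)\,x^{-1}, \]
where $B_Q(1,r)$ is the closed $d_Q$\ndash ball about $1$. By hypothesis $B_Q(1,r)$ is compact in the group topology, and conjugation by $x$ is a homeomorphism, so the displayed set is compact as well.

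Hence Lemma~\ref{lemma:milnor} applies and yields that $G$ is compactly generated, and that for some word metric $d_G$ on $G$ the map $g\mapsto g\cdot 1=g$ is a quasi-isometry $(G,d_G)\to(G,d_Q)$. In other words the identity is a quasi-isometry, so $d_G$ and $d_Q$ are quasi-isometrically equivalent. Combining this with the last part of Lemma~\ref{lemma:basics}, which says that any two word metrics on $G$ (with respect to compact symmetric generating sets) are bi-Lipschitz equivalent, gives the stated equivalence of $d_Q$ with every word metric on $G$.

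The main point of the argument is the properness verification: one must use both pieces of the hypothesis on $d_Q$---left-invariance to reduce to a ball around $1$, and compactness of closed $d_Q$\ndash balls to conclude compactness in the \emph{group} topology (which is the topology relevant for the Milnor-\v Svarc set-up, and which may a priori differ from the metric topology of $d_Q$). The remaining verifications are essentially routine given the assumptions.
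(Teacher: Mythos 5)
Your proof is correct and follows exactly the route the paper intends: the paper states Corollary~\ref{corollary:milnor2} as a direct consequence of the generalized Milnor--\v Svarc lemma (Lemma~\ref{lemma:milnor}) without writing out the verification, and your application to the left-multiplication action of $G$ on $(G,d_Q)$ --- in particular the properness check via $\{g \setsep d_Q(gx,x)\le r\} = x B_Q(1,r) x^{-1}$ and the compactness of closed $d_Q$\ndash balls in the group topology --- supplies precisely the missing details, with the final reduction to arbitrary word metrics handled by Lemma~\ref{lemma:basics}.
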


Note that it is not assumed that the metric $d_Q$ induces the group topology. However, the assumptions guarantee some compatibility between the metric $d_Q$ and the group topology. For example, the assumptions on $d_Q$ are satisfied, if $d_Q$ is left-invariant, geodesic, proper and induces the group topology.

\section{Nilpotent Lie groups}
\label{appendix:groups}

The purpose of the appendix is to provide some background on nilpotent Lie groups, see for instance \cite{corwin1990representations,goodman1976nilpotent,hochschild1965structure}, and, mainly, to prove several technical results, which are used in the proof of Theorem~\ref{theorem:nilpotent}.

Let $G$ be a group. We denote by $[g,h] = g^{-1}h^{-1}gh$ the commutator in $G$ and define the $k$\ndash fold commutator inductively by $[g_1]=g_1$ and $[g_1,\dotsc,g_k]=[g_1,[g_2,\dotsc,g_k]]$. The \emph{descending central series} of $G$ is inductively defined by
\[ \gamma_1(G) = G \qquad\text{and}\qquad \gamma_{n+1}(G) = \gen{ [G,\gamma_n(G)] } \]
for $n\ge1$. A group $G$ is called \emph{nilpotent} if $\gamma_{n+1}(G)=\{1\}$ for some integer $n$ and the least integer $n$ with this property is called \emph{nilpotency class} of $G$. If $A$ is a subset of $G$ then the set
\[ I(A) = \{ g\in G \setsep g^n\in A \text{ for some } n\in\N \} \]
is called \emph{isolator} of $A$.

If $G$ is commutative and finitely generated, we denote its torsion-free rank by $\rank(G)$. If $G$ is a commutative, connected Lie group then $G$ is isomorphic to $\R^a \times (\R/\Z)^b$ for some integers $a,b$. In analogy to the discrete case we call $a$ the \emph{compact-free dimension} of $G$ and denote it by $\vdim(G)$.

\begin{lemma}\label{lemma:ranks}
Let $G$ be a nilpotent group and set $G_n=\gamma_n(G)$ for $n\in\N$.
\begin{compactitem}
\item If $G$ is additionally a connected Lie group
	then the set $C$ of all compact elements in $G$
	is a characteristic, connected, compact subgroup,
	$G/C$ is simply connected and
	\[ \vdim(\gamma_n(G/C)/\gamma_{n+1}(G/C)) = \vdim(G_n/G_{n+1}) \]
	for all $n\in\N$.
\item If $G$ is finitely generated
	then the set $T$ of torsion elements in $G$
	is a characteristic, finite subgroup, $G/T$ is torsion-free and
	\[ \rank(\gamma_n(G/T)/\gamma_{n+1}(G/T)) = \rank(G_n/G_{n+1}) \]
	for all $n\in\N$.
\item If $G$ is finitely generated and torsion-free
	then $G = I(G_1) \supseteq I(G_2) \supseteq \dotsb$
	is a central series of $G$ with torsion-free quotients,
	$G_n$ has finite index in $I(G_n)$ and
	\[ \rank(I_n(G)/I_{n+1}(G)) = \rank(G_n/G_{n+1}) \]
	for all $n\in\N$
\end{compactitem}
\end{lemma}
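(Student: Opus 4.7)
The plan is to handle the three bullets separately, each by reducing to standard structure results for nilpotent groups combined with a brief computation on central quotients. For the Lie case, I would invoke the classical fact that a connected nilpotent Lie group $G$ has a unique maximal compact subgroup $K$, which is a central torus, and that $G/K$ is simply connected; $K$ coincides with the set $C$ of compact elements since every element generating a relatively compact abelian subgroup is contained in $K$. Characteristicness of $C$ is then immediate because automorphisms preserve the property ``$\gen g$ is relatively compact''. For the dimension identity, centrality of $C$ gives $\gamma_n(G/C) = G_n C/C$, and so
\[ \gamma_n(G/C)/\gamma_{n+1}(G/C) \cong G_n/(G_n \cap G_{n+1} C); \]
the kernel of $G_n/G_{n+1} \twoheadrightarrow G_n/(G_n \cap G_{n+1} C)$ embeds into $C/(C \cap G_{n+1})$ and is therefore compact, so passing from $G_n/G_{n+1}$ to $\gamma_n(G/C)/\gamma_{n+1}(G/C)$ preserves the compact-free dimension.

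For the discrete case I would cite the well-known fact that the torsion elements of a finitely generated nilpotent group form a finite characteristic subgroup $T$ (proved by induction on the lower central series, using finiteness of torsion in finitely generated abelian groups). The rank identity then follows by exactly the same algebraic argument as above, replacing $C$ by $T$ and ``compact'' by ``torsion'': the kernel of $G_n/G_{n+1} \twoheadrightarrow \gamma_n(G/T)/\gamma_{n+1}(G/T)$ embeds into $T/(T \cap G_{n+1})$, which is finite, and torsion-free rank is insensitive to finite subgroups.

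The third bullet is the most technical. Here I would invoke the standard theory of isolators in a torsion-free nilpotent group: $I(N)$ of a normal subgroup $N$ is again a normal subgroup, the quotient $G/I(N)$ is torsion-free, and centers are isolated, i.e.\ if some positive power of a group element lies in the center, then so does the element itself (a consequence of the commutator identity $[g, h^k] \equiv [g,h]^k$ modulo deeper commutators, combined with torsion-freeness). Applying these to each $G_n$ yields that $I(G_n)$ is normal, that $I(G_1) \supseteq I(G_2) \supseteq \dotsb$ has torsion-free quotients, and that this series is central: if $h \in I(G_n)$ then $h^k \in G_n$ projects to the center of $G/G_{n+1}$, hence to the center of the torsion-free nilpotent group $G/I(G_{n+1})$, and then isolation of the center shows that $h$ itself is central modulo $I(G_{n+1})$. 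Finite index of $G_n$ in $I(G_n)$ follows from $G/G_{n+1}$ being finitely generated nilpotent with finite torsion subgroup, of which $I(G_{n+1})/G_{n+1}$ is precisely the torsion part; an induction on $n$ then gives the claim. The rank identity is extracted from the exact sequence
\[ 1 \to I(G_{n+1})/G_{n+1} \to I(G_n)/G_{n+1} \to I(G_n)/I(G_{n+1}) \to 1, \]
since the left-hand term is finite and $G_n/G_{n+1}$ sits as a finite-index subgroup of the middle term. The main obstacle is the commutator bookkeeping underlying the isolator statements in Part~3, where torsion-freeness is indispensable for extracting $k$-th roots of commutators and propagating the information up the lower central series.
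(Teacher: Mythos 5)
Your proposal is correct and follows essentially the same route as the paper: establish the structural facts about the maximal compact subgroup $C$, the torsion subgroup $T$, and the isolator series (which the paper simply outsources to Glu\v skov and to Segal's book, Corollary~1.10 and Lemma~3.4, whereas you sketch the standard proofs), and then derive each rank/dimension identity from a surjection $G_n/G_{n+1}\to\gamma_n(G/C)/\gamma_{n+1}(G/C)$ (resp.\ $\to I(G_n)/I(G_{n+1})$) with compact (resp.\ finite) kernel and finite-index image. The computations you give, including $\gamma_n(G/C)=G_nC/C$ and the identification $I(G_n)/G_n=T(G/G_n)$, are exactly the ones the paper uses.
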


\begin{proof}
Let $G$ be a connected, nilpotent Lie group. Theorem~5.1 in \cite{gluskov1955locally} implies the statements concerning $C$ and $G/C$. It remains to show the equality concerning dimensions. By induction we have $\gamma_n(G/C) = G_nC/C$ and it is easy to check that
\[ G_n/G_{n+1}\to (G_nC/C)/(G_{n+1}C/C), \quad gG_{n+1}\mapsto gC \cdot (G_{n+1}C/C) \]
is a continuous epimorphism with compact kernel which implies the equality.

Now let $G$ be a finitely generated, nilpotent group. Corollary~1.10 in \cite{segal1983polycyclic} yields the first part and the assertion concerning ranks follows mutatis mutandis.

Finally, assume that $G$ is a finitely generated, torsion-free, nilpotent group. By Lemma~3.4 in \cite{segal1983polycyclic} $I(G_1)\supseteq I(G_2)\supseteq \dotsc$ is a central series with torsion-free quotients. Furthermore, it is easy to see that $I(G_n)/G_n = T(G/G_n)$, where $T(G/G_n)$ is the characteristic, finite subgroup of all torsion elements in $G/G_n$. Consider the map
\[ G_n/G_{n+1} \to I(G_n)/I(G_{n+1}), \quad gG_{n+1} \mapsto gI(G_{n+1}). \]
This is a homomorphism which has finite kernel and an image of finite index. This yields the claim concerning ranks.
\end{proof}

In the following we fix a connected, simply connected, nilpotent Lie group $G$ with nilpotency class $c$ and set $G_n=\gamma_n(G)$ for $n\in\N$. We denote by $\LieG$ the associated Lie algebra and by $(x,y)$ the Lie bracket of $\LieG$. Furthermore, we define the $k$\ndash fold Lie bracket inductively by $(x_1)=x_1$ and $(x_1,\dotsc,x_k)=(x_1,(x_2,\dotsc,x_k))$. The \emph{descending central series} of $\LieG$ is
\[ \LieG_1 = \LieG \qquad\text{and}\qquad \LieG_{n+1} = \vspan_\R(\LieG,\LieG_n) \]
for $n\ge1$. The Lie algebra of $G_n$ is $\LieG_n$. Let $\nu(n)$ be the compact-free dimension of $G_n/G_{n+1}$. Then
\[ G_n/G_{n+1} \simeq \LieG_n/\LieG_{n+1} \simeq \R^{\nu(n)} \]
as commutative groups. The \emph{exponential map} $\exp\colon\LieG\to G$ is a diffeomorphism from $\LieG$ to $G$ and its inverse is $\log\colon G\to\LieG$. The Baker-Campbell-Hausdorff formula yields a multiplicative group structure on $\LieG$:
\[ xy = x+y + \tfrac12(x,y) + \tfrac1{12}(x,x,y) - \tfrac1{12}(y,x,y) - \tfrac1{24}(y,x,x,y) \pm \dotsb \]
for $x,y\in\LieG$. Then the exponential map $\exp$ is a group isomorphism from $(\LieG,\cdot)$ to $(G,\cdot)$ and it is common to identify the Lie group $G$ with its Lie algebra $\LieG$.

A subgroup $\Gamma$ is called \emph{uniform} in $G$, if $\Gamma$ is discrete and the quotient $\Gamma\rightfactor G$ is compact. In the following lemma we study uniform subgroups. Its proof depends on well-known results on such subgroups which can be found in \cite[Chapter~5]{corwin1990representations}.

\begin{lemma}\label{lemma:maltsev}
Let $\Gamma$ be a uniform subgroup in $G$ and set $\Gamma_n=\gamma_n(\Gamma)$ for $n\in\N$. Then $\Gamma\cap G_n = I(\Gamma_n)$ and
\[ \rank(\Gamma_n/\Gamma_{n+1}) = \vdim(G_n/G_{n+1}) \]
for all $n\in\N$
\end{lemma}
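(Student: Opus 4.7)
The plan is to reduce the lemma to two structural inputs from the theory of lattices in simply connected nilpotent Lie groups (as developed in Corwin--Greenleaf, Chapter~5, or Raghunathan's monograph): first, that for every~$n$ the intersection $\Gamma\cap G_n$ is itself a uniform subgroup of~$G_n$; second, that $\Gamma_n=\gamma_n(\Gamma)\subseteq G_n$ sits inside $\Gamma\cap G_n$ as a subgroup of finite index. Together with the standard fact that a uniform subgroup of a simply connected nilpotent Lie group of dimension~$d$ has torsion-free rank~$d$ (e.g.\ via Malcev coordinates), these are enough to finish.

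For the identity $\Gamma\cap G_n = I(\Gamma_n)$ I would treat the two inclusions separately. The inclusion $\Gamma\cap G_n\subseteq I(\Gamma_n)$ is immediate from the finite-index input: every $\gamma\in\Gamma\cap G_n$ has some positive power in $\Gamma_n$. For the reverse inclusion, take $\gamma\in\Gamma$ with $\gamma^k\in\Gamma_n\subseteq G_n$ for some $k\ge1$. Since $G_n$ is a closed, connected, normal Lie subgroup of the simply connected group~$G$, the homotopy long exact sequence of the fibration $G_n\to G\to G/G_n$ shows that $G/G_n$ is again simply connected; being a simply connected nilpotent Lie group it is torsion-free, because the exponential map is a global diffeomorphism and $\exp(x)^k=\exp(kx)$. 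Hence the image of $\gamma$ in $G/G_n$, being a torsion element, is trivial, so $\gamma\in G_n$ and therefore $\gamma\in\Gamma\cap G_n$.

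For the rank equality I would invoke additivity of the torsion-free rank (equivalently the Hirsch length) on short exact sequences of finitely generated nilpotent groups. Applied to
\[ 1 \longrightarrow \Gamma_{n+1} \longrightarrow \Gamma_n \longrightarrow \Gamma_n/\Gamma_{n+1} \longrightarrow 1 \]
this gives $\rank(\Gamma_n/\Gamma_{n+1})=\rank(\Gamma_n)-\rank(\Gamma_{n+1})$. Since $\Gamma_n$ has finite index in $\Gamma\cap G_n$, and $\Gamma\cap G_n$ is uniform in the $\vdim(G_n)$\ndash dimensional simply connected nilpotent Lie group $G_n$, we conclude $\rank(\Gamma_n)=\rank(\Gamma\cap G_n)=\vdim(G_n)$, and likewise for $n+1$. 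Subtracting yields $\rank(\Gamma_n/\Gamma_{n+1})=\vdim(G_n)-\vdim(G_{n+1})=\vdim(G_n/G_{n+1})$, which is what the lemma claims.

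The main obstacle is really that both structural inputs about $\Gamma\cap G_n$ rest on the Malcev rational-structure theory of uniform subgroups and are not reproved here; once they are cited, what remains is a short synthesis combining the torsion-freeness of simply connected nilpotent Lie groups (to identify $I(\Gamma_n)$) with additivity of the Hirsch length (to translate dimensions into ranks).
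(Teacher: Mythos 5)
Your proof is correct, and it reaches the same two structural facts the paper relies on, but by a genuinely different organization. The paper proves $\Gamma\cap G_n=I(\Gamma_n)$ by backward induction on $n$: the top case $n=c$ is done by hand, using a strong Mal'tsev basis strongly based on $\Gamma$ and the identity $\exp((x_1,\dotsc,x_c))=[\exp(x_1),\dotsc,\exp(x_c)]$ to show that $\Gamma_c$ and $\Gamma\cap G_c$ are both uniform in $G_c$ (so their quotient is finite), and the general case is then bootstrapped by applying the top case to $G/G_n$ and its uniform subgroup $\Gamma G_n/G_n$. You instead cite wholesale that $\Gamma\cap G_n$ is a lattice in $G_n$ and that $\Gamma_n$ sits inside it with finite index, which collapses the induction; in exchange you make explicit the inclusion $I(\Gamma_n)\subseteq\Gamma\cap G_n$ via torsion-freeness of the simply connected quotient $G/G_n$, a step the paper treats as obvious at the top level. (Your reading of $I(\Gamma_n)$ as the isolator taken inside $\Gamma$ is the intended one; with the paper's literal definition of $I$ inside $G$ the identity would fail already for $\Z\le\R$.) For the rank identity the two arguments diverge more substantially: the paper passes to the abelian quotient $G_n/G_{n+1}$, observes that $(\Gamma\cap G_n)G_{n+1}/G_{n+1}$ is uniform there, and then converts $\rank((\Gamma\cap G_n)/(\Gamma\cap G_{n+1}))$ into $\rank(\Gamma_n/\Gamma_{n+1})$ using the last part of Lemma~\ref{lemma:ranks}; you use additivity of the Hirsch length on short exact sequences, its invariance under finite-index subgroups, and the fact that a lattice in a $d$-dimensional simply connected nilpotent Lie group has Hirsch length $d$. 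Both routes are standard; yours is shorter and bypasses Lemma~\ref{lemma:ranks}, at the cost of leaning more heavily on cited structure theory, which is consistent with the paper's own framing (it only points to Corwin--Greenleaf, Chapter~5, for the background facts). The only point to keep straight is that the "rank" of the non-abelian group $\Gamma_n$ must be read as Hirsch length, as you indicate, and that for the abelian quotient $\Gamma_n/\Gamma_{n+1}$ this agrees with the torsion-free rank appearing in the statement.
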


\begin{proof}
First we show that $\Gamma\cap\gamma_n(G) = I(\gamma_n(\Gamma))$ for all $n\in\N$ by backward induction on $n$:
\begin{itemize}
\item Suppose that $n=c$: Obviously, $I(\Gamma_c)\subseteq\Gamma$ and
	$I(\Gamma_c)\subseteq G_c$, hence $I(\Gamma_c)\subseteq G_c\cap\Gamma$.
	To prove the reversed inclusion, note that
	$\exp$ is a group homomorphism from $(\LieG_c,+)$ to $(G_c,\cdot)$.
	Let $X\subseteq\LieG$ be a strong Mal'tsev basis
	strongly based on $\Gamma$ and set $Z=\exp(X)$.
	Then $\Gamma_c=\gen{[Z,\dotsc,Z]}$
	(see \cite[Theorem~5.4]{magnus2004combinatorial}) and
	thus $\log(\Gamma_c)=\vspan_\Z (X,\dotsc,X)$,
	since $\exp((x_1,\dotsc,x_c))=[\exp(x_1),\dotsc,\exp(x_c)]$
	for all $x_1,\dotsc,x_c\in\LieG$.
	Furthermore, we have $\LieG_c=\vspan_\R (X,\dotsc,X)$.
	This implies that $\Gamma_c$ and $G_c\cap\Gamma$
	are uniform subgroups in $G_c$.
	Therefore $(G_c\cap\Gamma)/\Gamma_c$ is finite,
	whence $G_c\cap\Gamma\subseteq I(\Gamma_c)$.
\item Assume that the claim holds for $n\ge2$:
	Consider the groups $G/G_n$ and $\Gamma G_n/G_n$.
	Then $\Gamma G_n/G_n$ is (topologically) isomorphic
	to $\Gamma/(\Gamma\cap G_n)$.
	By $\phi$ we denote the canonical isomorphism
	$\Gamma G_n/G_n\to\Gamma/(\Gamma\cap G_n)$.
	Since $\Gamma G_n/G_n$ is a uniform subgroup in $G/G_n$ and
	$G/G_n$ is nilpotent with nilpotency class $n-1$,
	using the initial step for the nilpotent group $G/G_n$ yields
	\[ (\Gamma\cap G_{n-1})G_n/G_n
		= \Gamma G_n/G_n\cap\gamma_{n-1}(G/G_n)
		= I(\gamma_{n-1}(\Gamma G_n/G_n)). \]
	Applying the isomorphism $\phi$ on both sides we obtain
	\begin{align*}
	(\Gamma\cap G_{n-1})/(\Gamma\cap G_n)
	&= I(\gamma_{n-1}(\Gamma/(\Gamma\cap G_n))) \\
	&= I(\Gamma_{n-1}(\Gamma\cap G_n))/(\Gamma\cap G_n) \\
	&= I(\Gamma_{n-1})/(\Gamma\cap G_n)
	\end{align*}
	using the induction hypothesis $\Gamma\cap G_n=I(\Gamma_n)$ once more.
	It follows that $\Gamma\cap G_{n-1}=I(\Gamma_{n-1})$.
\end{itemize}
Now we prove the assertion concerning ranks. Since $\Gamma\cap G_k$ is uniform in $G_k$ for all $k\ge1$, it follows that $(\Gamma\cap G_n)G_{n+1}/G_{n+1}$ is uniform in $G_n/G_{n+1}$. This implies that
\[ \rank((\Gamma\cap G_n)/(\Gamma\cap G_{n+1})) = \rank((\Gamma\cap G_n)G_{n+1}/G_{n+1}) = \vdim(G_n/G_{n+1}) \]
which yields the statement using the last part of Lemma~\ref{lemma:ranks}.
\end{proof}

Since $\LieG$ is a real vector space of finite dimension $\nu(1)+\dotsb+\nu(c)$, there are linear subspaces $V_n\subseteq\LieG$ of dimension $\nu(n)$, such that $\LieG_n = V_n \oplus \LieG_{n+1}$. Hence
\[ \LieG_n = V_n \oplus \dotsb \oplus V_c. \]
Write $\pi_n\colon\LieG\to V_n$ to denote the canonical projection. Then $\pi_n$ is a continuous epimorphism from $(\LieG_n,\cdot)$ to $(V_n,+)$ with kernel $\LieG_{n+1}$. Let $\norm{\argument}_n$ be some $\ell^2$\ndash norm on $V_n$. Then
\[ \norm{x} = \max\{ \norm{\pi_n(x)}_n \setsep 1\le n\le c \} \]
is a norm on $\LieG$. Notice that $\norm{\pi_n(x)} = \norm{\pi_n(x)}_n$. Since the Lie bracket $(\argument,\argument)$ is bilinear, we have the following simple statement.

\begin{lemma}\label{lemma:liebracket}
There is a constant $M\ge1$, such that $\norm{(x,y)}\le M\norm{x}\,\norm{y}$ for all $x,y\in\LieG$. Consequently,
\[ \norm{(x_1,\dotsc,x_k)} \le M^{k-1} \norm{x_1}\dotsm \norm{x_k} \]
for all $x_1,\dotsc,x_k\in\LieG$.
\end{lemma}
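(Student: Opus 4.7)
The plan is to derive the first inequality from the fact that the Lie bracket is a bilinear map on the finite-dimensional real vector space $\LieG$, and then iterate to obtain the second.

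First I would observe that $(\argument,\argument)\colon\LieG\times\LieG\to\LieG$ is bilinear and that $\LieG$ is finite-dimensional, so the bracket is continuous with respect to the norm $\norm{\argument}$. The unit sphere $\{(x,y)\in\LieG\times\LieG\setsep\norm{x}=\norm{y}=1\}$ is compact, hence the continuous function $(x,y)\mapsto\norm{(x,y)}$ attains its supremum there; call this supremum $M_0$. Setting $M=\max\{1,M_0\}$ and using bilinearity to rescale (the case $x=0$ or $y=0$ is trivial) gives $\norm{(x,y)}\le M\norm{x}\,\norm{y}$ for all $x,y\in\LieG$, with $M\ge 1$.

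For the iterated bound I would proceed by induction on $k\ge1$. The base case $k=1$ reads $\norm{(x_1)}=\norm{x_1}\le M^{0}\norm{x_1}$, which is valid since $M\ge1$. For the inductive step, the recursive definition $(x_1,\dotsc,x_k)=(x_1,(x_2,\dotsc,x_k))$ together with the bilinear bound from the first part yields
\[ \norm{(x_1,\dotsc,x_k)}\le M\norm{x_1}\,\norm{(x_2,\dotsc,x_k)}\le M\norm{x_1}\cdot M^{k-2}\norm{x_2}\dotsm\norm{x_k}=M^{k-1}\norm{x_1}\dotsm\norm{x_k}, \]
using the inductive hypothesis applied to the $(k-1)$\ndash fold bracket.

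I do not anticipate any real obstacle: the argument is a standard continuity/compactness fact about bilinear maps on finite-dimensional normed spaces, followed by a routine induction, and it uses nothing about the nilpotent group structure beyond the finite dimension of $\LieG$.
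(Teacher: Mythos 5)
Your proof is correct and follows exactly the route the paper intends: the paper gives no explicit proof, merely remarking that the statement follows ``since the Lie bracket $(\argument,\argument)$ is bilinear,'' and your compactness-of-the-unit-sphere argument plus induction is the standard way to fill that in. No gaps.
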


For $x\in\LieG$ set
\[ \gauge{x} = \max\{ \norm{\pi_n(x)}^{1/n} \setsep 1\le n\le c \}. \]
Then $\gauge{\argument}$ is called (homogeneous) \emph{gauge} or \emph{quasi-norm} (see for instance \cite{breuillard2012geometry,goodman1976nilpotent,guivarch1973croissance}). Note that $\gauge{\argument}$ is homogeneous with respect to the dilation $\delta_t(x) = t\pi_1(x) + \dotsb + t^c\pi_c(x)$, i.e.\ $\gauge{\delta_t(x)}=t\gauge{x}$, and it satisfies a weak form of the triangle inequality with respect to the Lie group structure on $\LieG$ (see Lemma~\ref{lemma:gaugenorm}).

\begin{lemma}\label{lemma:gaugeprop}
For all $x,y\in\LieG$ the following holds:
\begin{compactitem}
\item $\gauge{-x}=\gauge{x}$,
\item $\gauge{x+y}\le \gauge{x}+\gauge{y}$,
\item if $x\in\LieG_n$ and $\alpha\ge1$ then $\gauge{\alpha x}\le\alpha^{1/n}\gauge{x}$,
\item if $0\le\alpha\le1$ then $\gauge{\alpha x}\le\alpha^{1/c}\gauge{x}$.
\end{compactitem}
In any case, $\gauge{\alpha x} \le \max\{1,\alpha\} \gauge{x}$ for all $\alpha\ge0$.
\end{lemma}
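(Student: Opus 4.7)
The plan is to verify each of the four bullets directly from the definition $\gauge{x} = \max\{\norm{\pi_n(x)}^{1/n} \setsep 1\le n\le c\}$, using only linearity of the projections $\pi_n$, the norm properties of each $\norm{\argument}_n$, and two elementary scalar inequalities. The Lie-algebra structure plays no role here, so there is no substantial obstacle; the proof amounts to four short arguments. The least trivial of them, and the step I would check most carefully, is the subadditivity bound.

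The first bullet is immediate: since $\pi_n(-x) = -\pi_n(x)$ and $\norm{-y}_n = \norm{y}_n$, the maximum in the definition is unchanged. For subadditivity, linearity of $\pi_n$ and the triangle inequality for $\norm{\argument}_n$ give $\norm{\pi_n(x+y)}_n \le \norm{\pi_n(x)}_n + \norm{\pi_n(y)}_n$; combining this with $\norm{\pi_n(x)}_n \le \gauge{x}^n$ and $\norm{\pi_n(y)}_n \le \gauge{y}^n$ yields
\[ \norm{\pi_n(x+y)}_n \le \gauge{x}^n + \gauge{y}^n. \]
The elementary estimate $(a^n+b^n)^{1/n} \le a+b$ for $a,b\ge 0$ and $n\ge 1$, which comes from comparing the binomial expansion of $(a+b)^n$ with its two extremal terms, then gives $\norm{\pi_n(x+y)}_n^{1/n} \le \gauge{x}+\gauge{y}$ for every $n$, and taking the maximum completes the second bullet.

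For the third bullet I would exploit that $\pi_k(x)=0$ whenever $k<n$ and $x\in\LieG_n$, since $\LieG_n = V_n\oplus\dotsb\oplus V_c$. The maximum in $\gauge{x}$ therefore ranges only over $k\ge n$; for such $k$ and $\alpha\ge 1$ one has $\alpha^{1/k}\le\alpha^{1/n}$, so homogeneity of each $\norm{\argument}_k$ yields
\[ \norm{\pi_k(\alpha x)}_k^{1/k} = \alpha^{1/k}\,\norm{\pi_k(x)}_k^{1/k} \le \alpha^{1/n}\gauge{x}, \]
and taking the maximum over $k\ge n$ gives $\gauge{\alpha x}\le\alpha^{1/n}\gauge{x}$. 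For the fourth bullet, if $0\le\alpha\le 1$ then $k\mapsto\alpha^{1/k}$ is non-decreasing on $\{1,\dotsc,c\}$, so $\alpha^{1/k}\le\alpha^{1/c}$ for every such $k$; the same homogeneity computation then gives $\norm{\pi_k(\alpha x)}_k^{1/k} \le \alpha^{1/c}\gauge{x}$, whence $\gauge{\alpha x}\le\alpha^{1/c}\gauge{x}$.

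The concluding inequality $\gauge{\alpha x}\le\max\{1,\alpha\}\gauge{x}$ is a case distinction: for $\alpha\ge 1$ apply the third bullet with $n=1$ (using $x\in\LieG_1=\LieG$) to obtain $\alpha^{1/1}=\alpha$; for $0\le\alpha\le 1$ combine the fourth bullet with $\alpha^{1/c}\le 1$.
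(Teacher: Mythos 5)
Your proof is correct. The paper states Lemma~\ref{lemma:gaugeprop} without proof, and your argument supplies exactly the routine verification that is left implicit: each bullet follows from linearity of the projections $\pi_k$, homogeneity and subadditivity of the norms $\norm{\argument}_k$, the elementary bound $(a^n+b^n)^{1/n}\le a+b$, and the monotonicity of $k\mapsto\alpha^{1/k}$ in the two regimes $\alpha\ge1$ and $0\le\alpha\le1$; all steps check out, including the observation that $\pi_k(x)=0$ for $k<n$ when $x\in\LieG_n$.
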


The following lemma is a crucial observation due to Guivarc'h~\cite[Lemme~II.1]{guivarch1973croissance}, see also \cite[Lemma~2.5]{breuillard2012geometry}.

\begin{lemma}\label{lemma:gaugenorm}
Let $\alpha>0$. Then, by appropriately rescaling the norms $\norm{\argument}_n$, we have
\[ \gauge{xy} \le \gauge{x} + \gauge{y} + \alpha \]
for all $x,y\in\LieG$.
\end{lemma}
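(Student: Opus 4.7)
My plan is to combine the Baker--Campbell--Hausdorff formula with the multilinear estimate of Lemma~\ref{lemma:liebracket}, and then to rescale the norms $\norm{\argument}_n$ so that the bracket corrections are absorbed into the additive slack~$\alpha$. Since $\LieG$ is nilpotent of class~$c$, BCH gives a finite expansion
\[ xy = x + y + R(x,y), \]
where $R(x,y)$ is an $\R$-linear combination of iterated Lie brackets $(z_1,\dotsc,z_k)$ with $z_l\in\{x,y\}$ and $2\le k\le c$; by Dynkin's formula each bracket involves at least one $x$ and at least one $y$, so in the expansions below only mixed terms in $\gauge{x},\gauge{y}$ appear.

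Fix $n$ and project to $V_n$. After expanding $z_l=\pi_1(z_l)+\dotsb+\pi_c(z_l)$ and using multilinearity of the bracket, only summands $(\pi_{i_1}(z_1),\dotsc,\pi_{i_k}(z_k))$ with $i_1+\dotsb+i_k\le n$ can contribute, since the bracket lies in $\LieG_{i_1+\dotsb+i_k}$ and $\pi_n$ annihilates $\LieG_{n+1}$. Iterating Lemma~\ref{lemma:liebracket} and using $\norm{\pi_{i_l}(z_l)}_{i_l}\le\gauge{z_l}^{i_l}$ bounds each such summand by $M^{k-1}\gauge{x}^a\gauge{y}^b$, where $a,b\ge1$ and $a+b=i_1+\dotsb+i_k\in[k,n]$. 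Combined with $\norm{\pi_n(x)}_n+\norm{\pi_n(y)}_n\le\gauge{x}^n+\gauge{y}^n$, this yields a polynomial bound on $\norm{\pi_n(xy)}_n$ consisting of $\gauge{x}^n$, $\gauge{y}^n$, and mixed monomials $\gauge{x}^a\gauge{y}^b$ with $a,b\ge 1$.

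The rescaling step is to replace $\norm{\argument}_n$ with $C_n\norm{\argument}_n$ for $C_n=D^{n^2}$ and some $D\in(0,1)$ to be chosen. Let $\gauge{\argument}$ now denote the new gauge and set $s=\gauge{x}+\gauge{y}$. A short calculation shows that the effective bracket constant in the rescaled norm is bounded by $D^2$ times a constant depending only on the original bracket structure, since $C_m/(C_iC_j)=D^{m^2-i^2-j^2}\le D^2$ whenever $m\ge i+j\ge 2$. Rewriting the target inequality $\norm{\pi_n(xy)}_n^{1/n}\le s+\alpha$ as $C_n\norm{\pi_n(xy)}_n\le(s+\alpha)^n$ and using $(s+\alpha)^n=\sum_{j=0}^n\binom{n}{j}s^{n-j}\alpha^j$, it then suffices to match each mixed bracket contribution of total degree $m=a+b$ against the summand $\binom{n}{m}s^m\alpha^{n-m}$.

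The main obstacle is the top-weight case $m=n$: here $\alpha^{n-m}=1$ offers no slack, so the bracket prefactor itself must be small. This is exactly what the super-exponential choice $C_n=D^{n^2}$ delivers, because a bracket of weights $i_1+\dotsb+i_k=n$ with $k\ge2$ carries a prefactor of order $D^{n^2-\sum_l i_l^2}=D^{2\sum_{l<l'}i_li_{l'}}$, whose exponent is $\ge 2$; hence $D$ small in terms of $M$ and the BCH coefficients forces this prefactor below any fixed threshold. The lower-weight cases $m<n$ are then automatic after possibly shrinking $D$ further in terms of $\alpha$, because $\alpha^{n-m}$ contributes positively. Assembling the estimates yields $C_n\norm{\pi_n(xy)}_n\le(s+\alpha)^n$ for every $n$; taking $n$-th roots and maximizing over $n$ gives $\gauge{xy}\le s+\alpha=\gauge{x}+\gauge{y}+\alpha$, as required.
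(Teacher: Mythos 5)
The paper does not actually prove this lemma: it is imported verbatim from Guivarc'h~\cite[Lemme~II.1]{guivarch1973croissance} (see also \cite[Lemma~2.5]{breuillard2012geometry}), so there is no in-text proof to compare against. Your argument is correct and is in substance the argument of those references: expand $xy$ by Baker--Campbell--Hausdorff, grade the bracket corrections by total weight using $(\LieG_i,\LieG_j)\subseteq\LieG_{i+j}$ so that $\pi_n$ only sees weights $\le n$, bound each graded piece via Lemma~\ref{lemma:liebracket} and $\norm{\pi_i(z)}_i\le\gauge{z}^i$, and then rescale the norms super-exponentially ($C_n=D^{n^2}$) so that every mixed monomial acquires a prefactor $D^{n^2-\sum_l i_l^2}\le D^{2}$. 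Two small points are worth making explicit. First, the inclusion $(\LieG_i,\LieG_j)\subseteq\LieG_{i+j}$, which you use to discard weights exceeding $n$, is not stated in the paper; it follows from the Jacobi identity by induction on $j$ and should be recorded. Second, in your matching scheme the $j=0$ summand $s^n$ of $(s+\alpha)^n$ must simultaneously absorb $\gauge{x}^n+\gauge{y}^n$ and the top-weight ($m=n$) mixed terms; this is not double counting, because $s^n=(\gauge{x}+\gauge{y})^n$ itself contains the cross terms $\binom{n}{a}\gauge{x}^a\gauge{y}^b$ with $a,b\ge1$, and your smallness condition on $D$ (in terms of $M$, $c$ and the BCH coefficients only) pushes each mixed coefficient below $1\le\binom{n}{a}$. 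With that reading the estimate closes as you describe: for weights $m<n$ the exponent satisfies $n^2-\sum_l i_l^2\ge(n-m)(n+m)+2\ge 3(n-m)+2$, so after further shrinking $D$ so that $D^{3}\le\min\{1,\alpha\}$ these terms are dominated by $\binom{n}{m}s^m\alpha^{n-m}$, and taking $n$-th roots and the maximum over $1\le n\le c$ gives $\gauge{xy}\le\gauge{x}+\gauge{y}+\alpha$.
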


In the sequel we assume that the norms $\norm{\argument}_n$ are chosen appropriately, so that the previous lemma holds with $\alpha=1$. As a simple consequence we obtain $\gauge{(x,y)}\le 2\gauge{x}+2\gauge{y}+2$ and it follows by induction, that
\begin{equation}\label{equation:liebracket}
\gauge{(x_1,\dotsc,x_k)} \le 2^{k-1} (\gauge{x_1}+\dotsb+\gauge{x_k}) + 2^k
\end{equation}
for all $x_1,\dotsc,x_k\in\LieG$.

Since $(G,\cdot)\simeq (\LieG,\cdot)$ is a connected, locally compact group, it is compactly generated. Let $d_w$ be some word metric on the group $(\LieG,\cdot)$. The following result shows a fundamental connection between the gauge $\gauge{\argument}$ and the word metric $d_w$.

\begin{lemma}[Theorem~2.7 in \cite{breuillard2012geometry}]\label{lemma:gaugequasi}
There is a constant $q\ge1$, such that
\[ q^{-1} \gauge{x} \le d_w(0,x) \le q \gauge{x} + q \]
for all $x\in\LieG$.
\end{lemma}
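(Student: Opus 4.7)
The plan is to prove the two inequalities separately. The lower bound will drop out quickly from the near-subadditivity of the gauge established in Lemma~\ref{lemma:gaugenorm}, while the upper bound I will prove by induction on the nilpotency class $c$ of $G$, with a central sub-claim forming the main obstacle.

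For the lower bound $\gauge{x}\le q\,d_w(0,x)$, I would fix a compact symmetric generating set $S$ for $d_w$ and set $M=\sup_{s\in S}\gauge{s}$; this is finite since $\gauge{\argument}$ is continuous on $\LieG$ and $S$ is compact. Iterating Lemma~\ref{lemma:gaugenorm} (with $\alpha=1$, as adopted in the excerpt following that lemma), any factorization $x=s_1\dotsm s_n$ with $s_i\in S$ yields
\[ \gauge{x}\le\sum_{i=1}^n\gauge{s_i}+(n-1)\le(M+1)n, \]
so $\gauge{x}\le(M+1)d_w(0,x)$.

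For the upper bound $d_w(0,x)\le q\gauge{x}+q$ I would induct on $c$. The base case $c=1$ is immediate: $(\LieG,\cdot)$ is just $\R^{\nu(1)}$ with $\gauge{\argument}$ a Euclidean norm, and any word metric on $\R^{\nu(1)}$ is quasi-isometric to the Euclidean metric. For the inductive step I would consider the central extension $\{1\}\to V_c\to G \stackrel{\pi}{\longrightarrow} G/G_c\to\{1\}$; the quotient has class $c-1$, inherits a gauge (using the projected complements $V_1,\dotsc,V_{c-1}$ and the norms rescaled once more to enforce Lemma~\ref{lemma:gaugenorm} with $\alpha=1$) that satisfies $\gauge{\pi(x)}\le B\gauge{x}$ for a uniform constant $B$. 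I would fix a compact symmetric generating set $S$ of $G$ so that $\pi(S)$ generates $G/G_c$. The inductive hypothesis expresses $\pi(x)$ as a product of at most $k\le q'\gauge{x}+q'$ elements of $\pi(S)$, and lifting term-by-term to $S$ gives $x=s_1\dotsm s_k\cdot z$ for some $z\in V_c$; one more application of Lemma~\ref{lemma:gaugenorm} then yields $\gauge{z}\le C_1\gauge{x}+C_1$. This reduces the upper bound to the sub-claim: there is a constant $K$ such that $d_w(0,z)\le K\gauge{z}+K$ for every $z\in V_c$.

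This central sub-claim is the main obstacle. Since $V_c=\LieG_c$ is linearly spanned by $c$-fold iterated Lie brackets $(Y_1,\dotsc,Y_c)$ of elements of $\LieG$, I would fix a basis of $V_c$ and express each basis vector as a finite real linear combination of such brackets. The key input is the identity
\[ [\exp(sY_1),\dotsc,\exp(sY_c)] = \exp\bigl(s^c(Y_1,\dotsc,Y_c)\bigr) \qquad\text{for all }s\in\R, \]
which holds exactly in a $c$-step nilpotent group because all Baker-Campbell-Hausdorff correction terms beyond the leading Lie bracket lie in $\LieG_{c+1}=0$. Each $\exp(sY_j)$ has word length $O(\abs{s}+1)$, since it sits within bounded word distance of the integer power $\exp(Y_j)^{\lfloor s\rfloor}$; and an iterated group commutator $[g_1,\dotsc,g_c]$, when expanded via $[g,h]=g^{-1}h^{-1}gh$, becomes a word of length at most $2^c\max_j d_w(0,g_j)$. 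Hence $\exp(s^c(Y_1,\dotsc,Y_c))$ is reached with word length $O(\abs{s}+1)$. Choosing $s$ so that $s^c$ equals the coefficient of the chosen bracket direction in the basis expansion of $z$, and summing the contributions from the finitely many basis directions (using that $V_c$ is abelian under the group law), will yield $d_w(0,z)\le K\norm{z}^{1/c}+K=K\gauge{z}+K$, closing the induction.
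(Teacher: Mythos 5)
The paper offers no proof of this lemma at all: it is imported verbatim as Theorem~2.7 of \cite{breuillard2012geometry} (going back to Guivarc'h~\cite{guivarch1973croissance}), so your blind attempt is not an alternative to an internal argument but a self-contained reconstruction --- and it is, in substance, the classical Guivarc'h proof, and it is correct. The lower bound is exactly right: iterating Lemma~\ref{lemma:gaugenorm} over a word $x=s_1\dotsm s_n$ with $s_i$ in a compact generating set gives $\gauge{x}\le(M+1)\,d_w(0,x)$. The upper bound by induction on the class, reduced via the central extension $V_c\to G\to G/G_c$ to the sub-claim $d_w(0,z)\le K\norm{z}^{1/c}+K$ for $z\in V_c$, is the standard route, and your key identity $[\exp(sY_1),\dotsc,\exp(sY_c)]=\exp\bigl(s^c(Y_1,\dotsc,Y_c)\bigr)$ does hold exactly in a $c$\ndash step group; your one-line justification is slightly too quick, though, because the \emph{intermediate} commutators are not clean (already for $c=3$ one computes $\log[\exp(sB),\exp(sC)]=s^2(B,C)-\tfrac{s^3}{2}\,(B+C,(B,C))$). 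What saves the identity is that the correction picked up at stage $j$ lies in $\LieG_{c-j+2}$ and is pushed into $\LieG_{c+1}=0$ by the remaining brackets --- a short induction worth writing out. Three further points are cosmetic rather than gaps: the word-length of the expanded commutator obeys $L([g,h])\le 2L(g)+2L(h)$, whose unrolling gives a constant closer to $2^{c+1}$ than $2^c$, but any constant depending only on $c$ suffices; for $c$ even and a negative coefficient you must negate one of the $Y_j$ rather than extract a real $c$\ndash th root; and the inductive hypothesis on $G/G_c$ applies to the word metric generated by $\pi(S)$ only after invoking the bi-Lipschitz equivalence of word metrics (Lemma~\ref{lemma:basics}), which you use implicitly. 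What the citation buys the paper is brevity; what your argument buys is a proof visibly running on nothing beyond Lemma~\ref{lemma:gaugeprop}, Lemma~\ref{lemma:gaugenorm} and the Baker-Campbell-Hausdorff formula, i.e.\ exactly the toolkit already assembled in Appendix~\ref{appendix:groups}.
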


After providing the basic setup and important tools from Lie theory, we now apply the notions of Section~\ref{section:construction} to this setting. We write $s^+_w$ instead of $s^+_{\smash{(\LieG,d_w)}}$. The quantity $d_a$ defined by $d_a(x,y) = \gauge{-x+y}$ yields by Lemma~\ref{lemma:gaugeprop} a metric on $\LieG$, and as before we write $s^+_a$ instead of $s^+_{\smash{(\LieG,d_a)}}$. Although $(x,y)\mapsto \gauge{x^{-1}y}$ is not a metric, we define
\[ s^+_m(\genp x, \genp y)
	= \limsup_{n\to\infty} \, \inf\biggl\{ \frac{\gauge{y^{-n}x^m}}{\gauge{x^m}} \setsep m\in\N_0 \biggr\} \]
and
\[ s^+_m(\gen x, \gen y)
	= \limsup_{\abs{n}\to\infty} \, \inf\biggl\{ \frac{\gauge{y^{-n}x^m}}{\gauge{x^m}} \setsep m\in\N_0 \biggr\} \]
for $x,y\in\LieG\setminus\{0\}$. Using Lemma~\ref{lemma:limsup} and Lemma~\ref{lemma:gaugequasi} we get the following comparison of $s^+_w$ and $s^+_m$.

\begin{lemma}\label{lemma:compare}
Let $x,y\in\LieG$ with $x\ne0$ and $y\ne0$. Then
\[ q^{-2} s^+_m(\genp x,\genp y) \le s^+_w(\genp x,\genp y) \le q^2 s^+_m(\genp x,\genp y) \]
and
\[ q^{-2} s^+_m(\gen x,\gen y) \le s^+_w(\gen x,\gen y) \le q^2 s^+_m(\gen x,\gen y), \]
where $q$ is the constant of Lemma~\ref{lemma:gaugequasi}.
\end{lemma}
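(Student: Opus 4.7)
\smallskip
\noindent\textbf{Proof plan.}
The plan is to reduce $s^+_w$ to an expression involving $d_w$ via Lemma~\ref{lemma:limsup} and then swap $d_w$ for the gauge $\gauge{\argument}$ by means of Lemma~\ref{lemma:gaugequasi}; the only subtlety is that the additive constants in Lemma~\ref{lemma:gaugequasi} have to be controlled inside the $\limsup$. I treat only the case of $\genp x,\genp y$; the case of $\gen x,\gen y$ is identical with $m\in\Z$ and $\abs{n}\to\infty$.

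First I would check that Lemma~\ref{lemma:limsup} applies in $(\LieG,\cdot)$. Because $G$ is connected and simply connected nilpotent, $\exp\colon\LieG\to G$ is a diffeomorphism; hence for any $x\ne0$ the map $n\mapsto x^n$ is a proper injection into $G$ and $\gen x$ is not relatively compact, i.e.\ $x$ is non-compact in the sense of Section~\ref{section:groups}. So Lemma~\ref{lemma:limsup}, combined with left-invariance of $d_w$, yields
\[ s^+_w(\genp x,\genp y)=\limsup_{n\to\infty}\,\inf_{m\in\N_0}\frac{d_w(0,y^{-n}x^m)}{d_w(0,x^m)}. \]

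Next, I would apply Lemma~\ref{lemma:gaugequasi} to both numerator and denominator, obtaining for every $m,n$
\[ \frac{q^{-1}\gauge{y^{-n}x^m}}{q\gauge{x^m}+q}\;\le\;\frac{d_w(0,y^{-n}x^m)}{d_w(0,x^m)}\;\le\;\frac{q\gauge{y^{-n}x^m}+q}{q^{-1}\gauge{x^m}}. \]
If the additive constants were absent, taking $\inf_m$ and $\limsup_n$ would give the desired bi-Lipschitz estimate between $s^+_w$ and $s^+_m$ immediately. So the real task is to show that along any sequence $m=m(n)$ that approximately realizes the infimum as $n\to\infty$, we have $\gauge{x^{m(n)}}\to\infty$; with that in hand, the terms $q/\gauge{x^m}$ and $q$ in the denominator of the lower bound become $o(1)$ and the claim falls out.

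To establish $\gauge{x^{m(n)}}\to\infty$ I would argue as in the proof of Lemma~\ref{lemma:limsup}: if the ratio $\gauge{y^{-n}x^{m(n)}}/\gauge{x^{m(n)}}$ stays bounded by some $\alpha$, then the weak triangle inequality for the gauge (Lemma~\ref{lemma:gaugenorm}, with the constant chosen to be $1$) gives
\[ \gauge{y^n}=\gauge{y^{-n}}\le\gauge{x^{m(n)}}+\gauge{y^{-n}x^{m(n)}}+1\le(1+\alpha)\gauge{x^{m(n)}}+1. \]
Since $y$ is non-compact, Lemma~\ref{lemma:gaugequasi} forces $\gauge{y^n}\to\infty$, hence $\gauge{x^{m(n)}}\to\infty$. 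Once this is known, a short $\eps$/$N$\ndash argument shows that for every $\alpha>s^+_m(\genp x,\genp y)$ one has $s^+_w(\genp x,\genp y)\le q^2\alpha$, and symmetrically for every $\beta>s^+_w(\genp x,\genp y)$ one has $s^+_m(\genp x,\genp y)\le q^2\beta$; letting $\alpha\downarrow s^+_m$ and $\beta\downarrow s^+_w$ produces the two inequalities of the lemma.

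The main obstacle is precisely this bookkeeping around the additive constants of Lemma~\ref{lemma:gaugequasi}: a naive term-by-term comparison only yields an asymptotic bi-Lipschitz estimate, not a uniform one, so one has to justify that the infimum over $m$ is effectively realized on indices where $\gauge{x^m}$ is large. The weak triangle inequality for the gauge, together with non-compactness of $y$, provides exactly this and is the only nontrivial input beyond the two cited lemmas.
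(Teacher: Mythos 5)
Your proposal is correct and follows exactly the route the paper intends: the paper offers no written proof beyond the remark that the lemma follows ``using Lemma~\ref{lemma:limsup} and Lemma~\ref{lemma:gaugequasi}'', and your argument is precisely that reduction, with the additive constants handled correctly by showing $\gauge{x^{m(n)}}\to\infty$ along near-minimizing indices via the weak triangle inequality for the gauge and non-compactness of $y$. Nothing is missing.
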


Our goal is the comparison of $s^+_a$ and $s^+_m$. We restrict this comparison to elements of $\Forward\LieG$ and $\Orbit\LieG$. Note that $\Forward(\LieG,\cdot)=\Forward(\LieG,+)$ and $\Orbit(\LieG,\cdot)=\Orbit(\LieG,+)$, since $x^n=nx$ for all $x\in\LieG$ and $n\in\Z$. Before we provide the necessary tools for this comparison, let us identify $\Linear(\LieG,d_a)$ and $\Project(\LieG,d_a)$.

\begin{lemma}\label{lemma:additive}
Up to homeomorphism we have
\[ \Linear(\LieG,d_a) = \S^{\nu(1)-1} \uplus\dotsb\uplus \S^{\nu(c)-1}, \qquad
	\Project(\LieG,d_a) = \P^{\nu(1)-1} \uplus\dotsb\uplus \P^{\nu(c)-1}. \]
Moreover, the following three statements yield a precise description of $\Linear(\LieG,d_a)$ and $\Project(\LieG,d_a)$.
\begin{compactenum}
\def\theenumi{(\alph{enumi})}
\def\labelenumi{\normalfont\theenumi}
\item\label{claim:firsta}
	If $x,y\in\LieG_i$ and $x+\LieG_{i+1}=y+\LieG_{i+1}\ne\LieG_{i+1}$ then
	\[ s^+_a(\genp x,\genp y)=0 \qquad\text{and}\qquad s^+_a(\gen x,\gen y)=0. \]
\item\label{claim:seconda}
	If $x\in\LieG_i$, $x\notin\LieG_{i+1}$, and $y\in\LieG_{i+1}$ then
	\[ s^+_a(\genp x,\genp y)=1 \qquad\text{and}\qquad s^+_a(\gen x,\gen y)=1. \]
\item\label{claim:thirda}
	If $x,y\in V_i$ and $x,y\ne0$ then, using the notation of Example~\ref{example:euclid},
	\[ s^+_a(\genp x,\genp y) = \bigl(\sin(\min\{\tfrac12\pi,\angle(H_x,H_y)\})\bigr)^{1/i} \]
	and
	\[ s^+_a(\gen x,\gen y) = \bigl(\sin(\angle(L_x,L_y))\bigr)^{1/i}. \]
\end{compactenum}
\end{lemma}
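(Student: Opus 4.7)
\emph{Proof plan.} I will first establish the three computational statements \ref{claim:firsta}, \ref{claim:seconda}, \ref{claim:thirda} and then use them to identify the equivalence classes in $\Quotient{\Forward\LieG}$ and $\Quotient{\Orbit\LieG}$. The upshot will be that every class has a representative in some $V_i\setminus\{0\}$; within each $V_i$ the classes are indexed by half-lines (or lines); and classes from different layers sit at maximal distance, yielding the claimed disjoint unions.

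For \ref{claim:firsta}, writing $z=y-x\in\LieG_{i+1}$, Lemma~\ref{lemma:gaugeprop} gives $d_a(nx,ny)=\gauge{nz}\le n^{1/(i+1)}\gauge z$ while $\pi_i(x)\ne 0$ forces $\gauge{nx}\ge n^{1/i}\norm{\pi_i(x)}^{1/i}$. The ratio $d_a(nx,ny)/d_a(0,nx)$ therefore decays like $n^{-1/(i(i+1))}$, so for every $\alpha>0$, choosing $r$ large enough makes $\genp y\setminus\OpenBall(0,r)\subseteq\alpha\genp x$. Lemma~\ref{lemma:alternative} then yields $s^+_a(\genp x,\genp y)=0$; symmetry gives the reverse, and signed $n$ handles the group case. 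For \ref{claim:thirda}, the restriction of the gauge to $V_i$ is $\gauge v=\norm v_i^{1/i}$, so $d_a$ agrees on $V_i$ with $\norm{\argument}_i^{1/i}$. Combining Lemma~\ref{lemma:limsup} with continuity and monotonicity of $t\mapsto t^{1/i}$ reduces the computation to Euclidean ratios, and Example~\ref{example:euclid} (applied after noting that $\genp x$ and $\genp y$ are linearly equivalent to $H_x$, $H_y$ in the Euclidean sense) supplies the desired trigonometric expressions for both the semigroup and group variants.

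The main obstacle is \ref{claim:seconda}, specifically the lower bound $s^+_a(\genp x,\genp y)\ge 1$. Via Lemma~\ref{lemma:limsup} it suffices, for each $\alpha<1$, to show $\inf_{m\in\N_0} d_a(ny,mx)/d_a(0,mx)\ge\alpha$ for all sufficiently large $n$. Since $\pi_i(y)=0$, one has $\pi_i(ny-mx)=-m\pi_i(x)$, yielding the projection inequality $\gauge{ny-mx}\ge m^{1/i}\norm{\pi_i(x)}^{1/i}$. I would split the infimum into two regimes. For $m\ge M_0=M_0(x,\alpha)$, chosen so that the asymptotic $\gauge{mx}/m^{1/i}\to\norm{\pi_i(x)}^{1/i}$ (the components $\pi_k(x)$ with $k>i$ contributing only $O(m^{1/k})$) gives $\gauge{mx}\le\alpha^{-1}m^{1/i}\norm{\pi_i(x)}^{1/i}$, the projection inequality forces the ratio $\ge\alpha$. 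For $m<M_0$, $\gauge{mx}$ is uniformly bounded while the triangle inequality gives $\gauge{ny-mx}\ge\gauge{ny}-\gauge{mx}\to\infty$ as $n\to\infty$, so the ratio tends to infinity. Together these bounds produce $\inf_m\ge\alpha$ for $n$ large, whence $s^+_a\ge\alpha$ and, letting $\alpha\to 1$, equality; the matching upper bound is immediate from Proposition~\ref{proposition:premetric}, and the group case is analogous with $m\in\Z$.

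To assemble, \ref{claim:firsta} shows every class in $\Quotient{\Forward\LieG}$ admits a representative $\genp{\pi_i(x)}$ with $\pi_i(x)\in V_i\setminus\{0\}$ for the unique $i$ with $x\in\LieG_i\setminus\LieG_{i+1}$; \ref{claim:thirda} then parametrises the classes in layer $i$ by the half-lines in $V_i\cong\R^{\nu(i)}$, giving $\S^{\nu(i)-1}$ (respectively $\P^{\nu(i)-1}$ in the projective case via lines); and \ref{claim:seconda} forces distinct layers to sit at $t_a$-distance $1$, so the combined space carries the disjoint union topology. Since $t_a$ is a strictly increasing continuous function of the Euclidean angle on each layer, each piece is homeomorphic to the standard sphere (respectively projective space), and compactness of the individual pieces makes the union already closed, so $\Linear(\LieG,d_a)$ and $\Project(\LieG,d_a)$ are as stated.
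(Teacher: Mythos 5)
Your proposal is correct and follows essentially the same route as the paper: establish \ref{claim:firsta}, \ref{claim:seconda}, \ref{claim:thirda} via the projection $\pi_i$, the homogeneity of the gauge, and the reduction of the $V_i$\ndash case to Example~\ref{example:euclid}, then assemble the layers, which sit at maximal distance by \ref{claim:seconda}. The only real divergence is in \ref{claim:seconda}, where the paper first uses \ref{claim:firsta} to assume $x\in V_i$, making $\gauge{-ny+mx}\ge\gauge{\pi_i(-ny+mx)}=\gauge{mx}$ immediate, whereas your two-regime asymptotic argument reaches the same bound with slightly more work; both are fine.
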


\begin{proof}
Once we have proved \ref{claim:firsta}, \ref{claim:seconda}, \ref{claim:thirda} the statement of the lemma follows. We only prove these three statements for $s^+_a(\genp x,\genp y)$ the other case being analogous.

\textit{Statement}~\ref{claim:firsta}. By assumption $-y+x\in\LieG_{i+1}$, whence
	\[ \gauge{-ny+nx} = \gauge{n(-y+x)} \le n^{1/(i+1)}\gauge{-y+x}. \]
	Since $x\in\LieG_i\setminus\LieG_{i+1}$, it follows that $\pi_i(x)\ne0$ and
	\[ \gauge{nx} \ge \gauge{\pi_i(nx)} = n^{1/i}\gauge{\pi_i(x)}. \]
	From this we infer that
	\[ s^+_a(\genp x, \genp y) \le \limsup_{n\to\infty} \frac{\gauge{-ny+nx}}{\gauge{nx}}
		\le \limsup_{n\to\infty} \frac{n^{1/(i+1)}\gauge{-y+x}}{n^{1/i}\gauge{\pi_i(x)}} = 0. \]

\textit{Statement}~\ref{claim:seconda}. Using \ref{claim:firsta},
	we may assume that $x\in V_i$. Then $\pi_i(-ny+mx)=mx$ and so
	\[ \gauge{-ny+mx} \ge \gauge{\pi_i(-ny+mx)} = \gauge{mx}. \]
	This implies
	\[ \inf\biggl\{ \frac{\gauge{-ny+mx}}{\gauge{mx}} \setsep m\in\N_0 \biggr\} \ge 1 \]
	and therefore $s^+_a(\genp x, \genp y) \ge 1$.

\textit{Statement}~\ref{claim:thirda}. Note that $\gauge{v}=\norm{v}^{1/i}$ for all $v\in V_i$.
	Since $s^+_a(\genp x, \genp y) = s^+_a(H_x,H_y)$,
	the statement follows from Example~\ref{example:euclid}.
\end{proof}

We now compare $s^+_a$ and $s^+_w$. Let $y,z$ be elements in $\LieG$ and consider the product $y^{-1}(y+z) = (-y)(y+z)$. Then, using the Baker-Campbell-Hausdorff formula,
\begin{equation}\label{equation:product}
\begin{aligned}
y^{-1}(y+z)
&= (-y) + (y+z) + \tfrac12(-y,y+z) + \tfrac1{12}(-y,-y,y+z) \\
&\qquad\qquad - \tfrac1{12}(y+z,-y,y+z) \pm \dotsb \\
& = z - \tfrac12(y,z) + \tfrac2{12}(y,y,z) + \tfrac{1}{12}(z,y,z) \pm \dotsb.
\end{aligned}
\end{equation}
Of course in the last expression above at most $c$\ndash fold Lie brackets occur and, for each $1\le k\le c$, there are finitely many $k$\ndash fold Lie brackets, say $v_{k,1},\dotsc,v_{k,m(k)}$, whose entries are either $y$ or $z$, and each of which contains at least one $y$ and at least one $z$. If $1\le k\le c$ and $1\le j\le m(k)$ then write $q_{k,j}$ for the rational coefficient in front of the $k$\ndash fold Lie bracket $v_{k,j}$. Then
\[ y^{-1}(y+z) = \sum_{1\le k\le c} \sum_{1\le j\le m(k)} q_{k,j} v_{k,j}. \]
Note that the constants $q_{k,j}$ depend on the Baker-Campbell-Hausdorff formula only. For convenience we set $Q_{k,j} = \max\{1,q_{k,j}\}$ and
\[ Q = \sum_{1\le k\le c} \sum_{1\le j\le m(k)} Q_{k,j}. \]

\begin{lemma}\label{lemma:equal}
Suppose that $x,y\in\LieG_i$ and $x\LieG_{i+1}=y\LieG_{i+1}\ne\LieG_{i+1}$. Then
\[ \gauge{y^{-n}x^n} \le 2^{c-1}Q (c\gauge{x}+c\gauge{y}+2) n^{(1-1/c)/i}  \]
for all $n\ge0$.
\end{lemma}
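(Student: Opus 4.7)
The plan is to expand $y^{-n}x^n$ using the Baker--Campbell--Hausdorff formula \eqref{equation:product} and control each summand via its depth in the central series. Set $z = -y + x$; by hypothesis $z \in \LieG_{i+1}$. Since $y, x \in \LieG_i$ satisfy $(y,y)=(x,x)=0$, their group powers coincide with scalar multiples, and $y^{-n}x^n = (-ny)(ny+nz)$. Applying \eqref{equation:product} with $y, z$ replaced by $ny, nz$ and invoking multilinearity of the $k$-fold Lie bracket gives
\[ y^{-n}x^n = \sum_{k,j} q_{k,j}\, n^k\, v_{k,j}(y, z), \]
where each $v_{k,j}(y,z)$ is a $k$-fold Lie bracket in $y, z$ containing $j$ copies of $y$ and $k-j \ge 1$ copies of $z$.

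Next, I would locate each summand in the descending central series: since $y \in \LieG_i$ and $z \in \LieG_{i+1}$, the bracket $v_{k,j}(y,z)$ lies in $\LieG_{ki + (k-j)}$ and vanishes as soon as $ki + (k-j) > c$. The surviving terms therefore satisfy $ki \le c - 1$, and for such a term Lemma~\ref{lemma:gaugeprop} yields
\[ \gauge{n^k v_{k,j}(y,z)} \le n^{k/(ki + (k-j))}\,\gauge{v_{k,j}(y,z)} \qquad (n \ge 1). \]
The decisive arithmetic step is to bound this exponent uniformly. Since $k-j \ge 1$ we have $k/(ki+(k-j)) \le k/(ki+1)$; the map $k \mapsto k/(ki+1)$ is increasing, so by $k \le (c-1)/i$,
\[ \frac{k}{ki+1} \le \frac{(c-1)/i}{(c-1)+1} = \frac{c-1}{ci} = \frac{1 - 1/c}{i}. \]

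Finally, I would combine \eqref{equation:liebracket} with the additive triangle inequality $\gauge{z} \le \gauge{x} + \gauge{y}$ to obtain
\[ \gauge{v_{k,j}(y,z)} \le 2^{k-1}\bigl(k\gauge{y} + (k-j)\gauge{x}\bigr) + 2^k \le 2^{c-1}(c\gauge{x} + c\gauge{y} + 2) \]
for every $k \le c$. Summing these individual estimates with coefficients $Q_{k,j}$ via the additive quasi-triangle inequality of Lemma~\ref{lemma:gaugeprop} and using $\sum_{k,j} Q_{k,j} = Q$ then yields the claimed bound for $n \ge 1$; the case $n = 0$ is trivial. The main obstacle is pinning down the sharp exponent $(1 - 1/c)/i$: it arises from extremizing $k/(ki + (k-j))$ subject to $k - j \ge 1$ and $ki \le c - 1$, the extremum being realized by brackets living as deep as possible in the central series.
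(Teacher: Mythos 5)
Your proof is correct and follows essentially the same route as the paper's: expand $y^{-n}x^n=(-ny)(ny+nz)$ via the Baker--Campbell--Hausdorff representation \eqref{equation:product}, place each $k$\ndash fold bracket in $\LieG_{ki+1}$ (you refine this to $\LieG_{ki+(k-j)}$), apply the dilation property of the gauge to extract the factor $n^{k/(ki+1)}\le n^{(1-1/c)/i}$, and bound $\gauge{v_{k,j}}$ by \eqref{equation:liebracket} before summing with the coefficients $Q_{k,j}$. The only differences are cosmetic: you make explicit the vanishing condition $ki\le c-1$ and the monotonicity argument for the exponent, which the paper leaves implicit.
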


\begin{proof}
Set $z=x-y$ and $m=\gauge{x}+\gauge{y}$. By assumption $z\in\LieG_{i+1}$ and obviously $\gauge{x},\gauge{y},\gauge{z}\le m$. Using the representation~\eqref{equation:product} of the product $y^{-1}(y+z)$ we obtain
\[ y^{-n} x^n = y^{-n} (y+z)^n = \sum_{1\le k\le c} \sum_{1\le j\le m(k)} q_{k,j} n^k v_{k,j}. \]
Since each $k$\ndash fold Lie bracket $v_{k,j}$ contains at least one $z$, we get $v_{k,j}\in\LieG_{ki+1}$. Using \eqref{equation:liebracket} yields $\gauge{v_{k,j}}\le 2^{k-1} km + 2^k = 2^{k-1}(km+2)$ for all $k,j$ and therefore
\[ \gauge{q_{k,j} n^k v_{k,j}} \le Q_{k,j} n^{k/(ki+1)} 2^{k-1} (km+2). \]
Collecting the pieces, we obtain
\begin{align*}
\gauge{y^{-n} x^n}
&\le \sum_{1\le k\le c} \sum_{1\le j\le m(k)} \gauge{q_{k,j} n^k v_{k,j}} \\
&\le \sum_{1\le k\le c} \sum_{1\le j\le m(k)} Q_{k,j} n^{k/(ki+1)} 2^{k-1} (km+2) \\
&\le 2^{c-1}Q (cm+2) n^{(1-1/c)/i} \qedhere
\end{align*}
\end{proof}

\begin{lemma}\label{lemma:upper}
Suppose that $x,y\in V_i$ and $\gauge{x}\ge\gauge{y}=1$ and $\gauge{x-y}=\alpha\gauge{x}$ for some $\alpha\in[0,1]$. Then
\[ \gauge{y^{-n}x^n} \le MQ \alpha^{i/c} \gauge{x^n} \]
for all $n\ge0$.
\end{lemma}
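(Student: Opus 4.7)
The plan is to reuse the BCH-expansion strategy from Lemma~\ref{lemma:equal}, but apply it more carefully so as to exploit the placement $x,y\in V_i$. Setting $z=x-y\in V_i$, so that $\gauge{z}=\alpha\gauge{x}$ while $\gauge{y}=1$ and $\gauge{x}\ge1$, equation~\eqref{equation:product} with $ny$ and $nz$ in place of $y$ and $z$ gives
\[ y^{-n}x^n \;=\; \sum_{1\le k\le c}\sum_{1\le j\le m(k)} q_{k,j}\,n^k\,v_{k,j}(y,z), \]
where each $v_{k,j}(y,z)$ is a $k$-fold Lie bracket in $y,z$ containing at least one $z$. Applying the triangle inequality from Lemma~\ref{lemma:gaugeprop} together with the routine bound $\gauge{q_{k,j}w}\le Q_{k,j}\gauge{w}$, the problem reduces to proving the uniform per-term estimate $\gauge{n^k v_{k,j}(y,z)}\le M\alpha^{i/c}\gauge{x^n}$; summing the $Q_{k,j}$'s then produces the global constant $Q$.

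For the per-term estimate, first observe that $v_{k,j}(y,z)\in\LieG_{ki}$ because $y,z\in V_i\subseteq\LieG_i$; this kills all terms with $ki>c$, so we restrict to $ki\le c$. Letting $b\ge1$ count the occurrences of $z$ in $v_{k,j}$, Lemma~\ref{lemma:liebracket} combined with $\norm{y}=\gauge{y}^i=1$ and $\norm{z}=\gauge{z}^i=\alpha^i\gauge{x}^i$ yields
\[ \norm{n^k v_{k,j}(y,z)} \;\le\; n^k M^{k-1}\alpha^{ib}\gauge{x}^{ib}. \]
Since $n^k v_{k,j}(y,z)\in\LieG_{ki}$, we have $\gauge{n^k v_{k,j}(y,z)}\le\max_{l\ge ki}\norm{n^k v_{k,j}(y,z)}^{1/l}$; this maximum is attained at $l=ki$ when the norm is $\ge1$ and at $l=c$ otherwise. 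Comparing against $\gauge{x^n}=n^{1/i}\gauge{x}$, and using $\gauge{x}\ge1$ together with $b/k\le1$ in the first regime and $n\ge1$ together with $k/c\le1/i$ in the second, gives the two candidate bounds
\[ \gauge{n^k v_{k,j}(y,z)}/\gauge{x^n} \;\le\; M\alpha^{b/k} \quad\text{or}\quad M\alpha^{ib/c}. \]

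The whole argument pivots on the arithmetic inequality $b/k\ge i/c$, which follows directly from $ki\le c$ and $b\ge1$ since $bc\ge c\ge ik$. Because $\alpha\in[0,1]$ this implies $\alpha^{b/k}\le\alpha^{i/c}$, and likewise $\alpha^{ib/c}\le\alpha^{i/c}$ from $b\ge1$ alone, so both regimes collapse to the clean bound $M\alpha^{i/c}\gauge{x^n}$; the degenerate case $n=0$ is trivial. The main obstacle is navigating the two gauge regimes without losing the $\alpha^{i/c}$ factor: the sharper bound $\gauge{w}\le\norm{w}^{1/(ki)}$ available on $\LieG_{ki}$ applies only when $\norm{w}\ge1$, forcing the case split, and only the degree-weighted inequality $b/k\ge i/c$---which encodes that a $k$-fold bracket of $V_i$-elements sits deep enough in the filtration---makes the two branches collapse to the same final estimate.
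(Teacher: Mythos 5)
Your proposal is correct and follows essentially the same route as the paper's proof: expand $y^{-n}x^n$ via the Baker--Campbell--Hausdorff representation~\eqref{equation:product}, note that each $k$\ndash fold bracket $v_{k,j}$ lies in $\LieG_{ki}$ and contains at least one factor $z=x-y$, estimate its gauge through Lemma~\ref{lemma:liebracket} and the definition of $\gauge{\argument}$, and sum with the constants $Q_{k,j}$. The only difference is bookkeeping: the paper coarsens immediately to $\norm{v_{k,j}}\le M^{k-1}\alpha^i\norm{x}^k$ and bounds each factor of $(\,\cdot\,)^{1/l}$ separately over $ki\le l\le c$, whereas you track the number $b$ of $z$\ndash occurrences and split on which $l$ attains the maximum, reaching the same bound via $b/k\ge i/c$.
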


\begin{proof}
Set $z=x-y\in V_i$. Of course $\norm{x}\ge\norm{y}=1$, and $\norm{z}=\alpha^i\norm{x}$. Using the representation~\eqref{equation:product} we get as in the proof above
\[ y^{-n} x^n = y^{-n} (y+z)^n = \sum_{1\le k\le c} \sum_{1\le j\le m(k)} q_{k,j} n^k v_{k,j}. \]
Each $k$\ndash fold Lie bracket $v_{k,j}$ contains at least one $z$, but this time $v_{k,j}\in\LieG_{ki}$. An application of Lemma~\ref{lemma:liebracket} implies
\begin{align*}
\gauge{v_{k,j}}
&= \max\{ \norm{\pi_j(v_{k,j})}^{1/l} \setsep ik\le l\le c \} \\
&\le \max\{ \norm{v_{k,j}}^{1/l} \setsep ik\le l\le c \} \\
&\le \max\{ (M^{k-1} \alpha^i \norm{x}^k)^{1/l} \setsep ik\le l\le c \} \\
&\le M \alpha^{i/c} \norm{x}^{1/i} = M \alpha^{i/c} \gauge{x}.
\end{align*}
Hence we obtain
\begin{align*}
\gauge{y^{-n} x^n}
&\le \sum_{1\le k\le c} \sum_{1\le j\le m(k)} \gauge{q_{k,j} n^k v_{k,j}} \\
&\le \sum_{1\le k\le c} \sum_{1\le j\le m(k)} Q_{k,j} n^{1/i} M \alpha^{i/c} \gauge{x} \\
&= MQ \alpha^{i/c} \gauge{x^n} \qedhere
\end{align*}
\end{proof}

\begin{lemma}\label{lemma:three}
The following three statements hold.
\begin{compactenum}
\def\theenumi{(\alph{enumi})}
\def\labelenumi{\normalfont\theenumi}
\item\label{claim:firstm}
	If $x,y\in\LieG_i$ and $x\LieG_{i+1}=y\LieG_{i+1}\ne\LieG_{i+1}$ then
	\[ s^+_m(\genp x,\genp y)=0 \qquad\text{and}\qquad s^+_m(\gen x,\gen y)=0. \]
\item\label{claim:secondm}
	If $x\in\LieG_i$, $x\notin\LieG_{i+1}$, and $y\in\LieG_{i+1}$ then
	\[ s^+_m(\genp x,\genp y)=1 \qquad\text{and}\qquad s^+_m(\gen x,\gen y)=1. \]
\item\label{claim:thirdm}
	If $x,y\in V_i$ and $x,y\ne0$ then
	\[ s^+_a(\genp x,\genp y) \le s^+_m(\genp x,\genp y) \le MQ \bigl(s^+_a(\genp x,\genp y)\bigr)^{i/c} \]
	and
	\[ s^+_a(\gen x,\gen y) \le s^+_m(\gen x,\gen y) \le MQ \bigl(s^+_a(\gen x,\gen y)\bigr)^{i/c}. \]
\end{compactenum}
\end{lemma}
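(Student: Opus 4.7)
The plan is to handle the three parts in parallel, in each case combining the Baker--Campbell--Hausdorff expansion $y^{-n}x^m = -ny + mx + \text{(iterated brackets of $-ny$, $mx$)}$ with the filtration $\LieG_1\supseteq\LieG_2\supseteq\dotsb$; the two-sided cases for $\gen\cdot$ follow verbatim from the $\genp\cdot$ arguments with $n\in\Z$ and $\abs n\to\infty$. For \ref{claim:firstm}, the hypothesis gives $x-y\in\LieG_{i+1}$, so Lemma~\ref{lemma:equal} yields $\gauge{y^{-n}x^n}=O(n^{(1-1/c)/i})$, while $\gauge{x^n}\ge n^{1/i}\norm{\pi_i(x)}^{1/i}$ because $\pi_i(x)\ne0$; substituting $m=n$ in the infimum, the ratio is $O(n^{-1/(ci)})\to 0$, whence $s^+_m=0$. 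For \ref{claim:secondm} with $y\in\LieG_{i+1}$, every bracket in the BCH expansion of $y^{-n}x^m$ that involves $y$ lies in $\LieG_{2i+1}\subseteq\LieG_{i+1}$, so $\pi_i(y^{-n}x^m)=m\pi_i(x)$; hence $\gauge{y^{-n}x^m}\ge m^{1/i}\norm{\pi_i(x)}^{1/i}$, while $\gauge{x^m}=m^{1/i}\norm{\pi_i(x)}^{1/i}$ once $m$ exceeds a threshold, producing ratio $\ge 1$ (for smaller $m$ the reverse triangle form of Lemma~\ref{lemma:gaugenorm} pushes the ratio to $\infty$ with $n$). This gives $s^+_m\ge 1$; the matching upper bound $s^+_m\le 1$ follows by taking $m(n)=n^2$, which makes $\gauge{x^m}\sim m^{1/i}$ swamp $\gauge{y^{-n}}\le n^{1/(i+1)}\gauge y$ and forces $\gauge{y^{-n}x^m}/\gauge{x^m}\to 1$ via Lemma~\ref{lemma:gaugenorm}.

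For \ref{claim:thirdm}, since $x,y\in V_i$ every bracket in the BCH expansion of $y^{-n}x^m$ is at least $2$-fold and therefore lies in $\LieG_{2i}\subseteq\LieG_{i+1}$. Hence $\pi_i(y^{-n}x^m)=mx-ny\in V_i$, giving
\[
	\gauge{y^{-n}x^m}\ge\norm{mx-ny}^{1/i}=\gauge{mx-ny};
\]
dividing by $\gauge{x^m}=\gauge{mx}$ and invoking Lemma~\ref{lemma:limsup} proves the lower bound $s^+_m\ge s^+_a$. For the upper bound, Lemma~\ref{lemma:additive}\ref{claim:thirda} gives $s^+_a=(\sin\theta)^{1/i}$ where $\theta=\min\{\tfrac\pi2,\angle(H_x,H_y)\}$. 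When $\theta<\tfrac\pi2$ I pick $m=m(n)$ as the integer nearest $n\norm y/\norm x$, which guarantees $\gauge{mx}\ge\gauge{ny}$; an elementary planar $V_i$-computation (using $\norm{mx-ny}\lesssim n\norm y\cdot 2\sin(\theta/2)$) shows that $\gauge{mx-ny}/\gauge{mx}$ lies within a universal constant of $(\sin\theta)^{1/i}$. Applying Lemma~\ref{lemma:upper} with $X=mx$, $Y=ny$, and internal exponent~$1$ then bounds $\gauge{y^{-n}x^m}\le MQ(s^+_a)^{i/c}\gauge{mx}$, absorbing the universal constant into $MQ$. When $\theta=\tfrac\pi2$ we have $s^+_a=1$ and taking $m\gg n$ drives $\gauge{mx-ny}/\gauge{mx}\to 1$, so Lemma~\ref{lemma:upper} yields the trivial bound~$MQ$.

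The main obstacle is the upper bound in \ref{claim:thirdm}: Lemma~\ref{lemma:upper} requires $\gauge{X}\ge\gauge{Y}$, yet the ostensibly sharpest choice of~$m$---the orthogonal projection $m\approx n\cos\theta\cdot\norm y/\norm x$---violates this since $\gauge{mx}<\gauge{ny}$. Overshooting slightly to $m$ of order $n\norm y/\norm x$ restores the hypothesis at the cost of only a universal multiplicative factor in $\gauge{mx-ny}/\gauge{mx}$ (via $2\sin(\theta/2)\le\sqrt2\,\sin\theta$ on $[0,\pi/2]$), which can be hidden inside~$MQ$. The degenerate case $\theta=\pi/2$ has to be handled separately because the projection value collapses to~$0$.
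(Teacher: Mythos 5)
Parts \ref{claim:firstm}, \ref{claim:secondm} and the lower bound in \ref{claim:thirdm} of your argument match the paper's proof. The divergence is in the upper bound of \ref{claim:thirdm}, which is the crux of the lemma: the paper rescales the generators once and for all (normalizing $\norm y=1$ with $y$ orthogonal to $x-y$, so that $\gauge y\le\gauge x$ and $\gauge{x-y}=\alpha\gauge x$ with $\alpha=s^+_a(\genp x,\genp y)$) and then applies Lemma~\ref{lemma:upper} to $x,y$ themselves along the diagonal $m=n$; you instead keep $x,y$ fixed and feed $X=mx$, $Y=ny$ into Lemma~\ref{lemma:upper} for a chosen $m=m(n)$. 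Three things go wrong. First, Lemma~\ref{lemma:upper} requires $\gauge Y=1$, whereas $\gauge{ny}=n^{1/i}\norm y^{1/i}\to\infty$; since the gauge is not homogeneous under scalar multiplication, this is not a cosmetic normalization --- you would have to conjugate by the dilation $\delta_t$ with $t=1/\gauge{ny}$ (using that $\delta_t$ is a group automorphism with $\gauge{\delta_t(u)}=t\gauge u$ and $\delta_t=t^i\cdot\mathrm{id}$ on $V_i$), and that step is missing. Second, and fatally for a range of angles, your choice $m\approx n\norm y/\norm x$ gives $\gauge{mx-ny}/\gauge{mx}\to\bigl(2\sin(\theta/2)\bigr)^{1/i}$, which exceeds $1$ for every $\theta\in(\pi/3,\pi/2]$; Lemma~\ref{lemma:upper} genuinely needs $\alpha\in[0,1]$ (its proof uses $\norm z\le\norm x$ and $\alpha^{i/l}\le\alpha^{i/c}$), so on that whole range --- including your separately treated case $\theta=\pi/2$, where $\norm{mx-ny}>\norm{mx}$ for \emph{every} $m$ --- the lemma simply does not apply and you offer no substitute. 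Third, even where it does apply you obtain $2^{1/(2i)}MQ(s^+_a)^{i/c}$; the constants $M$ and $Q$ are fixed by Lemma~\ref{lemma:liebracket} and the Baker--Campbell--Hausdorff coefficients, so the extra factor cannot be ``absorbed'' and you prove a strictly weaker inequality than the one stated (harmless for Theorem~\ref{theorem:nilpotent}, but not the lemma).

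All three defects trace back to choosing $m$ to match lengths rather than to minimize the ratio. The quantity $\norm{mx-ny}/\norm{mx}$ is minimized not at $\norm{mx}=\norm{ny}$ but at $\norm{mx}=\norm{ny}/\cos\theta$, where it equals exactly $\sin\theta\le1$ and $\gauge{mx}\ge\gauge{ny}$ holds automatically; with $m=\lceil n\norm y/(\norm x\cos\theta)\rceil$ for $\theta<\pi/2$, together with the dilation argument above, you would recover the clean bound $MQ(s^+_a)^{i/c}$ with no spurious constant, leaving only $\theta=\pi/2$ to be dispatched by the trivial estimate $s^+_m\le1\le MQ$ (not by Lemma~\ref{lemma:upper}). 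This optimal configuration is exactly what the paper builds into the generators by demanding $y\perp(x-y)$: the normalization $\norm y=\norm x\cos\theta$ is the foot-of-perpendicular condition, which is why the diagonal $m=n$ then works.
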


\begin{proof}
\textit{Statement}~\ref{claim:firstm}. By assumption $\pi_i(x)\ne0$ and we get
	\[ \gauge{x^n}\ge\gauge{\pi_i(x^n)}=n^{1/i}\gauge{\pi_i(x)}. \]
	On the other hand Lemma~\ref{lemma:equal} implies
	\[ \gauge{y^{-n}x^n} \le 2^{c-1}Q(c\gauge{x}+c\gauge{y}+2) n^{(1-1/c)/i} \]
	for all $n\ge0$. Hence
	\begin{align*}
	s^+_m(\genp x,\genp y)
	&\le \limsup_{n\to\infty} \frac{\gauge{y^{-n}x^n}}{\gauge{x^n}} \\
	&\le \limsup_{n\to\infty} \frac{2^{c-1}Q(c\gauge{x}+c\gauge{y}+2) n^{(1-1/c)/i}}{n^{1/i}\gauge{\pi_i(x)}} = 0.
	\end{align*}

\textit{Statement}~\ref{claim:secondm}: By the first claim we may assume that $x\in V_i$.
	Using the Baker-Campbell-Hausdorff formula we obtain $\pi_i(y^{-n}x^m) = x^m$ and thus
	\[ \gauge{y^{-n}x^m} \ge \gauge{\pi_i(y^{-n}x^m)} = \gauge{x^m}. \]
	This implies
	\[ \inf\biggl\{ \frac{\gauge{y^{-n}x^m}}{\gauge{x^m}} \setsep m\in\N_0 \biggl\} \ge 1 \]
	and $s^+_m(\genp x, \genp y)\ge1$.

\textit{Statement}~\ref{claim:thirdm}:
	To prove the lower bound, note that
	\[ \gauge{y^{-n}x^m} \ge \gauge{\pi_i(y^{-n}x^m)} = \gauge{-ny+mx} \]
	for all $n,m\in\N_0$.
	This implies $s^+_m(\genp x,\genp y) \ge s^+_a(\genp x,\genp y)$.

	Now we prove the upper bound. Set $\alpha=s^+_a(\genp x,\genp y)$.
	Without loss of generality we may assume that $\alpha<1$.
	Furthermore, we may scale $x$ and $y$ by positive constants
	without changing the value of $s^+_a(\genp x,\genp y)$ or of $s^+_m(\genp x,\genp y)$.
	Hence we may assume that $\norm{y}=1$ and $y$ is orthogonal to $x-y$
	with respect to the inner product on $V_i$ associated with $\norm{\argument}$,
	see Figure~\ref{figure:choice}.
	\begin{figure}[htb]
	\centering
	\tikzstyle{vertex}=[circle, fill=black, inner sep=0pt, minimum width=2pt]
	\begin{tikzpicture}[scale=0.6]
		\draw (0,0)--(6,0);
		\draw (0,0)--(6,3);
		\draw (3,1.5) node[vertex] {} node[above] {$x$} -- (3,0) node[vertex] {} node[below] {$y$};
		\draw (3,0.4) arc(90:180:0.4);
		\draw (3+1.5,1.5) arc(0:360:1.5);
		\draw (1.5,0) node[below] {$1$};
	\end{tikzpicture}
	\caption{The constraints for the choice of $x$ and $y$.}
	\label{figure:choice}
	\end{figure}
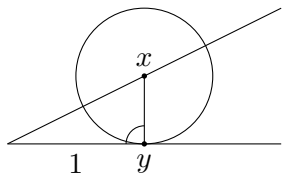
	As a consequence we get $1=\norm{y}\le\norm{x}$ and $\norm{x-y} = \alpha^i \norm{x}$
	(due to Lemma~\ref{lemma:additive}).
	Then $1=\gauge{y}\le\gauge{x}$ and $\gauge{x-y}=\alpha\gauge{x}$.
	By Lemma~\ref{lemma:upper} we get
	\[ \gauge{y^{-n}x^n} \le MQ \alpha^{i/c} \gauge{x^n} \]
	for all $n\ge0$. Thus
	\[ s^+_m(\genp x,\genp y)
		\le \limsup_{n\to\infty} \frac{\gauge{y^{-n}x^n}}{\gauge{x^n}}
		\le MQ\alpha^{i/c}. \qedhere \]
\end{proof}

\end{appendix}

\def\doi#1{\href{http://dx.doi.org/#1}{\protect\nolinkurl{doi:#1}}}
\bibliographystyle{amsalphaurl}
\bibliography{bndry}

\parindent=0pt

\end{document}